\newtheorem{theorem}{Theorem}[section]
\newtheorem{corollary}{Corollary}[section]
\newtheorem{lemma}{Lemma}[section]
\newtheorem{proposition}{Proposition}[section]
\newtheorem{notation}{Notation}[section]
\theoremstyle{definition}
\theoremstyle{remark}
\newtheorem{remark}{Remark}[section]
\numberwithin{equation}{section}
\begin{document}
 \title{On the Yamabe Problem on contact Riemannian Manifolds}
 \date{}
 \author{Feifan Wu} \thanks{Department of Mathematics, Zhejiang University, Hangzhou 310027, P. R. China, Email: wesleywufeifan08@sina.com.} \author{Wei Wang} \thanks{Department of Mathematics, Zhejiang University, Hangzhou 310027, P. R. China, Email: wwang@zju.edu.cn.} \thanks{
Supported by National Nature Science Foundation in China (No.
11171298)}
 \maketitle
 \begin{abstract}
 Contact Riemannian manifolds, whose complex structures are not necessarily integrable, are generalization of pseudohermitian manifolds in CR geometry. The Tanaka-Webster-Tanno connection plays the role of the Tanaka-Webster connection of a pseudohermitian manifold. Conformal transformations and the Yamabe problem are also defined naturally in this setting. By constructing the special frames and the normal coordinates on a contact Riemannian manifold, we prove that if the complex structure is not integrable, its Yamabe invariant on a contact Riemannian manifold is always less than the Yamabe invariant of the Heisenberg group. So the Yamabe problem on a contact Riemannian manifold is always solvable.
 \end{abstract}
\tableofcontents
 \section{Introduction}
 The Yamabe Problem in Riemannian Geometry was completely solved during 70's-80's (cf. \cite{A1}, \cite{JP1}, \cite{S1}, \cite{TR1} and references therein). For the analogous CR Yamabe problem, Jerison and Lee proved in \cite{JL2} \cite{JL4} that there is a numerical CR invariant $\lambda(M)$ called the Yamabe invariant, and for any compact, strictly pseudoconvex $(2n+1)$-dimensional CR manifold $M$, it is always less than or equal to the Yamabe invariant $\lambda(\mathscr{H}^n)$ of the Heisenberg group $\mathscr{H}^n$. Furthermore if $\lambda(M)<\lambda(\mathscr{H}^n)$, then $M$ admits a pseudohermitian structure with constant scalar curvature. In \cite{JL1}, Jerison and Lee proved that $\lambda(M)<\lambda(\mathscr{H}^n)$ holds if $n>2$ and $M$ is not locally CR equivalent to $S^{2n+1}$. The remaining case was solved by Gamara and Yacoub in \cite{GA1} \cite{GY1}. The purpose of this paper is to solve the Yamabe problem on general contact Riemannian manifolds. This problem has already been studied by Zhang \cite{ZH1} by using the contact Yamabe flow.

 A $(2n+1)$-dimensional manifold $(M,\theta)$ is called a {\it contact manifold} if it has a real 1-form $\theta$ such that $\theta{\wedge}d\theta^n\ne0$ everywhere on $M$. We call such $\theta$ a {\it contact form}. There exists a unique vector field $T$, {\it the Reeb vector field}, such that $\theta(T)=1$ and $T\lrcorner{d\theta}=0$. It's known that given a contact manifold $(M,\theta)$, there are a Riemannian metric $h$ and a $(1,1)$-tensor field $J$ on $M$ such that
 \begin{equation}\label{eq2.1}
  \begin{aligned}
  &h(X,T)=\theta(X),\\
  &J^2=-Id+\theta\otimes{T},\\
  &d\theta(X,Y)=h(X,JY),
  \end{aligned}
 \end{equation}
 for any vector field $X,Y$ (cf. p. 278 in \cite{BD1} or p. 351 in \cite{T1}). Such a metric $h$ is said to be associated with $\theta$, and $J$ is called an {\it almost complex structure}. Given a   contact form $\theta$, once $h$ is fixed, $J$ is uniquely determined, and vise versa. $(M,\theta,h,J)$ is called a {\it contact Riemannian manifold}. $HM:=Ker(\theta)$ is called the {\it horizontal subbudle} of the tangent bundle $TM$. On a contact Riemannian manifold, there exists a distinguished connection called the {\it Tanaka-Webster-Tanno connection} $\nabla$ (or {\it TWT connection} briefly). In the CR case, this connection is exactly the Tanaka-Webster connection (cf. \cite{TA1} and \cite{W1}). Tanno constructed this connection for general contact Riemannian manifolds in \cite{T1}. Since there is no obstruction to the existence of the almost complex structure $J$, contact Riemannian structures exist naturally on any contact manifold and analysis on it has potential applications to the geometry of contact manifolds (cf. e.g. \cite{Pe1}, \cite{Se1} and \cite{WW1}).

 Let $\mathbb{C}TM$ be the complexification of $TM$. $\mathbb{C}TM$ has a unique subbuddle $T^{(1,0)}M$ such that $JX=iX$ for any $X\in{\Gamma(T^{(1,0)}M)}$. Here and in the following, $\Gamma(S)$ denotes the space of all smooth sections of a vector bundle $S$. Set $T^{(0,1)}M=\overline{T^{(1,0)}M}$. Then for any $X\in{T^{(0,1)}M}$, $JX=-iX$. $J$ is called {\it integrable} if $[\Gamma(T^{(1,0)}M),\Gamma(T^{(1,0)}M)]{\subset}\Gamma(T^{(1,0)}M).$
 If $J$ is integrable, $J$ is called a {\it CR structure} and $(M,\theta,h,J)$ is called a {\it pseudohermitian manifold}. By \cite{T1}, the integrable condition holds if and only if the Tanno tensor $Q=\nabla{J}=0$. In general, a contact Riemman manifold is not a CR manifold.

  Under conformal transformations of a contact Riemannian manifold, which is given by
  $$\widehat{\theta}=f\theta,$$
  for some positive function $f$, we have the transformation formulae
  \[(\theta,J,T,h)\to(\widehat{\theta},\widehat{J},\widehat{T},\widehat{h}),\]
  with $\widehat{J},\widehat{T},\widehat{h}$ given by
  \begin{equation}\label{eqA.1}
  \begin{aligned}
  &\widehat{T}=\frac{1}{f}(T+\zeta),\\
  &\widehat{h}=fh-f\big(\theta\otimes\omega+\omega\otimes\theta\big)+f(f-1+||\zeta||^2)\theta\otimes\theta,\\
  &\widehat{J}=J+\frac{1}{2f}\theta\otimes\big(\nabla{f}-T(f)T\big),
  \end{aligned}
  \end{equation}
   (cf. (12) in \cite{BD1} or Lemma 9.1 in \cite{T1}), where $\zeta=\frac{1}{2f}J\nabla{f}$ and $\omega$ satisfies $\omega(X)=h(X,\zeta)$ for $X\in{TM}$.

  The contact Riemannian Yamabe problem is that given a compact contact Riemannian manifold $(M,\theta,h,J)$, find $\widehat{\theta}$ conformal to $\theta$ such that its scalar curvature is constant. It is known that (cf. p. 337 in \cite{BD1}) if we write the conformal transformation $\widehat{\theta}=f\theta$ with $f=u^{2/n}$, the scalar curvature $\widehat{R}$ of the TWT connection transforms as
  \begin{equation}\label{eq1.1}
  b_n\Delta_{\theta}u+Ru=\widehat{R}u^{(n+2)/n},{\quad}b_n=2+\frac{2}{n},
  \end{equation}
  where $\Delta_{\theta}$ is the sub-laplacian. \eqref{eq1.1} is the contact Riemannian Yamabe equation. The {\it Yamabe functional} is defined as
  \begin{equation}\label{eq1.2}
  \mathscr{Y}_{\theta,h}(u)=\frac{\int_M(p{|du|}_{H}^2+Ru^2)dV_{\theta}}{(\int_Mu^{p}dV_{\theta})^{2/p}},
  {\quad}p=b_n=2+\frac{2}{n},
  \end{equation}
where ${|du|}_{H}$ is the norm of the horizontal part of $du$ and $dV_{\theta}$ is the volume form. The solutions to the Yamabe problem are critical points of the Yamabe functional $\mathscr{Y}_{\theta,h}$. The {\it Yamabe invariant} is defined by
\begin{equation}\label{eq1.2a}
\lambda(M)=\inf\limits_{u}\mathscr{Y}_{\theta,h}(u).
\end{equation}
Equivalently,
\begin{equation}\label{eq1.3}
  \lambda(M)=inf\{A_{\theta,h}(u):B_{\theta,h}(u)=1\},
  \end{equation}
  where $A_{\theta,h}(u)=\int_M(b_n|du|_{H}^2+Ru^2)dV_{\theta}$, $B_{\theta,h}(u)=\int_M|u|^pdV_{\theta}$.

  Our main result in this paper is
  \begin{theorem}\label{thm1.1}
  Suppose $(M,\theta,h,J)$ is a compact contact manifold of dimension $2n+1$, $n\ge2$. If the almost complex structure $J$ is not integrable, then $\lambda(M)<\lambda(\mathscr{H}^n)$.
  \end{theorem}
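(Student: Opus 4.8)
The plan is to follow the variational scheme of Aubin and of Jerison--Lee. Once the strict inequality $\lambda(M)<\lambda(\mathscr{H}^n)$ is known, a standard concentration--compactness argument shows that the infimum in \eqref{eq1.3} is attained, and any minimizer solves the contact Riemannian Yamabe equation \eqref{eq1.1}; thus the entire content of the theorem is the strict inequality, and I would prove it by exhibiting a family of test functions whose Yamabe quotient \eqref{eq1.2} drops below $\lambda(\mathscr{H}^n)$. First I would record the (standard, non-strict) bound $\lambda(M)\le\lambda(\mathscr{H}^n)$, obtained by transplanting the Heisenberg extremal to $M$ and letting it concentrate; the real work is to produce the gain. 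Since $J$ is not integrable, the characterization recalled above (integrability is equivalent to $Q=\nabla J\equiv0$) yields a point $p\in M$ with $Q(p)\ne0$, and I would concentrate every test function at this $p$.

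Next I would use the special frames and normal coordinates constructed earlier to identify a neighborhood of $p$ with a neighborhood of the origin in $\mathscr{H}^n$, arranged so that $(\theta,h,J)$ coincides with the flat Heisenberg structure at $p$ to the highest possible order, the leading deviation being governed by the Tanno tensor $Q(p)$ (with Webster--type curvature entering only at higher order). Let $\omega$ be the Jerison--Lee extremal realizing $\lambda(\mathscr{H}^n)$, and let $\delta_\varepsilon$ denote the Heisenberg dilations. I would set, in these coordinates, $u_\varepsilon=\eta\cdot\big(\varepsilon^{-n}\,\omega\circ\delta_{\varepsilon^{-1}}\big)$, where $\eta$ is a cutoff supported near $p$, and then expand $A_{\theta,h}(u_\varepsilon)$ and $B_{\theta,h}(u_\varepsilon)$ as $\varepsilon\to0$. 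The scale invariance of the flat functional gives the leading term $\lambda(\mathscr{H}^n)$, while the deviations of $\Delta_{\theta}$, $R$ and $dV_{\theta}$ from their flat counterparts produce the corrections.

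The key structural point is that $Q$ is a \emph{first-order} invariant --- a single derivative of $J$ --- whereas the pseudoconformal curvature driving the CR case is of second order. Consequently the contribution of $Q(p)$ enters the expansion at a strictly lower power of $\varepsilon$ than the curvature terms, and the weighted integral of the form $\int_{\mathscr{H}^n}(\,\cdots\,)\,\omega^2$ that measures it converges already for $n\ge2$; this is precisely the source of the hypothesis $n\ge2$, and it explains why non-integrability gives strict inequality uniformly, bypassing both the restriction $n>2$ and the positive mass theorem needed in the integrable case. Carrying out the computation I would expect an expansion of the shape
\begin{equation}\label{eqPP}
\mathscr{Y}_{\theta,h}(u_\varepsilon)=\lambda(\mathscr{H}^n)-c\,|Q(p)|^2\,\varepsilon^{k}+o(\varepsilon^{k}),
\end{equation}
with $c>0$ and some positive $k$ determined by the homogeneity of $Q$, so that $\mathscr{Y}_{\theta,h}(u_\varepsilon)<\lambda(\mathscr{H}^n)$ for small $\varepsilon$, giving $\lambda(M)<\lambda(\mathscr{H}^n)$.

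The main obstacle is establishing \eqref{eqPP} with a \emph{strictly negative} coefficient. This demands: (i) the precise normal-coordinate expansions of $h$, $J$, $\Delta_{\theta}$, $R$ and $dV_{\theta}$ in terms of $Q(p)$ and its covariant derivatives, which is exactly what the special frames are built to furnish; (ii) sufficiently explicit knowledge of the extremal $\omega$ to evaluate the resulting weighted integrals and to verify their convergence for $n\ge2$; and (iii) control of the cutoff and error terms, so that the tail contributions from $\eta$ are negligible against $\varepsilon^{k}$. The delicate step is the sign: after integrating against the bubble and using the Yamabe equation satisfied by $\omega$ to cancel the indefinite cross terms --- those linear in $Q(p)$ and in the first derivatives of the metric --- one must check that the surviving leading term is a negative definite multiple of $|Q(p)|^2$. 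Securing this positivity of $c$ is where I expect essentially all the difficulty to lie.
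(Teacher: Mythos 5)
Your proposal follows essentially the same route as the paper: concentrate the Jerison--Lee extremal at a point $q$ where the Tanno tensor is nonzero, use the special frames and normal coordinates (after a conformal normalization killing $R_{\alpha\ \gamma\bar\beta}^{\ \gamma}(q)$ and $A_{\alpha\beta}(q)$) to expand the Yamabe functional, and extract a strictly negative correction proportional to $\mathfrak{Q}=\sum|Q_{\alpha\beta}^{\bar\gamma}(q)|^2$; the paper carries this out with $k=2$ and coefficient $\frac{3n-1}{12(n-1)n(n+1)}$, confirming the sign you flag as the main difficulty. The only minor discrepancy is your attribution of the hypothesis $n\ge 2$ solely to integral convergence, whereas it is needed first of all because a $3$-dimensional contact Riemannian manifold is automatically integrable.
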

  Here we require $n\ge2$ because the almost complex structure on a $3$-dimensional contact Riemannian manifold is automatically integrable.
  \begin{corollary}\label{cor1.1}
  If the contact Riemannian manifold $M$ is not integrable, then the infimum \eqref{eq1.2a} is attained by a positive $C^{\infty}$ solution to \eqref{eq1.1}. Thus the contact form $\widehat{\theta}=u^{p-2}\theta$ has constant scalar curvature $R\equiv\lambda(M)$.
  \end{corollary}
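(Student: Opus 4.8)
The plan is to deduce Corollary \ref{cor1.1} from Theorem \ref{thm1.1} by the now-standard subcritical approximation scheme, adapted from the CR Yamabe problem of Jerison and Lee (\cite{JL2}, \cite{JL4}) to the present contact Riemannian setting. The role of the strict inequality $\lambda(M)<\lambda(\mathscr{H}^n)$ is precisely to rule out the loss of compactness caused by concentration of a minimizing sequence at a point; this is the whole point of proving Theorem \ref{thm1.1}.

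First I would set up the subcritical problems. For $2<q\le p=2+\tfrac2n$ put
$$\lambda_q(M)=\inf\Big\{A_{\theta,h}(u):\int_M|u|^q\,dV_\theta=1\Big\}.$$
Since $p=2Q/(Q-2)$ is the critical Folland--Stein exponent for the homogeneous dimension $Q=2n+2$, for every $q<p$ the Folland--Stein (horizontal) Sobolev embedding $S^{1,2}(M)\hookrightarrow L^q(M)$ is compact. A direct-method argument then yields a nonnegative minimizer $u_q$ for $\lambda_q$, which after normalization solves the subcritical equation $b_n\Delta_\theta u_q+Ru_q=\lambda_q u_q^{q-1}$. By subelliptic regularity (Folland--Stein estimates and bootstrapping) together with Bony's strong maximum principle for subelliptic operators, each $u_q$ is smooth and strictly positive, and one checks directly that $\lambda_q(M)\to\lambda(M)$ as $q\uparrow p$.

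The core step, which I expect to be the main obstacle, is to obtain bounds on $u_q$ that are uniform in $q$ and survive the passage to the critical exponent. This is exactly where Theorem \ref{thm1.1} enters. If the family $\{u_q\}$ were to concentrate at a point as $q\uparrow p$, a blow-up/rescaling analysis built on the sharp Folland--Stein (Sobolev) inequality on the Heisenberg group would force the concentrated energy to be controlled from below by the model constant, yielding $\lambda(M)\ge\lambda(\mathscr{H}^n)$ and contradicting Theorem \ref{thm1.1}. Hence no concentration occurs; one obtains a uniform $L^\infty$ (and $S^{1,2}$) bound, extracts a subsequence $u_q\rightharpoonup u$ with $u\ge0$ nonzero, and passes to the limit to get a weak solution $u\in S^{1,2}(M)$ of the critical Yamabe equation \eqref{eq1.1} that attains the infimum \eqref{eq1.2a}. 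Carrying out this concentration--compactness argument carefully, and in particular identifying the correct model constant on $\mathscr{H}^n$, is the delicate part.

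Finally I would upgrade $u$ to a genuine classical solution. Subelliptic regularity theory gives $u\in C^\infty(M)$, and the strong maximum principle gives $u>0$ everywhere, the trivial case $u\equiv0$ being excluded by the normalization $B_{\theta,h}(u)=1$. With $u$ smooth and positive, setting $f=u^{2/n}$ and $\widehat\theta=f\theta=u^{p-2}\theta$, the transformation law \eqref{eq1.1} reads $\widehat R\,u^{(n+2)/n}=b_n\Delta_\theta u+Ru=\lambda(M)\,u^{(n+2)/n}$, so that $\widehat R\equiv\lambda(M)$ is constant, which is the assertion of the corollary.
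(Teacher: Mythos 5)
Your proposal is correct and follows essentially the same route as the paper, which disposes of this corollary by invoking the subcritical approximation and regularity arguments of Sections 5 and 6 of \cite{JL2}, after noting that the Folland--Stein normal coordinates and the estimates of Theorem 4.3 there carry over since the integrability of $J$ is never used. The only cosmetic difference is that you phrase the uniform bound step in concentration--compactness language, whereas Jerison and Lee obtain the uniform $L^\infty$ bound by a direct rescaling argument near a maximum point; the substance is the same.
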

  It is well-known that the function
  \begin{equation}\label{eq5.2}
  \Phi(z,t)=\frac{1}{|w+i|^n},{\quad}w=t+i{|z|}^2,{\quad}z\in\mathbb{C}^{2n},{\quad}t{\in}\mathbb{R},
  \end{equation}
  is an extremal for the Yamabe functional on the Heisenberg group (cf. \cite{JL4}). For each $\varepsilon>0$, $\Phi^{\varepsilon}:=\varepsilon^{-n}\delta_{\frac{1}{\varepsilon}}^*\Phi=\varepsilon^n{|w+i\varepsilon^2|}^{-n}$ is also an extremal. As in CR case \cite{JL1}, we use the test function
  \[f^{\varepsilon}(z,t)=\psi(w)\Phi^{\varepsilon}(z,t),\]
  to calculate the asymptotic expansion for $\mathscr{Y}_{\theta,h}(f^{\varepsilon})$ as $\varepsilon{\to}0$, where $\psi\in{C_0^\infty}(M)$ is supported in the set $\{|w|<2\kappa\}$ and $\psi(w)=1$ for $|w|<\kappa$ for $\kappa>0$.

  To solve the CR Yamabe problem, Jerision and Lee constructed the pseudohermitian normal coordinates by parabolic geodesics and parabolic exponential map in \cite{JL1}. On a contact Riemannian manifold, in Section 2, we also have the parabolic geodesics analogous to the CR case. For a fixed point $q$, the parabolic geodesics induce a natural map from $T_qM$ to $M$, called the parabolic exponential map. The Reeb vector $T$ is automatically parallel along the parabolic geodesics. Choosing a basis $\{W_{\alpha;q}\}_{\alpha=1}^{n}$ of the complex vector space $T_q^{(1,0)}M$ and its conjugation $\{W_{\bar{\alpha};q}\} $ of $T_q^{(0,1)}M$, and extending them by parallel translation along the parabolic geodesics, together with $W_0=T$, we get a special frame in a neighborhood of $q$. The normal coordinates is the coordinates with respect to this special frame.

  In the CR case, the complex structure is preserved under the parallel translation, and so $T^{(1,0)}M$ and $T^{(0,1)}M$ are preserved. But on a general contact Riemannian manifold, the complex structure is not preserved under the parallel translation. Namely, the special frame $\{W_{\alpha}\}_{\alpha=1}^{n}$ is not a $T^{(1,0)}M$   frame even if it is a basis of $T_q^{(1,0)}M$   at point $q$. This is our main difficulty.

  With the normal coordinates, following the method in \cite{JL1}, the asymptotic expansion of $\mathscr{Y}_{\theta,h}(f^{\varepsilon})$ can be calculated explicitly by using certain invariants at the origin. These invariants are constructed by the curvature,  Webster torsion  and  Tanno tensors. In the CR case, besides the first term, the first nonzero term of the Yamabe functional $\mathscr{Y}_{\theta,h}(f^{\varepsilon})$ is $O(\varepsilon^4)$. Because our frame $\{W_a\}$ is not holomorphic or anti-holomorphic, our expansion of $\mathscr{Y}_{\theta,h}(f^{\varepsilon})$ is much more complicated than that in the CR case. Notably, we have to expand the almost complex structure $J$ asymptotically near $q$. While in the CR case, $J$ is constant. But fortunately, if the Tanno tensor is nonvanishing at point $q$, the second-order term of the Yamabe functional $\mathscr{Y}_{\theta,h}(f^{\varepsilon})$ is already nonzero. This makes the calculation easier than we expected.
  The Tanno tensor plays an important role in the analysis of contact Riemannian manifolds (see also \cite{WW1}).

  In Section 3, we construct the invariant
  $$\mathfrak{Q}=\sum\limits_{\alpha,\beta,\gamma}|Q_{\alpha\beta}^{\bar{\gamma}}(q)|^2,$$
  where $Q_{\alpha\beta}^{\bar{\gamma}}$ is the components of the Tanno tensor with respect to a special frame. The Tanno tensor is nonzero at point $q$ if and only if $\mathfrak{Q}$ is strictly positive at this point. In Section 4, as in the CR case in Section 3 in \cite{JL1}, for a fixed contact form $\theta$, we can make certain components of the curvature tensor $R_{\alpha\ \gamma\bar{\beta}}^{\ \gamma}(q)$ and the Webster torsion tensor vanish at point $q$ after a suitable conformal transformation. This will make our calculation easier.

  In Section 5, we calculate the asymptotic expansion for $\mathscr{Y}_{\theta,h}(f^{\varepsilon})$ explicitly. By the preparation in Section 3 and Section 4, we finally find
  \begin{equation}\label{eq1.6}
  \mathscr{Y}_{\theta,h}(f^{\varepsilon})=\lambda(\mathscr{H}^n)\bigg(1-\frac{3n-1}{12(n-1)n(n+1)}\mathfrak{Q}\varepsilon^2\bigg)+O(\varepsilon^3).
  \end{equation}
  So if the complex structure is not integrable, we prove the main theorem.

  In Appendix A, we discuss the transformation formulae of the connection coefficients, the Webster torsion tensor and    curvature tensors under the conformal transformations,  and the covariance of the Webster torsion   and    curvature tensors, which is used in Section 4. In Appendix B, we give the details of the calculation of the second-order terms of the Yamabe functional $\mathscr{Y}_{\theta,h}(f^{\varepsilon})$.

  Besides the Yamabe problem on Riemmanian manifolds, CR manifolds, and contact Riemannian manifolds, there is also the Yamabe problem on quaternionic contact manifolds (cf. \cite{IV1}, \cite{Wa2} and references therein). It is interesting to find the asymptotic expansion of this Yamabe functional.
Another interesting problem is to find the asymptotic expansion of the Yamabe-type functional on  differential forms \cite{Wa3}.

  \section{Construction of the normal coordinates}
  \subsection{The TWT connection}
  \begin{proposition}(cf. (7)-(9) in \cite{BD1})\label{prop2.0a}
  On a contact Riemannian manifold $(M,\theta,h,J)$, there exists a unique linear connection $\nabla$ such that
  \begin{equation}\label{TWT}
  \begin{aligned}
  &\nabla{\theta}=0,\quad\nabla{T}=0,\\
  &\nabla{h}=0,\\
  &\tau(X,Y)=2d\theta(X,Y)T,{\quad}X,Y\in{\Gamma(HM)},\\
  &\tau(T,JZ)=-J{\tau(T,Z)},{\quad}Z\in{{\Gamma(TM)}},
  \end{aligned}
  \end{equation}
  where $\tau$ is the torsion of $\nabla$, i.e. $\tau(X,Y)=\nabla_XY-\nabla_YX-[X,Y]$ for $X,Y\in{\Gamma(TM)}$.
  \end{proposition}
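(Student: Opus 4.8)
The plan is to prove existence and uniqueness separately, exploiting the principle that a metric connection is completely determined by its torsion. Writing $\overline{\nabla}$ for the Levi--Civita connection of $h$ and polarizing the compatibility condition $\nabla h=0$ against the torsion identity $\nabla_XY-\nabla_YX=[X,Y]+\tau(X,Y)$ in the usual Koszul manner, I would first derive
\begin{equation*}
2h(\nabla_XY,Z)=2h(\overline{\nabla}_XY,Z)+h(\tau(X,Y),Z)-h(\tau(X,Z),Y)-h(\tau(Y,Z),X).
\end{equation*}
A direct check shows conversely that, for \emph{any} antisymmetric $(1,2)$-tensor $\tau$, the right-hand side defines a connection with $\nabla h=0$ whose torsion is exactly $\tau$: the three torsion terms cancel identically in the symmetrization that tests metric compatibility. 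This reduces the whole statement to identifying $\tau$. I also note that $\nabla\theta=0$ is not an independent requirement, since $\theta(X)=h(X,T)$ forces it formally from $\nabla h=0$ and $\nabla T=0$, so only $\nabla h=0$, $\nabla T=0$ and the two torsion conditions must be handled.

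For uniqueness I would argue through the difference tensor $D(X,Y)=\nabla^{(1)}_XY-\nabla^{(2)}_XY$ of two connections satisfying the hypotheses. Metric compatibility forces $h(D(X,Y),Z)=-h(D(X,Z),Y)$; the condition $\nabla T=0$ gives $D(X,T)=0$; and equality of the horizontal torsions $\tau(X,Y)=2d\theta(X,Y)T$ gives $D(X,Y)=D(Y,X)$ for horizontal $X,Y$. On triples of horizontal vectors the trilinear form $h(D(X,Y),Z)$ is thus symmetric in its first two arguments and skew in its last two, and the standard cyclic-permutation argument makes it vanish. Combined with $D(X,T)=0$ and $h(D(X,Y),T)=-h(D(X,T),Y)=0$, this kills every component of $D$ except $D(T,\cdot)$ restricted to $HM$, which is precisely the difference of the two Webster torsions; it is $h$-skew and, by the last hypothesis, anticommutes with $J$.

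For existence I would build $\tau$ explicitly, setting $\tau(X,Y)=2d\theta(X,Y)T$ on $HM\times HM$ and $\tau(T,X)=\sigma(X)$, where $\sigma$ is the $h$-symmetric operator on $HM$ whose symmetric form $h(\sigma X,Y)+h(\sigma Y,X)$ is the expression forced by $\nabla_XT=0$ via the Koszul formula, concretely the operator governed by $\frac12\mathcal{L}_TJ$. Feeding this $\tau$ into the Koszul formula produces a connection that is automatically metric with the prescribed torsion, so only $\nabla T=0$ and the anticommuting condition $\tau(T,JZ)=-J\tau(T,Z)$ remain to be verified, and these follow from the contact-metric identities for $\overline{\nabla}_XT$ together with the fact that $\frac12\mathcal{L}_TJ$ is symmetric and anticommutes with $J$. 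The main obstacle, and the step I would spend the most care on, is closing the uniqueness argument: one must show that $D(T,\cdot)=0$, i.e.\ that the defining conditions together with metric compatibility force the Webster torsion $\tau(T,\cdot)$ to be $h$-symmetric, thereby eliminating the a priori admissible $h$-skew, $J$-anticommuting part that $\nabla T=0$ and the anticommuting condition alone do not rule out.
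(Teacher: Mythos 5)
The first thing to record is that the paper offers no proof of this proposition at all: it is quoted from Tanno and Blair--Dragomir, so there is no in-text argument to compare your route against. Your framework is nonetheless the standard one: an $h$-metric connection is equivalent, via the Koszul formula with contorsion, to the prescription of its torsion, so everything reduces to pinning down $\tau$. Your bookkeeping is also correct --- $\nabla\theta=0$ is redundant given $\nabla h=0$ and $\nabla T=0$; the difference tensor of two candidates is killed on all blocks except $D(T,\cdot)|_{HM}$, which equals the difference of the two Webster torsions and is $h$-skew and $J$-anticommuting; and $\nabla T=0$ determines only the $h$-symmetric part of $\tau(T,\cdot)$ (its associated symmetric form on $HM$ being $\tfrac12\mathscr{L}_Th$, which is connection-independent).

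The obstacle you flag at the end, however, is not a step that more care will close: it is unclosable from the four listed conditions when $n\ge2$. Let $S$ be any smooth $h$-skew endomorphism of $HM$ anticommuting with $J$ (in a unitary frame these correspond to antisymmetric complex matrices $S_{\alpha\beta}$, a space of real dimension $n(n-1)>0$ for $n\ge 2$), and set $\nabla'_XY=\nabla_XY+\theta(X)S\bigl(Y-\theta(Y)T\bigr)$ for a connection $\nabla$ satisfying the axioms. A direct check shows $\nabla'T=0$, $\nabla'h=0$ (skewness of $S$), the torsion on $HM\times HM$ is unchanged, and $\tau'(T,\cdot)=\tau(T,\cdot)+S$ still anticommutes with $J$; so $\nabla'$ satisfies all four conditions yet differs from $\nabla$. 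Hence the listed axioms genuinely do not force $h$-symmetry of the Webster torsion, and the uniqueness you hope to extract from ``metric compatibility plus the defining conditions'' is false as literally stated. What secures uniqueness in the cited sources is an additional normalization: the requirement that $\nabla J$ equal an explicitly prescribed tensor built from the Levi--Civita data --- which the present paper instead converts into the definition \eqref{eq2.8} of the Tanno tensor --- or, equivalently, the self-adjointness of $\tau(T,\cdot)$ recorded here as Lemma \ref{lem2.1}\,(2). Your existence construction (Tanno's connection, with $\tau(T,\cdot)$ the symmetric $J$-anticommuting operator built from $\tfrac12\mathscr{L}_TJ$) is sound in outline; the uniqueness half must invoke that extra condition, and without it the statement cannot be proved.
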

This connection is called the {\it TWT connection}. The $(1,2)$-tensor field $Q$ defined by
  \begin{equation}\label{eq2.8}
  Q(X,Y):=(\nabla_YJ)X,{\quad}X,Y\in{\Gamma(TM)},
  \end{equation}
  is called the {\it Tanno tensor} (cf. (10) in \cite{BD1}). Tanno proved that a contact Riemannian manifold  is a CR manifold if and only if $Q\equiv0$ (cf.  Proposition 2.1 in \cite{T1}).
  The curvature tensor of TWT connection is $R(X,Y)=\nabla_X\nabla_Y-\nabla_Y\nabla_X-\nabla_{[X,Y]}.$
The Ricci tensor of the TWT connection is defined by $Ric(Y,Z)=tr\{X\longrightarrow{R(X,Z)Y}\},$
for any $X,Y,Z\in{TM}$. The scalar curvature is $R=tr(Ric)$.

  We extend $h$, $J$ and $\nabla$ to the complexified tangent bundle by $\mathbb{C}$-linear extension:
  \begin{equation}\label{eq2.8a}
  \begin{aligned}
  h(X_1+iY_1,X_2+iY_2)&:=h(X_1,X_2)-h(Y_1,Y_2)+i\big(h(X_1,Y_2)+h(X_2,Y_1)\big),\\
  J(X_1+iY_1)&:=JX_1+iJY_1,\\
  \nabla_{(X_1+iY_1)}(X_2+iY_2)&:=\nabla_{X_1}X_2-\nabla_{Y_1}Y_2+i\big(\nabla_{X_1}Y_2+\nabla_{Y_1}X_2\big),
  \end{aligned}
  \end{equation}
  for any $Z_j=X_j+iY_j\in{\mathbb{C}TM}$, $j=1,2$.
  \begin{corollary}\label{prop2.0f}
  The Riemannian metric $h$, the complex structure $J$, the TWT connection, the torsion and   curvature tensors are preserved under the complex conjugation, i.e.,
  $$\overline{h(Z_1,Z_2)}=h(\overline{Z_1},\overline{Z_2}),{\quad}J\overline{Z_1}=\overline{JZ_1},{\quad}\overline{\nabla_{Z_1}Z_2}=\nabla_{\overline{Z_1}}\overline{Z_2},$$
  $$\overline{\tau(Z_1,Z_2)}=\tau(\overline{Z_1},\overline{Z_2}),{\quad}\overline{R(Z_1,Z_2)Z_3}=R(\overline{Z_1},\overline{Z_2})\overline{Z_3},$$
  for any $Z_1,Z_2,Z_3\in{\mathbb{C}TM}$.
  \end{corollary}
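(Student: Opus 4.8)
The plan is to reduce all five identities to a single principle: if $T$ is a tensor on $TM$ taking real values (real numbers in the case of $h$, real vectors in the case of $J$, $\tau$, $R$), then its $\mathbb{C}$-multilinear extension to $\mathbb{C}TM$ intertwines complex conjugation, i.e. $\overline{T(Z_1,\dots,Z_k)}=T(\overline{Z_1},\dots,\overline{Z_k})$. To establish this principle I would observe that, for fixed slots, both maps $(Z_1,\dots,Z_k)\mapsto\overline{T(Z_1,\dots,Z_k)}$ and $(Z_1,\dots,Z_k)\mapsto T(\overline{Z_1},\dots,\overline{Z_k})$ are $\mathbb{C}$-antilinear (conjugate-linear) in each argument. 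Since the real vectors span $\mathbb{C}TM$ over $\mathbb{C}$, and the two maps agree whenever all arguments are real (there $\overline{Z_j}=Z_j$ and $T$ is real-valued, so unaffected by conjugation), the two maps coincide everywhere.

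For $h$ and $J$ this principle applies directly, as both are extended $\mathbb{C}$-(bi)linearly in \eqref{eq2.8a}. Concretely, writing $Z_j=X_j+iY_j$ with $X_j,Y_j\in\Gamma(TM)$, the identity $J\overline{Z_1}=JX_1-iJY_1=\overline{JX_1+iJY_1}=\overline{JZ_1}$ is immediate, and a one-line expansion of both sides of the metric identity via \eqref{eq2.8a} (replacing $Y_1,Y_2$ by $-Y_1,-Y_2$ on the right-hand side) gives $\overline{h(Z_1,Z_2)}=h(\overline{Z_1},\overline{Z_2})$.

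The connection needs a small extra remark, since $\nabla$ is not a tensor. Nevertheless the extension in \eqref{eq2.8a} is $\mathbb{C}$-linear over constants in both the direction and the differentiated field, and it sends real fields to real fields; expanding $\nabla_{\overline{Z_1}}\overline{Z_2}$ by \eqref{eq2.8a} with $Y_1,Y_2$ replaced by $-Y_1,-Y_2$ yields exactly $\overline{\nabla_{Z_1}Z_2}$. The same computation—or the same reduction-to-real-arguments principle applied to the (real) Lie bracket of real fields—gives $\overline{[Z_1,Z_2]}=[\overline{Z_1},\overline{Z_2}]$ for the $\mathbb{C}$-bilinear extension of the bracket.

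Finally, the torsion $\tau(Z_1,Z_2)=\nabla_{Z_1}Z_2-\nabla_{Z_2}Z_1-[Z_1,Z_2]$ and the curvature $R(Z_1,Z_2)Z_3=\nabla_{Z_1}\nabla_{Z_2}Z_3-\nabla_{Z_2}\nabla_{Z_1}Z_3-\nabla_{[Z_1,Z_2]}Z_3$ are assembled from $\nabla$ and $[\cdot,\cdot]$ by formulas that are $\mathbb{C}$-multilinear in the arguments; since each constituent intertwines conjugation and conjugation is additive and $\mathbb{C}$-antilinear, so do $\tau$ and $R$. The only point deserving care—and essentially the sole obstacle—is precisely that $\nabla$ and $[\cdot,\cdot]$ are not tensorial, so here the clean span-plus-antilinearity argument must be replaced by the explicit extension formula \eqref{eq2.8a} together with the locality of the bracket. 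Once this is in place, the torsion and curvature identities follow formally.
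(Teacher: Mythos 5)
Your proposal is correct and follows essentially the same route as the paper: verify the conjugation property for $h$, $J$, and $\nabla$ directly from the explicit extension formula \eqref{eq2.8a}, note that the bracket is likewise preserved, and then deduce the statements for $\tau$ and $R$ from their defining formulas. The extra ``span plus antilinearity'' framing is a harmless elaboration of the same computation, and you correctly flag the one point the abstract version does not cover, namely that $\nabla$ and $[\cdot,\cdot]$ are not tensorial and must be handled via the explicit extension.
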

  \begin{proof}
  For $Z_1,Z_2,Z_3\in{\mathbb{C}TM}$, $h$, $J$ and $\nabla$ are preserved under the complex conjugation follows from the definition of the extension \eqref{eq2.8a}.

  It's apparent that $\overline{[Z_1,Z_2]}=[\overline{Z_1},\overline{Z_2}]$. Since we already have $\nabla$ are preserved under the complex conjugation, so by the definitions of $\tau$ and $R$, $\tau(Z_1,Z_2)=\nabla_{Z_1}Z_2-\nabla_{Z_2}Z_1-[Z_1,Z_2]$ and $R=\nabla_{Z_1}\nabla_{Z_2}Z_3-\nabla_{Z_2}\nabla_{Z_1}Z_3-\nabla_{[Z_1,Z_2]}Z_3$ are also preserved under the complex conjugation.
  \end{proof}
\subsection{The structure equations}
  \begin{notation}
  In this paper, from now on, we adopt the following index conventions:
  \begin{equation*}
  \begin{split}
  &a,b,c,d,e,\cdots\in\{1,2,\cdots,2n\},\\
  &j,k,l,r,s,\cdots\in\{0,1,\cdots,2n\},\\
  &\alpha,\beta,\gamma,\rho,\lambda,\mu,\cdots\in\{1,\cdots,n\},\\
  &\bar{\alpha}=\alpha+n.
  \end{split}
  \end{equation*}
  The order of index $j$ is defined to be $o(j)=2$ if $j=0$, and $o(j)=1$ otherwise. For a multi-index $J=(j_1,\cdots,j_s)$, we denote $\sharp{J}=s$, $o(J)=o(j_1)+\cdots+o(j_s)$, $x^J=x^{j_1}\cdots{x^{j_s}}$, $Z_J=Z_{j_s}\cdots{Z_{j_1}}$, and $\partial_J=\partial^s/\partial{x}^{j_s}\cdots{\partial{x}^{j_1}}$.
  \end{notation}

  In this subsection, we consider the structure equations with respect to a general frame $\{W_j\}$, where $\{W_a\}$ are horizontal and $W_0=T$ is the Reeb vector field. Let $U_q$ be a neighborhood of a point $q$ where this frame is defined. It's easy to see that $h(T,T)=\theta(T)=1$ and $h(W_a,T)=\theta(W_a)=0$ by \eqref{eq2.1}. In horizontal space, we set $h(W_a,W_b)=h_{ab}$ and using $h_{ab}$ and its inverse matrix to lower and raise indices. And the Einstein summation convention will be used.

Let $\{\theta^j\}$ be the coframe dual to $\{W_j\}$.
  Write $\nabla{W_j}=\omega_{j}^{k}\otimes{W_k}$, with the TWT-connection 1-forms $\omega_{j}^{k}=\Gamma_{ij}^k\theta^i$. For the almost complex structure $J$, we write $J=J_{\ k}^l\theta^k\otimes{W_l}$ or equivalently $JW_k=J_{\ k}^{l}W_l$.
  \begin{proposition}\label{prop2.1}
  \begin{equation}\label{eq2.0}
  \begin{aligned}
  \omega_0^k=0,{\quad}\omega_j^0=0,{\quad}\Gamma_{i0}^k=0,{\quad}\Gamma_{ij}^0=0,\\
  J_{\ 0}^{k}=0,{\quad}J_{\ j}^{0}=0,{\quad}J_{ab}=-J_{ba}.
  \end{aligned}
  \end{equation}
  \end{proposition}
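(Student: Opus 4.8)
The plan is to derive all seven identities directly from the defining properties of the TWT connection in Proposition~\ref{prop2.0a} together with the algebraic relations \eqref{eq2.1} of the contact Riemannian structure, reading everything off against the adapted frame $\{W_j\}$ with $W_0=T$. The identities fall into two groups: the four statements about the connection $1$-forms $\omega_j^k$ and Christoffel symbols $\Gamma_{ij}^k$, which follow from $\nabla T=0$ and $\nabla\theta=0$; and the three statements about the components $J_{\ k}^l$ of the almost complex structure, which follow from the compatibility of $J$ with the contact form and the antisymmetry of $d\theta$.

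For the connection identities I would first use $\nabla T=0$. Since $W_0=T$, the definition $\nabla W_j=\omega_j^k\otimes W_k$ gives $0=\nabla W_0=\omega_0^k\otimes W_k$, whence $\omega_0^k=0$ and therefore $\Gamma_{i0}^k=0$ for all $i,k$. Next, observing that $\theta=\theta^0$ (the two agree on every frame vector because $\{W_a\}$ is horizontal and $\theta(T)=1$), I would pass to the induced connection on the coframe, namely $\nabla\theta^k=-\omega_l^k\otimes\theta^l$, which is obtained by differentiating the duality $\theta^k(W_l)=\delta^k_l$. Then $\nabla\theta=\nabla\theta^0=-\omega_l^0\otimes\theta^l=0$ forces $\omega_j^0=0$, and hence $\Gamma_{ij}^0=0$.

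For the $J$ identities the key is to extract $JT=0$ and $\theta\circ J=0$ from \eqref{eq2.1}. Using $d\theta(X,Y)=h(X,JY)$ together with $h(X,T)=\theta(X)$, I would compute $\theta(JY)=h(JY,T)=h(T,JY)=d\theta(T,Y)=0$, the last equality being $T\lrcorner d\theta=0$; this shows $J$ maps into $HM=\mathrm{Ker}(\theta)$, giving $J_{\ j}^0=0$. Symmetrically, $h(X,JT)=d\theta(X,T)=-d\theta(T,X)=0$ for all $X$ forces $JT=0$ by nondegeneracy of $h$, giving $J_{\ 0}^k=0$. Finally, since $J_{\ b}^0=0$, I have $J_{ab}=h_{ac}J_{\ b}^c=h(W_a,JW_b)=d\theta(W_a,W_b)$, so the antisymmetry $J_{ab}=-J_{ba}$ is immediate from the antisymmetry of the $2$-form $d\theta$.

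None of the steps presents a genuine obstacle; the proposition is essentially a bookkeeping consequence of the structure. The only points requiring care are the sign and index placement in the coframe relation $\nabla\theta^k=-\omega_l^k\otimes\theta^l$, and the correct use of the symmetry $h(X,T)=\theta(X)$ when converting between $\theta\circ J$, the metric $h$, and $d\theta$; getting these right is precisely what makes the contractions with $T$ collapse cleanly.
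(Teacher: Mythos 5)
Your proposal is correct and follows essentially the same route as the paper: the connection identities come from $\nabla T=0$ and $\nabla\theta=0$, and the $J$ identities from $JT=0$, $\theta\circ J=0$ and the antisymmetry of $d\theta$ via $J_{ab}=h(W_a,JW_b)=d\theta(W_a,W_b)$. The only cosmetic difference is that you derive the auxiliary relations $JT=0$ and $\theta(JX)=0$ directly from \eqref{eq2.1} (correctly), where the paper simply quotes them from Tanno, and you phrase the $\omega_j^0=0$ step through the induced coframe connection rather than through $\theta(\nabla_Y W_a)=\omega_a^0(Y)$; these are equivalent.
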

  \begin{proof}
  $\omega_0^k=0$ follows from $\nabla{T}=0$. And by $\theta(\nabla{X})=0$ for any $X\in{HM}$, we have $\omega_a^0=0$. $\omega_j^0=0$ follows. $\Gamma_{i0}^k=0$ and $\Gamma_{ij}^0=0$ follows from $\omega_0^k=0$ and $\omega_j^0=0$, respectively.

Note that \eqref{eq2.1} implies some useful relations (cf. p. 351 in \cite{T1}),
 \begin{equation}\label{eq2.3a}
 \begin{aligned}
 &JT=0,{\quad}\theta(JX)=0,\\
 &h(X,Y)=h(JX,JY)+\theta(X)\theta(Y),{\quad}d\theta(X,JY)=-d\theta(JX,Y),
 \end{aligned}
 \end{equation}
  for any $X,Y\in{TM}$. $JT=0$ implies $J_{\ 0}^k=0$, and $\theta(JW_j)=0$ in \eqref{eq2.3a} implies $J_{\ j}^0=0$.
 Since
 \begin{equation}\label{eq2.16b}
 h(W_a,JW_b)=h(W_a,J_{\ b}^cW_c)=h_{ac}J_{\ b}^c=J_{ab},
 \end{equation}
 holds by $h(X,JY)=d\theta(X,Y)=-d\theta(Y,X)=-h(Y,JX),$
 for any $X,Y\in{TM}$, we get $J_{ab}=h(W_a,JW_b)=-h(W_b,JW_a)=-J_{ba}$.
 \end{proof}
The {\it Webster torsion} is defined by
$$\tau_{\ast}(X)=\tau(T,X),{\quad}X\in{TM},$$
(cf. p. 279 in \cite{BD1}). We have the following lemma for the Webster torsion.
\begin{lemma}\label{lem2.1}
Let $(M,\theta,h,J)$ be a contact Riemannian manifold and $T$ be the Reeb vector. Then:

{\text (1)} (cf. Lemma 1 in \cite{BD1}) $\text{(a)}\ \tau_{\ast}(T)=0$, $\text{(b)}\ \tau_{\ast}{\circ}J+J{\circ}\tau_{\ast}=0$, $\ \text{(c)}\ \tau_{\ast}TM\subset{HM}$, $\ \ \ $
$\text{(d)}\ \tau_{\ast}T^{(1,0)}M{\subset}T^{(0,1)}M$, $\tau_{\ast}T^{(0,1)}M{\subset}T^{(1,0)}M$.

{\text (2)} (cf. Lemma 3 in \cite{BD1}) The Webster torsion $\tau_{\ast}$ is self-adjoint, i.e. $h(\tau_{\ast}X,Y)=h(X,\tau_{\ast}Y)$ for any $X,Y\in{TM}$.
\end{lemma}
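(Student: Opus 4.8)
The plan is to obtain the four statements in (1) directly from the defining relations \eqref{TWT} of the TWT connection in Proposition \ref{prop2.0a}, and then to reduce the self-adjointness in (2) to the vanishing of the Tanno tensor in the Reeb direction, which is where the non-integrability of $J$ must be confronted.

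For (1)(a), antisymmetry of the torsion gives $\tau_*(T)=\tau(T,T)=0$. For (1)(b), the last relation in \eqref{TWT}, $\tau(T,JZ)=-J\tau(T,Z)$, is exactly $\tau_*\circ J+J\circ\tau_*=0$. For (1)(c) I would verify $\theta(\tau_*(X))=0$: from $\nabla T=0$ we have $\nabla_XT=0$, so $\tau_*(X)=\nabla_TX-[T,X]$; then $\nabla\theta=0$ gives $\theta(\nabla_TX)=T(\theta(X))$, while the formula $d\theta(T,X)=T(\theta(X))-X(\theta(T))-\theta([T,X])$ together with $T\lrcorner d\theta=0$ and $\theta(T)\equiv1$ gives $\theta([T,X])=T(\theta(X))$; subtracting yields $\theta(\tau_*(X))=0$, i.e. $\tau_*(X)\in HM$. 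For (1)(d), if $Z\in T^{(1,0)}M$ then $JZ=iZ$, and (b) gives $J\tau_*(Z)=-\tau_*(JZ)=-i\tau_*(Z)$, so $\tau_*(Z)$ lies in the $(-i)$-eigenbundle $T^{(0,1)}M$; applying complex conjugation and Corollary \ref{prop2.0f} gives $\tau_*T^{(0,1)}M\subset T^{(1,0)}M$.

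For (2) I would first record that, again by $\nabla_XT=0$, $\tau_*(X)=\nabla_TX-[T,X]$, so $\nabla h=0$ yields the symmetric identity
\[(\mathcal{L}_Th)(X,Y)=h(\tau_*X,Y)+h(X,\tau_*Y).\]
This identifies the $h$-symmetric part of $\tau_*$ with $\tfrac12\mathcal{L}_Th$ but leaves the skew part $S(X,Y):=h(\tau_*X,Y)-h(X,\tau_*Y)$ undetermined; note that for $X$ or $Y$ equal to $T$ the desired identity holds trivially by (a) and (c), so it suffices to treat horizontal $X,Y$. To handle $S$ I would differentiate the identity $d\theta(\cdot,\cdot)=h(\cdot,J\cdot)$ of \eqref{eq2.1} along $T$. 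Since $d\theta$ is closed and $T\lrcorner d\theta=0$, Cartan's formula gives $\mathcal{L}_Td\theta=0$, and expanding $\mathcal{L}_T\big(h(\cdot,J\cdot)\big)$ using $\nabla h=0$ produces
\[h(X,(\nabla_TJ)Y)+h(\tau_*X,JY)+h(X,J\tau_*Y)=0.\]
Replacing $Y$ by $JY$, using (b), $J^2=-Id$ on $HM$, and the fact that $\nabla_TJ$ anticommutes with $J$ (a consequence of $\nabla_T(J^2)=0$), this collapses to $S(X,Y)=h(JX,(\nabla_TJ)Y)$ for horizontal $X,Y$. Hence $\tau_*$ is self-adjoint if and only if the Reeb-direction Tanno tensor $\nabla_TJ=Q(\cdot,T)$ vanishes.

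The main obstacle is precisely this last vanishing: in the CR case $\nabla J\equiv0$, so $S\equiv0$ is immediate, but in general $\nabla_TJ=0$ must be proved separately, and I expect it to be the only non-formal step. I would establish it from the explicit form of the connection underlying \eqref{TWT} (cf. (7)-(9) in \cite{BD1}): writing $\nabla$ as the Levi-Civita connection of $h$ plus its contorsion, the condition $\nabla T=0$ determines the contorsion in the Reeb direction, and a computation with the standard contact-metric identities identifies the Webster torsion with $\tau_*=-J\circ P$, where $P:=\tfrac12\mathcal{L}_TJ$ is the symmetric structural operator of the contact metric $(h,J,T)$; this identification is equivalent to $\nabla_TJ=0$. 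Granting it, self-adjointness is a one-line verification, since $P$ is self-adjoint, anticommutes with $J$, and $J$ is skew-adjoint for $h$:
\[h(\tau_*X,Y)=-h(JPX,Y)=h(PX,JY)=h(X,PJY)=-h(X,JPY)=h(X,\tau_*Y).\]
This completes (2).
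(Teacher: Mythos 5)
The paper does not actually prove this lemma --- both parts are quoted from Lemma 1 and Lemma 3 of \cite{BD1} --- so there is no internal argument to compare against. Your part (1) is complete and correct: (a) and (b) read off the axioms \eqref{TWT}, your verification of (c) via $\nabla\theta=0$, $T\lrcorner d\theta=0$ and $\theta(T)\equiv 1$ is sound, and (d) follows from (b), (c) and Corollary \ref{prop2.0f} exactly as you say. Your reduction in part (2) is also correct as far as it goes: the identity $(\mathscr{L}_Th)(X,Y)=h(\tau_*X,Y)+h(X,\tau_*Y)$ and the identity $h(X,(\nabla_TJ)Y)+h(\tau_*X,JY)+h(X,J\tau_*Y)=0$ obtained from $\mathscr{L}_Td\theta=0$ both check out, and together they do show that self-adjointness of $\tau_*$ on $HM$ is equivalent to $(\nabla_TJ)|_{HM}=0$.

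The genuine gap is that this last fact --- $\nabla_TJ=0$, equivalently the identification $\tau_*=-J\circ P$ with $P=\tfrac12\mathscr{L}_TJ$ --- is precisely what you yourself flag as ``the only non-formal step,'' and you do not prove it: you only sketch a route through the explicit Levi-Civita-plus-contorsion form of the Tanno connection and then write ``granting it.'' Your closing one-line verification additionally invokes the self-adjointness of $P$ without proof (a true but nontrivial fact of contact metric geometry). So as written the argument for (2) is conditional on its crux. The gap is closable, and in fact the paper's own machinery closes it in one line: $(\nabla_TJ)X=Q(X,T)$ is the Tanno tensor evaluated in the Reeb direction, and setting $Y=T$ in \eqref{eq4.6} kills the right-hand side because $JT=0$, giving $Q(X,T)=0$ for all $X$ --- this is exactly the identity $Q_{i0}^k=0$ of Proposition \ref{prop4.2}. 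Either import that computation here or carry out the contorsion calculation you allude to; until one of these is done, part (2) is not proved.
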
\label{lem2.1a}
By (c) in Lemma \ref{lem2.1} (1), we can write $\tau_{\ast}(W_{a})=A_{a}^{b}W_{b}$. And we define $\tau^a:=A_b^a\theta^b$.
We also write
$R(W_k,W_l)W_j=\nabla_{W_k}\nabla_{W_l}W_j-\nabla_{W_l}\nabla_{W_k}W_j-\nabla_{[W_k,W_l]}W_j=R_{j\ kl}^{\ s}W_s,$
for the components of the curvature tensor.

Recall that we have the following identities for exterior derivatives
\begin{equation}\label{eq2.15a}
\begin{aligned}
&\phi\wedge\psi(X,Y)=\frac{1}{2}\bigg(\phi(X)\psi(Y)-\psi(X)\phi(Y)\bigg),\\
&X{\lrcorner}(\phi\wedge\psi)=2(\phi\wedge\psi)(X,\cdot)=\phi(X)\psi-\psi(X)\phi,\\
&2(d\phi)(X,Y)=X(\phi(Y))-Y(\phi(X))-\phi([X,Y])=(\nabla_X\phi)Y-(\nabla_Y\phi)X+\phi(\tau(X,Y)),
\end{aligned}
\end{equation}
where $\phi$ and $\psi$ is any $1$-form. The Lie derivation of a differential form $\phi$ is given by
\begin{equation}\label{eq2.15b}
\mathscr{L}_X\phi=X\lrcorner{d\phi}+d(X\lrcorner\phi).
\end{equation}
Note that here we use the definition of the exterior derivative with a factor $\frac{1}{2}$. The reason we use this definition is that the $n$-form defined is this way has the property that $dx^1{\wedge}\cdots{\wedge}dx^n$ equals to the Lebesgue measure on $\mathbb{R}^n$. We may refer to Section 4 in \cite{BG1} for these identities.
\begin{proposition}\label{prop2.0e}
With $J_{ab}$, $A_{ab}$, $R_{a\ cd}^{\ b}$ defined as above, we have the following structure equations.
\begin{equation}\label{SE}
\begin{aligned}
&d\theta=J_{\alpha\beta}\theta^{\alpha}\wedge\theta^{\beta}+2J_{\alpha\bar{\beta}}\theta^{\alpha}\wedge\theta^{\bar{\beta}}
+J_{\bar{\alpha}\bar{\beta}}\theta^{\bar{\alpha}}\wedge\theta^{\bar{\beta}},\\
&d\theta^a=\theta^b\wedge{\omega_b^a}+\theta\wedge{\tau^a}=\theta^b\wedge{\omega_b^a}+A_b^a\theta\wedge\theta^b,\\
&d\omega_a^b-\omega_a^c\wedge\omega_c^b=R_{a\ \lambda\bar{\mu}}^{\ b}\theta^{\lambda}\wedge\theta^{\bar{\mu}}+\frac{1}{2}R_{a\ \lambda\mu}^{\ b}\theta^{\lambda}\wedge\theta^{\mu}+\frac{1}{2}R_{a\ \bar{\lambda}\bar{\mu}}^{\ b}\theta^{\bar{\lambda}}\wedge\theta^{{\bar\mu}}+R_{a\ 0\bar{\mu}}^{\ b}\theta\wedge\theta^{\bar{\mu}}-R_{a\ \lambda0}^{\ b}\theta\wedge\theta^{\lambda},\\
&R(X,Y)W_a=2\big(d\omega_a^b-\omega_a^c\wedge\omega_c^b\big)(X,Y)W_b.
\end{aligned}
\end{equation}
\end{proposition}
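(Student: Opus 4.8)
The four identities in \eqref{SE} are the Cartan structure equations for the TWT connection, and the plan is to derive each by pairing both sides with the frame $\{W_j\}$ and invoking the three exterior-derivative identities of \eqref{eq2.15a} together with Proposition \ref{prop2.1}; I would prove the fourth (frame) identity before the third, since the latter follows from it. For the first equation I would start from $d\theta(X,Y)=h(X,JY)$ in \eqref{eq2.1} and $h(W_a,JW_b)=J_{ab}$ from \eqref{eq2.16b}, giving $d\theta(W_a,W_b)=J_{ab}$, while $T\lrcorner d\theta=0$ makes every component involving $W_0=T$ vanish. Using the first identity of \eqref{eq2.15a} one verifies that the ansatz $d\theta=J_{ab}\theta^a\wedge\theta^b$ (summed over horizontal $a,b$) reproduces $d\theta(W_c,W_d)=\frac12(J_{cd}-J_{dc})=J_{cd}$ via $J_{ab}=-J_{ba}$. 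It then remains to split the range $\{1,\dots,2n\}$ into holomorphic indices $\{\alpha\}$ and antiholomorphic indices $\{\bar\alpha\}$; the two mixed blocks coincide because $J_{\bar\alpha\beta}\theta^{\bar\alpha}\wedge\theta^\beta=J_{\alpha\bar\beta}\theta^\alpha\wedge\theta^{\bar\beta}$ by antisymmetry, which is the source of the factor $2$ in the cross term.

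For the second equation I would use the third identity of \eqref{eq2.15a}, namely $2d\theta^a(X,Y)=(\nabla_X\theta^a)Y-(\nabla_Y\theta^a)X+\theta^a(\tau(X,Y))$. Differentiating $\theta^a(W_b)=\delta_b^a$ yields $\nabla\theta^a=-\omega_b^a\otimes\theta^b$, and a short computation shows the first two terms assemble into $2(\theta^b\wedge\omega_b^a)(X,Y)$. The torsion contribution I would identify with $\theta\wedge\tau^a$: for horizontal arguments $\tau(W_a,W_b)=2d\theta(W_a,W_b)T$ is a multiple of $T$ and is therefore annihilated by $\theta^a$, so it contributes nothing, whereas $\tau(T,W_b)=\tau_{\ast}(W_b)=A_b^cW_c$ produces precisely $\theta\wedge\tau^a$ with $\tau^a=A_b^a\theta^b$. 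The structural point worth stressing is that the horizontal torsion is purely vertical, so no $\theta^b\wedge\theta^c$ term survives in $d\theta^a$.

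For the fourth equation I would expand $R(X,Y)W_a=\nabla_X\nabla_YW_a-\nabla_Y\nabla_XW_a-\nabla_{[X,Y]}W_a$ using $\nabla W_a=\omega_a^b\otimes W_b$; the $W_b$-component collects into $[X\omega_a^b(Y)-Y\omega_a^b(X)-\omega_a^b([X,Y])]-[\omega_a^c(X)\omega_c^b(Y)-\omega_a^c(Y)\omega_c^b(X)]$, which in the normalized calculus of \eqref{eq2.15a} is exactly $2d\omega_a^b(X,Y)-2(\omega_a^c\wedge\omega_c^b)(X,Y)$, giving the asserted factor $2$. Writing $\Omega_a^b:=d\omega_a^b-\omega_a^c\wedge\omega_c^b$, the fourth equation then reads $\Omega_a^b(W_k,W_l)=\frac12 R_{a\ kl}^{\ b}$, so $\Omega_a^b=\frac12 R_{a\ kl}^{\ b}\theta^k\wedge\theta^l$ summed over all $k,l\in\{0,\dots,2n\}$; splitting this sum into the holomorphic, antiholomorphic, mixed, and Reeb blocks and combining paired terms through the antisymmetry $R_{a\ kl}^{\ b}=-R_{a\ lk}^{\ b}$ reproduces the third equation term by term, exactly as in the first equation.

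The computations are routine once the conventions are pinned down, so I expect the only genuine pitfall to be carrying the two factors of $\frac12$ — one in the wedge product and one in the exterior derivative of \eqref{eq2.15a} — consistently; this is precisely what generates the factor $2$ in the second through fourth equations and in the cross term of the first. The role of Proposition \ref{prop2.1}, namely $\Gamma_{i0}^k=0$, $\Gamma_{ij}^0=0$ and $J^k_{\ 0}=J^0_{\ j}=0$, is to guarantee that the $\theta^0=\theta$ rows and columns drop out where claimed, so that neither $d\theta^a$ nor $\Omega_a^b$ acquires spurious components along $T$.
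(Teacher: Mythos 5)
Your proposal is correct and follows essentially the same route as the paper: it derives the first identity from $d\theta(X,Y)=h(X,JY)$ and $T\lrcorner d\theta=0$, the second from the third identity of \eqref{eq2.15a} together with the torsion conditions in \eqref{TWT}, and the fourth by direct expansion of $R(X,Y)W_a$, with the third obtained by evaluating the fourth on the frame. The only cosmetic difference is that you package the evaluation step as $\Omega_a^b=\tfrac12 R_{a\ kl}^{\ b}\theta^k\wedge\theta^l$ and then split into blocks, which is exactly what the paper does implicitly.
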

\begin{proof}
By \eqref{eq2.1} and \eqref{eq2.16b}, we have $d\theta(W_{\alpha},W_{\beta})=h(W_{\alpha},JW_{\beta})=J_{\alpha\beta}$,
$d\theta(W_{\alpha},W_{\bar{\beta}})=J_{\alpha\bar{\beta}}$,
$d\theta(W_{\bar{\alpha}},W_{\bar{\beta}})=J_{\bar{\alpha}\bar{\beta}}$. We also have $d\theta(T,\cdot)\equiv0$. So the first identity in \eqref{SE} follows.

Substituting $\phi=\theta^{a}$, $X=W_c$ and $Y=W_d$ in \eqref{eq2.15a}, by
$(\nabla_X\phi)Y=X(\phi(Y))-\phi(\nabla_XY)$
for any $1$-forms $\phi$, we get
\begin{align}
\notag 2d\theta^a(W_c,W_d)&=(\nabla_{W_c}\theta^a)W_d-(\nabla_{W_d}\theta^a)W_c+\theta^a(\tau(W_c,W_d))=-\theta^a(\nabla_{W_c}W_d)+\theta^a(\nabla_{W_d}W_c)\\
\notag &=-\Gamma_{cd}^a+\Gamma_{dc}^a=2(\theta^b\wedge{\omega_b^a}+A_b^a\theta\wedge\theta^b)(W_c,W_d),
\end{align}
by \eqref{TWT} and \eqref{eq2.15a}. And similarly we get
\begin{align}
\notag 2d\theta^a(T,W_d)&=(\nabla_T\theta^a)W_d-(\nabla_{W_d}\theta^a)T+\theta^a(\tau(T,W_d))=-\theta^a({\nabla_TW_d})+A_d^a\\
\notag &=-\Gamma_{0d}^a+A_d^a=2\bigg(\theta^b\wedge{\omega_b^a}+A_b^a\theta\wedge\theta^b\bigg)(T,W_d).
\end{align}
So the second identity in \eqref{SE} holds.

For the fourth identity of \eqref{SE}, we have
\begin{align}
\notag R(X,Y)&W_a=\nabla_X\nabla_YW_a-\nabla_Y\nabla_XW_a-\nabla_{[X,Y]}W_a\\
\notag &=\nabla_X(\omega_a^b(Y)W_b)-\nabla_Y(\omega_a^b(X)W_b)-\omega_a^b([X,Y])W_b\\
\notag &=X(\omega_a^b(Y))W_b-Y(\omega_a^b(X))W_b-\omega_a^b([X,Y])W_b+\omega_a^b(Y)\omega_b^c(X)W_c-\omega_a^b(X)\omega_b^c(Y)W_c\\
\notag &=2\big(d\omega_a^b-\omega_a^c\wedge\omega_c^b\big)(X,Y)W_b,
\end{align}
by the definition of curvatures and \eqref{eq2.15a}. The third identity of \eqref{SE} follows by applying both sides to $X=W_j$, $Y=W_k$ in the fouth identity of \eqref{SE}.
\end{proof}
\begin{remark}
Note that the structure equations (13), (14) and (39) in \cite{BD1} are the special case of \eqref{SE} with respect to a $T^{(1,0)}M$-frame.
\end{remark}
Consequently, we have
\begin{align}\label{eq2.14}
\notag R_{a\ cd}^{\ b}&=2(d\omega_a^b)(W_c,W_d)-2\omega_a^e\wedge\omega_e^b(W_c,W_d)\\
\notag &=(\nabla_{W_c}\omega_a^b)(W_d)-(\nabla_{W_d}\omega_a^b)(W_c)+\omega_a^b(\tau(W_c,W_d))-\Gamma_{ca}^e\Gamma_{de}^b+\Gamma_{da}^e\Gamma_{ce}^b\\
\notag &=W_c\Gamma_{da}^b-W_d\Gamma_{ca}^b-\omega_a^b(\nabla_{W_c}W_d)+\omega_a^b(\nabla_{W_d}W_c)+\omega_a^b(2h(W_c,JW_d)T)-\Gamma_{ca}^e\Gamma_{de}^b+\Gamma_{da}^e\Gamma_{ce}^b\\
&=W_c\Gamma_{da}^b-W_d\Gamma_{ca}^b-\Gamma_{cd}^e\Gamma_{ea}^b+\Gamma_{dc}^e\Gamma_{ea}^b-\Gamma_{ca}^e\Gamma_{de}^b+\Gamma_{da}^e\Gamma_{ce}^b+2\Gamma_{0a}^bJ_{cd},
\end{align}
by \eqref{TWT}, \eqref{eq2.15a} and the fourth identity in \eqref{SE}.
\subsection{The special frame and the normal coordinates}
$\mathscr{H}^n=\mathbb{C}^n\times{\mathbb{R}}$ with coordinates $x=(z,t)$ has the structure of the Heisenberg group. The Heisenberg norm is $|x|=(|z|^4+t^2)^{1/4}$. Here we choose the contact form $\Theta=dt-iz^{\alpha}dz^{\bar{\alpha}}+iz^{\bar{\alpha}}dz^{\alpha}$ on $\mathscr{H}^n$ and set $\Theta^{\alpha}=dz^{\alpha}$. Their dual are
$${\quad}Z_0=\frac{\partial}{\partial{t}},\qquad
Z_{\alpha}=\frac{\partial}{\partial{z}^{\alpha}}-iz^{\bar{\alpha}}\frac{\partial}{\partial{t}}
.$$

On $\mathscr{H}^n$, the orbit of the the parabolic dilation is a parabola through $0\in\mathscr{H}^n$. Recall that in the Riemannian geometry, the classical exponential map sends radial lines in the tangent space to geodesics. Similarly, in the CR geometry, Jerison and Lee \cite{JL1} defined the parabolic exponential map, which sends a parabola in the tangent space to a parabolic geodesic. In a contact Riemannian manifold, the parabolic exponential map can be defined in the same way as in the CR case. A smooth curve $\gamma(s)$ in a contact Riemannian manifold $M$ is a {\it parabolic geodesic} if it satisfies ODE:
\begin{equation}\label{eq2.9}
\nabla_{\dot{\gamma}}\dot{\gamma}=2cT,
\end{equation}
for some $c\in\mathbb{R}$, where $\nabla$ is the TWT connection and $T$ is the Reeb vector field. We have the following proposition.
  \begin{proposition}\label{prop2.0b}
  Let $(M,\theta,h,J)$ be a contact Riemannian manifold and $q\in{M}$. For any $W\in{H_qM}$ and $c\in\mathbb{R}$, let $\gamma=\gamma_{W,c}$ denote the solution to the ODE \eqref{eq2.9} with initial conditions $\gamma(0)=q$ and $\dot{\gamma}(0)=W$. We call $\gamma$ the parabolic geodesic determined by $W$ and $c$. Define the parabolic exponential map $\Psi:T_qM\to{M}$ by \[\Psi(W+cT)=\gamma_{W,c}(1).\]
  Then $\Psi$ maps a neighborhood of $0$ in $T_qM$ diffeomorphically to a neighborhood of $q$ in $M$, and sends $sW+s^2cT$ to $\gamma_{W,c}(s)$.
  \end{proposition}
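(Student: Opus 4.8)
The plan is to treat $\Psi$ exactly as one treats the Riemannian or CR exponential map: first establish that it is a smooth map defined near $0$, then read off the parabolic homogeneity from the scaling symmetry of \eqref{eq2.9}, and finally show $d\Psi_0=\mathrm{Id}$ so that the inverse function theorem gives the local diffeomorphism. Smoothness is immediate: the right-hand side of \eqref{eq2.9} and the connection coefficients $\Gamma_{ij}^k$ are smooth, so by the standard theory of ordinary differential equations the solution $\gamma_{W,c}(r)$ exists for $r$ in a fixed interval and depends smoothly on $(W,c,r)$ whenever $(W,c)$ lies in a bounded neighborhood of $0$. Under the identification $T_qM\cong H_qM\oplus\mathbb{R}T$, $W+cT\mapsto(W,c)$, this makes $\Psi(W+cT)=\gamma_{W,c}(1)$ smooth near $0$.

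For the homogeneity, I would set $\tilde\gamma(r):=\gamma_{W,c}(sr)$ and compute $\dot{\tilde\gamma}(r)=s\,\dot\gamma_{W,c}(sr)$, whence $\nabla_{\dot{\tilde\gamma}}\dot{\tilde\gamma}=s^2(\nabla_{\dot\gamma}\dot\gamma)(sr)=2(s^2c)T$, while $\tilde\gamma(0)=q$ and $\dot{\tilde\gamma}(0)=sW$. Thus $\tilde\gamma$ solves \eqref{eq2.9} with data $(sW,s^2c)$, and uniqueness forces $\gamma_{sW,s^2c}(r)=\gamma_{W,c}(sr)$. Evaluating at $r=1$ gives $\Psi(sW+s^2cT)=\gamma_{sW,s^2c}(1)=\gamma_{W,c}(s)$, which is the asserted parabolic scaling property.

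To compute $d\Psi_0$ I would differentiate the family $s\mapsto\gamma_{sW,sc}(r)$ at $s=0$. The key point is that the base curve is constant: $\gamma_{0,0}$ solves \eqref{eq2.9} with zero initial velocity and zero right-hand side, so by uniqueness $\gamma_{0,0}(r)\equiv q$. Writing everything in a coordinate chart centered at $q$, the Christoffel terms of \eqref{eq2.9} are quadratic in the velocity $\partial_r\gamma$, which vanishes along the constant base curve; hence they drop out to first order in $s$, and the variation field $V(r):=\frac{d}{ds}\big|_{0}\gamma_{sW,sc}(r)$ is governed by $\ddot V=2cT$ with $V(0)=0$ and $\dot V(0)=W$. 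Integrating gives $V(r)=rW+r^2cT$, so $d\Psi_0(W+cT)=V(1)=W+cT$, i.e. $d\Psi_0=\mathrm{Id}$ on $T_qM$, and the inverse function theorem completes the proof.

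The main obstacle is conceptual: the parabolic scaling in \eqref{eq2.9} is anisotropic---horizontal directions scale like $s$ but the Reeb direction like $s^2$---so one might fear that $d\Psi_0$ degenerates along $T$. The resolution is exactly the second-order appearance of the forcing term $2cT$: since $\gamma_{0,0}$ is constant, the Reeb data enters the variation equation through $\ddot V$ rather than $\dot V$ and still contributes $cT$ to $V(1)$. One must also keep the two scalings separate---$\Psi$ is defined through the ordinary linear scaling of its argument, so that evaluating $d\Psi_0$ on $\gamma_{sW,sc}$ is legitimate, while the parabolic scaling $sW+s^2cT$ is the distinct homogeneity recorded above.
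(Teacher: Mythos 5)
Your proof is correct and follows essentially the same route as the paper, which simply writes the ODE \eqref{eq2.9a} in coordinates and appeals to uniqueness and smooth dependence on parameters (citing Theorem 2.1 of Jerison--Lee). You fill in the details the paper leaves implicit --- the reparametrization argument for $\gamma_{sW,s^2c}(r)=\gamma_{W,c}(sr)$ and the computation $d\Psi_0=\mathrm{Id}$ via the variation equation $\ddot V=2cT$ --- and both steps are carried out correctly.
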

  \begin{proof}
  The proof is the same as that in the CR case (Theorem 2.1 in \cite{JL1}) since the integrability of $J$ is not used. Choosing a coordinate $\{x^i\}$ centered at $q$, we let $\Gamma_{ij}^k$ denote the Christoffel symbols of the TWT connection in these coordinates, i.e. $\nabla_{\frac{\partial}{\partial{x}^i}}{\frac{\partial}{\partial{x}^j}}=\Gamma_{ij}^k{\frac{\partial}{\partial{x}^k}}$.
  ODE \eqref{eq2.9} can be written as
  \begin{equation}\label{eq2.9a}
  \ddot{\gamma}^k(s)=-\Gamma_{ij}^k(\gamma(s))\dot\gamma^i(s)\dot\gamma^j(s)+2cT^k(\gamma(s)),
  \end{equation}
  where $\dot{\gamma}^k(s)=\frac{dx^k}{ds}(\gamma(s))$, $\ddot{\gamma}^k(s)=\frac{d^2x^k}{ds^2}(\gamma(s))$ and $T^k=dx^k(T)$ in these coordinates. This proposition follows from the uniqueness of the solution to this ODEs and smooth dependence of the solution on the parameters.
  \end{proof}
  A vector field $X\in{M}$ is called {\it parallel} along a curve $\gamma(s)$ if it satisfies
$$\nabla_{\dot{\gamma}}X=0.$$
  \begin{proposition}\label{prop2.0c}
  Suppose $X$ is a vector field defined in a neighborhood of $q$ in $M$ which is parallel along each curve $\gamma_{W,c}$. Then $X$ is smooth near $q$.
  \end{proposition}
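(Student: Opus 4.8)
The plan is to exhibit $X$ near $q$ as the pull-back, under the parabolic exponential map $\Psi$, of the solution of a single first-order ODE system whose right-hand side and initial data depend smoothly on parameters; smoothness of $X$ then follows from the standard theorem on smooth dependence of ODE solutions on initial conditions and parameters, exactly in the spirit of Proposition \ref{prop2.0b}.

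First I would fix a coordinate chart $\{x^i\}$ centered at $q$ and rewrite the parallelism condition $\nabla_{\dot\gamma}X=0$ along a curve $\gamma$ as the linear ODE
\begin{equation*}
\dot{X}^k(s)=-\Gamma_{ij}^k(\gamma(s))\dot\gamma^i(s)X^j(s),
\end{equation*}
where $X=X^k\partial/\partial x^k$. Adjoining this to the parabolic geodesic equation \eqref{eq2.9a} and introducing $v^k=\dot\gamma^k$, I obtain a first-order system for the triple $(\gamma,v,X)\in\mathbb{R}^{3(2n+1)}$, namely
\begin{equation*}
\dot\gamma^k=v^k,\qquad\dot v^k=-\Gamma_{ij}^k(\gamma)v^iv^j+2cT^k(\gamma),\qquad\dot X^k=-\Gamma_{ij}^k(\gamma)v^iX^j,
\end{equation*}
with initial data $\gamma(0)=0$, $v(0)=W$, $X(0)=X(q)$ and real parameter $c$. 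The right-hand side is smooth in $(\gamma,v,X)$ and in $c$, and the initial data depend smoothly (indeed linearly) on $W$.

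Next I would invoke smooth dependence of the solution on initial conditions and parameters to conclude that the map $(s,W,c)\mapsto X(s;W,c)$ is smooth. Because $X$ is by hypothesis parallel along each $\gamma_{W,c}$ and $X(\gamma_{W,c}(0))=X(q)$, uniqueness for the transport equation forces $X(\gamma_{W,c}(s))=X(s;W,c)$; in particular $X(\Psi(W+cT))=X(1;W,c)$. Finally, since $\Psi$ is a diffeomorphism of a neighborhood of $0$ in $T_qM$ onto a neighborhood of $q$ by Proposition \ref{prop2.0b}, the decomposition $\Psi^{-1}(p)=W(p)+c(p)T$ produces smooth functions $W(p),c(p)$, so that $X^k(p)=X^k(1;W(p),c(p))$ is a composition of smooth maps. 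Hence $X$ is smooth near $q$.

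The routine but essential point, and the only place where care is genuinely required, is the passage from the two ODEs to a single first-order system to which the smooth-dependence theorem applies verbatim; once the transport equation, which is linear in $X$ and therefore exists on the same $s$-interval as the geodesic itself, is folded into the geodesic system, the remaining steps are formal. I expect no deep obstacle, only the bookkeeping needed to ensure that evaluating the flow at $s=1$ and composing with the smooth inverse $\Psi^{-1}$ returns the given vector field rather than merely some parallel field.
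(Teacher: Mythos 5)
Your proposal is correct and follows essentially the same route as the paper: express $\nabla_{\dot\gamma}X=0$ in coordinates as a linear ODE along each $\gamma_{W,c}$, invoke smooth dependence of ODE solutions on initial data and parameters, and use that $\Psi$ is a diffeomorphism to transfer smoothness in $(W,c)$ to smoothness in the point $p$. The only difference is presentational — you fold the transport equation and the geodesic equation into one autonomous first-order system, whereas the paper treats $\gamma_{W,c}(s)$ as an already-smooth function of $(s,W,c)$ from Proposition \ref{prop2.0b} and writes the transport equation alone; both are standard and equivalent.
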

This proposition can be proved in the same way as Lemma 2.2 in \cite{JL1} since the integrability of $J$ is not used. Choosing coordinates $\{x^i\}$ centered at $q$, we can write $X=X^j\frac{\partial}{\partial{x}^j}$ for some functions $X_j$. For each curve $\gamma_{W,c}(s)$, we write $\xi^j(s,W,c)=X^j(\gamma_{W,c}(s))$. Then the differential equation $\nabla_{\dot\gamma(s)}X=0$ becomes:
  $$\frac{\partial}{\partial{s}}\xi^j(s,W,c)=-\Gamma_{kl}^j(\gamma_{W,c}(s))\dot\gamma_{W,c}^k(s)\xi^l(s,W,c),$$
  with initial condition $\xi^j(0,W,c)=X^j(0)$. $X$ is smooth since the solutions to this ODEs depend smoothly on parameters.

  As introduced before, the complexification of the tangent space $\mathbb{C}T_qM$ at point $q$ has a unique subbuddle $T_q^{(1,0)}M$ such that $JX=iX$ for any $X\in{T_q^{(1,0)}M}$. Set $T_q^{(0,1)}M=\overline{T_q^{(1,0)}M}$, and for any $X\in{T_q^{(0,1)}M}$, $JX=-iX$. Furthermore, we choose an orthonormal basis of the horizontal space with respect to the metric $h$ at point $q$ as in the following lemma.
\begin{lemma}\label{lem2.1b}
  We can choose $W_{\alpha;q}\in{T_q^{(1,0)}M}$ and $W_{\bar{\alpha};q}:=\overline{W_{\alpha;q}}\in{T_q^{(0,1)}M}$, such that
  $$h(W_{\alpha;q},W_{\bar{\beta};q})=\delta_{\alpha\bar{\beta}},{\quad}h(W_{\alpha;q},W_{\beta;q})=0.$$
  \end{lemma}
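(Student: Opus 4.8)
The plan is to reduce the statement to the existence of an orthonormal basis for a positive-definite Hermitian inner product on the complex vector space $T_q^{(1,0)}M$, which is then produced by Gram--Schmidt.

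First I would record the linear structure on the horizontal space $H_qM$. Since $JT=0$ and $J^2=-\mathrm{Id}+\theta\otimes T$, the restriction of $J$ to $H_qM$ satisfies $J^2=-\mathrm{Id}$, so $(H_qM,J)$ is a complex vector space of complex dimension $n$. The compatibility relation $h(X,Y)=h(JX,JY)+\theta(X)\theta(Y)$ of \eqref{eq2.3a} gives $h(JX,JY)=h(X,Y)$ for horizontal $X,Y$, while $h(X,JX)=d\theta(X,X)=0$ by \eqref{eq2.1}; thus $(H_qM,h,J)$ is a Hermitian vector space. The map $X\mapsto\tfrac12(X-iJX)$ is a $\mathbb{C}$-linear isomorphism from $(H_qM,J)$ onto $T_q^{(1,0)}M$ (one checks $J(X-iJX)=i(X-iJX)$), so every element of $T_q^{(1,0)}M$ arises this way.

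Next I would define on $T_q^{(1,0)}M$ the sesquilinear form $\langle Z,W\rangle:=h(Z,\overline{W})$ and verify it is a positive-definite Hermitian inner product. Conjugate-linearity in the second slot and $\mathbb{C}$-linearity in the first are immediate from the $\mathbb{C}$-bilinear extension \eqref{eq2.8a}; Hermitian symmetry $\overline{\langle Z,W\rangle}=\langle W,Z\rangle$ follows from the conjugation identity $\overline{h(Z_1,Z_2)}=h(\overline{Z_1},\overline{Z_2})$ of Corollary~\ref{prop2.0f} together with the symmetry of the bilinear $h$. Positive-definiteness is the one point requiring computation: writing $Z=\tfrac12(X-iJX)$, so that $\overline{Z}=\tfrac12(X+iJX)$, one expands $\langle Z,Z\rangle=\tfrac14 h(X-iJX,X+iJX)$ and uses $h(X,JX)=0$ and $h(JX,JX)=h(X,X)$ to obtain $\langle Z,Z\rangle=\tfrac12 h(X,X)\ge 0$, with equality only when $X=0$, i.e. $Z=0$.

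Finally, applying Gram--Schmidt to this Hermitian inner product yields a basis $\{W_{\alpha;q}\}_{\alpha=1}^{n}$ of $T_q^{(1,0)}M$ with $\langle W_{\alpha;q},W_{\beta;q}\rangle=\delta_{\alpha\beta}$, that is $h(W_{\alpha;q},W_{\bar\beta;q})=\delta_{\alpha\bar\beta}$ once we set $W_{\bar\beta;q}:=\overline{W_{\beta;q}}$. The remaining identity $h(W_{\alpha;q},W_{\beta;q})=0$ holds automatically for any two $(1,0)$-vectors: since $\theta$ vanishes on them, the $\mathbb{C}$-bilinear extension of $h(X,Y)=h(JX,JY)+\theta(X)\theta(Y)$ gives $h(Z,W)=h(iZ,iW)=-h(Z,W)$, forcing $h(Z,W)=0$. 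I expect no genuine obstacle here; the only care needed is bookkeeping the complex-bilinear versus Hermitian conventions, ensuring that the correct form $h(Z,\overline{W})$ (rather than $h(Z,W)$) is the positive-definite one.
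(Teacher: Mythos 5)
Your proposal is correct and follows essentially the same route as the paper: the paper's inductive construction of a real orthogonal basis $X_1,JX_1,\dots,X_n,JX_n$ with $h(X_a,X_b)=2\delta_{ab}$ and the formula $W_{\alpha;q}=\tfrac12(X_\alpha-iJX_\alpha)$ is exactly the real-side description of your Gram--Schmidt orthonormalization for the Hermitian form $h(Z,\overline W)$ on $T_q^{(1,0)}M$ (note your normalization $\langle Z,Z\rangle=\tfrac12 h(X,X)$ matches the paper's choice $h(X_a,X_a)=2$). The only cosmetic difference is that you obtain $h(W_{\alpha;q},W_{\beta;q})=0$ conceptually from $J$-invariance of $h$ on the complexification, whereas the paper gets it by direct expansion in the real basis; both are fine.
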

  \begin{proof}
  Choose a real vector $\{X_1\}$ on $H_qM$ such that $h(X_1,X_1)=2$ and set $X_{n+1}:=JX_1$. Then by
  $h(X_1,X_{n+1})=h(X_1,JX_1)=d\theta(X_1,X_1)=0$
  and $h(X_{n+1},X_{n+1})=h(JX_1,JX_1)=h(X_1,X_1)=2$,
  $X_{n+1}$ is orthogonal to $X_1$. We can choose $X_2$ orthogonal to $span\{X_1,JX_1\}$, and define $X_{n+2}:=JX_2$. Repeating the procedure, we can choose an orthogonal basis $X_1,\cdots,X_{2n}$ with $h(X_a,X_b)=2\delta_{ab}$ and $JX_{\alpha}=X_{\alpha+n}$.
  Now define
  \begin{equation}\label{eq2.16a}
  W_{\alpha;q}:=\frac{1}{2}(X_{\alpha}-iX_{\alpha+n})=\frac{1}{2}(X_{\alpha}-iJX_{\alpha}),
  {\quad}W_{\bar{\alpha};q}=\overline{W_{\alpha;q}}.
  \end{equation}
  We see that $W_{\alpha;q}\in{T_q^{(1,0)}M}$ and $W_{\bar{\alpha};q}\in{T_q^{(0,1)}M}$. Then by \eqref{eq2.16a} and $\mathbb{C}$-linear extension \eqref{eq2.8a}, we have $h(W_{\alpha;q},W_{\bar{\beta};q})=\frac{1}{4}h(X_{\alpha}-iX_{\alpha+n},X_{\beta}+iX_{\beta+n})
  =\frac{1}{4}(h(X_{\alpha},X_{\beta})+h(X_{\alpha+n},X_{\beta+n}))=\delta_{\alpha\bar{\beta}}$ and $h(W_{\alpha;q},W_{\beta;q})=0$.
  \end{proof}
  We extend $\{W_{\alpha;q}\}$ by parallel translation along each parabola $\gamma_{W,c}$, i.e. $\nabla_{\dot{\gamma}}W_{\alpha}=0$. Let $W_{\bar{\alpha}}=\overline{W_{\alpha}}$, so $W_{\bar{\alpha}}$ is also parallel along $\gamma_{W,c}$. $T$ is automatically parallel along each curve $\gamma_{W,c}$ by $\nabla{T}=0$ in \eqref{TWT}.
  Since every point in some punctured neighborhood $U$ near $q$ is on a unique $\gamma_{W,c}$,  the frame $\{W_{\alpha},W_{\bar{\alpha}},T\}$ is well-defined and smooth near $q$ by Proposition \ref{prop2.0c}. We call such a frame a {\it special frame}.

  Let $\{\theta^{\beta},\theta^{\bar{\beta}},\theta\}$ denote the coframe dual to $\{W_{\alpha},W_{\bar{\alpha}},T\}$, i.e.,
  $\theta^{\beta}(W_{\alpha})=\delta_{\alpha}^{\beta}$, $\theta^{\beta}(W_{\bar{\alpha}})=\theta^{\beta}(T)=0$, and
  $\theta(W_{\alpha})=\theta(W_{\bar{\alpha}})=0$, $\theta(T)=1$. From now on we denote
  $$W_0:=T,{\quad}\theta^0:=\theta.$$
Since $\nabla{T}=0$, we have $\nabla_{\dot\gamma}W_k=0$. So $0=\nabla_{\dot\gamma}(\theta^j(W_k))=(\nabla_{\dot\gamma}\theta^j)(W_k)+\theta^j(\nabla_{\dot\gamma}W_k)=(\nabla_{\dot\gamma}\theta^j)(W_k)$ holds for each geodesic $\gamma(s)$. Namely, $\nabla_{\dot\gamma}\theta^j=0$ along each $\gamma$, so $\{\theta^{\alpha},\theta^{\bar{\alpha}},\theta\}$ is also parallel along each $\gamma$. We call such a coframe a {\it special coframe}.  Define an isomorphism $\iota:T_qM\to{\mathscr{H}^n}$ by $\iota(V)=(\theta^{\alpha}(V),\theta^{\bar{\alpha}}(V),\theta(V))=(z^{\alpha},z^{\bar{\alpha}},t)$, which determines a coordinate chart $\iota\circ{\Psi^{-1}}$ in a neighborhood of $q$.
  We call this chart {\it the normal coordinates} determined by $\{W_{\alpha},W_{\bar{\alpha}},T\}$.
\begin{remark}
(1) In the  CR case, Jerison and Lee chose a $T^{(1,0)}M$-frame at $q$ with norm $h(W_{\alpha;q},W_{\bar{\beta};q})=2\delta_{\alpha\bar{\beta}}$, $h(W_{\alpha;q},W_{\beta;q})=0$ to construct a special frame. This is because they used the structure equation $d\theta=ih_{\alpha\bar{\beta}}\theta^{\alpha}\wedge\theta^{\bar{\beta}}$ (cf. p. 307 in \cite{JL1}). But here in contact Riemannian case, we follow \cite{BD1} to use the structure equation $d\theta=-2ih_{\alpha\bar{\beta}}\theta^{\alpha}\wedge\theta^{\bar{\beta}}$ (cf. (13) in \cite{BD1}) for a $T^{(1,0)}M$-frame at $q$. That's why we choose a $T^{(1,0)}M$-frame at $q$ with norm as Lemma \ref{lem2.1b} to construct a special frame.

(2) By Corollary \ref{prop2.0f}, since $W_{\bar{\alpha}}=\overline{W_{\alpha}}$ holds for our special frame, the complex conjugation can be reflected in the indices of the components of $\omega_a^b$, $h_{ab}$, $J_{\ a}^b$, $A_{ab}$, $R_{abcd}$ and their covariant derivations, e.g.,
$$\overline{\omega_{\alpha}^{\bar{\beta}}}=\omega_{\bar{\alpha}}^{\beta},{\quad}\overline{J_{\ \alpha}^{\beta}}=J_{\ \bar{\alpha}}^{\bar{\beta}},{\quad}\overline{h_{\alpha\bar{\beta}}}=h_{\bar{\alpha}\beta}.$$
\end{remark}
\begin{proposition}\label{prop2.0d}
  A special frame $\{W_j\}$ is parallel along each parabolic geodesic and satisfies
  \begin{equation}\label{eq2.18}
  \begin{aligned}
  &h_{\alpha\bar{\beta}}=\delta_{\alpha\bar{\beta}},{\quad}h_{\alpha\beta}=0,\\
  &J_{\ \alpha}^{\beta}(q)=i\delta_{\alpha}^{\beta},{\quad}J_{\ \alpha}^{\bar\beta}(q)=0,\\
  &J_{\alpha\bar{\beta}}(q)=-i\delta_{\alpha\bar{\beta}},{\quad}J_{\alpha\beta}(q)=0,{\quad}J_{\ \alpha}^{\bar{\beta}}=-J_{\ \beta}^{\bar{\alpha}}.
  \end{aligned}
  \end{equation}
  \end{proposition}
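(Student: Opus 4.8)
The plan is to split the statement into three groups of assertions handled by different mechanisms: the parallelism of the frame, the metric identities (which propagate to the whole normal neighborhood), and the identities for $J$ (which in general are valid only at the base point $q$, except for one antisymmetry relation that is purely algebraic and hence global). First I would record the parallelism. By construction $W_\alpha$ is obtained by parallel translation along each parabolic geodesic $\gamma_{W,c}$, so $\nabla_{\dot\gamma}W_\alpha=0$; since $\dot\gamma$ is real and $\nabla$ commutes with complex conjugation by Corollary \ref{prop2.0f}, the conjugate frame vector $W_{\bar\alpha}=\overline{W_\alpha}$ satisfies $\nabla_{\dot\gamma}W_{\bar\alpha}=\overline{\nabla_{\dot\gamma}W_\alpha}=0$; and $W_0=T$ is parallel because $\nabla T=0$ in \eqref{TWT}. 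Thus the full frame $\{W_j\}$ is parallel along every parabolic geodesic.

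Next I would deduce the metric identities, which I claim hold on the entire normal neighborhood $U$. Because $\nabla h=0$ and the frame is parallel, each coefficient $h_{ab}=h(W_a,W_b)$ is constant along every $\gamma_{W,c}$, since $\dot\gamma\big(h(W_a,W_b)\big)=(\nabla_{\dot\gamma}h)(W_a,W_b)+h(\nabla_{\dot\gamma}W_a,W_b)+h(W_a,\nabla_{\dot\gamma}W_b)=0$. As every point of $U$ lies on a unique geodesic $\gamma_{W,c}$ issuing from $q$, the value of $h_{ab}$ equals its value at $q$, which is prescribed by Lemma \ref{lem2.1b}. Hence $h_{\alpha\bar\beta}\equiv\delta_{\alpha\bar\beta}$ and $h_{\alpha\beta}\equiv 0$ throughout $U$.

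Then I would establish the values of $J$ at $q$. Since $W_{\alpha;q}\in T_q^{(1,0)}M$ we have $JW_{\alpha;q}=iW_{\alpha;q}$; expanding $JW_\alpha=J_{\ \alpha}^{l}W_l$ and using $J_{\ \alpha}^{0}=0$ from Proposition \ref{prop2.1}, comparison of coefficients in the basis $\{W_{\beta;q},W_{\bar\beta;q}\}$ at $q$ forces $J_{\ \alpha}^{\beta}(q)=i\delta_\alpha^\beta$ and $J_{\ \alpha}^{\bar\beta}(q)=0$. For the lowered components I would apply $J_{ab}=h(W_a,JW_b)$ from \eqref{eq2.16b} at $q$: using $JW_{\bar\beta;q}=-iW_{\bar\beta;q}$, $JW_{\beta;q}=iW_{\beta;q}$ and Lemma \ref{lem2.1b} gives $J_{\alpha\bar\beta}(q)=-i\,h(W_{\alpha;q},W_{\bar\beta;q})=-i\delta_{\alpha\bar\beta}$ and $J_{\alpha\beta}(q)=i\,h(W_{\alpha;q},W_{\beta;q})=0$. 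It is important to stress that, unlike $h$, the tensor $J$ is \emph{not} parallel (its covariant derivative is exactly the Tanno tensor $Q=\nabla J$), so these four identities are genuinely pointwise at $q$ and must not be extended off $q$.

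Finally, the antisymmetry relation $J_{\ \alpha}^{\bar\beta}=-J_{\ \beta}^{\bar\alpha}$ is algebraic and holds on all of $U$. Combining $J_{ab}=h_{ac}J_{\ b}^{c}$ from \eqref{eq2.16b} with the global metric normalization from the second step ($h_{\alpha\gamma}=0$, $h_{\alpha\bar\gamma}=\delta_{\alpha\bar\gamma}$), I obtain $J_{\alpha\beta}=h_{\alpha\bar\gamma}J_{\ \beta}^{\bar\gamma}=J_{\ \beta}^{\bar\alpha}$ everywhere; the antisymmetry $J_{ab}=-J_{ba}$ from Proposition \ref{prop2.1} then gives $J_{\ \alpha}^{\bar\beta}=J_{\beta\alpha}=-J_{\alpha\beta}=-J_{\ \beta}^{\bar\alpha}$. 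The step requiring the most care is not any single computation but the bookkeeping of \emph{which} identities are global and which hold only at $q$: the metric relations and this last antisymmetry survive on all of $U$ precisely because they rest solely on the parallelism and normalization of $h$, whereas the explicit values $J_{\ \alpha}^{\beta}(q)=i\delta_\alpha^\beta$, $J_{\alpha\bar\beta}(q)=-i\delta_{\alpha\bar\beta}$ cannot be propagated since $J$ fails to be parallel. This distinction is exactly the difficulty that separates the present contact Riemannian setting from the CR case.
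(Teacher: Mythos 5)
Your proposal is correct and follows essentially the same route as the paper: parallelism of the frame plus $\nabla h=0$ gives the metric identities on the whole neighborhood, the choice of $W_{\alpha;q}\in T_q^{(1,0)}M$ with the normalization of Lemma \ref{lem2.1b} gives the pointwise values of $J$ at $q$, and the final antisymmetry comes from $J_{ab}=h_{ac}J_{\ b}^{c}$ together with $J_{ab}=-J_{ba}$. The extra bookkeeping you add about which identities are global versus pointwise is consistent with the paper's remark following the proposition.
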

  \begin{proof}
  By $\nabla{h}=0$, we see that
  $$\frac{d}{ds}\big(h_{ab}(\gamma(s))\big)=h(\nabla_{\dot{\gamma}}W_a,W_b)+h(W_a,\nabla_{\dot{\gamma}}W_b)=0,$$
  along each $\gamma$. So $h_{\alpha\bar{\beta}}\equiv\delta_{\alpha\bar{\beta}}$ and $h_{\alpha\beta}\equiv0$ hold near $q$.

  $J_{\ \alpha}^{\beta}(q)=i\delta_{\alpha}^{\beta}$ and $J_{\ \alpha}^{\bar\beta}(q)=0$ follows from Lemma \ref{lem2.1b} by our choice of the special frame at $q$. Then by \eqref{eq2.16b}, $J_{\alpha\bar{\beta}}(q)=h_{\alpha\bar{\gamma}}J_{\ \bar{\beta}}^{\bar{\gamma}}(q)=-i\delta_{\alpha\bar{\beta}}$ and $J_{\alpha\beta}(q)=h_{\alpha\bar{\gamma}}J_{\ \beta}^{\bar{\gamma}}(q)=0$ hold. For the last identity in \eqref{eq2.18}, we have $J_{\ \beta}^{\bar{\alpha}}=h_{\alpha\bar{\gamma}}J_{\ \beta}^{\bar{\gamma}}=J_{\alpha\beta}=-J_{\beta\alpha}=-h_{\beta\bar{\gamma}}J_{\ \alpha}^{\bar{\gamma}}=-J_{\ \alpha}^{\bar{\beta}}$ by $h_{\alpha\bar{\gamma}}=\delta_{\alpha\bar{\gamma}}$ and the anti-symmetry of $J_{ab}$ in Proposition \ref{prop2.1}.
  \end{proof}
\begin{remark}
Recall that when $(M,\theta,h,J)$ is a CR manifold, $Q=\nabla{J}=0$, and so $J$ is also parallel along each parabolic geodesic. Hence $(J_{\ a}^b)$ is a constant matrix near $q$ (see Proposition 2.3 in \cite{JL1}). But on a contact Riemannian manifold $(M,\theta,h,J)$, $Q=\nabla{J}$ may not vanish. So $J_{\ a}^b$ may not be   constant   near $q$. We only know that $J_{\ \alpha}^{\beta}(q)=i\delta_{\alpha}^{\beta}$ and $J_{\ \alpha}^{\bar{\beta}}(q)=0$ at the point $q$.
\end{remark}
The following corollary follows from Lemma \ref{lem2.1}.
\begin{corollary}\label{cor2.0a}
With respect to a special frame, we have
$$A_{\alpha}^{\beta}(q)=0,{\quad}A_{\alpha\bar{\beta}}(q)=0,{\quad}A_{ab}=A_{ba}.$$
\end{corollary}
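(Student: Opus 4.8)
The plan is to reduce all three assertions to the intrinsic description of the Webster torsion through the metric, namely the identity $A_{ab}=h(\tau_{\ast}(W_a),W_b)$. This follows immediately from the definition $\tau_{\ast}(W_a)=A_a^c W_c$ together with the index-lowering convention $A_{ab}=A_a^c h_{cb}$, since $h(\tau_{\ast}W_a,W_b)=A_a^c h_{cb}=A_{ab}$. Once this characterization is in hand, each of the three identities becomes a direct consequence of Lemma \ref{lem2.1} and the normalizations of the special frame recorded in Proposition \ref{prop2.0d}.

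For the symmetry $A_{ab}=A_{ba}$, which I expect to hold identically and not merely at $q$, I would invoke the self-adjointness of $\tau_{\ast}$ from Lemma \ref{lem2.1}(2), i.e. $h(\tau_{\ast}W_a,W_b)=h(W_a,\tau_{\ast}W_b)$. Combining this with the symmetry of the $\mathbb{C}$-bilinear extension of $h$ in \eqref{eq2.8a} yields $A_{ab}=h(\tau_{\ast}W_a,W_b)=h(\tau_{\ast}W_b,W_a)=A_{ba}$. This step uses neither a special property of the frame nor evaluation at $q$, which is why it is global.

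For $A_{\alpha}^{\beta}(q)=0$, the key observation is that at the center $q$ the special frame is genuinely adapted to the intrinsic $J$-splitting: by Proposition \ref{prop2.0d} we have $J_{\ \alpha}^{\beta}(q)=i\delta_{\alpha}^{\beta}$ and $J_{\ \alpha}^{\bar\beta}(q)=0$, so $W_{\alpha;q}\in T_q^{(1,0)}M$. Then Lemma \ref{lem2.1}(1)(d) gives $\tau_{\ast}(W_{\alpha})|_q\in T_q^{(0,1)}M=\mathrm{span}\{W_{\bar\gamma;q}\}$, whose $T^{(1,0)}$-coefficients vanish; this is exactly $A_{\alpha}^{\beta}(q)=0$. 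For the remaining identity $A_{\alpha\bar\beta}(q)=0$ I would argue directly that $A_{\alpha\bar\beta}(q)=h(\tau_{\ast}W_{\alpha},W_{\bar\beta})|_q=0$, because $\tau_{\ast}W_{\alpha}|_q$ and $W_{\bar\beta;q}$ both lie in $T_q^{(0,1)}M$, on which $h$ vanishes (indeed $h_{\bar\gamma\bar\beta}(q)=\overline{h_{\gamma\beta}}(q)=0$ by Proposition \ref{prop2.0d}); alternatively one lowers the index using $h_{\gamma\bar\beta}(q)=\delta_{\gamma\bar\beta}$, which collapses $A_{\alpha\bar\beta}$ to $A_{\alpha}^{\beta}$ and reuses the previous step.

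There is no serious analytic obstacle; the statement is genuinely a corollary. The only point that requires care — and the reason the first two identities are asserted only at $q$ while the third is global — is that the $(1,0)/(0,1)$-decomposition of the intrinsic complex structure matches the special frame \emph{only} at $q$. Because the frame is defined by parallel transport along parabolic geodesics and $J$ is not preserved by the TWT connection (the Tanno tensor $Q=\nabla J$ need not vanish), the vectors $W_{\alpha}$ drift out of $T^{(1,0)}M$ away from $q$, so Lemma \ref{lem2.1}(1)(d) can no longer be applied to them and $A_{\alpha}^{\beta}$, $A_{\alpha\bar\beta}$ need not vanish off the center. Keeping this distinction straight is essentially the whole content of the argument.
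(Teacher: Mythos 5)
Your proposal is correct and is essentially the paper's own argument: the paper offers no written proof beyond the remark that the corollary ``follows from Lemma \ref{lem2.1}'', and your deduction is exactly the intended one — self-adjointness of $\tau_{\ast}$ plus symmetry of $h$ for $A_{ab}=A_{ba}$, and $\tau_{\ast}T^{(1,0)}M\subset T^{(0,1)}M$ applied at $q$ (where $W_{\alpha;q}\in T_q^{(1,0)}M$ and $h_{\alpha\bar\beta}=\delta_{\alpha\bar\beta}$, $h_{\bar\alpha\bar\beta}=0$) for the two vanishing statements. Your closing remark about why the first two identities hold only at $q$ — the frame drifts out of $T^{(1,0)}M$ under parallel transport because $Q=\nabla J\neq 0$ — correctly identifies the one point requiring care.
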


The parabolic dilations in this coordinate $(z,t)$ is $\delta_s(z,t)=(sz,s^2t)$ for $s>0$, the generator of the parabolic dilation is the vector field
  \begin{equation}\label{eq2.15c}
  P_{(z,t)}=z^{\alpha}\frac{\partial}{\partial{z}^{\alpha}}+z^{\bar{\alpha}}\frac{\partial}{\partial{z}^{\bar{\alpha}}}
  +2t\frac{\partial}{\partial{t}}.
  \end{equation}
  A tensor field $\varphi$ is called {\it homogeneous of degree $m$} if $\mathscr{L}_P\varphi=m\varphi$. For any tensor $\varphi$, we denote $\varphi_{(m)}$ as the part of its Taylor's series that is homogeneous of degree $m$ in terms of the parabolic dilations. So $\mathscr{L}_P\varphi_{(m)}=m\varphi_{(m)}$. If $\varphi$ is a differential form,
  \begin{equation}\label{eq2.12}
  \varphi_{(m)}=\frac{1}{m}(P\lrcorner{d\varphi}+d(P\lrcorner{\varphi}))_{(m)},
  \end{equation}
  by Cartan's formula \eqref{eq2.15b}. For example, $z^a$, $dz^a$ is homogeneous of degree $1$. $z^0=t$, $dt$ and $\Theta$ is homogeneous of degree $2$. $\frac{\partial}{\partial{z^a}}$ and $\frac{\partial}{\partial{t}}$ is homogeneous of degree $-1$ and $-2$, respectively. With respect to a normal coordinate, we define the vector fields by
 \begin{equation}\label{eq2.10}
  Z_{\alpha}=\frac{\partial}{\partial{z^{\alpha}}}-iz^{\bar{\alpha}}\frac{\partial}{\partial{t}},{\quad}\alpha=1,2,\cdots,n,
  {\quad}Z_0=\frac{\partial}{\partial{t}},
  \end{equation}
  and their dual
  \begin{equation}\label{eq2.11}
  \Theta^{\alpha}=dz^{\alpha},\quad\Theta^{\bar{\alpha}}=dz^{\bar{\alpha}},\quad
  \Theta=dt-iz^{\alpha}dz^{\bar{\alpha}}+iz^{\bar{\alpha}}dz^{\alpha}.
  \end{equation}
Hence $Z_{\alpha}$ and $Z_0$ are homogeneous of degree $-1$ and degree $-2$, respectively. Namely $Z_j$ is homogenous of degree $-o(j)$.
\begin{remark}
 In this paper, if indices $\alpha$ and $\bar{{\alpha}}$ both appear in low (or upper) indices, then the index $\alpha$ will be taken summation, e.g.
 $$\Theta=dt-iz^{\alpha}dz^{\bar{\alpha}}+iz^{\bar{\alpha}}dz^{\alpha}=dt-i\sum\limits_{\alpha}z^{\alpha}dz^{\bar{\alpha}}+i\sum\limits_{\alpha}z^{\bar{\alpha}}dz^{\alpha}.$$
\end{remark}
  \begin{theorem}
  On a contact Riemannian manifold $(M,\theta,h,J)$, suppose $F$ is a smooth function defined near $q$. Then with respect to the normal coordinates, for any $m$, we have
  \begin{equation}\label{eq2.12a}
  F_{(m)}(x)=\sum\limits_{o(K)=m}\frac{1}{(\sharp{K})!}z^KZ_KF(q).
  \end{equation}
  The notations of the multi-index are defined as in Notation 2.1.
  \end{theorem}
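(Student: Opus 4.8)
The plan is to prove first the \emph{ungraded} identity
\[
F(x)=\sum_{K}\frac{1}{(\sharp K)!}\,z^{K}Z_{K}F(q),
\]
(the sum over all multi-indices $K$, including the empty one) as an identity of Taylor expansions at $q$, the origin of the normal coordinates, and then to read off $F_{(m)}$ as the piece that is parabolic-homogeneous of degree $m$. Since the $Z_{j}$ in \eqref{eq2.10} are the explicit Heisenberg-model fields written in the coordinates, this is a statement about the coordinate chart alone and uses nothing further about the connection; I would therefore work entirely in $(z^{1},\dots,z^{2n},t)$ with $z^{0}=t$.

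First I would introduce, for a fixed $x$ with coordinates $(z^{1},\dots,z^{2n},t)$, the \emph{ordinary} (non-parabolic) dilation $\rho_{s}\colon z^{j}\mapsto sz^{j}$ (all indices, including $t$), and set $b_{K}(s):=(Z_{K}F)(\rho_{s}x)$, so $h(s):=b_{\emptyset}(s)=F(\rho_{s}x)$. The decisive observation is the operator identity $\sum_{j}z^{j}Z_{j}=\sum_{j}z^{j}\partial_{z^{j}}$: writing $Z_{\alpha}=\partial_{z^{\alpha}}-iz^{\bar\alpha}\partial_{t}$ and $Z_{\bar\alpha}=\overline{Z_{\alpha}}$, the first-order correction terms $-iz^{\alpha}z^{\bar\alpha}\partial_{t}$ and $iz^{\bar\alpha}z^{\alpha}\partial_{t}$ cancel, leaving the ordinary Euler field $\sum_{a}z^{a}\partial_{z^{a}}+t\partial_{t}$. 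The same cancellation, evaluated along the ray $\rho_{s}x$, gives $\sum_{j}z^{j}(\partial_{z^{j}}H)(\rho_{s}x)=\sum_{j}z^{j}(Z_{j}H)(\rho_{s}x)$ for every smooth $H$, whence the clean recursion
\[
b_{K}'(s)=\sum_{j}z^{j}\,(Z_{j}Z_{K}F)(\rho_{s}x)=\sum_{j}z^{j}\,b_{K\cdot j}(s),\qquad K\cdot j:=(k_{1},\dots,k_{s},j),\ \ Z_{K\cdot j}=Z_{j}Z_{K}.
\]

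Next I would solve this hierarchy by iterated integration. Expanding $h=b_{\emptyset}$ repeatedly down to depth $N=m+1$ and retaining the constant (innermost) value $Z_{K}F(q)$ produces, for each ordered $K$, the iterated integral of $1$ over the simplex $0<\sigma_{\sharp K}<\dots<\sigma_{1}<s$, namely $s^{\sharp K}/(\sharp K)!$; the depth-$(m+1)$ terms stay as an integral remainder. Putting $s=1$ then yields the ungraded identity above modulo a remainder of \emph{ordinary} degree $\ge m+1$. To finish, observe that each summand $\frac{1}{(\sharp K)!}z^{K}Z_{K}F(q)$ is parabolic-homogeneous of degree exactly $o(K)$, since $z^{k_{i}}$ has parabolic degree $o(k_{i})$ and $Z_{K}F(q)$ is a constant; and every monomial of ordinary degree $\ge m+1$ has parabolic degree $\ge m+1>m$, so the remainder cannot contribute in degree $m$. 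As $o(K)=m$ forces $\sharp K\le o(K)=m$, all the relevant terms are already present among the closed ones, and collecting those with $o(K)=m$ gives precisely \eqref{eq2.12a}.

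The step I expect to be the crux is exactly the cancellation $\sum_{j}z^{j}Z_{j}=\sum_{j}z^{j}\partial_{z^{j}}$ and its consequence along rays: it is what makes the combinatorial weight come out as the symmetric $1/(\sharp K)!$. If one instead differentiates along the parabolic dilation $\delta_{s}$, whose generator $P$ in \eqref{eq2.15c} equals $\sum_{j}o(j)z^{j}Z_{j}$, the analogous iterated integrals produce the order-dependent weights $\prod_{i}o(j_{i})\big/\prod_{i}\bigl(o(j_{i})+\dots+o(j_{\sharp K})\bigr)$ instead of $1/(\sharp K)!$; these coincide with the clean coefficients only after invoking the Heisenberg brackets $[Z_{\alpha},Z_{0}]=0$ and $[Z_{\alpha},Z_{\bar\beta}]=2i\delta_{\alpha}^{\beta}Z_{0}$. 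Routing the argument through the ordinary radial field keeps the bookkeeping trivial; the remaining points (smoothness of the iterated integrals in $x$ and the ordinary-versus-parabolic degree comparison for the remainder) are routine.
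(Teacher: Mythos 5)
Your argument is correct. The paper offers no proof of this statement beyond citing Lemma 3.10 of Jerison--Lee, and your route --- the cancellation $\sum_j z^jZ_j=\sum_j z^j\partial_{z^j}$, one-variable Taylor expansion of $s\mapsto F(sx)$ along ordinary rays with the simplex integrals producing $1/(\sharp K)!$, and then sorting terms by parabolic degree --- is exactly the standard proof of that cited lemma, so this is essentially the paper's intended argument written out in full.
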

  This theorem can be proved exactly in the same way as Lemma 3.10 in \cite{JL1} since the integrability of $J$ is not used here.
\subsection{Homogeneous parts of the special frame (coframe) and the connection coefficients}
As in the CR case in \cite{JL1}, there exists a simple relation between the Euler vector field $P$ and the special coframe.
\begin{lemma}\label{lem2.2}
With respect to the normal coordinates $(z^{\alpha},z^{\bar{\alpha}},t)$, we have
  \begin{equation}\label{eq2.17}
  \theta(P)=2t,\quad\theta^a(P)=z^a,\quad\omega_b^a(P)=0,
  \end{equation}
  where $P$ is the Euler vector field. Equivalently, $P=z^{\alpha}W_{\alpha}+z^{\bar{\alpha}}W_{\bar{\alpha}}+2tT$.
  \end{lemma}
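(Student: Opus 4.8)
The plan is to deduce all three identities from a single structural fact: along each parabolic geodesic $\gamma=\gamma_{W,c}$ the Euler vector field restricts to $s$ times the velocity, i.e. $P|_{\gamma(s)}=s\,\dot\gamma(s)$. Granting this, each pairing $\theta(P)$, $\theta^a(P)$, $\omega_b^a(P)$ collapses to a one-line computation using the duality relations of the special coframe and the parallelism of the special frame.

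First I would record the coordinate description of the geodesics. By Proposition~\ref{prop2.0b} we have $\Psi(sW+s^2cT)=\gamma_{W,c}(s)$, and since the chart is $\iota\circ\Psi^{-1}$ with $\iota$ linear, writing $W=W^aW_{a;q}$ shows that $\gamma_{W,c}(s)$ has normal coordinates $(z^\alpha,z^{\bar\alpha},t)=(sW^\alpha,sW^{\bar\alpha},s^2c)$; this exhibits $\gamma$ as a reparametrized orbit of the parabolic dilation $\delta_s(z,t)=(sz,s^2t)$. Differentiating the coordinate curve in $s$ gives $\dot\gamma(s)$, while evaluating the Euler field \eqref{eq2.15c} at the point with coordinates $(sW^a,s^2c)$ gives $P|_{\gamma(s)}$; comparing the two expressions yields $P|_{\gamma(s)}=s\,\dot\gamma(s)$, the desired bridge between the dilation generator and the geodesic velocity.

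Next I would compute $\dot\gamma$ in the special frame. Because the special frame $\{W_j\}$ is parallel along $\gamma$, writing $\dot\gamma=\xi^jW_j$ and feeding this into the geodesic equation $\nabla_{\dot\gamma}\dot\gamma=2cT$ from \eqref{eq2.9} forces $\dot\xi^0=2c$ and $\dot\xi^a=0$; the horizontal initial condition $\dot\gamma(0)=W$ then integrates to $\dot\gamma(s)=W^aW_a+2csT$. Combining this with $P=s\,\dot\gamma$ and substituting $z^a=sW^a$, $t=cs^2$ produces the closed form $P=z^\alpha W_\alpha+z^{\bar\alpha}W_{\bar\alpha}+2tT$, which is precisely the asserted equivalent statement.

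Finally the three formulae follow at once. Pairing $P$ with the dual coframe and using $\theta(W_a)=0$, $\theta(T)=1$, $\theta^a(W_b)=\delta^a_b$, $\theta^a(T)=0$ gives $\theta(P)=2t$ and $\theta^a(P)=z^a$. For the connection $1$-form I would avoid expanding in coordinates and instead invoke parallelism directly: $\nabla_{\dot\gamma}W_b=\omega_b^k(\dot\gamma)W_k=0$ forces $\omega_b^k(\dot\gamma)=0$ for every $k$, whence $\omega_b^a(P)=s\,\omega_b^a(\dot\gamma)=0$. The only delicate point is the bookkeeping in establishing $P=s\,\dot\gamma$, where one must carefully keep apart the dilation structure that lives on the coordinate chart from the intrinsic TWT connection that defines both the geodesics and the parallel special frame; once that identity is in hand, everything else is an immediate consequence of duality or of the parallel-translation definition of the frame.
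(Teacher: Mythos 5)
Your proof is correct and follows essentially the same route as the paper: both rest on the identity $P|_{\gamma(s)}=s\,\dot\gamma(s)$ obtained from the explicit coordinate form $(sw^a,s^2c)$ of the parabolic geodesics, combined with parallelism along $\gamma$. The only (immaterial) difference is that you solve the ODE for the frame components $\xi^j$ of $\dot\gamma$ and deduce $P=z^aW_a+2tT$ first, whereas the paper integrates the dual ODEs for $\theta(\dot\gamma)$ and $\theta^a(\dot\gamma)$ directly and uses parallelism of the coframe rather than the frame for $\omega_b^a(P)=0$.
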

  \begin{proof}
  This lemma can be proved in the same way as Lemma 2.4 in \cite{JL1} since the integrability of $J$ is not used. We mention it briefly. We need to show that \eqref{eq2.17} holds along each parabolic geodesic $\gamma_{W,c}$. Fix a vector $W+cT$ at $q$ with $W\in{H_qM}$ and $c\in\mathbb{R}$, we write $W=w^aW_{a;q}$. In these coordinates, the parabolic geodesic $\gamma=\gamma_{W,c}$ is given explicitly by
  $$(z^a,t)=\gamma(s)=(sw^a,s^2c),$$
  by Proposition \ref{prop2.0b}. Note that by the definition \eqref{eq2.15c}, $P_{\gamma(s)}=P_{(sw^a,s^2c)}= \sum_a sw^a\frac{\partial}{\partial{z^a}}+ 2s^2c\frac{\partial}{\partial{t}}$. Then by explicit computation $\dot{\gamma}(s)=\sum_a  w^a\frac{\partial}{\partial{z^a}}+ 2s c\frac{\partial}{\partial{t}}=s^{-1}P_{\gamma(s)}$ for $s\ne0$. Along $\gamma$, by $\nabla\theta=0$, we have
  $$\frac{d}{ds}\bigg(\theta\big(\dot{\gamma}(s)\big)\bigg)=\theta(\nabla_{\dot{\gamma}}\dot{\gamma}(s))=\theta(2cT)=2c,$$
  and so $\theta(\dot\gamma(0))=0$, $\theta(\dot\gamma(s))=2cs$. Then $\theta(P)=\theta(s\dot\gamma(s))=2s^2c=2t$.
  Similarly by using $\nabla_{\dot\gamma}\theta^a=0$,
  $$\frac{d}{ds}\theta^a(\dot\gamma(s))=\theta^a(\nabla_{\dot\gamma}\dot\gamma(s))=\theta^a(2cT)=0.$$
  Note that $\theta^a(\dot\gamma(0))=\theta^a(W)=w^a$. We get $\theta^a(\dot\gamma(s))=w^a$ all along $\gamma$. So $\theta^a(P)=\theta^a(s\dot\gamma(s))=sw^a=z^a$.

  For the last identity in \eqref{eq2.17}, note that we have $\omega_b^a(P)\theta^b=0$ by $\nabla_P\theta^a=\nabla_{s\dot{\gamma}(s)}\theta^a=0$ by $\theta^a$ being parallel along each parabolic geodesic. Since $\theta^b$ are independent, we get $\omega_b^a(P)=0$ for any $a$, $b$.
  \end{proof}
  Then by Lemma \ref{lem2.2}, we have the following proposition:
  \begin{proposition}\label{thm2.3}
  With respect to a special frame and under the normal coordinates defined as above, we have
  \begin{equation}\label{eq2.19}
  \begin{aligned}
  &\omega_{a(m)}^b=\frac{1}{m}{(R_{a\ cd}^{\ b}z^c\theta^d+2tR_{a\ 0d}^{\ b}\theta^d+R_{a\ c0}^{\ b}z^c\theta)}_{(m)},\\
  &\theta^{b}_{(m)}={\frac{1}{m}(z^a\omega_a^b+2tA_a^b\theta^a-A_a^bz^a\theta+dz^b)}_{(m)},\\
  &\theta_{(m)}={\frac{1}{m}(2J_{ab}z^a\theta^b+2dt)}_{(m)}.
  \end{aligned}
  \end{equation}
  \end{proposition}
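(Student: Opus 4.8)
The plan is to apply the Cartan homogeneity identity \eqref{eq2.12}, namely $\varphi_{(m)}=\frac{1}{m}\bigl(P\lrcorner d\varphi+d(P\lrcorner\varphi)\bigr)_{(m)}$, to each of the three forms $\varphi=\theta$, $\varphi=\theta^a$, and $\varphi=\omega_a^b$ in turn. In every case the two ingredients needed are the values of the form on the Euler vector field $P$, supplied by Lemma \ref{lem2.2} ($\theta(P)=2t$, $\theta^a(P)=z^a$, $\omega_b^a(P)=0$), and the exterior derivative of the form, supplied by the structure equations \eqref{SE}. The interior products are then unwound using the identity $X\lrcorner(\phi\wedge\psi)=\phi(X)\psi-\psi(X)\phi$ from \eqref{eq2.15a}.

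First I would treat $\theta$, which is the cleanest. Here $P\lrcorner\theta=\theta(P)=2t$, so $d(P\lrcorner\theta)=2\,dt$. Writing the first structure equation in the compact form $d\theta=J_{ab}\theta^a\wedge\theta^b$ (the split displayed in \eqref{SE} is exactly this sum reorganized by index type, using the antisymmetry of $J_{ab}$ from Proposition \ref{prop2.1}), a short computation with $\theta^a(P)=z^a$ and the antisymmetry of $J_{ab}$ collapses $P\lrcorner d\theta$ into $2J_{ab}z^a\theta^b$. Substituting into \eqref{eq2.12} yields the third formula. Next I would treat $\theta^a$: now $P\lrcorner\theta^a=z^a$ gives $d(P\lrcorner\theta^a)=dz^a$, while the second structure equation $d\theta^a=\theta^b\wedge\omega_b^a+A_b^a\theta\wedge\theta^b$ together with $\omega_b^a(P)=0$, $\theta(P)=2t$, $\theta^b(P)=z^b$ produces $P\lrcorner d\theta^a=z^b\omega_b^a+2tA_b^a\theta^b-A_b^a z^b\theta$. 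Adding the two terms and inserting into \eqref{eq2.12} gives the second formula. The crucial point is that the connection term in $d\theta^a$ survives (since $\theta^b(P)=z^b\ne0$), while $\omega_b^a(P)=0$ removes any contribution from the connection factor itself.

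The $\omega_a^b$ formula is the main obstacle, and I would save it for last. Since $P\lrcorner\omega_a^b=0$, the $d(P\lrcorner\varphi)$ term drops out entirely, so I only need $P\lrcorner d\omega_a^b$. I would first rewrite the curvature $2$-form in the compact shape $d\omega_a^b-\omega_a^c\wedge\omega_c^b=\frac{1}{2}R_{a\ cd}^{\ b}\theta^c\wedge\theta^d$, with $c,d$ now ranging over the full index set $\{0,1,\dots,2n\}$; this follows from the fourth equation in \eqref{SE} together with the antisymmetry $R_{a\ cd}^{\ b}=-R_{a\ dc}^{\ b}$ of the curvature in its last two slots. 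The wedge $\omega_a^c\wedge\omega_c^b$ contributes nothing to $P\lrcorner d\omega_a^b$ because $\omega(P)=0$ annihilates both factors, so $P\lrcorner d\omega_a^b=P\lrcorner\bigl(\frac{1}{2}R_{a\ cd}^{\ b}\theta^c\wedge\theta^d\bigr)$.

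Using the antisymmetry in $c,d$ once more, this interior product equals $R_{a\ cd}^{\ b}\theta^c(P)\theta^d$ over the full range, and the only care required is to split the sum according to whether each index equals $0$ or is horizontal, recalling that $\theta^0(P)=\theta(P)=2t$ whereas $\theta^c(P)=z^c$ for horizontal $c$. The four cases then give exactly the three displayed terms — $R_{a\ cd}^{\ b}z^c\theta^d$ from both indices horizontal, $2tR_{a\ 0d}^{\ b}\theta^d$ from $c=0$, and $R_{a\ c0}^{\ b}z^c\theta$ from $d=0$ — with the $c=d=0$ case vanishing by antisymmetry, which is the first formula. The bookkeeping of these index cases, and the compactification of the curvature $2$-form, are where the only genuine attention is needed; the vanishing of $\omega_a^c\wedge\omega_c^b$ under $P\lrcorner$, guaranteed by Lemma \ref{lem2.2}, is what makes the whole computation close cleanly.
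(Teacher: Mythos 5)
Your proposal is correct and follows essentially the same route as the paper: apply the Cartan identity \eqref{eq2.12} to each of $\theta$, $\theta^a$, $\omega_a^b$, feed in $\theta(P)=2t$, $\theta^a(P)=z^a$, $\omega_b^a(P)=0$ from Lemma \ref{lem2.2} together with the structure equations \eqref{SE}, and unwind the interior products via \eqref{eq2.15a}. Your preliminary repackaging of the curvature $2$-form and of $d\theta$ into the antisymmetric compact sums $\frac{1}{2}R_{a\ cd}^{\ b}\theta^c\wedge\theta^d$ and $J_{ab}\theta^a\wedge\theta^b$ is only a cosmetic reorganization of the index bookkeeping the paper carries out term by term.
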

  \begin{proof}
  We have
  \begin{align}
  \notag \omega_{a(m)}^b&=\frac{1}{m}(P\lrcorner{d\omega_a^b})_{(m)}=\frac{1}{m}\bigg(P\lrcorner\bigg(R_{a\ \lambda\bar{\mu}}^{\ b}\theta^{\lambda}\wedge\theta^{\bar{\mu}}+\frac{1}{2}R_{a\ \lambda\mu}^{\ b}\theta^{\lambda}\wedge\theta^{\mu}+\frac{1}{2}R_{a\ \bar{\lambda}\bar{\mu}}^{\ b}\theta^{\bar{\lambda}}\wedge\theta^{\bar{\mu}}\\
  \notag &\ \ \ +R_{a\ 0\bar{\mu}}^{\ b}\theta\wedge\theta^{\bar{\mu}}-R_{a\ \lambda0}^{\ b}\theta\wedge\theta^{\lambda}\bigg)\bigg)_{(m)}\\
  \notag &=\frac{1}{m}\bigg(R_{a\ \lambda\bar{\mu}}^{\ b}(z^{\lambda}\theta^{\bar{\mu}}-z^{\bar{\mu}}\theta^{\lambda})+\frac{1}{2}R_{a\ \lambda\mu}^{\ b}(z^{\lambda}\theta^{\mu}-z^{\mu}\theta^{\lambda})\\
  \notag &\ \ \ +\frac{1}{2}R_{a\ \bar{\lambda}\bar{\mu}}^{\ b}(z^{\bar{\lambda}}\theta^{\bar{\mu}}-z^{\bar{\mu}}\theta^{\bar{\lambda}})+R_{a\ 0\bar{\mu}}^{\ b}(2t\theta^{\bar{\mu}}-z^{\bar{\mu}}\theta)-R_{a\ \lambda0}^{\ b}(2t\theta^{\lambda}-z^{\lambda}\theta)\bigg)_{(m)}\\
  \notag &=\frac{1}{m}{(R_{a\ cd}^{\ b}z^c\theta^d+2tR_{a\ 0d}^{\ b}\theta^d+R_{a\ c0}^{\ b}z^c\theta)}_{(m)}.
  \end{align}
 by \eqref{eq2.15a}, \eqref{SE}, \eqref{eq2.12} and $\omega_a^b(P)$=0 in \eqref{eq2.17}. Here we also use the relation $R_{a\ jk}^{\ b}=-R_{a\ kj}^{\ b}$ for the curvature tensor. Similarly we get
\begin{align}
  \notag \theta^b_{(m)}&=\frac{1}{m}\bigg(P\lrcorner{d\theta^b}+d(\theta^b(P))\bigg)_{(m)}
  =\frac{1}{m}\bigg(P\lrcorner\bigg(\theta^a\wedge\omega_a^b+A_a^b\theta\wedge\theta^a\bigg)+dz^b\bigg)_{(m)}\\
  \notag &={\frac{1}{m}\bigg(z^a\omega_a^b+2tA_a^b\theta^a-A_a^bz^a\theta+dz^b\bigg)}_{(m)},
\end{align}
by \eqref{eq2.15a}, \eqref{SE}, \eqref{eq2.12} and \eqref{eq2.17}.

Noting that $J_{ab}$ is anti-symmetric, we get
\begin{align}
\notag \theta_{(m)}&=\frac{1}{m}(P\lrcorner{d\theta}+d(\theta(P)))_{(m)}=\frac{1}{m}\bigg(P\lrcorner\big(J_{\alpha\beta}\theta^{\alpha}\wedge\theta^{\beta}+2J_{\alpha\bar{\beta}}\theta^{\alpha}\wedge\theta^{\bar{\beta}}+J_{\bar{\alpha}\bar{\beta}}\theta^{\bar{\alpha}}\wedge\theta^{\bar{\beta}}\big)+2dt\bigg)_{(m)}\\
\notag &=\frac{1}{m}\bigg(J_{\alpha\beta}\big(z^{\alpha}\theta^{\beta}-z^{\beta}\theta^{\alpha}\big)+2J_{\alpha\bar{\beta}}\big(z^{\alpha}{\theta}^{\bar{\beta}}-z^{\bar{\beta}}\theta^{\alpha}\big)+J_{\bar{\alpha}\bar{\beta}}\big(z^{\bar{\alpha}}\theta^{\bar{\beta}}-z^{\bar{\beta}}\theta^{\bar{\alpha}}\big)+2dt\bigg)_{(m)}\\
\notag &=\frac{1}{m}\bigg(2J_{\alpha\beta}z^{\alpha}\theta^{\beta}+2J_{\alpha\bar{\beta}}z^{\alpha}{\theta}^{\bar{\beta}}+2J_{\bar{\alpha}\beta}z^{\bar{\alpha}}\theta^{\beta}+2J_{\bar{\alpha}\bar{\beta}}z^{\bar{\alpha}}\theta^{\bar{\beta}}+2dt\bigg)_{(m)}={\frac{1}{m}(2J_{ab}z^a\theta^b+2dt)}_{(m)},
\end{align}
by \eqref{eq2.15a}, \eqref{SE}, \eqref{eq2.12} and \eqref{eq2.17}. Proposition \ref{thm2.3} is proved.
\end{proof}

Then we have the following corollary:
\begin{corollary}\label{cor2.1}
With respect to a special frame, we have
  \begin{equation}\label{eq2.22}
  \begin{aligned}
  &\omega_{a(1)}^b=0,\quad\omega_{a(2)}^b=\frac{1}{2}R_{a\ cd}^{\ b}(q)z^cdz^d,\\
  &\theta^b_{(1)}=dz^b,\quad\theta^b_{(2)}=0,\\
  &\theta^b_{(3)}=\frac{1}{6}R_{a\ cd}^{\ b}(q)z^az^cdz^d,{\quad}mod{\quad}\mathscr{A},\\
  &\theta_{(2)}=\Theta,\quad\theta_{(3)}=\frac{2}{3}J_{ab(1)}z^adz^b,\\
  &\theta_{(4)}=\frac{1}{12}J_{ab}(q)R_{e\ cd}^{\ b}(q)z^az^cz^edz^d+\frac{1}{2}J_{ab(2)}z^adz^b,{\quad}mod{\quad}\mathscr{A},
  \end{aligned}
  \end{equation}
  where $\mathscr{A}$ means terms linearly depending on $A_a^b(q)$.
  \end{corollary}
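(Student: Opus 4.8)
The plan is to feed the recursion \eqref{eq2.19} of Proposition \ref{thm2.3} degree by degree. The only bookkeeping needed is the homogeneity count: $z^\alpha$, $z^{\bar\alpha}$ and $dz^a$ are homogeneous of degree $1$, while $z^0=t$, $dt$ and $\Theta$ are homogeneous of degree $2$ (that is, $z^j$ carries degree $o(j)$), and each coefficient $R_{a\ cd}^{\ b}$, $A_a^b$, $J_{ab}$ is a function whose Taylor expansion starts in degree $0$. Since the three formulas in \eqref{eq2.19} for $\omega_a^b$, $\theta^b$ and $\theta$ refer to one another, I would compute in order of increasing total degree, substituting each previously determined lower-degree part into the next.

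First the connection forms. In the bracket defining $\omega_{a(m)}^b$ every monomial has degree at least $2$: the leading contribution $R_{a\ cd}^{\ b}(q)z^c\theta^d_{(1)}=R_{a\ cd}^{\ b}(q)z^c\,dz^d$ already sits in degree $2$, while the $t$- and $\theta$-weighted terms start in degree $3$. Hence $\omega_{a(1)}^b=0$, and extracting the degree-$2$ part gives $\omega_{a(2)}^b=\tfrac12 R_{a\ cd}^{\ b}(q)z^c\,dz^d$. For $\theta^b$, the same count shows that at degree $1$ only $dz^b$ survives and at degree $2$ every term except $dz^b$ (which is degree $1$) has degree at least $3$, so $\theta^b_{(1)}=dz^b$ and $\theta^b_{(2)}=0$. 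At degree $3$ the surviving contributions are $z^a\omega_{a(2)}^b$, together with the two $A$-weighted terms $2t\,A_a^b(q)\,dz^a$ and $-A_a^b(q)z^a\Theta$; the latter two lie in $\mathscr{A}$ and are discarded, leaving $\theta^b_{(3)}=\tfrac16 R_{a\ cd}^{\ b}(q)z^az^c\,dz^d$ modulo $\mathscr{A}$.

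Next the contact form. At degree $2$ the bracket $2J_{ab}z^a\theta^b+2dt$ contributes only $2J_{ab}(q)z^a\,dz^b+2dt$, so $\theta_{(2)}=J_{ab}(q)z^a\,dz^b+dt$; substituting the values $J_{\alpha\bar\beta}(q)=-i\delta_{\alpha\bar\beta}$, $J_{\alpha\beta}(q)=0$ of Proposition \ref{prop2.0d} together with the antisymmetry $J_{ab}=-J_{ba}$ turns this into $dt-iz^\alpha dz^{\bar\alpha}+iz^{\bar\alpha}dz^\alpha=\Theta$ by \eqref{eq2.11}. At degree $3$, using $\theta^b_{(2)}=0$, only $2J_{ab(1)}z^a\theta^b_{(1)}$ remains, giving $\theta_{(3)}=\tfrac23 J_{ab(1)}z^a\,dz^b$. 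Finally, at degree $4$ the surviving pieces of $2J_{ab}z^a\theta^b$ are $2J_{ab(2)}z^a\,dz^b$ and $2J_{ab}(q)z^a\theta^b_{(3)}$; inserting the degree-$3$ result for $\theta^b$ (legitimately, since multiplying an $\mathscr{A}$-term by $J_{ab}(q)$ stays in $\mathscr{A}$) and collecting the factor $\tfrac14$ yields $\theta_{(4)}=\tfrac1{12}J_{ab}(q)R_{e\ cd}^{\ b}(q)z^az^cz^e\,dz^d+\tfrac12 J_{ab(2)}z^a\,dz^b$ modulo $\mathscr{A}$.

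The computation is essentially mechanical, so the only real care is in the bookkeeping. The main obstacle is to enumerate exactly which monomials reach a prescribed degree once $t$ is counted as degree $2$, and to respect the coupling of the recursion by computing $\omega_{a(2)}^b$ before $\theta^b_{(3)}$ and $\theta^b_{(3)}$ before $\theta_{(4)}$; one must also track the $\mathscr{A}$-equivalence consistently, checking at each use that the discarded terms are genuinely linear in $A_a^b(q)$. The one nonmechanical point is the identification $\theta_{(2)}=\Theta$, which depends on the specific normalization $J_{\alpha\bar\beta}(q)=-i\delta_{\alpha\bar\beta}$ built into the special frame in Lemma \ref{lem2.1b} and Proposition \ref{prop2.0d}.
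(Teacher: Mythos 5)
Your proposal is correct and follows essentially the same route as the paper: feed the recursion \eqref{eq2.19} degree by degree with the parabolic homogeneity count ($t$, $dt$, $\Theta$ of degree $2$), computing $\omega_{a(2)}^b$ before $\theta^b_{(3)}$ and $\theta^b_{(3)}$ before $\theta_{(4)}$, and discarding the $A_a^b(q)$-linear terms into $\mathscr{A}$. The identification $\theta_{(2)}=\Theta$ via $J_{\alpha\bar\beta}(q)=-i\delta_{\alpha\bar\beta}$ is exactly the step the paper carries out as well.
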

  \begin{proof}
  By the first identity in \eqref{eq2.19}, it's obvious that $\omega_{a(1)}^b=0$. Then it follows from the second identity in \eqref{eq2.19} and $\omega_{a(1)}^b=0$ that $\theta^b_{(1)}=dz^b$ and $\theta^b_{(2)}=0$.

By the third identity in \eqref{eq2.19} for $m=2$ and \eqref{eq2.18}, we get
\begin{align}
\notag \theta_{(2)}&=dt+J_{\alpha\bar{\beta}}(q)z^{\alpha}dz^{\bar{\beta}}+J_{\bar{\beta}\alpha}(q)z^{\bar{\beta}}dz^{\alpha}
=dt-i\delta_{\alpha\bar{\beta}}z^{\alpha}dz^{\bar{\beta}}+i\delta_{\alpha\bar{\beta}}z^{\bar{\beta}}dz^{\alpha}=\Theta.
\end{align}
By the third identity in \eqref{eq2.19} for $m=3$ and $\theta_{(2)}^b=0$, we get $\theta_{(3)}=\frac{2}{3}J_{ab(1)}z^adz^b$. By \eqref{eq2.19} for $m=2$ and $\theta^d_{(1)}=dz^d$, we find that $\omega_{a(2)}^b=\frac{1}{2}R_{a\ cd}^{\ b}(q)z^cdz^d$. Hence
\begin{equation}
\notag \theta_{(3)}^b=\frac{1}{3}z^a\omega_{a(2)}^b=\frac{1}{6}R_{a\ cd}^{\ b}(q)z^az^cdz^d,{\quad}{mod}{\quad}\mathscr{A},
\end{equation}
holds by \eqref{eq2.19}. And so we also have
\begin{align}
\notag \theta_{(4)}&=\frac{1}{4}(2J_{ab}(q)z^a\theta_{(3)}^b+2J_{ab(2)}z^adz^b)\\
\notag &=\frac{1}{12}J_{ab}(q)R_{e\ cd}^{\ b}(q)z^az^cz^edz^d+\frac{1}{2}J_{ab(2)}z^adz^b,\quad{mod}{\quad}\mathscr{A},
\end{align}
by \eqref{eq2.18} and \eqref{eq2.19}.
\end{proof}
\begin{remark}
(1) In the CR case, Jerison and Lee \cite{JL1} used the identity $d\theta(X,Y)=h(JX,Y)$. But in contact Riemannian case, people usually use $d\theta(X,Y)=h(X,JY)=-h(JX,Y)$ (cf. \cite{BD1} or \cite{T1}). So $J_{ab}$ is different from \cite{JL1} by a factor $-1$. That's why we choose $\Theta$ as \eqref{eq2.11}, which coincides with  standard contact form in \cite{JL1} up to  signs.

(2) We choose $dV_{\theta}=(-1)^n\theta\wedge(d\theta)^n$ as the volume form on $(M,\theta,h,J)$. And we will see later in Section 5 that  the volume form $dV=(-1)^n\Theta\wedge(d\Theta)^n$ on the Heisenberg group is positive (cf. \eqref{eq5.11}).

(3) Recall that in the CR case $\theta_{(3)}$   vanishes by the integrability of $J$ (cf. Proposition 2.5 in \cite{JL1}). While in the general case, $\theta_{(3)}$ may not vanish.
\end{remark}
\subsection{The asymptotic expansion of the special frame}
  We can also examine the Taylor series of $W_j$ in terms of $Z_j$'s under the normal coordinate $(z,t)$. We write
  \begin{equation}\label{eq2.27}
  W_j=s_j^kZ_k=s_j^{\alpha}Z_{\alpha}+s_j^{\bar{\alpha}}Z_{\bar{\alpha}}+s_j^0Z_0,
  \end{equation}
  for some functions $s_j^k$ (following the idea of p. 327 in [3]). As mentioned above, $Z_j$ is homogeneous of degree $-o(j)$, so we can examine the Taylor series of $W_j$ by the Taylor series of their coefficient functions $s_j^k$. We denote $\varphi\in\mathscr{O}_m$ if all the terms in the Taylor series of $\varphi$ in normal coordinates are homogeneous of degree $\ge{m}$. It's easy to see that if $\varphi\in\mathscr{O}_{m_1}$, $\psi\in\mathscr{O}_{m_2}$ then $\varphi\psi\in\mathscr{O}_{m+n}$.
  \begin{proposition}\label{prop2.2}
  With respect to the normal coordinates, for the functions $s_j^k$ defined as \eqref{eq2.27}, we have:
  \begin{equation}\label{eq2.29}
  \begin{aligned}
  &s_{\beta(0)}^{\alpha}=s_{\bar{\beta}(0)}^{\bar{\alpha}}=\delta_{\beta}^{\alpha},{\quad}s_{\beta(0)}^{\bar{\alpha}}=s_{\bar{\beta}(0)}^{\alpha}=0,
  {\quad}s_{b(1)}^a=0,\\
  &s_{b(2)}^a=-\frac{1}{6}R_{d\ cb}^{\ a}(q)z^cz^d,{\quad}mod{\quad}\mathscr{A},\\
  &s_{b(0)}^0=s_{b(1)}^0=0,{\quad}s_{b(2)}^0=\frac{2}{3}J_{ba(1)}z^a,\\
  &s_{b(3)}^0=\frac{i}{12}R_{d\ cb}^{\ \bar{\alpha}}(q)z^{\alpha}z^cz^d-\frac{i}{12}R_{d\ cb}^{\ \alpha}(q)z^{\bar{\alpha}}z^cz^d-\frac{1}{2}J_{{\alpha}b(2)}z^{\alpha}-\frac{1}{2}J_{\bar{\alpha}b(2)}z^{\bar{\alpha}},\ mod\ \mathscr{A},\\
  &s_{0(0)}^a=0,{\quad}s_{0(0)}^0=1,{\quad}s_{0(1)}^0=0.
  \end{aligned}
  \end{equation}
  \end{proposition}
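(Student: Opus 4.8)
The plan is to obtain the coefficients $s_j^k$ of \eqref{eq2.27} by inverting the coframe expansion already recorded in Corollary \ref{cor2.1}. Dualizing $W_j=s_j^kZ_k$ gives $\Theta^k=s_j^k\theta^j$, so the matrix $(s_j^k)$ is the inverse of the matrix $(t_j^k)$ determined by $\theta^k=t_j^k\Theta^j$; concretely $t_j^k$ is the coefficient of the Heisenberg form $\Theta^j$ in the special form $\theta^k$. Since $dz^b=\Theta^b$ and, by \eqref{eq2.11}, $\Theta^0=\Theta=dt-iz^\alpha dz^{\bar\alpha}+iz^{\bar\alpha}dz^\alpha$, every formula of Corollary \ref{cor2.1} is already expressed in the coframe $\{\Theta^j\}$. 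Thus I would first extract $(t_j^k)$ and then invert it.

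First I record $(t_j^k)$. From $\theta^a_{(1)}=dz^a=\Theta^a$, $\theta^a_{(2)}=0$ and $\theta_{(2)}=\Theta$ one reads $t_b^a=\delta_b^a+\mathscr O_2$ and $t_0^0=1$, while $\theta^a$ carries no $\Theta^0$ term and $\theta$ carries no $\Theta^0$ term beyond $\theta_{(2)}=\Theta$; hence, modulo $\mathscr A$, $t_0^a=0$ and $t_0^0=1$ exactly, and $t_b^0\in\mathscr O_2$. Writing $t=\mathrm{Id}+\epsilon$, all entries of $\epsilon$ lie in $\mathscr O_2$, so $\epsilon^2\in\mathscr O_4$ and the Neumann series collapses to $s_j^k=\delta_j^k-\epsilon_j^k\ (\mathrm{mod}\ \mathscr O_4)$ through degree $3$: the only contribution is the single coefficient of $\Theta^j$ in $\theta^k$.

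Next I would read off the homogeneous parts. The degree-$0$ identities $s_{\beta(0)}^{\alpha}=s_{\bar\beta(0)}^{\bar\alpha}=\delta$, $s_{\beta(0)}^{\bar\alpha}=s_{\bar\beta(0)}^{\alpha}=0$, the vanishing $s_{b(1)}^a=s_{b(1)}^0=0$, and the entire row $s_0^k=\delta_0^k$ (giving $s_{0(0)}^a=0$, $s_{0(0)}^0=1$, $s_{0(1)}^0=0$) all follow at once from $t_b^a=\delta_b^a+\mathscr O_2$, $t_0^a=0$, $t_0^0=1$. Taking the coefficient of $\Theta^b$ in $\theta^a_{(3)}=\frac16R_{e\ cd}^{\ a}(q)z^ez^c\,dz^d$ gives $s_{b(2)}^a=-\frac16R_{d\ cb}^{\ a}(q)z^cz^d\ (\mathrm{mod}\ \mathscr A)$, and taking it in $\theta_{(3)}=\frac23J_{ac(1)}z^a\,dz^c$ gives $s_{b(2)}^0=-\frac23J_{ab(1)}z^a=\frac23J_{ba(1)}z^a$ by antisymmetry of $J_{ab}$. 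For $s_{b(3)}^0$ I take the coefficient of $\Theta^b$ in $\theta_{(4)}$; its second summand $\frac12J_{ac(2)}z^a\,dz^c$ yields $-\frac12J_{\alpha b(2)}z^\alpha-\frac12J_{\bar\alpha b(2)}z^{\bar\alpha}$.

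The main—indeed the only—nontrivial step is to put the curvature part of $\theta_{(4)}$, namely $-\frac1{12}J_{af}(q)R_{e\ cb}^{\ f}(q)z^az^cz^e$, into the stated form. Here I would insert the values $J_{\alpha\bar\beta}(q)=-i\delta_{\alpha\bar\beta}$, $J_{\alpha\beta}(q)=0$ from Proposition \ref{prop2.0d} together with their conjugates $J_{\bar\alpha\beta}(q)=i\delta_{\alpha\beta}$, $J_{\bar\alpha\bar\beta}(q)=0$: only the mixed index pairs survive, and summing them collapses the $J(q)$ factor and turns the term into $\frac{i}{12}R_{d\ cb}^{\ \bar\alpha}(q)z^\alpha z^cz^d-\frac{i}{12}R_{d\ cb}^{\ \alpha}(q)z^{\bar\alpha}z^cz^d$, exactly the first two summands of $s_{b(3)}^0$. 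Beyond this contraction the work is routine; the care required lies in the grading bookkeeping (keeping $\Theta^0$ at degree $2$ so that the inversion is organized correctly degree by degree) and in carrying the ``$\mathrm{mod}\ \mathscr A$'' convention consistently, which is harmless since every $A_a^b(q)$-term is set aside from the outset.
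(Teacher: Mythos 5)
Your proposal is correct and is essentially the paper's own argument: the paper likewise obtains each $s_{j(m)}^k$ by expanding the duality relations $\theta^a(W_b)=\delta_b^a$, $\theta(W_b)=0$, $\theta(T)=1$ degree by degree using Corollary \ref{cor2.1}, which is exactly your matrix inversion with the quadratic corrections discarded as $\mathscr{O}_4$. The only quibble is that $t_0^a$ is not exactly zero even mod $\mathscr{A}$ (the $\Theta$-component of $\theta^a$ is merely $\mathscr{O}_2$, e.g.\ via the $R_{a\ c0}^{\ b}z^c\theta$ term entering $\omega_{a(3)}^b$), but since this entry feeds into the required coefficients only at order $\mathscr{O}_4$ it does not affect any of the stated formulas.
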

  \begin{proof}
  By $\theta^a_{(1)}=dz^a$ and $\theta_{(2)}=\Theta$ in Corollary \ref{cor2.1},
  $\theta^a(W_b)=\delta_b^a$ and $\theta(W_b)=0$ lead to $W_{\beta(-1)}=Z_{\beta}$, $W_{\bar{\beta}(-1)}=Z_{\bar{\beta}}$. By $\theta^a(W_b)=\delta_b^a$ and $\theta_{(2)}^a=0$, we have
  \begin{align}
  \notag 0=\Bigg(\bigg(dz^a+\theta_{(2)}^a+\mathscr{O}_3)(Z_b+s_{b(1)}^cZ_c+s_{b(2)}^0Z_0+\mathscr{O}_1\bigg)\Bigg)_{(1)}
  =dz^a\big(s_{b(1)}^cZ_c\big)+\theta_{(2)}^a(Z_b)=s_{b(1)}^a.
  \end{align}
  Equivalently we can write $W_{b(0)}=s_{b(1)}^aZ_a+s_{b(2)}^0Z_0=0,\ mod\ Z_0.$
  And we have
  \begin{align}
  \notag 0&=\delta_{b(2)}^a=\Bigg(\bigg(dz^a+\theta_{(2)}^a+\theta_{(3)}^a+\mathscr{O}_4\bigg)
  \bigg(Z_b+W_{b(0)}+W_{b(1)}+\mathscr{O}_2\bigg)\Bigg)_{(2)}\\
  \notag &=\theta_{(3)}^a(Z_b)+dz^a(W_{b(1)})=\theta_{(3)}^a(Z_b)+s_{b(2)}^a,
  \end{align}
  and so
  \begin{equation}
  \notag s_{b(2)}^a=-\theta_{(3)}^a(Z_b)=-\frac{1}{6}R_{d\ cb}^{\ a}(q)z^cz^d,{\quad}mod{\quad}\mathscr{A}.
  \end{equation}
  By $\theta(W_b)=0$, we have
  $0=\Theta(W_{b(0)})+\theta_{(3)}(W_{b(-1)})=s_{b(2)}^0+\theta_{(3)}(Z_b)$, i.e. $s_{b(2)}^0=-\theta_{(3)}(Z_b)=-\frac{2}{3}J_{ab(1)}z^{a}=\frac{2}{3}J_{ba(1)}z^{a}$. Also $0=\theta_{(4)}(W_{b(-1)})+\theta_{(3)}(W_{b(0)})+\Theta(W_{b(1)})=\theta_{(4)}(Z_b)+\theta_{(3)}(s_{b(2)}^0Z_0)+\Theta(s_{b(2)}^aZ_a+s_{b(3)}^0Z_0)$, which gives
  \begin{align}
  \notag s_{b(3)}^0&=-\theta_{(4)}(Z_b)=-\frac{1}{12}J_{af}(q)R_{e\ cb}^{\ f}(q)z^az^cz^e-\frac{1}{2}J_{ab(2)}z^a,{\quad}mod{\quad}\mathscr{A}\\
  \notag &=-\frac{1}{12}J_{\alpha\bar{\rho}}(q)R_{d\ cb}^{\ \bar{\rho}}(q)z^{\alpha}z^cz^d-\frac{1}{12}J_{\bar{\alpha}\rho}(q)R_{d\ cb}^{\ \rho}(q)z^{\bar{\alpha}}z^cz^d-\frac{1}{2}J_{{\alpha}b(2)}z^{\alpha}-\frac{1}{2}J_{\bar{\alpha}b(2)}z^{\bar{\alpha}}\\
  \notag &=\frac{i}{12}R_{d\ cb}^{\ \bar{\alpha}}(q)z^{\alpha}z^cz^d-\frac{i}{12}R_{d\ cb}^{\ \alpha}(q)z^{\bar{\alpha}}z^cz^d-\frac{1}{2}J_{{\alpha}b(2)}z^{\alpha}-\frac{1}{2}J_{\bar{\alpha}b(2)}z^{\bar{\alpha}},{\quad}mod{\quad}\mathscr{A},
  \end{align}
  by \eqref{eq2.18} and \eqref{eq2.22}.

By $\theta(T)=1$, we get
$$1=\theta_{(2)}(W_{0(-2)})=\Theta\bigg(s_{0(0)}^0\frac{\partial}{\partial{t}}\bigg)=s_{0(0)}^0,$$
and $W_{0(-2)}=\frac{\partial}{\partial{t}}$. By the fact that $\theta_{(3)}$ has no $dt$ term (see \eqref{eq2.22}), we get
$$0=\theta_{(3)}(W_{0(-2)})+\theta_{(2)}(W_{0(-1)})=\Theta\bigg(s_{0(0)}^aZ_a+s_{0(1)}^0\frac{\partial}{\partial{t}}\bigg)
=s_{0(1)}^0.$$
By $\theta^a(T)=0$ and $\theta_{(2)}^a=0$ in \eqref{eq2.22}, we get
$$0=\theta_{(2)}^a(W_{0(-2)})+\theta_{(1)}^a(W_{0(-1)})=dz^a\bigg(s_{0(0)}^bZ_b\bigg)=s_{0(0)}^a.$$
We finish the proof of \eqref{eq2.29}.
\end{proof}
By Proposition \ref{prop2.2}, we have
\begin{equation}\label{eq2.36}
\begin{aligned}
&W_a=Z_a+\mathscr{O}_0=Z_a+\frac{2}{3}J_{ab(1)}z^b\frac{\partial}{\partial{t}}+\mathscr{O}_{1}
=Z_a+\frac{2}{3}J_{ab(1)}z^b\frac{\partial}{\partial{t}}+s_{a(2)}^bZ_b+s_{a(3)}^0Z_0,\\
&W_0=\frac{\partial}{\partial{t}}+\mathscr{O}_{0},
\end{aligned}
\end{equation}
where $s_{a(2)}^b$ and $s_{a(3)}^0$ are given by \eqref{eq2.29}. In our case, $W_a$ has extra term $\frac{2}{3}J_{ab(1)}z^b\frac{\partial}{\partial{t}}$, which vanishes in the CR case (cf. p. 314 in \cite{JL1}).
\begin{corollary}\label{cor2.3}
With respect to a special frame centered at $q$, the connection coefficients vanish at $q$, i.e.,
\begin{equation}\label{eq3.1b}
\Gamma_{jk}^l(q)=0,{\quad}\Gamma_{jk}^l\in\mathscr{O}_1.
\end{equation}
\end{corollary}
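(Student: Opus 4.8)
The plan is to read the connection coefficients off the connection $1$-forms and deduce their vanishing at $q$ directly from the homogeneity data already recorded in Corollary \ref{cor2.1} and \eqref{eq2.36}. Since $\nabla_{W_j}W_k=\omega_k^l(W_j)W_l$, we have $\Gamma_{jk}^l=\omega_k^l(W_j)$. For a smooth function $F$, membership in $\mathscr{O}_1$ means its Taylor expansion in normal coordinates has no homogeneous part of degree $0$, i.e. its constant term $F(q)$ vanishes; hence the two claims $\Gamma_{jk}^l(q)=0$ and $\Gamma_{jk}^l\in\mathscr{O}_1$ are equivalent, and it suffices to prove $\Gamma_{jk}^l\in\mathscr{O}_1$. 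Throughout I would use the grading rule that pairing a homogeneous $1$-form of degree $p$ with a homogeneous vector field of degree $q$ yields a function of degree $p+q$ (which follows from $\mathscr{L}_P\langle\phi,V\rangle=\langle\mathscr{L}_P\phi,V\rangle+\langle\phi,\mathscr{L}_PV\rangle$), and its filtered version for the $\mathscr{O}_m$.

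First I would dispose of the degenerate index ranges. By Proposition \ref{prop2.1} we have $\omega_0^l=0$ and $\omega_k^0=0$, so $\Gamma_{j0}^l=\omega_0^l(W_j)=0$ and $\Gamma_{jk}^0=\omega_k^0(W_j)=0$ identically; these lie trivially in $\mathscr{O}_1$. Thus only the purely horizontal coefficients $\Gamma_{jb}^c=\omega_b^c(W_j)$ with $b,c\in\{1,\dots,2n\}$ and $j\in\{0,1,\dots,2n\}$ remain. The key preliminary observation is that $\omega_b^c\in\mathscr{O}_2$: a smooth $1$-form written in normal coordinates is a combination of the $dz^a$ (parabolic degree $1$) and $dt$ (degree $2$) with power-series coefficients of non-negative degree, so it carries no homogeneous part of degree $\le0$, and combined with $\omega_{b(1)}^c=0$ from Corollary \ref{cor2.1} the lowest surviving term is the degree-$2$ part $\omega_{b(2)}^c=\tfrac12 R_{b\ ed}^{\ c}(q)z^e\,dz^d$.

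For $j=a$ horizontal, \eqref{eq2.36} gives $W_a=Z_a+\mathscr{O}_0$, where $Z_a$ has degree $-1$; pairing the degree-$\ge2$ form $\omega_b^c$ with $Z_a$ produces a function in $\mathscr{O}_{1}$, while pairing it with the $\mathscr{O}_0$ remainder produces $\mathscr{O}_2$, so $\Gamma_{ab}^c\in\mathscr{O}_1$. The one delicate case --- and the step I expect to be the main obstacle --- is $j=0$, that is $\Gamma_{0b}^c=\omega_b^c(T)$, because \eqref{eq2.36} gives $W_0=Z_0+\mathscr{O}_0$ with $Z_0=\partial/\partial t$ of parabolic degree $-2$, so a priori the degree-$2$ part of $\omega_b^c$ could contribute a nonzero constant and spoil the vanishing at $q$. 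The resolution is to use the explicit shape of $\omega_{b(2)}^c$ from Corollary \ref{cor2.1}: it involves only the forms $dz^d$ and has no $\Theta$- (equivalently no $dt$-) component, hence $\omega_{b(2)}^c(Z_0)=0$. Therefore the lowest surviving contribution is $\omega_{b(3)}^c(Z_0)$, which has degree $1$, and the $\mathscr{O}_0$ remainder of $W_0$ again contributes $\mathscr{O}_2$, giving $\Gamma_{0b}^c\in\mathscr{O}_1$. Collecting the three cases yields $\Gamma_{jk}^l\in\mathscr{O}_1$ for all $j,k,l$, and therefore $\Gamma_{jk}^l(q)=0$.
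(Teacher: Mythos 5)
Your argument is correct and follows essentially the same route as the paper: both reduce to the degenerate identities $\Gamma_{j0}^l=\Gamma_{jk}^0=0$ from Proposition \ref{prop2.1}, then use $\omega_{a(1)}^b=0$ from Corollary \ref{cor2.1} to kill $\Gamma_{ca}^b(q)$, and the absence of a $dt$-component in $\omega_{a(2)}^b$ to kill $\Gamma_{0a}^b(q)=\omega_{a(2)}^b(\partial/\partial t)$. Your bookkeeping via the filtration $\mathscr{O}_m$ is just a slightly more systematic packaging of the paper's degree-by-degree computation.
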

\begin{proof}
 By $\omega_{a(1)}^b=0$ in \eqref{eq2.22} we get $\Gamma_{ca}^b(q)=\omega_{a(1)}^b(W_{c(-1)})=0$. Again by \eqref{eq2.22},  $\omega_{a(1)}^b=0$ and $\omega_{a(2)}^b$ having no $dt$ term, we see that $\Gamma_{0a}^b(q)=\omega_{a(2)}^b(W_{0(-2)})+\omega_{a(1)}^b(W_{0(-1)})=0$. We also have $\Gamma_{j0}^{l}=0$ by $\nabla{T}=0$ and $\Gamma_{ab}^0=0$ by $\theta(\nabla_XY)=0$ for any $X,Y\in{HM}$. So \eqref{eq3.1b} follows.
 \end{proof}
\section{The asymptotic expansion of the almost complex structure, the curvature and Tanno tensors}
  In this section, we will discuss the curvature tensor,  the Tanno tensor and the almost complex structure with respect to the special frame centered at $q$.
\subsection{The Tanno tensor at point $q$}
  For the Tanno tensor, we write $Q(W_j,W_k)=(\nabla_{W_k}J)W_j=Q_{jk}^lW_l$. The components of the Tanno tensor $Q_{jk}^l$ can be written as:
  \begin{equation}\label{eq4.1}
  Q_{jk}^l=W_kJ_{\ j}^l-\Gamma_{kj}^sJ_{\ s}^l+\Gamma_{ks}^lJ_{\ j}^s.
  \end{equation}
So at the point $q$, we have
\begin{equation}\label{eq4.5a}
Q_{jk}^l(q)=W_kJ_{\ j}^l(q),
\end{equation}
 by \eqref{eq3.1b}. And noting that $\overline{Q_{\alpha\beta}^{\bar{\gamma}}}=Q_{\bar{\alpha}\bar{\beta}}^{\gamma}$ by definition, we set
\begin{equation}\label{eq4.2}
\mathfrak{Q}:=Q_{\alpha\beta}^{\bar{\gamma}}(q)Q_{\bar{\alpha}\bar{\beta}}^{\gamma}(q)
=\sum\limits_{\alpha,\beta,\gamma}|Q_{\alpha\beta}^{\bar{\gamma}}(q)|^2.
\end{equation}
\begin{proposition}\label{prop4.1}
With respect to a special frame centered at $q$, $J_{\ \alpha(1)}^{\gamma}=0$, $J_{\alpha\bar{\beta}(1)}=0$.
\end{proposition}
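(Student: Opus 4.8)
The plan is to reduce everything to one structural identity, namely that the Tanno tensor anticommutes with $J$ in the slot on which $J$ acts, and then to feed this into the degree-$1$ Taylor coefficient formula \eqref{eq2.12a}.

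First I would differentiate the defining relation $J^2=-Id+\theta\otimes T$ of \eqref{eq2.1}. Since the TWT connection acts as a derivation on $(1,1)$-tensors, $\nabla_Y(J\circ J)=(\nabla_YJ)\circ J+J\circ(\nabla_YJ)$, while $\nabla_Y(-Id+\theta\otimes T)=0$ because $\nabla\theta=0$ and $\nabla T=0$ by \eqref{TWT}. Recalling $Q(X,Y)=(\nabla_YJ)X$ and evaluating on $X$, this gives the key relation
\[
Q(JX,Y)=-J\,Q(X,Y),\qquad X,Y\in\Gamma(TM).
\]
Specializing $X=W_\alpha$ and working at $q$, where $JW_{\alpha}=iW_{\alpha}$ by Proposition \ref{prop2.0d}, the left side becomes $iQ(W_\alpha,Y)(q)$ by $\mathbb{C}$-linearity, so $J\big(Q(W_\alpha,Y)(q)\big)=-i\,Q(W_\alpha,Y)(q)$. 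Hence $Q(W_\alpha,Y)(q)$ lies in the $-i$-eigenspace $T_q^{(0,1)}M$ for every $Y$; in components this says exactly $Q_{\alpha k}^{\gamma}(q)=0$ for all $k$, and by conjugation $Q_{\bar\alpha k}^{\bar\gamma}(q)=0$ as well.

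Next I would convert this into a statement about the homogeneous part $J^{\gamma}_{\ \alpha(1)}$. By \eqref{eq2.12a} the only multi-indices of order $1$ are single horizontal indices, so $J^{\gamma}_{\ \alpha(1)}=z^{a}Z_{a}J^{\gamma}_{\ \alpha}(q)$. Since every correction term in \eqref{eq2.36} carries an explicit factor of $z$, the frame vectors satisfy $W_{a}(q)=Z_{a}(q)$, whence $Z_aJ^{\gamma}_{\ \alpha}(q)=W_aJ^{\gamma}_{\ \alpha}(q)=Q^{\gamma}_{\alpha a}(q)$ by \eqref{eq4.5a}. Combining this with the previous step yields $J^{\gamma}_{\ \alpha(1)}=z^{a}Q^{\gamma}_{\alpha a}(q)=0$. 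For the second assertion I would use that the horizontal metric components are constant near $q$ by \eqref{eq2.18}, so \eqref{eq2.16b} gives the pointwise identity $J_{\alpha\bar\beta}=J^{\bar\alpha}_{\ \bar\beta}$; applying the already-proved identity together with the conjugation rule of Corollary \ref{prop2.0f} then gives $J_{\alpha\bar\beta(1)}=J^{\bar\alpha}_{\ \bar\beta(1)}=\overline{J^{\alpha}_{\ \beta(1)}}=0$.

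The computations are short; the only step demanding care is the derivation of the anticommutation relation, where one must correctly apply the Leibniz rule to the composition $J\circ J$ and keep careful track of which slot of $Q$ is the differentiation direction and which is the one $J$ acts on, since a slot or sign error there would propagate through the whole argument. Once that relation is in hand, the eigenspace argument at $q$ and the bookkeeping with \eqref{eq2.12a}, \eqref{eq2.36} and \eqref{eq4.5a} are entirely routine.
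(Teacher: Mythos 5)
Your proof is correct, but it takes a genuinely different route from the paper's. The paper's argument is a two-line coordinate computation: it extracts the degree-$1$ homogeneous part of the component identity $J_{\ \alpha}^bJ_{\ b}^{\gamma}=-\delta_{\alpha}^{\gamma}$ and, using the degree-$0$ values $J_{\ \alpha(0)}^{\beta}=i\delta_{\alpha}^{\beta}$, $J_{\ \alpha(0)}^{\bar\beta}=J_{\ \bar\beta(0)}^{\gamma}=0$ from \eqref{eq2.18}, reads off $2iJ_{\ \alpha(1)}^{\gamma}=0$; no connection, no Tanno tensor. You instead differentiate the same identity $J^2=-Id+\theta\otimes T$ covariantly to get the anticommutation $Q(JX,Y)=-JQ(X,Y)$, use the eigenspace decomposition of $J$ at $q$ to conclude $Q_{\alpha k}^{\gamma}(q)=0$, and then translate back to the Taylor coefficient via \eqref{eq2.12a}, $W_a(q)=Z_a(q)$ and \eqref{eq4.5a}. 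All of these ingredients are available at this point (in particular \eqref{eq4.5a} rests only on Corollary \ref{cor2.3}, not on Proposition \ref{prop4.1}, so there is no circularity), and each step checks out. What your route costs is the extra input $\Gamma_{jk}^l(q)=0$, which the paper's proof does not need; what it buys is that you obtain $Q_{\alpha\beta}^{\gamma}(q)=Q_{\alpha\bar\beta}^{\gamma}(q)=0$ (and $Q_{\alpha k}^{0}(q)=0$) directly as a byproduct --- identities the paper proves only later, in Proposition \ref{prop4.2}, and there deduces \emph{from} Proposition \ref{prop4.1}. So you have effectively inverted the paper's logical order between these two statements, which is legitimate but worth being aware of if you go on to cite Proposition \ref{prop4.2}. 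Note also that your eigenvalue argument would not by itself give the remaining vanishing $Q_{\bar\beta\alpha}^{\gamma}(q)=0$ of Proposition \ref{prop4.2}, for which the paper needs the Nijenhuis-type formula \eqref{eq4.6}; but that is not claimed here.
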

\begin{proof}
Since $J^2W_{\alpha}=-W_{\alpha}$ means $J_{\ \alpha}^bJ_{\ b}^{\gamma}=-\delta_{\alpha}^{\gamma}$, we get
$$0=J_{\ \alpha(0)}^{\beta}J_{\ \beta(1)}^{\gamma}+
J_{\ \alpha(1)}^{\beta}J_{\ \beta(0)}^{\gamma}+J_{\ \alpha(0)}^{\bar{\beta}}J_{\ \bar{\beta}(1)}^{\gamma}
+J_{\ \alpha(1)}^{\bar{\beta}}J_{\ \bar{\beta}(0)}^{\gamma}=2iJ_{\ \alpha(1)}^{\gamma},$$
by \eqref{eq2.18}. Hence $J_{\ \alpha(1)}^{\gamma}=0$.
And so $J_{\alpha\bar{\beta}(1)}=h_{\alpha\bar{\gamma}}J_{\ \bar{\beta}(1)}^{\bar{\gamma}}=0$.
\end{proof}
  \begin{proposition}\label{prop4.2}
  With respect to a special frame centered at $q$, components of the Tanno tensor Q at $q$ satisfy the following relations:
  \begin{equation}\label{Q}
  \begin{aligned}
  &Q_{ab}^0=Q_{0j}^k=Q_{i0}^k=0,\\
  &Q_{\alpha\beta}^{\gamma}(q)=Q_{\alpha\bar{\beta}}^{\gamma}(q)=Q_{\bar{\beta}\alpha}^{\gamma}(q)=0,\\
  &Q_{\beta\alpha}^{\bar{\rho}}(q)Q_{\bar{\alpha}\bar{\beta}}^{\rho}(q)=\frac{1}{2}\mathfrak{Q}.
  \end{aligned}
  \end{equation}
  In particular, $Q(q)\ne0$ if and only if $\mathfrak{Q}>0$.
  \end{proposition}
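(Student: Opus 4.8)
The plan is to reduce the whole statement to two structural identities for the Tanno tensor and then simply read off components in the special frame at $q$, where by Proposition \ref{prop2.0d} the operator $J$ acts as $+i$ on the $W_\alpha$ and $-i$ on the $W_{\bar\alpha}$, and where $T_q^{(1,0)}M$, $T_q^{(0,1)}M$ are $h$-isotropic. The first line is elementary: since $\theta\circ J\equiv0$ and $\nabla\theta=0$ one gets $\theta(Q(X,Y))=0$, i.e. $Q_{ab}^0=0$; since $JT=0$ and $\nabla T=0$ one gets $Q(T,\cdot)=(\nabla_\cdot J)T=0$, i.e. $Q_{0j}^k=0$; and $Q_{i0}^k=0$ is the statement $\nabla_TJ=0$. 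I would obtain the last one from $(\nabla_T d\theta)(X,Y)=h(X,(\nabla_TJ)Y)$ together with the comparison $\nabla_Td\theta=\mathscr L_Td\theta-\bigl(d\theta(\tau_\ast X,Y)+d\theta(X,\tau_\ast Y)\bigr)$: the Lie derivative vanishes because $T$ is the Reeb field, and the torsion terms cancel by the self-adjointness of $\tau_\ast$ and the relation $\tau_\ast\circ J+J\circ\tau_\ast=0$ from Lemma \ref{lem2.1}, so $\nabla_Td\theta=0$ forces $(\nabla_TJ)Y=0$ as $h$ is definite on $HM$.

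The two decisive identities are: (i) the eigen-identity $Q(JX,Y)=-JQ(X,Y)$, obtained by applying $\nabla$ to $J^2=-\mathrm{Id}+\theta\otimes T$; and (ii) a first Bianchi-type identity from $d(d\theta)=0$. For (ii) I would first note $(\nabla_Z d\theta)(X,Y)=h(X,Q(Y,Z))$ (a consequence of $\nabla h=0$ and $d\theta=h(\cdot,J\cdot)$), and then expand $d(d\theta)=0$; for horizontal $X,Y,Z$ every torsion contribution drops out because $\tau(X,Y)=2d\theta(X,Y)T$ and $d\theta(T,\cdot)=\theta(J\cdot)=0$, leaving the cyclic relation
\[
h(X,Q(Y,Z))+h(Y,Q(Z,X))+h(Z,Q(X,Y))=0 .
\]
Evaluating (i) at $q$ gives $Q(W_\alpha,\cdot)\in T_q^{(0,1)}M$ and, by conjugation, $Q(W_{\bar\alpha},\cdot)\in T_q^{(1,0)}M$; this kills $Q_{\alpha\beta}^{\gamma}(q)=Q_{\alpha\bar\beta}^{\gamma}(q)=0$ at once. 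For the remaining $Q_{\bar\beta\alpha}^{\gamma}(q)=0$ I would put $X=W_{\bar\gamma},Y=W_{\bar\beta},Z=W_\alpha$ into the cyclic identity: the term $h(W_{\bar\beta},Q(W_\alpha,W_{\bar\gamma}))$ vanishes because $Q(W_\alpha,W_{\bar\gamma})\in T_q^{(0,1)}M$ and $T_q^{(0,1)}M$ is isotropic, the term $h(W_\alpha,Q(W_{\bar\gamma},W_{\bar\beta}))$ vanishes because $Q(W_{\bar\gamma},W_{\bar\beta})\in T_q^{(1,0)}M$ is isotropic against $W_\alpha$, and the surviving term is exactly $Q_{\bar\beta\alpha}^{\gamma}(q)$.

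For the last line I would exploit the same two identities on the surviving components $Q_{\alpha\beta}^{\bar\gamma}(q)$. The antisymmetry of the $2$-form $\nabla_Z d\theta$ yields $Q_{\alpha\beta}^{\bar\gamma}(q)=-Q_{\gamma\beta}^{\bar\alpha}(q)$ (antisymmetry between the base and the output index), while the cyclic identity read on three holomorphic slots yields $Q_{\alpha\beta}^{\bar\gamma}(q)+Q_{\beta\gamma}^{\bar\alpha}(q)+Q_{\gamma\alpha}^{\bar\beta}(q)=0$. A short manipulation then suffices: contracting the cyclic relation against $\overline{Q_{\alpha\beta}^{\bar\gamma}(q)}$ and using the first antisymmetry to rewrite $Q_{\beta\alpha}^{\bar\rho}$, one finds that $S:=Q_{\beta\alpha}^{\bar\rho}(q)\,Q_{\bar\alpha\bar\beta}^{\rho}(q)$ satisfies $S=\mathfrak Q+C$ and $C=-S$, whence $S=\tfrac12\mathfrak Q$; equivalently, the two relations force the totally antisymmetric part of $Q_{\alpha\beta}^{\bar\gamma}(q)$ to vanish, which is exactly what produces the constant $\tfrac12$. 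Finally the \emph{if and only if} is immediate: by the first two lines every component of $Q(q)$ vanishes except $Q_{\alpha\beta}^{\bar\gamma}(q)$ and its conjugate, so $Q(q)=0$ precisely when $\mathfrak Q=0$.

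I expect the main obstacle to be (ii): deriving the first Bianchi identity for $Q$ in the paper's $\tfrac12$-normalization of $d$, and above all verifying that the torsion terms genuinely cancel on horizontal arguments. This one fact does double duty — it is what kills $Q_{\bar\beta\alpha}^{\gamma}(q)$ and what supplies the cyclic relation responsible for the exact factor $\tfrac12$. A secondary care point, pervasive throughout, is the bookkeeping of the $\pm i$ eigenvalues of $J$ at $q$ together with the $h$-isotropy of $T_q^{(1,0)}M$ and $T_q^{(0,1)}M$, since it is precisely this isotropy that makes the unwanted terms disappear in each contraction.
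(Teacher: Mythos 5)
Your proposal is correct, but it reaches the conclusion by a genuinely different route from the paper. The paper proves the two non-trivial claims ($Q_{\bar{\beta}\alpha}^{\gamma}(q)=0$ and $Q_{\beta\alpha}^{\bar{\rho}}(q)Q_{\bar{\alpha}\bar{\beta}}^{\rho}(q)=\tfrac12\mathfrak{Q}$) by importing the Blair--Dragomir formula \eqref{eq4.6} for $2h(Q(X,Y),Z)$ in terms of $N^{(1)}=[J,J]+2d\theta\otimes T$, expanding all Lie brackets into connection coefficients to obtain \eqref{eq4.8}, and evaluating at $q$ where $\Gamma_{jk}^l(q)=0$; the vanishing of $Q_{\alpha\beta}^{\gamma}(q)$, $Q_{\alpha\bar{\beta}}^{\gamma}(q)$ comes from $Q_{\alpha\beta}^{\gamma}(q)=W_{\beta}J_{\ \alpha}^{\gamma}(q)$ together with Proposition \ref{prop4.1}, and $Q_{i0}^k=0$ from \eqref{eq4.6} with $Y=T$. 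You instead work from three pointwise structural identities: (i) $Q(JX,Y)=-JQ(X,Y)$, from differentiating $J^2=-\mathrm{Id}+\theta\otimes T$ with $\nabla\theta=\nabla T=0$; (ii) the antisymmetry of $\nabla_Z d\theta$, which gives $Q_{\alpha\beta}^{\bar{\gamma}}(q)=-Q_{\gamma\beta}^{\bar{\alpha}}(q)$; and (iii) the cyclic identity $\sum_{\mathrm{cyc}}h(X,Q(Y,Z))=0$ on horizontal vectors from $d(d\theta)=0$, the torsion contributions dying because $\tau(X,Y)=2d\theta(X,Y)T$ and $T\lrcorner d\theta=0$. I checked each step: (i) at $q$ places $Q(W_{\alpha;q},\cdot)$ in $T_q^{(0,1)}M$ and kills $Q_{\alpha\beta}^{\gamma}(q)$ and $Q_{\alpha\bar{\beta}}^{\gamma}(q)$; (iii) combined with the isotropy $h_{\alpha\beta}=h_{\bar{\alpha}\bar{\beta}}=0$ kills $Q_{\bar{\beta}\alpha}^{\gamma}(q)$; (ii) and (iii) together yield $Q_{\beta\alpha}^{\bar{\rho}}(q)=Q_{\alpha\beta}^{\bar{\rho}}(q)-Q_{\alpha\rho}^{\bar{\beta}}(q)$, which is equivalent (via (ii)) to the relation $Q_{\beta\alpha}^{\bar{\rho}}(q)=Q_{\beta\rho}^{\bar{\alpha}}(q)-Q_{\rho\beta}^{\bar{\alpha}}(q)$ that the paper extracts from \eqref{eq4.8}, and the contraction closes to give the factor $\tfrac12$. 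Your variant proof of $Q_{i0}^k=0$ via $\nabla_T d\theta=\mathscr{L}_T d\theta-d\theta(\tau_{\ast}\cdot,\cdot)-d\theta(\cdot,\tau_{\ast}\cdot)=0$, using Lemma \ref{lem2.1}, is also sound since $h$ is nondegenerate. What your route buys is economy and transparency: it avoids \eqref{eq4.6} and the bracket expansion \eqref{eq4.8} entirely, and uses the special frame only through $h_{\alpha\bar{\beta}}=\delta_{\alpha\bar{\beta}}$, $h_{\alpha\beta}=0$ and $J_qW_{\alpha;q}=iW_{\alpha;q}$. What the paper's computation buys is the first-order identity \eqref{eq4.8} itself, which is reused in the proof of Proposition \ref{prop4.4} to determine $\Gamma_{\alpha\bar{\beta}(1)}^{\gamma}$ and hence the curvature components at $q$; your pointwise identities would not by themselves supply that information, so the paper would still need \eqref{eq4.8} later.
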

\begin{proof}
$Q_{ab}^0=0$ follows from that for any $X,Y\in{HM}$, we have
\begin{align}
\notag \theta(Q(X,Y))&=\theta((\nabla_YJ)X)=h(T,(\nabla_YJ)X)=h(T,\nabla_Y{(JX)})-h(T,J\nabla_YX)\\
\notag &=\theta(\nabla_Y{(JX)})-\theta(J\nabla_YX)=0,
\end{align}
by \eqref{eq2.1} and $JX$, $\nabla_Y{(JX)}$, $J\nabla_YX$ all being horizontal.

$Q_{0j}^k=0$ follows from $Q(T,Y)=(\nabla_YJ)T=\nabla_Y(JT)-J\nabla_YT=0$ by $JT=0$ and $\nabla{T}=0$. By \eqref{eq4.5a} and Proposition \ref{prop4.1}, $Q_{\alpha\beta}^{\gamma}(q)=W_{\beta}J_{\ \alpha}^{\gamma}(q)=W_{\beta(-1)}J_{\ \alpha(1)}^{\gamma}+W_{\beta(0)}(J_{\ \alpha(0)}^{\gamma})=0$. Similarly, $Q_{\alpha\bar{\beta}}^{\gamma}(q)=0$.

To prove $Q_{i0}^k=0$ and $Q_{\bar{\beta}\alpha}^{\gamma}(q)=0$, recall that
\begin{equation}\label{eq4.6}
2h(Q(X,Y),Z)=h\bigg(N^{(1)}(X,Z)-\theta(X)N^{(1)}(T,Z)-\theta(Z)N^{(1)}(X,T),JY\bigg),
\end{equation}
for any $X,Y,Z\in{TM}$ (cf. (15) in \cite{BD1}), where
$$N^{(1)}=[J,J]+2(d\theta)\otimes{T},{\quad}[J,J](X,Y)=J^2[X,Y]+[JX,JY]-J[JX,Y]-J[X,JY].$$

$Q_{i0}^k=0$ is equivalent to $Q(X,T)=0$, for any $X\in{TM}$. Apply $Y=T$ to \eqref{eq4.6} to get $h(Q(X,T),Z)=0$ for any $X,Z\in{TM}$ by $JT=0$.

Substituting $X=W_{\bar{\beta}},Y=W_a,Z=W_{\bar{\rho}}$ into \eqref{eq4.6}, and noting that $h(JX,JY)=h(X,Y)$ for any $X,Y\in{HM}$, we get
  \begin{align}\label{eq4.7}
  \notag 2h_{\gamma\bar{\rho}}&Q_{\bar{\beta}a}^{\gamma}=h(N^{(1)}(W_{\bar{\beta}},W_{\bar{\rho}}),JW_a)=h([J,J](W_{\bar{\beta}},W_{\bar{\rho}}),JW_a)\\
  \notag &=h(J^2[W_{\bar{\beta}},W_{\bar{\rho}}]+[JW_{\bar{\beta}},JW_{\bar{\rho}}]-J[JW_{\bar{\beta}},W_{\bar{\rho}}]-J[W_{\bar{\beta}},JW_{\bar{\rho}}],JW_a)\\
  &=-h([W_{\bar{\beta}},W_{\bar{\rho}}],JW_a)+h([JW_{\bar{\beta}},JW_{\bar{\rho}}],JW_a)
  -h([JW_{\bar{\beta}},W_{\bar{\rho}}],W_a)-h([W_{\bar{\beta}},JW_{\bar{\rho}}],W_a).
  \end{align}
  by using \eqref{eq2.1}.
  Note that $[X,Y]=\nabla_XY-\nabla_YX-\tau(X,Y)$ and $\tau(X,Y)=0\ {mod}\ T$, for any $X,Y\in{HM}$. We find that
  \begin{align}
  \notag [W_{\bar{\beta}},W_{\bar{\rho}}]&=\nabla_{W_{\bar{\beta}}}W_{\bar{\rho}}-\nabla_{W_{\bar{\rho}}}W_{\bar{\beta}}=\Gamma_{\bar{\beta}\bar{\rho}}^cW_c-\Gamma_{\bar{\rho}\bar{\beta}}^cW_c,{\quad}mod{\quad} T,\\
  \notag [JW_{\bar{\beta}},JW_{\bar{\rho}}]&=\nabla_{J_{\ \bar{\beta}}^cW_c}(J_{\ \bar{\rho}}^dW_d)-\nabla_{J_{\ \bar{\rho}}^dW_d}(J_{\ \bar{\beta}}^cW_c)\\
  \notag &=J_{\ \bar{\beta}}^c(W_cJ_{\ \bar{\rho}}^d)W_d+J_{\ \bar{\beta}}^cJ_{\ \bar{\rho}}^e\Gamma_{ce}^dW_d-J_{\ \bar{\rho}}^c(W_cJ_{\ \bar{\beta}}^d)W_d-J_{\ \bar{\rho}}^eJ_{\ \bar{\beta}}^c\Gamma_{ec}^dW_d,{\quad}mod{\quad}T,\\
  \notag [JW_{\bar{\beta}},W_{\bar{\rho}}]&=-W_{\bar{\rho}}J_{\ \bar{\beta}}^dW_d+J_{\ \bar{\beta}}^c\Gamma_{c\bar{\rho}}^dW_d-J_{\ \bar{\beta}}^c\Gamma_{\bar{\rho}c}^dW_d{\quad}mod{\quad}T,\\
  \notag [W_{\bar{\beta}},JW_{\bar{\rho}}]&=W_{\bar{\beta}}J_{\ \bar{\rho}}^dW_d+J_{\ \bar{\rho}}^c\Gamma_{\bar{\beta}c}^dW_d-J_{\ \bar{\rho}}^c\Gamma_{c\bar{\beta}}^dW_d{\quad}mod{\quad}T.
  \end{align}
  Then \eqref{eq4.7} becomes
  \begin{align}\label{eq4.8}
  \notag 2Q_{\bar{\beta}a}^{\rho}&=\big(-\Gamma_{\bar{\beta}\bar{\rho}}^d+\Gamma_{\bar{\rho}\bar{\beta}}^d\big)J_{\ a}^fh_{df}+\bigg(J_{\ \bar{\beta}}^c(W_cJ_{\ \bar{\rho}}^d)+J_{\ \bar{\beta}}^cJ_{\ \bar{\rho}}^e\Gamma_{ce}^d-J_{\ \bar{\rho}}^c(W_cJ_{\ \bar{\beta}}^d)-J_{\ \bar{\rho}}^eJ_{\ \bar{\beta}}^c\Gamma_{ec}^d\bigg)J_{\ a}^fh_{df}\\
  &+\big(W_{\bar{\rho}}J_{\ \bar{\beta}}^d-J_{\ \bar{\beta}}^c\Gamma_{c\bar{\rho}}^d+J_{\ \bar{\beta}}^c\Gamma_{\bar{\rho}c}^d\big)h_{ad}+\big(-W_{\bar{\beta}}J_{\ \bar{\rho}}^d-J_{\ \bar{\rho}}^c\Gamma_{\bar{\beta}c}^d+J_{\ \bar{\rho}}^c\Gamma_{c\bar{\beta}}^d\big)h_{ad},
  \end{align}
  by writing $JW_a=J_{\ a}^fW_f$ and $h_{\gamma\bar{\rho}}=\delta_{\gamma\bar{\rho}}$. \eqref{eq4.8} will be used later.

  At the point $q$, \eqref{eq4.8} for $a=\alpha$ becomes
  \begin{align}
  \notag 2Q_{\bar{\beta}\alpha}^{\rho}(q)&=\bigg(J_{\ \bar{\beta}}^c(q)(W_cJ_{\ \bar{\rho}}^d)(q)-J_{\ \bar{\rho}}^c(q)(W_cJ_{\ \bar{\beta}}^d)(q)\bigg)J_{\ \alpha}^f(q)h_{df}+W_{\bar{\rho}}J_{\ \bar{\beta}}^d(q)h_{\alpha{d}}-W_{\bar{\beta}}J_{\ \bar{\rho}}^d(q)h_{\alpha{d}}\\
  \notag &=W_{\bar{\beta}}J_{\ \bar{\rho}}^{\bar{\mu}}(q)h_{\alpha\bar{\mu}}-W_{\bar{\rho}}J_{\ \bar{\beta}}^{\bar{\mu}}(q)h_{\alpha\bar{\mu}}+W_{\bar{\rho}}J_{\ \bar{\beta}}^{\bar{\mu}}(q)h_{\alpha\bar{\mu}}-W_{\bar{\beta}}J_{\ \bar{\rho}}^{\bar{\mu}}(q)h_{\alpha\bar{\mu}}=0.
  \end{align}
  by vanishing of connection coefficients \eqref{eq3.1b}. So $Q_{\bar{\beta}\alpha}^{\gamma}(q)=0$.

  It remains to prove the last identity in \eqref{Q}. Similarly at the point $q$,  for $a=\bar{\alpha}$, \eqref{eq4.8} becomes
  \begin{align}
  \notag 2Q_{\bar{\beta}\bar{\alpha}}^{\rho}(q)&=2h_{\gamma\bar{\rho}}Q_{\bar{\beta}\bar{\alpha}}^{\gamma}(q)=J_{\ \bar{\beta}}^c(q)(W_cJ_{\ \bar{\rho}}^d)(q)J_{\ \bar{\alpha}}^f(q)h_{df}-J_{\ \bar{\rho}}^c(q)(W_cJ_{\ \bar{\beta}}^d)(q)J_{\ \bar{\alpha}}^f(q)h_{df}\\
  \notag &\ \ \ +W_{\bar{\rho}}J_{\ \bar{\beta}}^d(q)h_{\bar{\alpha}{d}}-W_{\bar{\beta}}J_{\ \bar{\rho}}^d(q)h_{\bar{\alpha}{d}}\\
  \notag &=-W_{\bar{\beta}}J_{\ \bar{\rho}}^{\mu}(q)h_{\bar{\alpha}\mu}+W_{\bar{\rho}}J_{\ \bar{\beta}}^{\mu}(q)h_{\bar{\alpha}\mu}+W_{\bar{\rho}}J_{\ \bar{\beta}}^{\mu}(q)h_{\bar{\alpha}\mu}-W_{\bar{\beta}}J_{\ \bar{\rho}}^{\mu}(q)h_{\bar{\alpha}\mu}\\
  \notag &=-2Q_{\bar{\rho}\bar{\beta}}^{\mu}(q)h_{\mu\bar{\alpha}}+2Q_{\bar{\beta}\bar{\rho}}^{\mu}(q)h_{\mu\bar{\alpha}}
  =2Q_{\bar{\beta}\bar{\rho}}^{\alpha}(q)-2Q_{\bar{\rho}\bar{\beta}}^{\alpha}(q)
  \end{align}
  by \eqref{eq4.5a}. Taking conjugate on the both sides of last equation, we get $Q_{\beta\alpha}^{\bar{\rho}}(q)=Q_{\beta\rho}^{\bar{\alpha}}(q)-Q_{\rho\beta}^{\bar{\alpha}}(q).$
  Then we have
\begin{align}
\notag \sum\limits_{\alpha,\beta,\rho}|Q_{\beta\alpha}^{\bar{\rho}}(q)|^2&=\sum\limits_{\alpha,\beta,\rho}Q_{\beta\alpha}^{\bar{\rho}}(q)Q_{\bar{\beta}\bar{\alpha}}^{\rho}(q)
=\sum\limits_{\alpha,\beta,\rho}\bigg(Q_{\beta\rho}^{\bar{\alpha}}(q)-Q_{\rho\beta}^{\bar{\alpha}}(q)\bigg)\bigg(Q_{\bar{\beta}\bar{\rho}}^{\alpha}(q)-Q_{\bar{\rho}\bar{\beta}}^{\alpha}(q)\bigg)\\
\notag &=\sum\limits_{\alpha,\beta,\rho}2|Q_{\beta\rho}^{\bar{\alpha}}(q)|^2-2\sum\limits_{\alpha,\beta,\rho}Q_{\beta\rho}^{\bar{\alpha}}(q)Q_{\bar{\rho}\bar{\beta}}^{\alpha}(q).
\end{align}
By changing indices, we get $2Q_{\alpha\beta}^{\bar{\gamma}}(q)Q_{\bar{\beta}\bar{\alpha}}^{\gamma}(q)=\sum\limits_{\alpha,\beta,\gamma}|Q_{\alpha\beta}^{\bar{\gamma}}(q)|^2=\mathfrak{Q}$.
\end{proof}
\begin{remark}
On a contact Riemannian manifold, if we choose a local $T^{(1,0)}M$-frame, only $Q_{\alpha\beta}^{\bar{\gamma}}$ is non-vanishing for the Tanno tensor $Q$ (cf. (16)-(18) in \cite{BD1}). Here we have the similar property at point $q$.
\end{remark}
\subsection{The asymptotic expansion of the almost complex structure at point $q$}
\begin{proposition}\label{prop4.3}
  With respect to a special frame centered at $q$, we have
  \begin{equation}\label{eq4.9}
  J_{\alpha\beta(1)}=J_{\ \beta(1)}^{\bar{\alpha}}=Q_{\beta\gamma}^{\bar{\alpha}}(q)z^{\gamma},
  \end{equation}
  \begin{equation}\label{eq4.10}
  J_{\alpha\bar{\beta}(2)}=\frac{i}{2}Q_{\alpha\lambda}^{\bar{\gamma}}(q)Q_{\bar{\beta}\bar{\mu}}^{\gamma}(q)z^{\lambda}z^{\bar{\mu}}.
  \end{equation}
  \end{proposition}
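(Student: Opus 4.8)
The plan is to reduce everything to the homogeneous parts of the mixed components $J_{\ \beta}^{\bar\alpha}$ and $J_{\ \bar\beta}^{\bar\alpha}$. Since the metric is constant in a special frame ($h_{\alpha\bar\gamma}=\delta_{\alpha\bar\gamma}$ by \eqref{eq2.18}), the identity $J_{ab}=h_{ac}J_{\ b}^c$ from \eqref{eq2.16b} gives $J_{\alpha\beta}=J_{\ \beta}^{\bar\alpha}$ and $J_{\alpha\bar\beta}=J_{\ \bar\beta}^{\bar\alpha}$ term by term in the homogeneity expansion, so it suffices to compute $J_{\ \beta(1)}^{\bar\alpha}$ and $J_{\ \bar\beta(2)}^{\bar\alpha}$. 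Two tools drive the computation: for the degree-one part, the Taylor formula \eqref{eq2.12a} together with the identification \eqref{eq4.5a} of first derivatives of $J$ with the Tanno tensor; for the degree-two part, the purely algebraic constraint $J^2=-Id$ on $HM$ coming from \eqref{eq2.1}, which lets us trade the unknown second-order quantity for products of already-known first-order quantities.

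For the first identity, I would apply \eqref{eq2.12a} with $m=1$ to $F=J_{\ \beta}^{\bar\alpha}$, which has no constant term since $J_{\ \beta}^{\bar\alpha}(q)=0$ by \eqref{eq2.18}; this yields $J_{\ \beta(1)}^{\bar\alpha}=z^{\gamma}Z_{\gamma}J_{\ \beta}^{\bar\alpha}(q)+z^{\bar\gamma}Z_{\bar\gamma}J_{\ \beta}^{\bar\alpha}(q)$. The next point is that $W_a-Z_a$ has coefficients lying in $\mathscr{O}_1$ by the expansion \eqref{eq2.36} (equivalently Proposition \ref{prop2.2}), so these coefficients vanish at $q$ and hence $Z_cF(q)=W_cF(q)$ for any smooth $F$. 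Thus $Z_{\gamma}J_{\ \beta}^{\bar\alpha}(q)=W_{\gamma}J_{\ \beta}^{\bar\alpha}(q)=Q_{\beta\gamma}^{\bar\alpha}(q)$ and $Z_{\bar\gamma}J_{\ \beta}^{\bar\alpha}(q)=Q_{\beta\bar\gamma}^{\bar\alpha}(q)$ by \eqref{eq4.5a}. The second of these vanishes: conjugating the relation $Q_{\bar\beta\alpha}^{\gamma}(q)=0$ from Proposition \ref{prop4.2} gives $Q_{\beta\bar\gamma}^{\bar\alpha}(q)=0$, so the $z^{\bar\gamma}$ term drops out and $J_{\ \beta(1)}^{\bar\alpha}=Q_{\beta\gamma}^{\bar\alpha}(q)z^{\gamma}$, as claimed.

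For the second identity, I would start from $J_{\ \bar\beta}^{b}J_{\ b}^{\bar\alpha}=-\delta_{\beta}^{\alpha}$, split the sum over horizontal $b$ into $\rho$ and $\bar\rho$, and take the degree-two homogeneous part of both sides; the right side contributes nothing. Using the Leibniz rule for homogeneous parts together with the known data --- the degree-zero values $J_{\ \rho(0)}^{\bar\alpha}=0$ and $J_{\ \bar\rho(0)}^{\bar\alpha}=-i\delta_{\rho}^{\alpha}$ from \eqref{eq2.18}; the degree-one vanishings $J_{\ \bar\rho(1)}^{\bar\alpha}=0$ from Proposition \ref{prop4.1}; and the degree-one values $J_{\ \rho(1)}^{\bar\alpha}=Q_{\rho\lambda}^{\bar\alpha}(q)z^{\lambda}$ together with its conjugate $J_{\ \bar\beta(1)}^{\rho}=Q_{\bar\beta\bar\mu}^{\rho}(q)z^{\bar\mu}$ just obtained --- all terms collapse except one cross term and two copies of the unknown $J_{\ \bar\beta(2)}^{\bar\alpha}$, leaving $Q_{\bar\beta\bar\mu}^{\rho}(q)Q_{\rho\lambda}^{\bar\alpha}(q)z^{\bar\mu}z^{\lambda}-2iJ_{\ \bar\beta(2)}^{\bar\alpha}=0$. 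Solving gives $J_{\ \bar\beta(2)}^{\bar\alpha}=-\tfrac{i}{2}Q_{\rho\lambda}^{\bar\alpha}(q)Q_{\bar\beta\bar\mu}^{\rho}(q)z^{\lambda}z^{\bar\mu}$.

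The main obstacle is the final, cosmetic-looking but essential step of matching this to the stated form $\tfrac{i}{2}Q_{\alpha\lambda}^{\bar\gamma}(q)Q_{\bar\beta\bar\mu}^{\gamma}(q)z^{\lambda}z^{\bar\mu}$, which carries a different index placement and the opposite sign. The two expressions agree precisely because of the antisymmetry $Q_{\rho\lambda}^{\bar\alpha}(q)=-Q_{\alpha\lambda}^{\bar\rho}(q)$ at $q$. This antisymmetry is not among those displayed in Proposition \ref{prop4.2}, but I expect it to follow by applying the relation $Q_{pq}^{\bar r}(q)=Q_{pr}^{\bar q}(q)-Q_{rp}^{\bar q}(q)$ established in the proof of Proposition \ref{prop4.2} twice: once as $Q_{\rho\lambda}^{\bar\alpha}=Q_{\rho\alpha}^{\bar\lambda}-Q_{\alpha\rho}^{\bar\lambda}$ and once as $Q_{\alpha\lambda}^{\bar\rho}=Q_{\alpha\rho}^{\bar\lambda}-Q_{\rho\alpha}^{\bar\lambda}$, whose sum vanishes. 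Verifying this identity, and confirming that the curvature and torsion contributions genuinely cancel in the degree-two step (which hinges on the degree-one vanishings of Proposition \ref{prop4.1}), is where the care is needed; once it is in hand, both identities of the proposition follow.
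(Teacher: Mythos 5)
Your proposal is correct and follows essentially the same route as the paper: the degree-one part via the Taylor formula \eqref{eq2.12a} and $Q_{jk}^l(q)=W_kJ_{\ j}^l(q)$, and the degree-two part via the homogeneous expansion of $J_{\ a}^bJ_{\ b}^c=-\delta_a^c$. The only divergence is at the final sign flip: the antisymmetry $Q_{\gamma\lambda}^{\bar{\alpha}}(q)=-Q_{\alpha\lambda}^{\bar{\gamma}}(q)$, which you derive (validly) by iterating the identity from the proof of Proposition \ref{prop4.2}, is obtained in the paper more directly from the last identity $J_{\ \alpha}^{\bar{\beta}}=-J_{\ \beta}^{\bar{\alpha}}$ of \eqref{eq2.18}, whose degree-one part gives $J_{\ \rho(1)}^{\bar{\alpha}}=-J_{\ \alpha(1)}^{\bar{\rho}}$.
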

  \begin{proof}
  By $Q_{\beta\bar{\gamma}}^{\bar{\alpha}}(q)=0$ in \eqref{Q} and the expansion \eqref{eq2.12a}, we get $$J_{\alpha\beta(1)}=h_{\alpha\bar{\rho}}J_{\ \beta(1)}^{\bar{\rho}}=J_{\ \beta(1)}^{\bar{\alpha}}=z^cZ_cJ_{\ \beta}^{\bar{\alpha}}(q)=z^cQ_{\beta{c}}^{\bar{\alpha}}(q)=Q_{\beta\gamma}^{\bar{\alpha}}(q)z^{\gamma}.$$
  It follows from $J_{\ \alpha}^bJ_{\ b}^{\gamma}=-\delta_{\alpha}^{\gamma}$ that
  $$0=J_{\ \alpha(0)}^{\beta}J_{\ \beta(2)}^{\gamma}+J_{\ \alpha(1)}^{\beta}J_{\ \beta(1)}^{\gamma}+J_{\ \alpha(2)}^{\beta}J_{\ \beta(0)}^{\gamma}+J_{\ \alpha(0)}^{\bar{\beta}}J_{\ \bar{\beta}(2)}^{\gamma}+J_{\ \alpha(1)}^{\bar{\beta}}J_{\ \bar{\beta}(1)}^{\gamma}+J_{\ \alpha(2)}^{\bar{\beta}}J_{\ \bar{\beta}(0)}^{\gamma}.$$
  By \eqref{eq2.18} and Proposition \ref{prop4.1}, we get $2iJ_{\ \alpha(2)}^{\gamma}+J_{\ \alpha(1)}^{\bar{\beta}}J_{\ \bar{\beta}(1)}^{\gamma}=0$, i.e. $J_{\ \alpha(2)}^{\gamma}=\frac{i}{2}J_{\ \alpha(1)}^{\bar{\beta}}J_{\ \bar{\beta}(1)}^{\gamma}$.
  So
  $$J_{\alpha\bar{\beta}(2)}=h_{\alpha\bar{\gamma}}J_{\ \bar{\beta}(2)}^{\bar{\gamma}}=-\frac{i}{2}\delta_{\alpha\bar{\gamma}}J_{\ \bar{\beta}(1)}^{\rho}J_{\ \rho(1)}^{\bar{\gamma}}=-\frac{i}{2}J_{\ \bar{\beta}(1)}^{\rho}J_{\ \rho(1)}^{\bar{\alpha}}=\frac{i}{2}J_{\ \bar{\beta}(1)}^{\rho}J_{\ \alpha(1)}^{\bar{\rho}}=\frac{i}{2}Q_{\alpha\lambda}^{\bar{\rho}}(q)Q_{\bar{\beta}\bar{\mu}}^{\rho}(q)z^{\lambda}z^{\bar{\mu}},$$
  by \eqref{eq4.9} and $J_{\ \rho(1)}^{\bar{\alpha}}=-J_{\ \alpha(1)}^{\bar{\rho}}$ in \eqref{eq2.18}. We complete the proof of this proposition.
  \end{proof}
Proposition \ref{prop4.1} and Proposition \ref{prop4.3} leads to the following corollary:
\begin{corollary}\label{cor4.1}
$s_{b(2)}^0$ in \eqref{eq2.29} in Proposition \ref{prop2.2} can be rewritten as
\begin{equation}
\notag s_{\beta(2)}^0=\frac{2}{3}Q_{\alpha\gamma}^{\bar{\beta}}(p)z^{\alpha}z^{\gamma},{\quad}s_{\bar{\beta}(2)}^0
=\frac{2}{3}Q_{\bar{\alpha}\bar{\gamma}}^{\beta}(p)z^{\bar{\alpha}}z^{\bar{\gamma}}.
\end{equation}
\end{corollary}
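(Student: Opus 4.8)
The plan is to obtain both identities by substituting the first-order expansions of the almost complex structure from Propositions \ref{prop4.1} and \ref{prop4.3} into the formula for $s^0_{b(2)}$ already established in Proposition \ref{prop2.2}. Concretely, \eqref{eq2.29} gives $s^0_{b(2)}=\frac{2}{3}J_{ba(1)}z^a$, in which the contracted index $a$ runs over $\{1,\dots,2n\}$. So the first step is to set $b=\beta$ and split the contraction into its unbarred and barred parts, writing $s^0_{\beta(2)}=\frac{2}{3}\big(J_{\beta\gamma(1)}z^{\gamma}+J_{\beta\bar{\gamma}(1)}z^{\bar{\gamma}}\big)$, and to treat $b=\bar{\beta}$ analogously.

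For the first identity I would then discard the mixed term directly: the component $J_{\beta\bar{\gamma}(1)}$ is of the type $J_{\alpha\bar{\beta}(1)}$, which vanishes by Proposition \ref{prop4.1}. The remaining term is handled by \eqref{eq4.9}, which after relabelling reads $J_{\beta\gamma(1)}=Q^{\bar{\beta}}_{\gamma\mu}(q)z^{\mu}$; substituting and renaming the summation indices yields $s^0_{\beta(2)}=\frac{2}{3}Q^{\bar{\beta}}_{\alpha\gamma}(q)z^{\alpha}z^{\gamma}$, as claimed. For the conjugate identity the roles of the two vanishing statements are slightly different: now the mixed term $J_{\bar{\beta}\gamma(1)}$ has its barred index first, so I first use the antisymmetry $J_{ab}=-J_{ba}$ of Proposition \ref{prop2.1} to write $J_{\bar{\beta}\gamma(1)}=-J_{\gamma\bar{\beta}(1)}$ and then invoke Proposition \ref{prop4.1} to see that it vanishes. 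The surviving term $J_{\bar{\beta}\bar{\gamma}(1)}$ is obtained by conjugating $J_{\beta\gamma(1)}$ through Corollary \ref{prop2.0f}, giving $J_{\bar{\beta}\bar{\gamma}(1)}=\overline{J_{\beta\gamma(1)}}=Q^{\beta}_{\bar{\gamma}\bar{\mu}}(q)z^{\bar{\mu}}$, and substituting produces $s^0_{\bar{\beta}(2)}=\frac{2}{3}Q^{\beta}_{\bar{\alpha}\bar{\gamma}}(q)z^{\bar{\alpha}}z^{\bar{\gamma}}$.

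This argument is essentially a bookkeeping exercise, so there is no serious analytic obstacle; the only point requiring care is keeping track of barred versus unbarred indices and invoking the correct vanishing statement in each case, namely that $J_{\alpha\bar{\beta}(1)}=0$ applies directly to the mixed term in $s^0_{\beta(2)}$ but must be combined with the antisymmetry of $J_{ab}$ to kill the mixed term in $s^0_{\bar{\beta}(2)}$. One should also note that the base point written $p$ in the statement is the point $q$ about which the special frame is centered, so that all Tanno-tensor coefficients are evaluated at $q$ throughout.
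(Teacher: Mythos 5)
Your proposal is correct and follows exactly the route the paper intends: the paper states the corollary as an immediate consequence of Propositions \ref{prop4.1} and \ref{prop4.3} applied to $s_{b(2)}^0=\frac{2}{3}J_{ba(1)}z^a$ from Proposition \ref{prop2.2}, and your splitting of the contracted index, use of $J_{\alpha\bar{\beta}(1)}=0$ (combined with antisymmetry for the barred case), and conjugation via Corollary \ref{prop2.0f} supply precisely the omitted details. Your observation that the base point $p$ in the statement should read $q$ is also correct.
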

The following relation shows that $J_{\ \bar{\beta}(2)}^{\gamma}$ is independent of $t$.
\begin{proposition}\label{prop4.3a}
\begin{equation}
J_{\ \beta(2)}^{\bar{\gamma}}=\frac{1}{2}z_cz_dZ_cZ_dJ_{\ \beta}^{\bar{\gamma}}(q),{\quad}J_{\alpha\beta(2)}=\frac{1}{2}z_cz_dZ_cZ_dJ_{\alpha\beta}(q).
\end{equation}
\end{proposition}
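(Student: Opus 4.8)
The plan is to apply the Taylor expansion formula \eqref{eq2.12a} to the smooth function $J_{\ \beta}^{\bar\gamma}$ with $m=2$ and to check that the $t$-derivative contribution drops out, leaving only a purely horizontal second-order term. First I would list the multi-indices $K$ with $o(K)=2$. Since $o(0)=2$ while $o(c)=1$ for $c\in\{1,\dots,2n\}$, there are exactly two kinds: the single index $K=(0)$ with $\sharp K=1$, which contributes $z^{0}Z_{0}J_{\ \beta}^{\bar\gamma}(q)=t\,Z_{0}J_{\ \beta}^{\bar\gamma}(q)$, and the pairs $K=(c,d)$ with $c,d\ne0$ and $\sharp K=2$, which contribute $\tfrac12 z^{c}z^{d}Z_{d}Z_{c}J_{\ \beta}^{\bar\gamma}(q)$. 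Hence \eqref{eq2.12a} gives
\[
J_{\ \beta(2)}^{\bar\gamma}=t\,Z_{0}J_{\ \beta}^{\bar\gamma}(q)+\tfrac12 z^{c}z^{d}Z_{d}Z_{c}J_{\ \beta}^{\bar\gamma}(q).
\]

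The heart of the matter is to show that the first term vanishes, i.e. $Z_{0}J_{\ \beta}^{\bar\gamma}(q)=0$; this is precisely the assertion that $J_{\ \beta(2)}^{\bar\gamma}$ is independent of $t$. I would argue as follows. At the origin the Reeb direction of the special frame coincides with $Z_{0}$: by Proposition \ref{prop2.2} one has $s_{0(0)}^{a}=0$ and $s_{0(0)}^{0}=1$ (equivalently $W_{0}=\partial_{t}+\mathscr{O}_{0}$ in \eqref{eq2.36}), so $W_{0}|_{q}=Z_{0}|_{q}$. Therefore $Z_{0}J_{\ \beta}^{\bar\gamma}(q)=W_{0}J_{\ \beta}^{\bar\gamma}(q)=T J_{\ \beta}^{\bar\gamma}(q)$. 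Because the connection coefficients vanish at $q$, the formula \eqref{eq4.5a} identifies $W_{0}J_{\ \beta}^{\bar\gamma}(q)=Q_{\beta0}^{\bar\gamma}(q)$, and by $Q_{i0}^{k}=0$ in Proposition \ref{prop4.2} this is zero. Thus the $t$-term is killed.

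It then remains only to put the surviving term in the stated shape, which is routine. Each bracket $[Z_{c},Z_{d}]$ is a constant multiple of $Z_{0}$, so $(Z_{c}Z_{d}-Z_{d}Z_{c})J_{\ \beta}^{\bar\gamma}(q)=[Z_{c},Z_{d}]J_{\ \beta}^{\bar\gamma}(q)$ is a multiple of $Z_{0}J_{\ \beta}^{\bar\gamma}(q)=0$; hence the two orderings coincide and the harmless symmetrization $Z_{d}Z_{c}\to Z_{c}Z_{d}$ yields the first identity. For the second identity I would reduce to the first: by \eqref{eq2.16b} and \eqref{eq2.18} one has $J_{\alpha\beta}=h_{\alpha\bar\rho}J_{\ \beta}^{\bar\rho}$ with $h_{\alpha\bar\rho}=\delta_{\alpha\bar\rho}$ constant, so $Z_{c}Z_{d}$ passes through $h_{\alpha\bar\rho}$ and $J_{\alpha\beta(2)}=\delta_{\alpha\bar\rho}J_{\ \beta(2)}^{\bar\rho}$, giving $J_{\alpha\beta(2)}=\tfrac12 z_{c}z_{d}Z_{c}Z_{d}J_{\alpha\beta}(q)$.

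I do not expect a genuine obstacle here: the whole proposition collapses to the single identity $TJ_{\ \beta}^{\bar\gamma}(q)=Q_{\beta0}^{\bar\gamma}(q)=0$. The only point requiring a little care is the replacement of the coordinate derivative $Z_{0}=\partial_{t}$ by the Reeb (covariant) derivative at $q$, which rests on the normalization $W_{0}|_{q}=Z_{0}|_{q}$ from Proposition \ref{prop2.2}.
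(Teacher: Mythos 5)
Your proposal is correct and follows essentially the same route as the paper: expand $J_{\ \beta}^{\bar\gamma}$ via \eqref{eq2.12a}, observe that the $t$-term equals $W_0 J_{\ \beta}^{\bar\gamma}(q)=Q_{\beta 0}^{\bar\gamma}(q)=0$ by \eqref{eq4.5a} and Proposition \ref{prop4.2}, and deduce the second identity from the constancy of $h_{\alpha\bar\rho}$. The only cosmetic difference is that the paper extracts $\partial_t J_{\ \beta}^{\bar\gamma}(q)$ by decomposing $W_0 J_{\ \beta}^{\bar\gamma}$ into homogeneous parts, whereas you use $W_0|_q=Z_0|_q$ directly; these are equivalent, and your symmetrization remark is harmless (the sum over ordered pairs is already symmetric after relabeling).
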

\begin{proof}
By expansion \eqref{eq2.12a},
$$J_{\ \beta(2)}^{\bar{\gamma}}=\frac{1}{2}z_cz_dZ_cZ_dJ_{\ \beta}^{\bar{\gamma}}(q)+t\frac{\partial{J}_{\ \beta}^{\bar{\gamma}}}{\partial{t}}(q).$$
To get this proposition, we need to prove that $\frac{\partial}{\partial{t}}J_{\ \beta}^{\bar{\gamma}}(q)=0$.

By \eqref{eq4.5a} and \eqref{Q}, $0=Q_{\beta0}^{\bar{\gamma}}(q)=W_0J_{\ \beta}^{\bar{\gamma}}(q)$. Noting that $W_{0(-2)}=\frac{\partial}{\partial{t}}$, $W_{0(-1)}=0$ (cf. \eqref{eq2.36}) and $J_{\ \beta(0)}^{\bar{\gamma}}=0$ (cf. \eqref{eq2.18}), we get
$$0=W_0J_{\ \beta}^{\bar{\gamma}}(q)
=W_{0(-2)}\bigg(J_{\ \beta(2)}^{\bar{\gamma}}\bigg)+W_{0(-1)}\bigg(J_{\ \beta(1)}^{\bar{\gamma}}\bigg)+W_{0(0)}\bigg(J_{\ \beta(0)}^{\bar{\gamma}}\bigg)=\frac{\partial}{\partial{t}}J_{\ \beta}^{\bar{\gamma}}(q).$$
By $h_{ab}$ being constant, we get $\frac{\partial}{\partial{t}}J_{\alpha\beta}(q)=h_{\alpha\bar{\gamma}}\frac{\partial{J}_{\ \beta}^{\bar{\gamma}}}{\partial{t}}(q)=0$. So we prove Proposition \ref{prop4.3a}.
\end{proof}
\subsection{The asymptotic expansion of curvature tensors}
\begin{proposition}\label{prop4.4}
With respect to a special frame centered at $q$, we have
\begin{equation}\label{eq4.12}
R_{\bar{\beta}\ \lambda\mu}^{\ \alpha}(q)=\frac{1}{4}\bigg(Q_{\mu\lambda}^{\bar{\sigma}}(q)-Q_{\lambda\mu}^{\bar{\sigma}}(q)\bigg)
\bigg(Q_{\bar{\alpha}\bar{\beta}}^{\sigma}(q)-Q_{\bar{\beta}\bar{\alpha}}^{\sigma}(q)\bigg).
\end{equation}
In particular,
\begin{equation}\label{eq4.13}
R_{\bar{\beta}\ \alpha\beta}^{\ \alpha}(q)=-\frac{1}{4}\mathfrak{Q}.
\end{equation}
\end{proposition}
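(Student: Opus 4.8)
The plan is to reduce the curvature component at $q$ to the first-order behaviour of the ``type-mixing'' connection coefficient $\Gamma_{\mu\bar\beta}^\alpha$, and then to compute that coefficient from the Tanno tensor. First I would evaluate the curvature formula \eqref{eq2.14} at the point $q$. Since all connection coefficients vanish at $q$ by Corollary \ref{cor2.3} and $J_{\lambda\mu}(q)=0$ by \eqref{eq2.18}, every quadratic term $\Gamma\Gamma$ and the term $2\Gamma_{0\bar\beta}^{\alpha}J_{\lambda\mu}$ drop out, leaving
\[
R_{\bar\beta\ \lambda\mu}^{\ \alpha}(q)=W_\lambda\Gamma_{\mu\bar\beta}^\alpha(q)-W_\mu\Gamma_{\lambda\bar\beta}^\alpha(q).
\]
The point is that $\Gamma_{\mu\bar\beta}^\alpha$ vanishes identically in the CR case (the TWT connection then preserves the type decomposition), so its first derivative at $q$ is a genuinely non-integrable quantity, and the whole content of \eqref{eq4.12} is to express it through the Tanno tensor.

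Second, I would record the clean conceptual reformulation coming from the Ricci identity for the $(1,1)$-tensor $J$. Since $\nabla J=Q$, commuting covariant derivatives of $J$ gives, for a connection with torsion $\tau$,
\[
\nabla_{W_\lambda}\nabla_{W_\mu}J-\nabla_{W_\mu}\nabla_{W_\lambda}J-\nabla_{[W_\lambda,W_\mu]}J=R(W_\lambda,W_\mu)\cdot J+\nabla_{\tau(W_\lambda,W_\mu)}J.
\]
At $q$ both $\tau(W_\lambda,W_\mu)=2J_{\lambda\mu}T$ and $[W_\lambda,W_\mu]$ vanish, again because $J_{\lambda\mu}(q)=0$, so projecting onto $W_\gamma$ and using $JW_{\bar\beta}=-iW_{\bar\beta}$, $JW_\gamma=iW_\gamma$ at $q$ yields
\[
W_\lambda Q_{\bar\beta\mu}^{\gamma}(q)-W_\mu Q_{\bar\beta\lambda}^{\gamma}(q)=-2i\,R_{\bar\beta\ \lambda\mu}^{\ \gamma}(q).
\]
Thus \eqref{eq4.12} is equivalent to computing the antisymmetrised covariant derivative of $Q$ at $q$, which is quadratic in $Q(q)$ rather than involving further derivatives.

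Third, and this is the heart of the proof, I would compute that derivative from the Tanno--Nijenhuis identity \eqref{eq4.8}. Taking $a=\alpha$ there and extracting the degree-one homogeneous part in the parabolic dilations, I substitute the expansions $J_{\ \beta(1)}^{\bar\alpha}=Q_{\beta\gamma}^{\bar\alpha}(q)z^\gamma$ of \eqref{eq4.9} and $J_{\alpha\bar\beta(2)}$ of \eqref{eq4.10}, together with $\Gamma_{jk}^l(q)=0$, $\Gamma_{jk}^l\in\mathscr{O}_1$ from Corollary \ref{cor2.3}. Because $\Gamma$ carries no constant term, each undifferentiated $\Gamma$ is annihilated and each product of two first-order pieces of $J$ survives as a value $W_cJ(q)=Q(q)$; it is these products $J_{(1)}\!\cdot\!J_{(1)}$ that manufacture the quadratic Tanno expression. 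Feeding the resulting first-order part of $\Gamma_{\mu\bar\beta}^\alpha$ back into the reduction above, antisymmetrising in $\lambda\leftrightarrow\mu$, and invoking the symmetry relations of $Q(q)$ from Proposition \ref{prop4.2} --- in particular $Q_{\beta\alpha}^{\bar\rho}(q)=Q_{\beta\rho}^{\bar\alpha}(q)-Q_{\rho\beta}^{\bar\alpha}(q)$ --- should collapse everything into the antisymmetrised products $(Q_{\mu\lambda}^{\bar\sigma}-Q_{\lambda\mu}^{\bar\sigma})(Q_{\bar\alpha\bar\beta}^{\sigma}-Q_{\bar\beta\bar\alpha}^{\sigma})$ of \eqref{eq4.12}. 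The hard part is exactly this bookkeeping: the degree-one part of \eqref{eq4.8} also contains second-order data of $J$ and first derivatives of the all-barred connection coefficients $\Gamma_{\bar\beta\bar\rho}^{\bar\alpha}$ (that is, other curvature components), and the crux is to show, using $\nabla h=0$ (which forces antisymmetry of $\Gamma$ in its lowered horizontal indices) together with the vanishing relations in \eqref{Q}, that these extraneous terms recombine or cancel, leaving only the $Q\!\cdot\!Q$ contribution.

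Finally, \eqref{eq4.13} would follow from \eqref{eq4.12} by a short contraction. Setting $\lambda=\alpha$, $\mu=\beta$ and summing against the upper index $\alpha$, and using $\overline{Q_{\beta\alpha}^{\bar\sigma}}=Q_{\bar\beta\bar\alpha}^{\sigma}$, the right-hand side becomes $-\tfrac14\sum_{\alpha,\beta,\sigma}\bigl|Q_{\beta\alpha}^{\bar\sigma}-Q_{\alpha\beta}^{\bar\sigma}\bigr|^2$. Expanding the square, the two norm terms contribute $2\mathfrak{Q}$ by the definition \eqref{eq4.2} of $\mathfrak{Q}$, while the cross term contributes $\mathfrak{Q}$ via the identity $Q_{\beta\alpha}^{\bar\rho}(q)Q_{\bar\alpha\bar\beta}^{\rho}(q)=\tfrac12\mathfrak{Q}$ of Proposition \ref{prop4.2}; hence the bracket equals $\mathfrak{Q}$ and $R_{\bar\beta\ \alpha\beta}^{\ \alpha}(q)=-\tfrac14\mathfrak{Q}$, as claimed.
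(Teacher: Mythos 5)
Your proposal is correct and follows essentially the same route as the paper: reduce $R_{\bar\beta\ \lambda\mu}^{\ \alpha}(q)$ to the antisymmetrized first derivative of $\Gamma_{\mu\bar\beta}^{\alpha}$ via \eqref{eq2.14} and Corollary \ref{cor2.3}, extract the degree-one homogeneous part of the $N^{(1)}$-identity \eqref{eq4.8} to show that this derivative is quadratic in $Q(q)$ (the paper's \eqref{eq4.15}), and then contract using the relations of Proposition \ref{prop4.2}. Your Ricci-identity reformulation $W_\lambda Q_{\bar\beta\mu}^{\gamma}(q)-W_\mu Q_{\bar\beta\lambda}^{\gamma}(q)=-2i\,R_{\bar\beta\ \lambda\mu}^{\ \gamma}(q)$ is a clean equivalent of the paper's combination of \eqref{eq4.14} and \eqref{eq4.16}, where the symmetric second derivatives of $J$ drop out upon antisymmetrization, and the cancellation of extraneous terms in the degree-one part of \eqref{eq4.8} that you flag as the remaining bookkeeping is exactly what the paper verifies in \eqref{eq4.14a}--\eqref{eq4.14d}.
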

\begin{proof}
At the point $q$, it follows from \eqref{eq2.14} and \eqref{eq3.1b} that
\begin{equation}\label{eq4.17}
R_{\bar{\beta}\ \lambda\mu}^{\ \alpha}(q)=W_{\lambda}\Gamma_{\mu\bar{\beta}}^{\alpha}(q)-W_{\mu}\Gamma_{\lambda\bar{\beta}}^{\alpha}(q)
=Z_{\lambda}\bigg(\Gamma_{\mu\bar{\beta}(1)}^{\alpha}\bigg)-Z_{\mu}\bigg(\Gamma_{\lambda\bar{\beta}(1)}^{\alpha}\bigg).
\end{equation}
Let us calculate $Z_c(\Gamma_{db(1)}^a)$. By \eqref{eq4.1}, we have: $Q_{\bar{\beta}\alpha}^{\gamma}=W_{\alpha}J_{\ \bar{\beta}}^{\gamma}-\Gamma_{{\alpha}\bar{\beta}}^dJ_{\ d}^{\gamma}+\Gamma_{\alpha{d}}^{\gamma}J_{\ \bar{\beta}}^d$. So
$Q_{\bar{\beta}\alpha(1)}^{\gamma}={(W_{\alpha}J_{\ \bar{\beta}}^{\gamma})}_{(1)}-\Gamma_{\alpha\bar{\beta}(1)}^dJ_{\ d}^{\gamma}(q)+\Gamma_{\alpha{d}(1)}^{\gamma}J_{\ \bar{\beta}}^d(q)={(W_{\alpha}J_{\ \bar{\beta}}^{\gamma})}_{(1)}-2i\Gamma_{\alpha\bar{\beta}(1)}^{\gamma}$,
i.e.,
\begin{equation}\label{eq4.14}
\Gamma_{\alpha\bar{\beta}(1)}^{\gamma}=
\frac{i}{2}Q_{\bar{\beta}\alpha(1)}^{\gamma}-\frac{i}{2}{(W_{\alpha}J_{\ \bar{\beta}}^{\gamma})}_{(1)}.
\end{equation}
First we deal with the term $Q_{\bar{\beta}\alpha(1)}^{\gamma}$ in \eqref{eq4.14}. Take index $a$ to be $\alpha$ in \eqref{eq4.8} and consider the homogeneous part of degree $1$. For the right hand side, noting
$J_{\ \alpha}^{\beta}(q)=i\delta_{\alpha}^{\beta}$,
$J_{\ \alpha}^{\bar{\beta}}(q)=0$ by Proposition \ref{prop2.0d}, $J_{\ \alpha(1)}^{\beta}=0$ by Proposition \ref{prop4.1} and $Q_{\alpha\bar{\beta}}^{\gamma}(q)=0$ by \eqref{Q} in Proposition \ref{prop4.2}, we have
\begin{equation}\label{eq4.14a}
\begin{aligned}
\bigg(\big(-\Gamma_{\bar{\beta}\bar{\rho}}^d+\Gamma_{\bar{\rho}\bar{\beta}}^d\big)J_{\ \alpha}^fh_{df}\bigg)_{(1)}=(-\Gamma_{\bar{\beta}\bar{\rho}(1)}^{\bar{\mu}}+\Gamma_{\bar{\rho}\bar{\beta}(1)}^{\bar{\mu}})J_{\ \alpha}^{\gamma}(q)h_{\gamma\bar{\mu}}
=-i\Gamma_{\bar{\beta}\bar{\rho}(1)}^{\bar{\alpha}}+i\Gamma_{\bar{\rho}\bar{\beta}(1)}^{\bar{\alpha}},
\end{aligned}
\end{equation}
and
\begin{equation}\label{eq4.14b}
\begin{aligned}
&\ \ \ \bigg\{\bigg(J_{\ \bar{\beta}}^c(W_cJ_{\ \bar{\rho}}^d)+J_{\ \bar{\beta}}^cJ_{\ \bar{\rho}}^e\Gamma_{ce}^d-J_{\ \bar{\rho}}^c(W_cJ_{\ \bar{\beta}}^d)-J_{\ \bar{\rho}}^eJ_{\ \bar{\beta}}^c\Gamma_{ec}^d\bigg)J_{\ \alpha}^fh_{df}\bigg\}_{(1)}\\
&=\bigg(J_{\ \bar{\beta}}^c(W_cJ_{\ \bar{\rho}}^{\bar{\mu}})+J_{\ \bar{\beta}}^cJ_{\ \bar{\rho}}^e\Gamma_{ce}^{\bar{\mu}}-J_{\ \bar{\rho}}^c(W_cJ_{\ \bar{\beta}}^{\bar{\mu}})-J_{\ \bar{\rho}}^eJ_{\ \bar{\beta}}^c\Gamma_{ec}^{\bar{\mu}}\bigg)_{(1)}J_{\ \alpha}^{\gamma}(q)h_{\gamma\bar{\mu}}\\
&\ \ \ +\bigg(J_{\ \bar{\beta}}^c(W_cJ_{\ \bar{\rho}}^{\mu})+J_{\ \bar{\beta}}^cJ_{\ \bar{\rho}}^e\Gamma_{ce}^{\mu}-J_{\ \bar{\rho}}^c(W_cJ_{\ \bar{\beta}}^{\mu})-J_{\ \bar{\rho}}^eJ_{\ \bar{\beta}}^c\Gamma_{ec}^{\mu}\bigg)_{(0)}J_{\ \alpha(1)}^{\bar{\gamma}}h_{\mu\bar{\gamma}}\\
&=i\delta_{\alpha\bar{\mu}}\bigg(J_{\ \bar{\beta}(1)}^{\lambda}W_{\lambda}J_{\ \bar{\rho}}^{\bar{\mu}}(q)
+J_{\ \bar{\beta}}^{\bar{\lambda}}(q)(W_{\bar{\lambda}}J_{\ \bar{\rho}}^{\bar{\mu}})_{(1)}
+J_{\ \bar{\beta}}^{\bar{\lambda}}(q)J_{\ \bar{\rho}}^{\bar{\sigma}}(q)\Gamma_{\bar{\lambda}\bar{\sigma}(1)}^{\bar{\mu}}\\
&\ \ \ -J_{\ \bar{\rho}(1)}^{\sigma}W_{\sigma}J_{\ \bar{\beta}}^{\bar{\mu}}(q)
-J_{\ \bar{\rho}}^{\bar{\sigma}}(q)(W_{\bar{\sigma}}J_{\ \bar{\beta}}^{\bar{\mu}})_{(1)}
-J_{\ \bar{\rho}}^{\bar{\sigma}}(q)J_{\ \bar{\beta}}^{\bar{\lambda}}(q)\Gamma_{{\bar{\sigma}}\bar{\lambda}(1)}^{\bar{\mu}}\bigg)\\
&\ \ \ +\bigg(J_{\ \bar{\beta}}^{\bar{\lambda}}(q)W_{\bar{\lambda}}J_{\ \bar{\rho}}^{\mu}(q)
-J_{\ \bar{\rho}}^{\bar{\sigma}}(q)W_{\bar{\sigma}}J_{\ \bar{\beta}}^{\mu}(q)\bigg)J_{\ \alpha(1)}^{\bar{\gamma}}h_{\mu\bar{\gamma}}\\
&=iJ_{\ \bar{\beta}(1)}^{\lambda}Q_{\bar{\rho}\lambda}^{\bar{\alpha}}(q)
+(W_{\bar{\beta}}J_{\ \bar{\rho}}^{\bar{\alpha}})_{(1)}-i\Gamma_{\bar{\beta}\bar{\rho}(1)}^{\bar{\alpha}}
-iJ_{\ \bar{\rho}(1)}^{\sigma}Q_{\bar{\beta}\sigma}^{\bar{\alpha}}(q)
-(W_{\bar{\rho}}J_{\ \bar{\beta}}^{\bar{\alpha}})_{(1)}+i\Gamma_{\bar{\rho}\bar{\beta}(1)}^{\bar{\alpha}}\\
&\ \ \ -iQ_{\bar{\rho}\bar{\beta}}^{\gamma}(q)J_{\ \alpha(1)}^{\bar{\gamma}}
+iQ_{\bar{\beta}\bar{\rho}}^{\gamma}(q)J_{\ \alpha(1)}^{\bar{\gamma}}\\
&=(W_{\bar{\beta}}J_{\ \bar{\rho}}^{\bar{\alpha}})_{(1)}-(W_{\bar{\rho}}J_{\ \bar{\beta}}^{\bar{\alpha}})_{(1)}
-i\Gamma_{\bar{\beta}\bar{\rho}(1)}^{\bar{\alpha}}+i\Gamma_{\bar{\rho}\bar{\beta}(1)}^{\bar{\alpha}}
-iQ_{\bar{\rho}\bar{\beta}}^{\gamma}(q)J_{\ \alpha(1)}^{\bar{\gamma}}+iQ_{\bar{\beta}\bar{\rho}}^{\gamma}(q)J_{\ \alpha(1)}^{\bar{\gamma}},
\end{aligned}
\end{equation}
and
\begin{equation}\label{eq4.14c}
\begin{aligned}
\bigg(\big(W_{\bar{\rho}}J_{\ \bar{\beta}}^{\bar{\mu}}-J_{\ \bar{\beta}}^c\Gamma_{c\bar{\rho}}^{\bar{\mu}}
+J_{\ \bar{\beta}}^c\Gamma_{\bar{\rho}c}^{\bar{\mu}}\big)h_{\alpha\bar{\mu}}\bigg)_{(1)}
&=(W_{\bar{\rho}}J_{\ \bar{\beta}}^{\bar{\mu}})_{(1)}\delta_{\alpha\bar{\mu}}
-J_{\ \bar{\beta}}^{\bar{\gamma}}(q)\Gamma_{\bar{\gamma}\bar{\rho}(1)}^{\bar{\mu}}\delta_{\alpha\bar{\mu}}
+J_{\ \bar{\beta}}^{\bar{\gamma}}(q)\Gamma_{\bar{\rho}\bar{\gamma}(1)}^{\bar{\mu}}\delta_{\alpha\bar{\mu}}\\
&=(W_{\bar{\rho}}J_{\ \bar{\beta}}^{\bar{\alpha}})_{(1)}+i\Gamma_{\bar{\beta}\bar{\rho}(1)}^{\bar{\alpha}}
-i\Gamma_{\bar{\rho}\bar{\beta}(1)}^{\bar{\alpha}},
\end{aligned}
\end{equation}
and
\begin{equation}\label{eq4.14d}
\begin{aligned}
\bigg(\big(-W_{\bar{\beta}}J_{\ \bar{\rho}}^{\bar{\mu}}-J_{\ \bar{\rho}}^c\Gamma_{\bar{\beta}c}^{\bar{\mu}}+J_{\ \bar{\rho}}^c\Gamma_{c\bar{\beta}}^{\bar{\mu}}\big)h_{\alpha\bar{\mu}}\bigg)_{(1)}=-(W_{\bar{\beta}}J_{\ \bar{\rho}}^{\bar{\alpha}})_{(1)}+i\Gamma_{\bar{\beta}\bar{\rho}(1)}^{\bar{\alpha}}
-i\Gamma_{\bar{\rho}\bar{\beta}(1)}^{\bar{\alpha}}.
\end{aligned}
\end{equation}
Substitute the summation of \eqref{eq4.14a}-\eqref{eq4.14d} into \eqref{eq4.8} to get
$2Q_{\bar{\beta}\alpha(1)}^{\rho}=iQ_{\bar{\beta}\bar{\rho}}^{\gamma}(q)J_{\ \alpha(1)}^{\bar{\gamma}}
-iQ_{\bar{\rho}\bar{\beta}}^{\gamma}(q)J_{\ \alpha(1)}^{\bar{\gamma}}$,
namely
\begin{equation}\label{eq4.15}
  Q_{\bar{\beta}\alpha(1)}^{\rho}=\frac{i}{2}Q_{\bar{\beta}\bar{\rho}}^{\gamma}(q)J_{\ \alpha(1)}^{\bar{\gamma}}-\frac{i}{2}Q_{\bar{\rho}\bar{\beta}}^{\gamma}(q)J_{\ \alpha(1)}^{\bar{\gamma}}.
\end{equation}
Now we deal with ${(W_{\alpha}J_{\ \bar{\beta}}^{\gamma})}_{(1)}$ in \eqref{eq4.14}. By expansion \eqref{eq2.12a}, $J_{\ \bar{\beta}(1)}^{\gamma}=z^cZ_cJ_{\ \bar{\beta}}^{\gamma}(q)$ is independent with $t$. And note that $W_{\alpha(0)}=s_{\alpha(2)}^0\frac{\partial}{\partial{t}}$ by \eqref{eq2.36}. So $W_{\alpha(0)}(J_{\ \bar{\beta}(1)}^{\gamma})=0$. By $J_{\ \bar{\beta}}^{\gamma}(q)=0$ (see \eqref{eq2.18}) and $J_{\ \bar{\beta}(2)}^{\gamma}$ being independent with $t$ in Proposition \ref{prop4.3a}, we get
\begin{align}\label{eq4.16}
\notag {(W_{\alpha}J_{\ \bar{\beta}}^{\gamma})}_{(1)}&=\Bigg(\bigg(Z_{\alpha}+W_{\alpha(0)}+W_{\alpha(1)}+\mathscr{O}_2\bigg)
\bigg(J_{\ \bar{\beta}(1)}^{\gamma}+J_{\ \bar{\beta}(2)}^{\gamma}+\mathscr{O}_3\bigg)\Bigg)_{(1)}\\
\notag &=W_{\alpha(0)}\bigg(J_{\ \bar{\beta}(1)}^{\gamma}\bigg)+Z_{\alpha}\bigg(J_{\ \bar{\beta}(2)}^{\gamma}\bigg)=Z_{\alpha}\bigg(J_{\ \bar{\beta}(2)}^{\gamma}\bigg)\\
&=\frac{1}{2}\bigg(z^cZ_{\alpha}Z_cJ_{\ \bar{\beta}}^{\gamma}(q)+z^cZ_cZ_{\alpha}J_{\ \bar{\beta}}^{\gamma}(q)\bigg).
\end{align}
So by \eqref{eq4.14}, \eqref{eq4.15} and \eqref{eq4.16}, we get
  \begin{align}\label{eq4.18}
  \notag Z_{\lambda}\bigg(\Gamma_{\mu\bar{\beta}(1)}^{\alpha}\bigg)&=\frac{i}{2}Z_{\lambda}\bigg(Q_{\bar{\beta}\mu(1)}^{\alpha}-{(W_{\mu}J_{\ \bar{\beta}}^{\alpha})}_{(1)}\bigg)\\
  \notag &=-\frac{1}{4}Z_{\lambda}\bigg(Q_{\bar{\beta}\bar{\alpha}}^{\sigma}(q)J_{\ \mu(1)}^{\bar{\sigma}}-Q_{\bar{\alpha}\bar{\beta}}^{\sigma}(q)J_{\ \mu(1)}^{\bar{\sigma}}\bigg)-\frac{i}{4}Z_{\lambda}\bigg(z^cZ_{\mu}Z_cJ_{\ \bar{\beta}}^{\alpha}(q)+z^cZ_cZ_{\mu}J_{\ \bar{\beta}}^{\alpha}(q)\bigg)\\
  &=\frac{1}{4}\bigg(Q_{\bar{\alpha}\bar{\beta}}^{\sigma}(q)Q_{\mu\lambda}^{\bar{\sigma}}(q)-Q_{\bar{\beta}\bar{\alpha}}^{\sigma}(q)Q_{\mu\lambda}^{\bar{\sigma}}(q)\bigg)
  -\frac{i}{4}Z_{\mu}Z_{\lambda}J_{\ \bar{\beta}}^{\alpha}(q)-\frac{i}{4}Z_{\lambda}Z_{\mu}J_{\ \bar{\beta}}^{\alpha}(q).
  \end{align}
  Note that for the last identity, we use the relation
  $Q_{\mu\lambda}^{\bar{\sigma}}(q)=W_{\lambda}J_{\ \mu}^{\bar{\sigma}}(q)=Z_{\lambda}(J_{\ \mu(1)}^{\bar{\sigma}})=Z_{\lambda}(J_{\ \mu(1)}^{\bar{\sigma}})$
  by \eqref{eq4.5a}, and $J_{\ \mu}^{\bar{\sigma}}(q)=0$. Similarly, we get
  \begin{equation}\label{eq4.19}
  Z_{\mu}\bigg(\Gamma_{\lambda\bar{\beta}(1)}^{\alpha}\bigg)
  =\frac{1}{4}\bigg(Q_{\bar{\alpha}\bar{\beta}}^{\sigma}(q)Q_{\lambda\mu}^{\bar{\sigma}}(q)
  -Q_{\bar{\beta}\bar{\alpha}}^{\sigma}(q)Q_{\lambda\mu}^{\bar{\sigma}}(q)\bigg)
  -\frac{i}{4}Z_{\lambda}Z_{\mu}J_{\ \bar{\beta}}^{\alpha}(q)-\frac{i}{4}Z_{\mu}Z_{\lambda}J_{\ \bar{\beta}}^{\alpha}(q).
  \end{equation}
  Substituting \eqref{eq4.18} and \eqref{eq4.19} into \eqref{eq4.17} we get \eqref{eq4.12}.
In particular,
  \begin{align}
  \notag R_{\bar{\beta}\ \alpha\beta}^{\ \alpha}(q)&=\frac{1}{4}\bigg(Q_{\beta\alpha}^{\bar{\sigma}}(q)Q_{\bar{\alpha}\bar{\beta}}^{\sigma}(q)-Q_{\beta\alpha}^{\bar{\sigma}}(q)Q_{\bar{\beta}\bar{\alpha}}^{\sigma}(q)-Q_{\alpha\beta}^{\bar{\sigma}}(q)Q_{\bar{\alpha}\bar{\beta}}^{\sigma}(q)+Q_{\alpha\beta}^{\bar{\sigma}}(q)Q_{\bar{\beta}\bar{\alpha}}^{\sigma}(q)\bigg)\\
  \notag &=\frac{1}{2}(Q_{\beta\alpha}^{\bar{\sigma}}(q)Q_{\bar{\alpha}\bar{\beta}}^{\sigma}(q)-|Q_{\alpha\beta}^{\bar{\sigma}}(q)|^2)=-\frac{1}{4}|Q_{\alpha\beta}^{\bar{\sigma}}(q)|^2=-\frac{1}{4}\mathfrak{Q},
  \end{align}
  by \eqref{Q}.
  \end{proof}
\begin{proposition}\label{prop4.5}(An identity of Bianchi type)
  With respect to a special frame associated with $\theta$, the components of the curvature tensor $R$ have the following relation:
  \begin{equation}\label{eq4.20}
  -R_{\beta\ \lambda\bar{\mu}}^{\ \alpha}+R_{\lambda\ \beta\bar{\mu}}^{\ \alpha}+R_{\bar{\mu}\ \lambda\beta}^{\ \alpha}=0,{\quad}mod{\quad}\mathscr{A}\cup\Gamma.
  \end{equation}
  $\Gamma$ means terms depending on $\Gamma_{jk}^l$ linearly or quadratically.
  \end{proposition}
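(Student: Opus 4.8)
The plan is to read \eqref{eq4.20} as the horizontal $(1,0)$-component of the first Bianchi identity for the TWT connection, paired against $\theta^{\alpha}$ and extracted along $\theta^{\lambda}\wedge\theta^{\beta}\wedge\theta^{\bar\mu}$. Writing $R(W_k,W_l)W_j=R_{j\ kl}^{\ s}W_s$, the combination $-R_{\beta\ \lambda\bar\mu}^{\ \alpha}+R_{\lambda\ \beta\bar\mu}^{\ \alpha}+R_{\bar\mu\ \lambda\beta}^{\ \alpha}$ is exactly the $W_\alpha$-coefficient of the cyclic sum $R(W_\lambda,W_\beta)W_{\bar\mu}+R(W_\beta,W_{\bar\mu})W_\lambda+R(W_{\bar\mu},W_\lambda)W_\beta$, once one uses $R(W_{\bar\mu},W_\lambda)W_\beta=-R(W_\lambda,W_{\bar\mu})W_\beta$. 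So the proposition reduces to showing that this cyclic sum, contracted with $\theta^\alpha$, lies in $\mathscr{A}\cup\Gamma$.

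First I would feed the explicit expression \eqref{eq2.14} for $R_{a\ cd}^{\ b}$ into that alternating sum. Every term in \eqref{eq2.14} that is quadratic in the $\Gamma$'s, together with the single linear term $2\Gamma_{0a}^b J_{cd}$, belongs to $\Gamma$, so modulo $\Gamma$ we get the clean reduction $R_{a\ cd}^{\ b}\equiv W_c\Gamma_{da}^b-W_d\Gamma_{ca}^b$. Substituting and regrouping the six surviving derivative terms by the differentiating field yields, modulo $\Gamma$, the expression $W_\lambda(\Gamma_{\beta\bar\mu}^\alpha-\Gamma_{\bar\mu\beta}^\alpha)+W_\beta(\Gamma_{\bar\mu\lambda}^\alpha-\Gamma_{\lambda\bar\mu}^\alpha)+W_{\bar\mu}(\Gamma_{\lambda\beta}^\alpha-\Gamma_{\beta\lambda}^\alpha)$.

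The key step is to recognize the antisymmetric brackets through the second structure equation in \eqref{SE}: for horizontal $j,k$ one has $\Gamma_{jk}^\alpha-\Gamma_{kj}^\alpha=2\,d\theta^\alpha(W_k,W_j)$, since the $A$-term in $d\theta^a$ carries a factor $\theta$ and drops out on horizontal arguments. Hence the regrouped expression is, up to the normalization of the exterior derivative, precisely the alternating combination $W_\lambda\,d\theta^\alpha(W_{\bar\mu},W_\beta)+W_\beta\,d\theta^\alpha(W_\lambda,W_{\bar\mu})+W_{\bar\mu}\,d\theta^\alpha(W_\beta,W_\lambda)$ that appears in $dd\theta^\alpha$ evaluated on $(W_\lambda,W_{\bar\mu},W_\beta)$. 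Because $dd\theta^\alpha=0$, the formula \eqref{eq2.15a} for the exterior derivative lets me trade these derivative terms for the commutator terms $d\theta^\alpha([W_\lambda,W_{\bar\mu}],W_\beta)$ and its two cyclic companions.

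It then remains to verify that each commutator term lies in $\mathscr{A}\cup\Gamma$, and I expect this bookkeeping to be the main obstacle. Writing $[W_\lambda,W_{\bar\mu}]=(\Gamma_{\lambda\bar\mu}^c-\Gamma_{\bar\mu\lambda}^c)W_c-2J_{\lambda\bar\mu}T$ (using $\tau(W_\lambda,W_{\bar\mu})=2\,d\theta(W_\lambda,W_{\bar\mu})T$ from \eqref{TWT} and $\Gamma_{\lambda\bar\mu}^0=0$), the horizontal part couples a $\Gamma$-difference to the further $\Gamma$-difference $d\theta^\alpha(W_c,W_\beta)$ and so lands in $\Gamma$; the Reeb part $-2J_{\lambda\bar\mu}T$ produces $-2J_{\lambda\bar\mu}\,d\theta^\alpha(T,W_\beta)$, and by the second equation of \eqref{SE} one has $2\,d\theta^\alpha(T,W_\beta)=-\Gamma_{0\beta}^\alpha+A_\beta^\alpha$, whose $A_\beta^\alpha$ piece is linear in the Webster torsion (hence in $\mathscr{A}$) and whose $\Gamma_{0\beta}^\alpha$ piece is in $\Gamma$. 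Running the same split on the two cyclic companions shows every surviving term is linear in $A$ or depends on the $\Gamma$'s, which is exactly \eqref{eq4.20}. The only real care needed is to track at each evaluation which frame index is horizontal versus Reeb, so that $\tau(X,Y)=2d\theta(X,Y)T$ and $\theta(W_a)=0$ are applied correctly and no escape from $\mathscr{A}\cup\Gamma$ is overlooked.
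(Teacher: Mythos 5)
Your argument is correct, and it rests on the same underlying fact as the paper's proof --- namely $d(d\theta^{\alpha})=0$ combined with the torsion structure equation --- but the execution is genuinely different. The paper simply differentiates the second structure equation in \eqref{SE}, substitutes the curvature structure equation for $d\omega_{\beta}^{\alpha}$, and reads off the coefficient of $\theta^{\beta}\wedge\theta^{\lambda}\wedge\theta^{\bar{\mu}}$; the terms $d\theta^{b}\wedge\omega_{b}^{\alpha}$ and $d\theta\wedge\tau^{\alpha}$ are discarded wholesale as visibly lying in $\Gamma$ and $\mathscr{A}$ respectively, so the whole proof is a few lines of wedge products. You instead work at the component level: you reduce \eqref{eq2.14} modulo $\Gamma$ to $W_{c}\Gamma_{da}^{b}-W_{d}\Gamma_{ca}^{b}$, recognize the antisymmetrized connection coefficients as $2d\theta^{\alpha}$ evaluated on frame vectors, invoke $dd\theta^{\alpha}=0$ in its three-vector-field form to trade the cyclic derivative sum for commutator terms, and then check by hand that each commutator term splits into a quadratic-$\Gamma$ piece (horizontal part of the bracket), a linear-$\Gamma$ piece ($J_{\lambda\bar{\mu}}\Gamma_{0\beta}^{\alpha}$) and a linear-$A$ piece ($J_{\lambda\bar{\mu}}A_{\beta}^{\alpha}$). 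This is the dual, componentwise version of the same computation; it is longer, but it has the virtue of exhibiting exactly which terms are being thrown into $\mathscr{A}\cup\Gamma$ --- in particular it makes visible that the Reeb component of the commutators is the sole source of the $\mathscr{A}$-terms --- whereas the paper's form computation buys brevity at the cost of hiding that bookkeeping inside the structure equations. All the individual identities you rely on ($2d\theta^{a}(W_{c},W_{d})=\Gamma_{dc}^{a}-\Gamma_{cd}^{a}$, $2d\theta^{a}(T,W_{d})=-\Gamma_{0d}^{a}+A_{d}^{a}$, and $\tau(X,Y)=2d\theta(X,Y)T$ on $HM$) are already established in the proof of Proposition \ref{prop2.0e}, so nothing essential is missing.
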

  \begin{proof}
  Differentiate the second identity in the structure equations \eqref{SE} to get
  \begin{equation}\label{eq4.21} 0=d\theta^{\beta}\wedge\omega_{\beta}^{\alpha}-\theta^{\beta}\wedge{d\omega_{\beta}^{\alpha}}+d\theta^{\bar{\beta}}\wedge\omega_{\bar{\beta}}^{\alpha}-\theta^{\bar{\beta}}\wedge{d\omega_{\bar{\beta}}^{\alpha}}+d\theta\wedge\tau^{\alpha},\quad mod\quad\theta.
  \end{equation}
  By the definition $\omega_b^a=\Gamma_{ja}^b\theta^j$ and $\tau^{\alpha}$, we have
  $$d\theta^b\wedge\omega_b^a=0,\quad mod\quad \Gamma,{\quad}{\text and} {\quad}d\theta\wedge\tau^{\alpha}=0,\quad mod\quad \mathscr{A},$$
  and by the third identity in \eqref{SE}
  $$d\omega_b^a=R_{a\ \lambda\bar{\mu}}^{\ b}\theta^{\lambda}\wedge\theta^{\bar{\mu}}+\frac{1}{2}R_{a\ \lambda\mu}^{\ b}\theta^{\lambda}\wedge\theta^{\mu}+\frac{1}{2}R_{a\ \bar{\lambda}\bar{\mu}}^{\ b}\theta^{\bar{\lambda}}\wedge\theta^{{\bar\mu}},\quad mod\quad \theta\cup\Gamma.$$
  Consequently, by substituting the above identities into \eqref{eq4.21}, we find that
  \begin{align}
  \notag 0=&-R_{\beta\ \lambda\bar{\mu}}^{\ \alpha}\theta^{\beta}\wedge\theta^{\lambda}\wedge\theta^{\bar{\mu}}-\frac{1}{2}R_{\bar{\beta}\ \lambda\mu}^{\ \alpha}\theta^{\bar{\beta}}\wedge\theta^{\lambda}\wedge\theta^{\mu},\quad mod\quad \mathscr{A}\cup\Gamma.
  \end{align}
  Equivalently,
  \begin{align}
  \notag 0&=-R_{\beta\ \lambda\bar{\mu}}^{\ \alpha}+R_{\lambda\ \beta\bar{\mu}}^{\ \alpha}-\frac{1}{2}R_{\bar{\mu}\ \beta\lambda}^{\ \alpha}+\frac{1}{2}R_{\bar{\mu}\ \lambda\beta}^{\ \alpha}=-R_{\beta\ \lambda\bar{\mu}}^{\ \alpha}+R_{\lambda\ \beta\bar{\mu}}^{\ \alpha}+R_{\bar{\mu}\ \lambda\beta}^{\ \alpha},{\quad}mod{\quad}\mathscr{A}\cup\Gamma.
  \end{align}
  \end{proof}
\section{The normalized special frame}
  As in the CR case (cf. Section 3 in \cite{JL1}), to simplify the calculation of the asymptotic expansion of the Yamabe functional, we choose a conformal contact form such that certain components of the Webster torsion and curvature tensors vanish at the point $q$.

  Consider the conformal transformation
  \begin{equation}\label{eq3.1}
  \widehat{\theta}=e^{2u}\theta.
  \end{equation}
The main theorem of this section is the following.
  \begin{theorem}\label{thm3.1}
  For a contact Riemannian manifold $(M,\theta,h,J)$, there exists $(M,\widehat{\theta},\widehat{h},\widehat{J})$ with $\widehat{\theta}=e^{2u}\theta$, such that
  \begin{align}
  \notag &\widehat{R}_{\alpha\ \gamma{\bar{\beta}}}^{\ \gamma}(q)=0,{\quad}\widehat{A}_{\alpha\beta}(q)=0,
  \end{align}
  where $\widehat{R}_{\alpha\ \gamma{\bar{\beta}}}^{\ \gamma}(q)$ and $\widehat{A}_{\alpha\beta}(q)$ are the components of the curvature tensor and the Webster torsion tensor with respect to the special frame $\{\widehat{W_a},\widehat{T}\}$ of $(M,\widehat{\theta},\widehat{h},\widehat{J})$ centered at $q$.
  \end{theorem}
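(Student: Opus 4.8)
The plan is to realize both vanishing conditions by prescribing the $2$-jet of the conformal factor $u$ at the single point $q$, using the conformal transformation formulae for the Webster torsion and the curvature tensor that are established in Appendix A. Since the conditions are imposed only at $q$, it suffices to construct one smooth function $u$ with a prescribed value, differential and Hessian at $q$ (e.g.\ a quadratic polynomial in the normal coordinates times a cutoff); such $u$ always exists, so the whole problem reduces to solving a finite linear system for the components of the complex Hessian of $u$ at $q$.

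First I would normalize by taking $u(q)=0$ and $du(q)=0$. By the transformation formulae \eqref{eqA.1}, this forces $\widehat{J}(q)=J(q)$, $\widehat{T}(q)=T(q)$ and $\widehat{h}|_{H_qM}=h|_{H_qM}$, so the special frames of $(M,\widehat\theta,\widehat h,\widehat J)$ and $(M,\theta,h,J)$ may be taken to coincide at $q$; this makes the pointwise comparison of the two sets of components clean, and it also kills all the nonlinear first-order terms $u_\alpha u_\beta$, $u_\alpha u_{\bar\beta}$ that appear in the conformal formulae. With this normalization the formulae of Appendix A reduce at $q$ to affine relations of the schematic form
$$\widehat{A}_{\alpha\beta}(q)=A_{\alpha\beta}(q)+c_1\,u_{\alpha\beta}(q),\qquad \widehat{R}_{\alpha\ \gamma\bar{\beta}}^{\ \gamma}(q)=R_{\alpha\ \gamma\bar{\beta}}^{\ \gamma}(q)+c_2\,u_{\alpha\bar{\beta}}(q)+c_3\,\delta_{\alpha\bar{\beta}}\,u_{\gamma\bar{\gamma}}(q),$$
where $u_{\alpha\beta}$ and $u_{\alpha\bar{\beta}}$ denote the covariant Hessian of $u$ with respect to the TWT connection at $q$, and $c_1,c_2,c_3$ are explicit nonzero constants depending only on $n$ (with the extra, non-integrable contributions of the Tanno tensor $Q$ tracked through Appendix A).

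The two equations decouple because they involve different types of the complex Hessian. The torsion $\widehat A_{\alpha\beta}$ is symmetric of type $(2,0)$ by Corollary \ref{cor2.0a}, and is corrected only by the holomorphic Hessian $u_{\alpha\beta}(q)$; the Ricci-type contraction $\widehat R_{\alpha\ \gamma\bar{\beta}}^{\ \gamma}$ is Hermitian of type $(1,1)$ and is corrected only by the mixed Hessian $u_{\alpha\bar{\beta}}(q)$. For a real function $u$ these two pieces of the Hessian are independent: $u_{\alpha\beta}(q)$ can be an arbitrary symmetric complex matrix and $u_{\alpha\bar{\beta}}(q)$ an arbitrary Hermitian matrix, the counts $n(n+1)+n^2=n(2n+1)$ matching the real dimension of the symmetric real Hessian. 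Hence I would solve the torsion equation by setting $u_{\alpha\beta}(q)=-c_1^{-1}A_{\alpha\beta}(q)$ (consistent since both sides are symmetric in $\alpha,\beta$), giving $\widehat A_{\alpha\beta}(q)=0$; and solve the curvature equation by choosing the Hermitian matrix $u_{\alpha\bar{\beta}}(q)$ with $c_2\,u_{\alpha\bar{\beta}}(q)+c_3\,\delta_{\alpha\bar{\beta}}\,u_{\gamma\bar{\gamma}}(q)=-R_{\alpha\ \gamma\bar{\beta}}^{\ \gamma}(q)$, giving $\widehat R_{\alpha\ \gamma\bar{\beta}}^{\ \gamma}(q)=0$. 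The last system is invertible provided $c_2\ne0$ and $c_2+nc_3\ne0$: one first takes its trace to solve for $u_{\gamma\bar{\gamma}}(q)$ and then recovers the trace-free part.

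The hard part will be establishing these two transformation formulae with the correct nonvanishing coefficients in the non-integrable setting, which is exactly what Appendix A is for. Unlike the CR case, the conformal change of $J$ in \eqref{eqA.1} and the nonvanishing of the Tanno tensor inject additional terms into the transformation of the connection coefficients, and hence of $A_{\alpha\beta}$ and $R_{\alpha\ \gamma\bar{\beta}}^{\ \gamma}$. I would therefore have to verify that these extra $Q$-dependent terms either vanish at $q$ after the normalization $du(q)=0$ or enter only the constant (curvature and torsion) parts of the affine relations, so that the Hessian coefficients $c_1,c_2,c_3$ remain those of the underlying pseudohermitian computation and stay nonzero for $n\ge2$. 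Once this covariance is in hand, the remaining step is the elementary linear algebra above.
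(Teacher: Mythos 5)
Your strategy coincides with the paper's: both reduce the theorem to prescribing the second--order part of $u$ at $q$ via the conformal transformation formulae of Lemma \ref{lem3.3} and then solving a finite linear system. The paper takes $u\in\mathscr{R}_2$ (which automatically gives $u(q)=0$, $du(q)=0$), packages the two target quantities into the tensor $S_{\alpha\beta}=-(n+2)iA_{\alpha\beta}(q)$, $S_{\alpha\bar\beta}=R_{\alpha\ \gamma\bar\beta}^{\ \gamma}(q)$, and inverts the operator $-2(n+2)+|z|^2\mathscr{L}_0$ on $\mathscr{R}_2$; unwinding this is exactly your decoupled system with $c_1=-i$, $c_2=-(n+2)$, $c_3=-1$ (so $c_2+nc_3=-(2n+2)\neq0$), and your trace/trace-free inversion replaces the appeal to Lemma 3.9 of \cite{JL1}. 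Your handling of the frame issue ($du(q)=0$ forces the special frames to agree at $q$) is the content of the paper's Lemma \ref{lem3.5}.

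There is, however, one genuine gap. For the $(1,1)$ equation $c_2\,u_{\alpha\bar\beta}(q)+c_3\,\delta_{\alpha\bar\beta}u_{\gamma\bar\gamma}(q)=-R_{\alpha\ \gamma\bar\beta}^{\ \gamma}(q)$ to be solvable by the mixed Hessian of a \emph{real} function, the matrix $\bigl(R_{\alpha\ \gamma\bar\beta}^{\ \gamma}(q)\bigr)$ must be Hermitian, i.e.\ $\overline{R_{\alpha\ \gamma\bar\beta}^{\ \gamma}(q)}=R_{\beta\ \gamma\bar\alpha}^{\ \gamma}(q)$, equivalently $S_{\alpha\bar\beta}=S_{\bar\beta\alpha}$. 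This is not a formality: the correction $-\frac{n+2}{2}(Z_\alpha Z_{\bar\beta}u+Z_{\bar\beta}Z_\alpha u)+\frac12\delta_{\alpha\bar\beta}\mathscr{L}_0u$ produced by a real $u$ is always Hermitian, so any non-Hermitian part of $R_{\alpha\ \gamma\bar\beta}^{\ \gamma}(q)$ could never be cancelled and the argument would break down. You verify the analogous consistency for the torsion equation (symmetry of $A_{\alpha\beta}$) but not here. The paper devotes a separate proposition to precisely this point, deriving $R_{\alpha\bar\mu\gamma\bar\beta}(q)=R_{\bar\beta\gamma\bar\mu\alpha}(q)$ from the curvature symmetry \eqref{eq3.8a} of \cite{BD1}; in the non-integrable setting that identity carries extra terms built from $L=J-\tau_{\ast}$, which are shown to cancel at $q$ only after using $A_{ab}(q)=0$ and the normalized values of $J$ at $q$ from Proposition \ref{prop2.0d}. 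You would need to supply this verification before the linear algebra closes.
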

  In this section, we will work over the frame $\{W_a,\widehat{T}\}$ under the conformal transformation. And we abuse the notation to denote the components of the Webster torsion tensor and the curvature tensor with respect to $\{W_a,\widehat{T}\}$ also by $\widehat{A}_{\alpha\beta}(q)$ and $\widehat{R}_{\alpha\ \gamma\bar{\beta}}^{\ \gamma}(q)$.
  We will explain why we can do this later in Lemma \ref{lem3.5}.
\subsection{The transformation formulae under the conformal transformation}
First we do not change the special frame $\{W_a\}$ in the horizontal space. As mentioned in \eqref{eqA.1}, under the conformal transformation, we have $(\theta,J,T,h)\to(\widehat{\theta},\widehat{J},\widehat{T},\widehat{h})$ with
\begin{equation}\label{eq3.2}
\begin{aligned}
&\widehat{T}=e^{-2u}(T+J_{\ a}^bu^aW_b),\\
&\widehat{h}_{ab}=e^{2u}h_{ab},\\
&\widehat{J}_{\ a}^b=J_{\ a}^b,
\end{aligned}
\end{equation}
where $u_a=W_au$ and $u^a=h^{ab}u_b$. And we also get $\widehat{J}_{ab}=e^{2u}{J_{ab}}$ by \eqref{eq3.2}.

Let $\{\theta^b,\theta\}$ denote the special coframe. Noting that we do not change $\{W_a\}$, we require $\widehat{\theta^b}(W_a)=\delta_a^b$ and $\widehat\theta(\widehat{T})=0$. So $\theta^b$ change as:
  \begin{equation}\label{eqA.5}
  \widehat{\theta^a}=\theta^a-J_{\ b}^au^b\theta.\\
  \end{equation}
\begin{lemma}\label{lem3.3}
  If $u\in{\mathscr{O}_m}$, for a fixed special frame of the contact Riemannian manifold $(M,\theta,h,J)$, under the conformal transformation \eqref{eq3.1}, we have
  \begin{equation}\label{eq3.5}
  \widehat{A}_{\alpha\beta}=A_{\alpha\beta}-iZ_{\alpha}Z_{\beta}u+\mathscr{O}_{m-1},
  \end{equation}
  \begin{equation}\label{eq3.6}
  {\widehat{R}}_{\alpha\ \gamma{\bar{\beta}}}^{\ \gamma}=R_{\alpha\ \gamma\bar{\beta}}^{\ \gamma}-\frac{n+2}{2}(Z_{\alpha}Z_{\bar{\beta}}u+Z_{\bar{\beta}}Z_{\alpha}u)
  +\frac{1}{2}h_{\alpha\bar{\beta}}\mathscr{L}_0u+\mathscr{O}_{m-1},
  \end{equation}
  where we set $\mathscr{L}_0=-(Z_{\alpha}Z_{\bar{\alpha}}+Z_{\bar{\alpha}}Z_{\alpha})$.
  \end{lemma}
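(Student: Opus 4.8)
The plan is to work in the frame $\{W_a,\widehat{T}\}$ and to derive, from the structure equations \eqref{SE} written for the hatted data together with the transformation rules \eqref{eq3.2} and \eqref{eqA.5}, the exact conformal variation of $\widehat{A}_{\alpha\beta}$ and of the contraction $\widehat{R}_{\alpha\ \gamma\bar{\beta}}^{\ \gamma}$; this is the leading structure of the formulae recorded in Appendix A. Concretely, I would first determine the hatted connection forms $\widehat{\omega}_b^a$ by imposing $\widehat{\nabla}\widehat{h}=0$ and the torsion normalization through the second structure equation, so that $\widehat{\omega}_b^a=\omega_b^a+(\text{terms in }du)$; then I would read $\widehat{A}_b^a$ off the torsion equation $d\widehat{\theta^a}=\widehat{\theta^b}\wedge\widehat{\omega}_b^a+\widehat{A}_b^a\widehat{\theta}\wedge\widehat{\theta^b}$ and $\widehat{R}$ off $d\widehat{\omega}_a^b-\widehat{\omega}_a^c\wedge\widehat{\omega}_c^b$. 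The outcome expresses each hatted tensor as its unhatted counterpart, plus a term linear in the second covariant derivatives $\nabla^2u$, plus correction terms that are either linear in $u$ (paired with curvature, torsion, or Tanno factors) or quadratic in the first derivatives of $u$. The whole content of the lemma is that, modulo $\mathscr{O}_{m-1}$, only the pure second-derivative terms survive, and that they may be computed with the flat operators $Z_\alpha,Z_{\bar{\beta}}$.

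Next I would record the homogeneity bookkeeping. Since $u\in\mathscr{O}_m$ and $Z_j$ is homogeneous of degree $-o(j)$, a horizontal first derivative satisfies $W_au\in\mathscr{O}_{m-1}$ while $W_0u\in\mathscr{O}_{m-2}$. Hence every term quadratic in the first derivatives of $u$ lies in $\mathscr{O}_{2m-2}\subset\mathscr{O}_{m-1}$ (using $m\ge1$), and every term in which $u$ appears undifferentiated against a curvature, torsion, or Tanno factor (all of order $\mathscr{O}_0$) lies in $\mathscr{O}_m$. Terms carrying a connection coefficient are smaller still: by Corollary \ref{cor2.3}, $\Gamma_{jk}^l\in\mathscr{O}_1$, so a product $\Gamma\cdot(\text{first derivative of }u)$ lies in $\mathscr{O}_m$. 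Thus all correction terms are absorbed into the error $\mathscr{O}_{m-1}$, and only the second-derivative contributions remain to be analyzed.

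The last step is to replace covariant by coordinate derivatives. Writing $\nabla_\alpha\nabla_\beta u=W_\alpha(W_\beta u)-\Gamma_{\alpha\beta}^kW_ku$, the connection term drops because $\Gamma_{\alpha\beta}^0=0$ by Proposition \ref{prop2.1} while $\Gamma_{\alpha\beta}^\gamma\in\mathscr{O}_1$ forces $\Gamma_{\alpha\beta}^\gamma W_\gamma u\in\mathscr{O}_m$. For the remaining double frame derivative I would invoke \eqref{eq2.36}: the leading correction $W_a-Z_a$ equals $\tfrac23 J_{ab(1)}z^b\frac{\partial}{\partial t}$, with coefficient in $\mathscr{O}_2$, so applying $W_\alpha-Z_\alpha$ to a function costs at least two homogeneity degrees beyond what $\frac{\partial}{\partial t}$ removes. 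A short degree count then gives $W_\alpha W_\beta u=Z_\alpha Z_\beta u+\mathscr{O}_{m-1}$, and likewise $\nabla_\alpha\nabla_{\bar{\beta}}u\equiv Z_\alpha Z_{\bar{\beta}}u$, $\nabla_{\bar{\beta}}\nabla_\alpha u\equiv Z_{\bar{\beta}}Z_\alpha u$ modulo $\mathscr{O}_{m-1}$, while the sub-Laplacian contraction reduces to $\tfrac12 h_{\alpha\bar{\beta}}\mathscr{L}_0u$ with $\mathscr{L}_0=-(Z_\alpha Z_{\bar{\alpha}}+Z_{\bar{\alpha}}Z_\alpha)$. Substituting these into the transformation formulae yields \eqref{eq3.5} and \eqref{eq3.6}.

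The main obstacle is the curvature identity \eqref{eq3.6}: pinning down the exact coefficients $\tfrac{n+2}{2}$ and $\tfrac12$ requires the full conformal variation of $\widehat{R}_{\alpha\ \gamma\bar{\beta}}^{\ \gamma}$, and here the contact-Riemannian setting is genuinely harder than the CR case because $\widehat{J}=J$ is not constant and $\{W_a\}$ is not a $T^{(1,0)}M$-frame away from $q$, so that $\widehat{\omega}_a^b$ and its exterior derivative generate extra Tanno-type terms absent in \cite{JL1}. The essential check is therefore that every such non-integrability contribution carries an undifferentiated $u$, a connection factor, or a product of first derivatives, and hence falls into $\mathscr{O}_{m-1}$ by the bookkeeping above. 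Once this is verified, the leading part coincides with the CR-type expression, and the replacement $\nabla\to Z$ completes the proof.
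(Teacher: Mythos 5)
Your overall strategy---compute the conformal change of the connection, substitute into the torsion and curvature formulas, do homogeneity bookkeeping, and replace frame derivatives by the Heisenberg operators $Z_j$---is the same route the paper takes in Appendix A (the paper gets $\widehat{\Gamma}$ from the Koszul-type formula of Lemma \ref{lemA.0} rather than from the structure equations, and reads $\widehat{A}_{\alpha\beta}$ off an explicit bracket identity, but these are cosmetic differences). The gap is in your error classification. You assert that every correction beyond the pure second-derivative terms is either quadratic in first derivatives of $u$, or linear in an undifferentiated $u$, or carries a connection factor, and hence lies in $\mathscr{O}_{m-1}$. This omits the terms \emph{linear in the Reeb derivative} $u_0=W_0u\in\mathscr{O}_{m-2}$, which are strictly larger than the allowed error. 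Such a term genuinely occurs: the TWT torsion condition $\tau(X,Y)=2d\theta(X,Y)T$ puts the summand $2\Gamma_{0a}^{b}J_{cd}$ into the curvature formula \eqref{eq2.14}, and by \eqref{eqA.10} the hatted coefficient $\widehat{\Gamma}_{\widehat{0}\alpha}^{\gamma}$ contains $u_0\delta_\alpha^\gamma$, so the contraction $\widehat{R}_{\alpha\ \gamma\bar\beta}^{\ \gamma}$ acquires the term $-2i\delta_{\alpha\bar\beta}\,\partial u/\partial t\in\mathscr{O}_{m-2}$.

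This term is not noise to be discarded; it is exactly what produces the symmetrized form of \eqref{eq3.6}. The raw computation gives $-(n+2)Z_{\bar\beta}Z_\alpha u-\delta_{\alpha\bar\beta}Z_\gamma Z_{\bar\gamma}u-2i\delta_{\alpha\bar\beta}\partial_tu$, and only after using $[Z_\alpha,Z_{\bar\beta}]=2i\delta_{\alpha\bar\beta}\partial_t$ do the $\partial_tu$ contributions cancel, leaving $-\tfrac{n+2}{2}(Z_\alpha Z_{\bar\beta}+Z_{\bar\beta}Z_\alpha)u+\tfrac12\delta_{\alpha\bar\beta}\mathscr{L}_0u$. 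Since the symmetrized and unsymmetrized expressions differ by a multiple of $\delta_{\alpha\bar\beta}\partial_tu\in\mathscr{O}_{m-2}$, your bookkeeping as stated cannot distinguish between them, i.e.\ it cannot actually pin down \eqref{eq3.6} to within the claimed accuracy $\mathscr{O}_{m-1}$. To repair the argument you must carry the $u_0$-term explicitly through the computation (as the paper does at the end of Appendix A.2) and verify its cancellation against the commutators; the rest of your degree count (quadratic terms in $\mathscr{O}_{2m-2}$, $\Gamma\cdot Wu\in\mathscr{O}_m$, $W_a=Z_a+\mathscr{O}_0$) is sound and matches the paper.
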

  This lemma will be proved in the Appendix A. On a contact Riemannian manifold, we have transformation formulae \eqref{eq3.5} and \eqref{eq3.6} under the conformal transformation similar to the CR case (cf. Lemma 3.6 in \cite{JL1}), but with error terms $\mathscr{O}_{m-1}$ instead of $\mathscr{O}_{m}$. We will see that it's sufficient for our purpose.
\begin{lemma}\label{lem3.3a}
Under the conformal transformation \eqref{eq3.1}, for $u\in{\mathscr{O}_m}$, $m\ge2$. The connection 1-form of the TWT connection changes as
$$\widehat\omega_a^b=\omega_a^b+\mathscr{O}_m.$$
\end{lemma}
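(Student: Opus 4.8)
The plan is to work with the difference tensor $S:=\widehat{\nabla}-\nabla$, a genuine $(1,2)$-tensor on $M$. Since $\widehat{\nabla}\widehat{\theta}=0$ and $\widehat{\theta}=e^{2u}\theta$ forces $\widehat{\nabla}$ to preserve $HM=\ker\theta=\ker\widehat{\theta}$, the vector $S(X,W_a)=\widehat{\nabla}_XW_a-\nabla_XW_a$ is horizontal for every $X$, and its $W_b$-component equals $(\widehat{\omega}_a^b-\omega_a^b)(X)$. Expanding in the special coframe, $\widehat{\omega}_a^b-\omega_a^b=(\widehat{\omega}_a^b-\omega_a^b)(W_c)\,\theta^c+(\widehat{\omega}_a^b-\omega_a^b)(T)\,\theta$, and since $\theta^c\in\mathscr{O}_1$, $\theta\in\mathscr{O}_2$ in the parabolic grading, it suffices to establish the two bounds
\[
(\widehat{\omega}_a^b-\omega_a^b)(W_c)\in\mathscr{O}_{m-1},\qquad(\widehat{\omega}_a^b-\omega_a^b)(T)\in\mathscr{O}_{m-2}.
\]
Throughout I will use $u_c=W_cu\in\mathscr{O}_{m-1}$, $T(u)\in\mathscr{O}_{m-2}$, the horizontal field $V:=J\nabla u=u^cJ_{\ c}^dW_d\in\mathscr{O}_{m-1}$ (so that $\widehat{T}=e^{-2u}(T+V)$ and $e^{2u}\widehat{T}-T=V$ by \eqref{eq3.2}), and the vanishing $\Gamma_{jk}^l\in\mathscr{O}_1$ from Corollary \ref{cor2.3}.

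For the horizontal part, metric compatibility $\widehat{\nabla}\widehat{h}=0$, $\nabla h=0$ together with $\widehat{h}_{ab}=e^{2u}h_{ab}$ on $HM$ gives, for horizontal $X,Y,Z$,
\[
h(S(X,Y),Z)+h(S(X,Z),Y)=2X(u)\,h(Y,Z),
\]
while comparing the torsion conditions in \eqref{TWT} for $\nabla$ and $\widehat{\nabla}$ (using $d\widehat{\theta}(X,Y)=e^{2u}d\theta(X,Y)$ on $HM$ and $e^{2u}\widehat{T}-T=V$) yields
\[
S(X,Y)-S(Y,X)=2\,d\theta(X,Y)\,V.
\]
These are exactly the $(Y,Z)$-symmetric and $(X,Y)$-antisymmetric parts of $h(S(X,Y),Z)$, so by the usual Koszul-type reconstruction $S(W_c,W_a)$ is a fixed linear combination of the two right-hand sides. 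As both data $X(u)$ and $h(V,\cdot)$ lie in $\mathscr{O}_{m-1}$, we get $(\widehat{\omega}_a^b-\omega_a^b)(W_c)\in\mathscr{O}_{m-1}$, the first bound.

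For the $T$-component I use the torsion identity $S(T,W_a)-S(W_a,T)=\widehat{\tau}(T,W_a)-\tau(T,W_a)$. Writing $T=e^{2u}\widehat{T}-V$ and $\widehat{\nabla}\widehat{T}=0$ gives $S(W_a,T)=\widehat{\nabla}_{W_a}T=2u_ae^{2u}\widehat{T}-\widehat{\nabla}_{W_a}V$, whose horizontal components are $2u_aV_b-W_aV_b-V^c\widehat{\Gamma}_{acb}\in\mathscr{O}_{m-2}$ (leading term $-W_aV_b$, the rest $\mathscr{O}_m$ by $\Gamma\in\mathscr{O}_1$). On the other side, expanding $\widehat{\tau}(T,W_a)=e^{2u}\widehat{\tau}(\widehat{T},W_a)-\widehat{\tau}(V,W_a)$, the Webster piece contributes $e^{2u}\widehat{A}_{ab}$ and $\widehat{\tau}(V,W_a)=2e^{2u}d\theta(V,W_a)\widehat{T}$ contributes only $\mathscr{O}_{2m-2}$; subtracting $h(\tau(T,W_a),W_b)=A_{ab}$ and invoking Lemma \ref{lem3.3} (with its conjugate, and $\widehat{A}_{\alpha\bar{\beta}}=A_{\alpha\bar{\beta}}=0$ from Lemma \ref{lem2.1}) shows $e^{2u}\widehat{A}_{ab}-A_{ab}\in\mathscr{O}_{m-2}$. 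Hence $h(S(T,W_a),W_b)\in\mathscr{O}_{m-2}$, which is the second bound, and combining the two bounds proves $\widehat{\omega}_a^b-\omega_a^b\in\mathscr{O}_m$.

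The main obstacle is the $T$-direction component. The horizontal part is a routine metric-plus-torsion (Koszul) computation, but the $T$-component forces one to reconcile the two distinct Reeb fields $T$ and $\widehat{T}$ and the extra horizontal field $V=J\nabla u$, and its antisymmetric part is not accessible from metric compatibility alone but only through the transformation of the Webster torsion in Lemma \ref{lem3.3}. The delicate point is the order bookkeeping: although $u_c$ and $V$ are only $\mathscr{O}_{m-1}$ and $T(u)$ only $\mathscr{O}_{m-2}$, these degrees match precisely the degrees $1$ and $2$ of $\theta^c$ and $\theta$, so that the assembled $1$-form $\widehat{\omega}_a^b-\omega_a^b$ lands in $\mathscr{O}_m$ rather than the naively expected $\mathscr{O}_{m-1}$.
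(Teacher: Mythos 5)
Your strategy---reconstructing the difference tensor $S=\widehat{\nabla}-\nabla$ from metric compatibility plus the torsion normalization, and splitting $\widehat{\omega}_a^b-\omega_a^b$ into its $W_c$- and $T$-components---is a genuinely different route from the paper's, which simply reads the bounds $\widehat{\Gamma}_{ca}^b-\Gamma_{ca}^b\in\mathscr{O}_{m-1}$ and $\widehat{\Gamma}_{\widehat{0}a}^b-\Gamma_{0a}^b\in\mathscr{O}_{m-2}$ off the explicit transformation formulae \eqref{eqA.10} of Proposition \ref{propA.1} and then pairs them with $\widehat{\theta}^c\in\mathscr{O}_1$, $\widehat{\theta}\in\mathscr{O}_2$. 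Your horizontal (Koszul) bound, your analysis of $S(W_a,T)$, and the degree bookkeeping are all correct.

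There is, however, one genuine gap, on the torsion side of the $T$-component: you dispose of the mixed Webster-torsion components by asserting $\widehat{A}_{\alpha\bar{\beta}}=A_{\alpha\bar{\beta}}=0$ ``from Lemma \ref{lem2.1}''. Lemma \ref{lem2.1}(1)(d) gives $\tau_{\ast}T^{(1,0)}M\subset T^{(0,1)}M$, which forces $A_{\alpha\bar{\beta}}$ to vanish only where $W_{\alpha}$ is an actual $T^{(1,0)}M$-vector. For the special frame this happens only at the center $q$---that is precisely Corollary \ref{cor2.0a}, which asserts $A_{\alpha\bar{\beta}}(q)=0$ and nothing more---because away from $q$ the parallel-translated frame is not a $T^{(1,0)}M$-frame; this is the central difficulty the paper keeps emphasizing. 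So $A_{\alpha\bar{\beta}}$ and $\widehat{A}_{\alpha\bar{\beta}}$ are in general nonzero functions, Lemma \ref{lem3.3} says nothing about them, and your argument as written does not control the contribution of $\widehat{A}_{\alpha\bar{\beta}}-A_{\alpha\bar{\beta}}$ to $h\big(\widehat{\tau}(T,W_{\alpha})-\tau(T,W_{\alpha}),W_{\bar{\beta}}\big)$, which your torsion identity does require.

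The gap is fixable by the same method as the proof of Lemma \ref{lem3.3}: starting from $\widehat{A}_{ab}=\frac{1}{2}\big(\widehat{T}(\widehat{h}_{ab})-\widehat{h}(W_a,[\widehat{T},W_b])-\widehat{h}([\widehat{T},W_a],W_b)\big)$ and inserting the conjugate of \eqref{eqA.9}, one finds
\begin{equation*}
\widehat{A}_{\alpha\bar{\beta}}=A_{\alpha\bar{\beta}}+\delta_{\alpha\bar{\beta}}u_0
-\frac{i}{2}\big(Z_{\bar{\beta}}Z_{\alpha}u-Z_{\alpha}Z_{\bar{\beta}}u\big)+\mathscr{O}_{m-1}
=A_{\alpha\bar{\beta}}+\mathscr{O}_{m-2},
\end{equation*}
where now the term $\frac{1}{2}\widehat{T}(\widehat{h}_{\alpha\bar{\beta}})=u_0\delta_{\alpha\bar{\beta}}+\mathscr{O}_{2m-2}$ does not drop out (since $\widehat{h}_{\alpha\bar{\beta}}=e^{2u}\delta_{\alpha\bar{\beta}}$ is no longer constant), and the commutator term is $-\delta_{\alpha\bar{\beta}}\partial u/\partial t$ by $[Z_{\alpha},Z_{\bar{\beta}}]=2i\delta_{\alpha\bar{\beta}}\partial/\partial t$. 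Each correction lies in $\mathscr{O}_{m-2}$ (in fact the two $\partial u/\partial t$ contributions cancel, giving $\mathscr{O}_{m-1}$), which is exactly the missing bound; with this supplement your proof goes through.
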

\begin{proof}
First note that by \eqref{eqA.10} we have
\begin{equation}\label{eq3.6a}
\widehat\Gamma_{ca}^b=\Gamma_{ca}^b+\mathscr{O}_{m-1},{\quad}\widehat\Gamma_{\widehat{0}a}^b=\Gamma_{0a}^b+\mathscr{O}_{m-2}.
\end{equation}
By \eqref{eq3.1}, \eqref{eqA.5} and $e^{2u}=1+\mathscr{O}_m$, we get
$$\widehat\theta=(1+\mathscr{O}_m)\theta=\theta+\mathscr{O}_{m+2},{\quad}\widehat{\theta}^a=\theta^a+\mathscr{O}_{m+1}.$$
So
$$\widehat\omega_a^b=\widehat\Gamma_{ca}^b\widehat{\theta}^c+\widehat\Gamma_{\widehat{0}a}^b\widehat{\theta}
=\omega_a^b+\mathscr{O}_m.$$
\end{proof}
\subsection{The conformal contact form with vanishing $R_{\alpha\ \gamma\bar{\beta}}^{\ \gamma}(q)$ and $A_{\alpha\beta}(q)$}
As in the CR case (cf. p. 320 in \cite{JL1}),  we define the tensor $S_{ab}\theta^a\otimes\theta^b$, whose components are:
\[S_{\alpha\beta}=\overline{S_{\overline{\alpha\beta}}}=-(n+2)iA_{\alpha\beta}(q),{\quad}
S_{\alpha\bar{\beta}}=\overline{S_{\bar{\alpha}\beta}}=R_{\alpha\ \gamma\bar{\beta}}^{\ \gamma}(q).\]
\begin{proposition}
$S_{ab}$ is a symmetric tensor.
\end{proposition}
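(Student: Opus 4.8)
The plan is to check $S_{ab}=S_{ba}$ separately on the three index blocks determined by the bigrading: the holomorphic block $(\alpha,\beta)$, the antiholomorphic block $(\bar\alpha,\bar\beta)$, and the mixed block $(\alpha,\bar\beta)$. On the first two blocks $S$ is built from the Webster torsion and on the third from the contracted curvature, so the symmetry splits into a torsion statement and a curvature statement.

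The two pure blocks are immediate. On the holomorphic block, $S_{\alpha\beta}-S_{\beta\alpha}=-(n+2)i\big(A_{\alpha\beta}(q)-A_{\beta\alpha}(q)\big)$, which vanishes by the symmetry $A_{ab}=A_{ba}$ recorded in Corollary \ref{cor2.0a} (itself a consequence of the self-adjointness of the Webster torsion, Lemma \ref{lem2.1}(2)). The antiholomorphic block $S_{\bar\alpha\bar\beta}=S_{\bar\beta\bar\alpha}$ is the complex conjugate of this statement, using $\overline{A_{\alpha\beta}}=A_{\bar\alpha\bar\beta}$ from Corollary \ref{prop2.0f}.

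For the mixed block I must prove $S_{\alpha\bar\beta}=S_{\bar\beta\alpha}$, i.e. $R_{\alpha\ \gamma\bar\beta}^{\ \gamma}(q)=\overline{R_{\beta\ \gamma\bar\alpha}^{\ \gamma}(q)}$. The conjugation rule $\overline{R_{j\ kl}^{\ s}}=R_{\bar j\ \bar k\bar l}^{\ \bar s}$, which follows from Corollary \ref{prop2.0f} by expanding $\overline{R(W_k,W_l)W_j}=R(W_{\bar k},W_{\bar l})W_{\bar j}$ in the special frame, rewrites the right-hand side as $R_{\bar\beta\ \bar\gamma\alpha}^{\ \bar\gamma}(q)$; so the goal is the Hermitian-type identity $R_{\alpha\ \gamma\bar\beta}^{\ \gamma}(q)=R_{\bar\beta\ \bar\gamma\alpha}^{\ \bar\gamma}(q)$. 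I would lower the contravariant index with the constant metric of the special frame, setting $R_{abcd}:=h(R(W_c,W_d)W_a,W_b)$, so that $R_{\alpha\ \gamma\bar\beta}^{\ \gamma}=\sum_\gamma R_{\alpha\bar\gamma\gamma\bar\beta}$ and $R_{\bar\beta\ \bar\gamma\alpha}^{\ \bar\gamma}=\sum_\gamma R_{\bar\beta\gamma\bar\gamma\alpha}$. Two elementary symmetries hold with no integrability assumption: $R_{abcd}=-R_{abdc}$ from $R(X,Y)=-R(Y,X)$, and $R_{abcd}=-R_{bacd}$ from $\nabla h=0$. Applying both to $R_{\bar\beta\gamma\bar\gamma\alpha}$ gives $R_{\bar\beta\gamma\bar\gamma\alpha}=R_{\gamma\bar\beta\alpha\bar\gamma}$, and the target identity reduces to the pair-exchange symmetry in the traced form $\sum_\gamma R_{\gamma\bar\beta\alpha\bar\gamma}=\sum_\gamma R_{\alpha\bar\gamma\gamma\bar\beta}$.

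The main obstacle is exactly this pair-exchange symmetry, which for the TWT connection is not exact: the first Bianchi identity carries torsion and Tanno corrections, and here the Webster torsion $A_{\alpha\beta}(q)$ need not vanish, since the conformal normalization of Theorem \ref{thm3.1} has not yet been applied. My plan is to write the first Bianchi identity for the torsioned connection $\nabla$, evaluate it at $q$, where all connection coefficients vanish by Corollary \ref{cor2.3} and where $A_\alpha^\beta(q)=A_{\alpha\bar\beta}(q)=0$ by Corollary \ref{cor2.0a}, and show that the surviving correction terms cancel once traced over the conjugate pair $\gamma,\bar\gamma$. I expect the Bianchi-type relation of Proposition \ref{prop4.5} together with the antisymmetry $R_{a\ cd}^{\ b}=-R_{a\ dc}^{\ b}$ to effect this cancellation, the point being that the terms depending on $\mathscr{A}$ and on $\Gamma$ either vanish at $q$ or pair off antisymmetrically under the contraction. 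Once the traced pair symmetry is established, the Hermitian identity follows and $S$ is symmetric.
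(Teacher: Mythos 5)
Your decomposition into the three index blocks, the treatment of the pure blocks via the self-adjointness of $A$, and the reduction of the mixed block to the traced pair-exchange identity $\sum_\gamma R_{\gamma\bar{\beta}\alpha\bar{\gamma}}(q)=\sum_\gamma R_{\alpha\bar{\gamma}\gamma\bar{\beta}}(q)$ are all correct (the two antisymmetries you invoke do hold for the TWT connection, the second because $\nabla h=0$). But the proof stops exactly where the real work begins: the pair-exchange symmetry for a connection with torsion is the whole content of the mixed-block statement, and you only \emph{expect} the correction terms to cancel rather than showing that they do. The tools you name cannot deliver this. Proposition \ref{prop4.5} is only valid modulo $\mathscr{A}$, and the components $A_{\alpha\beta}(q)$ need not vanish at this stage (Theorem \ref{thm3.1}, which kills them, is proved \emph{after} and \emph{using} the symmetry of $S$; only $A_\alpha^\beta(q)$ and $A_{\alpha\bar\beta}(q)$ vanish a priori, by Corollary \ref{cor2.0a}). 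Moreover its index pattern is wrong for your purpose: contracting \eqref{eq4.20} produces relations between $R_{\beta\ \gamma\bar{\mu}}^{\ \gamma}$, $R_{\gamma\ \beta\bar{\mu}}^{\ \gamma}$ and $R_{\bar{\mu}\ \gamma\beta}^{\ \gamma}$, none of which is the conjugate-type trace $R_{\bar{\mu}\ \bar{\gamma}\beta}^{\ \bar{\gamma}}$ you need to reach, so even combined with its complex conjugate it does not close the argument. Similarly, the first Bianchi identity with torsion carries corrections $\mathfrak{S}\{2d\theta(X,Y)\tau_*(Z)\}$ in its horizontal part at $q$, which are genuinely nonzero (they involve $J(q)\ne 0$ and the possibly nonzero $A_{\alpha\beta}(q)$), so the four-fold Bianchi combination that yields pair-exchange in the torsion-free case produces explicit quadratic corrections here that must be computed, not waved away.

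The paper closes this gap by quoting the exact pair-exchange identity with correction, (38) of \cite{BD1}: $h(R(X,Y)Z,W)=h(R(W,Z)Y,X)+h((LW\wedge LZ)Y,X)-h((LX\wedge LY)Z,W)$ with $L=J-\tau_*$, applied to $X=W_\gamma$, $Y=W_{\bar\beta}$, $Z=W_\alpha$, $W=W_{\bar\mu}$. A short computation using $J_{\alpha\beta}(q)=0$, $A_{\alpha\bar\beta}(q)=0$, $A_{ab}=A_{ba}$ and $J_{ab}=-J_{ba}$ shows that both correction terms equal $A_{\bar\mu\bar\beta}(q)A_{\alpha\gamma}(q)-J_{\bar\beta\alpha}(q)J_{\gamma\bar\mu}(q)$, so they cancel and one gets the pointwise identity $R_{\alpha\bar\mu\gamma\bar\beta}(q)=R_{\bar\beta\gamma\bar\mu\alpha}(q)$ (no trace needed), from which $S_{\alpha\bar\beta}=S_{\bar\beta\alpha}$ follows. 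To complete your argument you would either need to cite such a corrected pair-exchange identity and verify this cancellation explicitly, or derive it from the torsioned Bianchi identity and carry out the same verification; as written, the decisive cancellation is asserted but not established.
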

\begin{proof}
$S_{\alpha\beta}$ and $S_{\bar{\alpha}\bar{\beta}}$ is symmetric following directly from the self-adjointness of $A_{ab}$ (see Lemma \ref{lem2.1}). So we need to prove $S_{\alpha\bar{\beta}}=S_{\bar{\beta}\alpha}$.

Recall that
\begin{equation}\label{eq3.8a}
h(R(X,Y)Z,W)=h(R(W,Z)Y,X)+h\big((LW{\wedge}LZ)Y,X\big)-h\big((LX{\wedge}LY)Z,W\big),
\end{equation}
(cf. (38) in \cite{BD1}), for any vector field $X$, $Y$, $Z$, $W$, where $(X{\wedge}Y)Z=h(X,Z)Y-h(Y,Z)X$ and $L=J-\tau_{\ast}$. Now apply \eqref{eq3.8a} to $X=W_{\gamma}$, $Y=W_{\bar{\beta}}$, $Z=W_{\alpha}$, $W=W_{\bar{\mu}}$ to get
\begin{align}\label{eq3.8b}
\notag h(R(W_{\gamma},W_{\bar{\beta}})W_{\alpha},W_{\bar{\mu}})=&h(R(W_{\bar{\mu}},W_{\alpha})W_{\bar{\beta}},W_{\gamma})+h\big((LW_{\bar{\mu}}{\wedge}LW_{\alpha})W_{\bar{\beta}},W_{\gamma}\big)\\
&-h\big((LW_{\gamma}{\wedge}LW_{\bar{\beta}})W_{\alpha},W_{\bar{\mu}}\big).
\end{align}
On the other hand, by the definition of $L$, $h(LW_a,W_b)=h\big((J-\tau_{\ast})W_a,W_b\big)=h(J_{\ a}^cW_c-A_a^cW_c,W_b)=J_{ba}-A_{ab}$. Then we get
\begin{align}\label{eq3.8c}
\notag &h((LW_{\bar{\mu}}{\wedge}LW_{\alpha})W_{\bar{\beta}},W_{\gamma})|_q=h(h(LW_{\bar{\mu}},W_{\bar{\beta}})LW_{\alpha},W_{\gamma})|_q-h(h(LW_{\alpha},W_{\bar{\beta}})LW_{\bar{\mu}},W_{\gamma})|_q\\ \notag &=(J_{\bar{\beta}\bar{\mu}}-A_{\bar{\mu}\bar{\beta}})(J_{\gamma\alpha}-A_{\alpha\gamma})|_q-(J_{\bar{\beta}\alpha}-A_{\alpha\bar{\beta}})(J_{\gamma\bar{\mu}}-A_{\bar{\mu}\gamma})|_q\\
&=A_{\bar{\mu}\bar{\beta}}(q)A_{\alpha\gamma}(q)-J_{\bar{\beta}\alpha}(q)J_{\gamma\bar{\mu}}(q),
\end{align}
and
\begin{align}\label{eq3.8d}
\notag &h\big((LW_{\gamma}{\wedge}LW_{\bar{\beta}})W_{\alpha},W_{\bar{\mu}}\big)|_q=h(h(LW_{\gamma},W_{\alpha})LW_{\bar{\beta}},W_{\bar{\mu}})|_q-h(h(LW_{\bar{\beta}},W_{\alpha})LW_{\gamma},W_{\bar{\mu}})|_q\\ \notag &=(J_{\alpha\gamma}-A_{\gamma\alpha})|_q(J_{\bar{\mu}\bar{\beta}}-A_{\bar{\beta}\bar{\mu}})|_q-(J_{\alpha\bar{\beta}}-A_{\bar{\beta}\alpha})|_q(J_{\bar{\mu}\gamma}-A_{\gamma\bar{\mu}})|_q\\
&=A_{\gamma\alpha}(q)A_{\bar{\beta}\bar{\mu}}(q)-J_{\alpha\bar{\beta}}(q)J_{\bar{\mu}\gamma}(q)=A_{\bar{\mu}\bar{\beta}}(q)A_{\alpha\gamma}(q)-J_{\bar{\beta}\alpha}(q)J_{\gamma\bar{\mu}}(q),
\end{align}
at the point $q$ by using Proposition \ref{prop2.0d} and Corollary \ref{cor2.0a}. Substitute \eqref{eq3.8c} and \eqref{eq3.8d} to \eqref{eq3.8b} to get
$R_{\alpha\bar{\mu}\gamma\bar{\beta}}(q)=R_{\bar{\beta}\gamma\bar{\mu}\alpha}(q)$ at $q$. Hence $S_{\alpha\bar{\beta}}
=R_{\alpha\ \gamma\bar{\beta}}^{\ \gamma}(q)=h^{\gamma\bar{\mu}}R_{\alpha\bar{\mu}\gamma\bar{\beta}}(q)=R_{\alpha\bar{\gamma}\gamma\bar{\beta}}(q)=R_{\bar{\beta}\gamma\bar{\gamma}\alpha}
=h_{\gamma\bar{\mu}}R_{\bar{\beta}\ \bar{\gamma}\alpha}^{\ \bar{\mu}}(q)=R_{\bar{\beta}\ \bar{\gamma}\alpha}^{\ \bar{\gamma}}(q)=S_{\bar{\beta}\alpha}$. So tensor $\{S_{ab}\}$ is symmetric.
\end{proof}

{\it Proof of Theorem \ref{thm3.1}.} If $u=u(z)$ is a polynomial homogeneous of degree $m$ but independent of $t$, we denote $u\in\mathscr{R}_m$. We assume $u{\in}\mathscr{R}_2$ in the conformal transformation \eqref{eq3.1}.

  For the symmetric tensor $S_{ab}$ as above, we define the polynomial $$S=S_{ab}z^az^b.$$
  By Lemma \ref{lem3.3}, for $u\in\mathscr{R}_2$, we have
  \begin{align}
  \notag &\widehat{S}_{\alpha\beta}=S_{\alpha\beta}-(n+2)Z_{\alpha}Z_{\beta}u,\\
  \notag &\widehat{S}_{\alpha\bar{\beta}}=S_{\alpha\bar{\beta}}-\frac{n+2}{2}(Z_{\bar{\beta}}Z_{\alpha}u+Z_{\alpha}Z_{\bar{\beta}}u)
  +\frac{1}{2}h_{\alpha\bar{\beta}}\mathscr{L}_0u.
  \end{align}
  Now let $\widehat{S}=\widehat{S}_{ab}z^az^b$, we get
  \begin{align}\label{eq3.11}
  \notag \widehat{S}&=\widehat{S}_{ab}z^az^b=\bigg(S_{\alpha\beta}-(n+2)Z_{\alpha}Z_{\beta}u\bigg)z^{\alpha}z^{\beta}
  +\bigg(S_{\alpha\bar{\beta}}-\frac{n+2}{2}(Z_{\bar{\beta}}Z_{\alpha}u+Z_{\alpha}Z_{\bar{\beta}}u)
  +\frac{1}{2}{\delta}_{\alpha\bar{\beta}}\mathscr{L}_0u\bigg)z^{\alpha}z^{\bar{\beta}}\\
  \notag &+\bigg(S_{\bar{\alpha}\beta}-\frac{n+2}{2}(Z_{\beta}Z_{\bar{\alpha}}u+Z_{\bar{\alpha}}Z_{\beta}u)
  +\frac{1}{2}{\delta}_{\beta\bar{\alpha}}\mathscr{L}_0u\bigg)z^{\bar{\alpha}}z^{\beta}
  +\bigg(S_{\bar{\alpha}\bar{\beta}}-(n+2)Z_{\bar{\alpha}}Z_{\bar{\beta}}u\bigg)z^{\bar{\alpha}}z^{\bar{\beta}}\\
  &=S-(n+2)z^az^bZ_aZ_bu+|z|^2\mathscr{L}_0u.
  \end{align}
  Note that
  \begin{equation}
  \notag m^2u=P^2u=(z^aZ_a+2z^0Z_0)^2u=z^az^bZ_aZ_bu+4z^0z^aZ_0Z_au+4z^0z^0Z_0Z_0u+2z^0Z_0u+Pu,
  \end{equation}
  (cf. p. 320 in \cite{JL1}). Thus for $u\in{\mathscr{R}_2}$, we have
  \begin{equation}
  \notag z^az^bZ_aZ_bu=2u.
  \end{equation}
  Therefore by \eqref{eq3.11}, $\widehat{S}=S-2(n+2)u+|z|^2\mathscr{L}_0u$. The operator $-2(n+2)+|z|^2\mathscr{L}_0$ is invertible on $\mathscr{R}_2$ by ${|z|}^2\mathscr{L}_0$ having no positive eigenvalues (cf. Lemma 3.9 in \cite{JL1}). So we can find $u=u_0\in{\mathscr{R}_2}$ such that $\widehat{S}_{(2)}=0$. Namely $u_0$ satisfies $-2(n+2)u_0+|z|^2\mathscr{L}_0u_0=-S_{(2)}=-z^az^bS_{ab}(q)$. Under the conformal transformation $\widehat\theta=e^{2u_0}\theta$ for such $u_0$, we have
  $$\widehat{R}_{\alpha\ \gamma\bar{\beta}}^{\ \gamma}(q)=0,{\quad}\widehat{A}_{\alpha\beta}(q)=0,$$
  with respect to a special frame $\{W_a,\widehat{T}\}$ of $(M,\theta,h,J)$ centered at $q$. Finally, we have the following lemma.
  \begin{lemma}\label{lem3.5}
  Under the conformal transformation \eqref{eq3.1}  with $u\in{\mathscr{R}_2}$, changing the special frame $\{W_a\}$  of $(M,\theta,h,J)$ centered at $q$ to a special frame $\{\widehat{W_a}\}$ of $(M,\widehat{\theta},\widehat{h},\widehat{J})$ centered at $q$ makes the value of the curvature tensor and the Webster torsion tensor at $q$ invariant. So we can abuse the notation to write $\widehat{R}_{\alpha\ \gamma\bar{\beta}}^{\ \gamma}(q)$ and $\widehat{A}_{\alpha\beta}(q)$ no matter they are with respect to $\{W_a,\widehat{T}\}$ or $\{\widehat{W_a},\widehat{T}\}$.
  \end{lemma}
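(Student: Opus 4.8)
The plan is to exploit tensoriality. Both the curvature $\widehat{R}$ and the Webster torsion $\widehat{\tau}_{\ast}$ are tensors of the single connection $\widehat{\nabla}$ of the new structure $(M,\widehat{\theta},\widehat{h},\widehat{J})$; their components at the point $q$ are determined entirely by the values at $q$ of the frame, its dual coframe and the metric, and are wholly insensitive to how the frame is extended off $q$. Hence it suffices to show that the two frames $\{W_a,\widehat{T}\}$ and $\{\widehat{W_a},\widehat{T}\}$, together with the coframes and metric components they induce, coincide at $q$. Since only the reference frame differs while the curvature and torsion objects are the same, this immediately forces $\widehat{R}_{\alpha\ \gamma\bar{\beta}}^{\ \gamma}(q)$ and $\widehat{A}_{\alpha\beta}(q)$ to take the same value in either frame.

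First I would record the behaviour of $u\in\mathscr{R}_2$ at $q$. Since $u$ is a homogeneous polynomial of degree $2$ in $z$ that is independent of $t$, both $u$ and all its first derivatives vanish at the origin; in particular $u(q)=0$ and $u_a(q)=W_au(q)=0$ (using $W_a=Z_a+\mathscr{O}_0$ from \eqref{eq2.36} and the fact that $Z_au$ is homogeneous of positive degree or identically zero), whence $u^a(q)=0$. Feeding this into the transformation formulae \eqref{eq3.2} gives, at $q$,
\begin{equation*}
\widehat{h}_{ab}(q)=e^{2u(q)}h_{ab}(q)=h_{ab}(q),\qquad \widehat{T}(q)=e^{-2u(q)}\big(T+J_{\ a}^bu^aW_b\big)(q)=T(q),
\end{equation*}
while $\widehat{J}=J$ leaves the splitting $\mathbb{C}T_qM=T_q^{(1,0)}M\oplus T_q^{(0,1)}M$ unchanged. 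Consequently the normalization conditions of Lemma \ref{lem2.1b} defining a special frame at $q$ for the new structure, namely $\widehat{W}_{\alpha;q}\in T_q^{(1,0)}M$, $\widehat{h}(\widehat{W}_{\alpha;q},\widehat{W}_{\bar{\beta};q})=\delta_{\alpha\bar{\beta}}$ and $\widehat{h}(\widehat{W}_{\alpha;q},\widehat{W}_{\beta;q})=0$, are literally the same as those satisfied by $\{W_{a;q}\}$ for the old structure. We are therefore free to choose $\widehat{W}_{a;q}=W_{a;q}$, and then the full frames $\{W_a,\widehat{T}\}$ and $\{\widehat{W_a},\widehat{T}\}$ agree at $q$.

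To close the argument I would note that $\{W_a\}$ stays horizontal for $\widehat{\theta}$ (since $\widehat{\theta}=e^{2u}\theta$ annihilates the same distribution), so $\{W_a,\widehat{T}\}$ is a bona fide frame in which to read off $\widehat{R}$ and $\widehat{\tau}_{\ast}$; the dual coframes of the two frames then also coincide at $q$, being determined pointwise by the frame and duality with $\widehat{T}$ (consistently with $\widehat{\theta^a}=\theta^a-J_{\ b}^au^b\theta$ in \eqref{eqA.5} and $u^b(q)=0$). Because the contracted component $\widehat{R}_{\alpha\ \gamma\bar{\beta}}^{\ \gamma}=\widehat{\theta}^{\gamma}\big(\widehat{R}(W_\gamma,W_{\bar{\beta}})W_\alpha\big)$ and the component $\widehat{A}_{\alpha\beta}$, obtained by reading off $\widehat{\tau}_{\ast}(W_\alpha)$ in the frame and lowering with $\widehat{h}$, are assembled solely from the frame, coframe and metric at $q$, all of which agree, their values at $q$ are identical in the two frames, which is the asserted invariance. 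The only point requiring care — and the single place where $u\in\mathscr{R}_2$ is used — is the matching of the normalizations at $q$, which rests precisely on the vanishing of $u$ and $du$ at $q$; the differing parallel-transport prescriptions used to build the two special frames away from $q$ are irrelevant, since tensor components at a point never see the germ of the frame.
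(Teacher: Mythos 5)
Your proposal is correct and follows essentially the same route as the paper: both arguments reduce to showing that, because $u\in\mathscr{R}_2$ forces $u(q)=0$ and $du(q)=0$, the new special frame can be taken to agree with the old one at $q$ (the paper phrases this as $\widehat{W_a}=v_a^bW_b$ with $v_a^b=\delta_a^b+\mathscr{O}_1$), and then invoke the tensoriality/covariance of $R_{abcd}$ and $A_{ab}$ at the single point $q$. Your explicit remark that one is \emph{free to choose} $\widehat{W}_{a;q}=W_{a;q}$ (the normalization only pins the frame down up to a unitary transformation) is a slightly more careful rendering of the same step the paper performs implicitly.
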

  \begin{proof}
  Since $\{W_a\}$ and $\{\widehat{W_a}\}$ are both horizontal, we write $\widehat{W_a}=v_a^bW_b$ for some invertible matrix $\{v_a^b\}$. The value of $\{W_a\}$ and $\{\widehat{W_a}\}$ at the point $q$ are decided by relation
  $$\widehat{h}(\widehat{W_{\alpha}},\widehat{W_{\bar{\beta}}})=\delta_{\alpha\bar{\beta}}=h(W_{\alpha},W_{\bar{\beta}}),
  {\quad}\widehat{h}(\widehat{W_{\alpha}},\widehat{W_{\beta}})=0=h(W_{\alpha},W_{\beta}),$$
  at $q$. By \eqref{eq3.2}, if $u\in\mathscr{R}_2$, we have $h_{ab}=h(W_a,W_b)=\widehat{h}(\widehat{W_a},\widehat{W_b})=(1+\mathscr{O}_2)v_a^cv_b^dh(W_c,W_d)=v_a^cv_b^d(1+\mathscr{O}_2)h_{cd}$.
  So the special frame satisfies $\widehat{W_a}=v_a^bW_b$ with $v_a^b=\delta_a^b+\mathscr{O}_1$.

  By the classical theory of the differential geometry, $R_{abcd}$ and $A_{ab}$ are covariant tensors. So with changing $W_a{\to}\widehat{W_a}=v_a^bW_b$, these components change as $R_{abcd}{\to}v_a^{a_1}v_b^{b_1}v_c^{c_1}v_d^{d_1}R_{a_1b_1c_1d_1}$ and $A_{ab}{\to}A_{a_1b_1}v_a^{a_1}v_b^{b_1}$ (it's shown in Appendix A.2). So their value at $q$ does not change since $v$ is the identity transformation at $q$.
  \end{proof}
   So we also have
   $$\widehat{R}_{\alpha\ \gamma\bar{\beta}}^{\ \gamma}(q)=0,{\quad}\widehat{A}_{\alpha\beta}(q)=0,$$
   with respect to a special frame of $(M,\widehat{\theta},\widehat{h},\widehat{J})$ centered at $q$.
  \section{The proof of the main theorem}
  \subsection{The asymptotic expansion of the Yamabe functional}
  \begin{lemma}\label{lem5.1}(cf. Theorem 11.3 in \cite{T1}.)
  For a contact Riemannian manifold $(M,\theta,h,J)$, the Yamabe functional $\mathscr{Y}_{\theta,h}(u)$ in \eqref{eq1.2} is invariant under the conformal transformation.
  \end{lemma}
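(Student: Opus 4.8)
The plan is to show that a conformal change of contact form acts on the Yamabe functional merely through the substitution $w\mapsto vw$, where $v>0$ is the conformal factor written in the natural power. First I would record the conformal transformation in the form adapted to \eqref{eq1.1}, namely $\widehat{\theta}=f\theta$ with $f=v^{2/n}=v^{p-2}$ for a positive function $v$. The two geometric computations I need are the transformation of the volume form and of the horizontal part of the metric. For the volume form, writing $dV_{\theta}=(-1)^n\theta\wedge(d\theta)^n$ and using $d\widehat{\theta}=df\wedge\theta+f\,d\theta$, the terms involving $df\wedge\theta$ are killed upon wedging with $\widehat{\theta}=f\theta$ (since $\theta\wedge\theta=0$), which gives $\widehat{\theta}\wedge(d\widehat{\theta})^n=f^{n+1}\theta\wedge(d\theta)^n$, i.e. $dV_{\widehat{\theta}}=f^{n+1}dV_{\theta}=v^{p}dV_{\theta}$. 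For the horizontal metric, the crucial observation is that the horizontal bundle is unchanged, $\ker\widehat{\theta}=\ker(f\theta)=\ker\theta=HM$, and by \eqref{eqA.1} the extra terms in $\widehat{h}$ all carry a factor $\theta$, so $\widehat{h}=fh$ on $HM$. Hence the dual horizontal norm scales by $f^{-1}$, and $b_n|dw|_{\widehat{H}}^2\,dV_{\widehat{\theta}}=b_n f^{-1}|dw|_H^2\cdot f^{n+1}dV_{\theta}=b_n v^{2}|dw|_H^2\,dV_{\theta}$.

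Next I would handle the curvature term using the scalar curvature transformation law \eqref{eq1.1}. Since $(n+2)/n=p-1$, \eqref{eq1.1} reads $\widehat{R}\,v^{p-1}=b_n\Delta_{\theta}v+Rv$, and multiplying by $v$ gives $\widehat{R}\,v^{p}=b_n v\Delta_{\theta}v+Rv^{2}$. Therefore the potential term of $A_{\widehat{\theta},\widehat{h}}(w)$ becomes $\int_M\widehat{R}\,w^2\,dV_{\widehat{\theta}}=\int_M\widehat{R}\,w^2 v^{p}\,dV_{\theta}=\int_M\big(b_n w^2 v\Delta_{\theta}v+Rv^2w^2\big)\,dV_{\theta}$. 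Using the integration-by-parts identity $\int_M\phi\,\Delta_{\theta}\psi\,dV_{\theta}=\int_M\langle d\phi,d\psi\rangle_H\,dV_{\theta}$ (with the sign convention for $\Delta_{\theta}$ under which \eqref{eq1.1} is the Euler--Lagrange equation of \eqref{eq1.2}) applied to $\phi=w^2 v$, $\psi=v$, and expanding $d(w^2v)=2wv\,dw+w^2\,dv$, this equals $\int_M\big(2b_n wv\langle dw,dv\rangle_H+b_n w^2|dv|_H^2+Rv^2w^2\big)\,dV_{\theta}$.

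Combining the two pieces and comparing with $A_{\theta,h}(vw)=\int_M\big(b_n|d(vw)|_H^2+R(vw)^2\big)dV_{\theta}$, where $|d(vw)|_H^2=v^2|dw|_H^2+2vw\langle dv,dw\rangle_H+w^2|dv|_H^2$, I would find that the gradient cross-terms and the $w^2|dv|_H^2$ term produced by the integration by parts exactly reconstitute $|d(vw)|_H^2$, yielding $A_{\widehat{\theta},\widehat{h}}(w)=A_{\theta,h}(vw)$. The denominator is immediate: $B_{\widehat{\theta},\widehat{h}}(w)=\int_M|w|^p\,dV_{\widehat{\theta}}=\int_M|vw|^p\,dV_{\theta}=B_{\theta,h}(vw)$ since $v>0$. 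Hence $\mathscr{Y}_{\widehat{\theta},\widehat{h}}(w)=\mathscr{Y}_{\theta,h}(vw)$, and because $w\mapsto vw$ is a bijection of positive functions the functional, and in particular its infimum \eqref{eq1.3}, is conformally invariant.

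I expect the main obstacle to be organizing the sign conventions consistently: one must fix the sign of $\Delta_{\theta}$ so that \eqref{eq1.1} is genuinely the Euler--Lagrange equation of \eqref{eq1.2}, and then the integration-by-parts step must be carried out with that same convention, otherwise the curvature term will fail to match $|d(vw)|_H^2$. The two genuinely geometric inputs, $dV_{\widehat{\theta}}=v^p dV_{\theta}$ and $\widehat{h}|_{HM}=fh$, are elementary given \eqref{eqA.1}, so the proof reduces to this bookkeeping together with one application of \eqref{eq1.1}.
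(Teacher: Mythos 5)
Your proof is correct, but note that the paper does not actually supply an argument for this lemma: it simply cites Theorem 11.3 of \cite{T1}, where Tanno establishes conformal invariance of the functional on general contact Riemannian manifolds. What you have written is the standard direct verification, the same computation as in the Riemannian and CR Yamabe problems. Your two geometric inputs are right: $\widehat{\theta}\wedge(d\widehat{\theta})^n=f^{n+1}\theta\wedge(d\theta)^n$ with $f^{n+1}=v^{2(n+1)/n}=v^{p}$, and $\widehat{h}|_{HM}=fh$ because every correction term in \eqref{eqA.1} carries a factor of $\theta$ while $HM=\ker\widehat{\theta}=\ker\theta$ is unchanged; the latter point also justifies $|dw|^2_{\widehat{H}}=f^{-1}|dw|^2_H$, since the horizontal norm depends only on $dw|_{HM}$ and $\widehat{h}|_{HM}$ and not on the altered Reeb field $\widehat{T}$. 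The conversion of the curvature term via \eqref{eq1.1} (with $(n+2)/n=p-1$) and one integration by parts into precisely the cross terms of $|d(vw)|_H^2$ is the heart of the argument and is carried out correctly; the only external inputs are the transformation law \eqref{eq1.1}, which the paper quotes from \cite{BD1}, and the self-adjointness of the sub-Laplacian on the compact $M$ in the sign convention making \eqref{eq1.1} the Euler--Lagrange equation of \eqref{eq1.2}, a caveat you flag explicitly. So your proposal replaces the paper's citation with a self-contained proof; it buys transparency about exactly which structural facts (the $\theta$-divisibility of the corrections to $\widehat{h}$ and the covariance \eqref{eq1.1}) the invariance $\mathscr{Y}_{\widehat{\theta},\widehat{h}}(w)=\mathscr{Y}_{\theta,h}(vw)$ rests on.
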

  For a contact Riemannian manifold $(M,\theta,h,J)$, suppose that the almost structure $J$ is not integrable. There exists a point $q$ such that the Tanno tensor $Q(q)\ne0$. By Proposition \ref{prop4.2}, we must have $\mathfrak{Q}>0$. By Lemma \ref{lem5.1}, we can choose $(M,\widehat{\theta},\widehat{h},\widehat{J})$ conformal to $(M,\theta,h,J)$ such that the components of curvature and Webster torsion tensors satisfy Theorem \ref{thm3.1}. And $\mathfrak{Q}>0$ also holds with respect to $(M,\widehat{\theta},\widehat{h},\widehat{J})$. We denote this $(M,\widehat{\theta},\widehat{h},\widehat{J})$ as $(M,\theta,h,J)$ in this section.
  \begin{proposition}\label{prop5.1}
  We can choose $(M,\theta,h,J)$ in its conformal class such that with respect to a special frame $\{W_a,T\}$ of $(M,\theta,h,J)$ centered at $q$,
  \begin{align}
  \notag &A_{ab}(q)=0,{\quad}R_{\alpha\ \gamma\bar{\beta}}^{\ \gamma}(q)=0,
  {\quad}R_{\bar{\beta}\ \alpha\beta}^{\ \alpha}(q)=-\frac{1}{4}\mathfrak{Q},
  {\quad}R_{\alpha\ \beta\bar{\beta}}^{\ \alpha}(q)=\frac{1}{4}\mathfrak{Q}.
  \end{align}
  \end{proposition}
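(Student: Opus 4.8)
The plan is to assemble the four identities from results already in hand, fixing the conformal gauge first and then reading off the curvature relations from the Tanno-tensor computations of Section~3 together with the Bianchi-type identity of Proposition~\ref{prop4.5}.

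First I would invoke Theorem~\ref{thm3.1}: passing to the conformal contact form $\widehat\theta=e^{2u_0}\theta$ with $u_0\in\mathscr{R}_2$, and relabelling the result as $(M,\theta,h,J)$ exactly as in the paragraph preceding the statement, yields a special frame centered at $q$ with $R_{\alpha\ \gamma\bar\beta}^{\ \gamma}(q)=0$ and $A_{\alpha\beta}(q)=0$. To upgrade the torsion condition to the full statement $A_{ab}(q)=0$, I would combine $A_{\alpha\beta}(q)=0$ (and its conjugate $A_{\bar\alpha\bar\beta}(q)=0$) with the relations $A_{\alpha\bar\beta}(q)=0$ and $A_{ab}=A_{ba}$ furnished by Corollary~\ref{cor2.0a}. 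Together with complex conjugation these cover all four index types $A_{\alpha\beta},A_{\alpha\bar\beta},A_{\bar\alpha\beta},A_{\bar\alpha\bar\beta}$, so every component of the Webster torsion vanishes at $q$.

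The third identity, $R_{\bar\beta\ \alpha\beta}^{\ \alpha}(q)=-\tfrac14\mathfrak{Q}$, is nothing but \eqref{eq4.13} in Proposition~\ref{prop4.4}, which holds with respect to any special frame centered at $q$; the gauge change affects only the (still strictly positive) value of $\mathfrak{Q}$ computed for the new structure. For the last identity I would exploit the Bianchi-type relation \eqref{eq4.20},
$$-R_{\beta\ \lambda\bar\mu}^{\ \alpha}+R_{\lambda\ \beta\bar\mu}^{\ \alpha}+R_{\bar\mu\ \lambda\beta}^{\ \alpha}=0,$$
valid modulo terms linear in the torsion $A$ and in the connection coefficients. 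The role of the normalization is precisely that these error terms drop out at $q$: the $\mathscr{A}$-part vanishes because $A_{ab}(q)=0$, and the $\Gamma$-part vanishes because $\Gamma_{jk}^l(q)=0$ by Corollary~\ref{cor2.3} (see \eqref{eq3.1b}), so the identity holds exactly at $q$. Setting $\lambda=\alpha$ and $\mu=\beta$ and summing over $\alpha$ and $\beta$, the first term reduces to $\sum_\beta R_{\beta\ \gamma\bar\beta}^{\ \gamma}(q)$, a sum of Ricci-type contractions each of which is a special case of $R_{\alpha\ \gamma\bar\beta}^{\ \gamma}(q)=0$ and hence vanishes; the third term equals $R_{\bar\beta\ \alpha\beta}^{\ \alpha}(q)=-\tfrac14\mathfrak{Q}$ by \eqref{eq4.13}. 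Solving for the middle term then gives $R_{\alpha\ \beta\bar\beta}^{\ \alpha}(q)=\tfrac14\mathfrak{Q}$.

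The genuinely delicate step---and the one I expect to require the most care---is the index bookkeeping in the contraction of \eqref{eq4.20} under the paper's summation convention, together with the verification that the ``$\bmod\ \mathscr{A}\cup\Gamma$'' terms are truly annihilated at $q$ in the chosen gauge. Everything else is a direct citation of Theorem~\ref{thm3.1}, Corollary~\ref{cor2.0a}, Corollary~\ref{cor2.3}, and Proposition~\ref{prop4.4}.
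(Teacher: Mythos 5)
Your proposal is correct and follows essentially the same route as the paper: cite Theorem \ref{thm3.1}, Corollary \ref{cor2.0a} and \eqref{eq4.13} for the first three identities, then contract the Bianchi-type identity \eqref{eq4.20} at $q$, where the $\mathscr{A}$ and $\Gamma$ error terms vanish by $A_{ab}(q)=0$ and \eqref{eq3.1b}, to obtain $R_{\alpha\ \beta\bar\beta}^{\ \alpha}(q)=-R_{\bar\beta\ \alpha\beta}^{\ \alpha}(q)=\tfrac14\mathfrak{Q}$. The index bookkeeping you flag as delicate works out exactly as you describe, matching the paper's computation.
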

  \begin{proof}
  We only need to show the last identity since the others are given by Corollary \ref{cor2.0a}, Theorem \ref{thm3.1} and \eqref{eq4.13} in Proposition \ref{prop4.4}.
  Since we already have $A_{ab}(q)=0$, $R_{\beta\ \alpha\bar{\beta}}^{\ \alpha}(q)=0$ and $\Gamma_{jk}^l(q)=0$ (by \eqref{eq3.1b}), the Bianchi-type identity \eqref{eq4.20} at $q$ gives us
  \begin{align}
  \notag 0=-R_{\beta\ \alpha\bar{\mu}}^{\ \alpha}(q)+R_{\alpha\ \beta\bar{\mu}}^{\ \alpha}(q)+R_{\bar{\mu}\ \alpha\beta}^{\ \alpha}(q)=R_{\alpha\ \beta\bar{\mu}}^{\ \alpha}(q)+R_{\bar{\mu}\ \alpha\beta}^{\ \alpha}(q),
  \end{align}
  which implies $R_{\alpha\ \beta\bar{\beta}}^{\ \alpha}(q)=
  -R_{\bar{\beta}\ \alpha\beta}^{\ \alpha}(q)=\frac{1}{4}\mathfrak{Q}$.
  \end{proof}
Then we have the main theorem of this section.
  \begin{theorem}\label{thm5.1}
  For a contact Riemannian manifold $(M,\theta,h,J)$ such that $Q(q)\ne0$ for some point $q$. If we choose the normalized contact form and the special frame as Proposition 5.1, then \eqref{eq1.6} holds. In particular, there exists $\varepsilon>0$ such that $\mathscr{Y}_{\theta}(f^{\varepsilon})<\lambda(\mathscr{H}^n)$.
  \end{theorem}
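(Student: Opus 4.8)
The plan is to compute $\mathscr{Y}_{\theta,h}(f^{\varepsilon})=A_{\theta,h}(f^{\varepsilon})/B_{\theta,h}(f^{\varepsilon})^{2/p}$ by expanding numerator and denominator in powers of $\varepsilon$ and reading off the coefficients of $\varepsilon^{0},\varepsilon^{1}$ and $\varepsilon^{2}$. Since $\psi$ is supported near $q$ and $\Phi^{\varepsilon}$ concentrates at $q$ as $\varepsilon\to0$, I first pass to the normal coordinates $(z,t)$ centered at $q$ furnished by the special frame of Proposition \ref{prop5.1}, and then apply the parabolic dilation $\delta_{\varepsilon}(z,t)=(\varepsilon z,\varepsilon^{2}t)$, under which $\Phi^{\varepsilon}$ becomes a fixed multiple of $\Phi$ and each geometric object is replaced by its Taylor expansion graded by parabolic homogeneity. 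The homogeneous pieces computed in Sections 2--4 feed directly into the expansions of $|df^{\varepsilon}|_{H}^{2}$, the scalar curvature $R$, and the volume form $dV_{\theta}=(-1)^{n}\theta\wedge(d\theta)^{n}$; the ones that matter are the coframe $\theta=\Theta+\tfrac{2}{3}J_{ab(1)}z^{a}dz^{b}+\cdots$ (Corollary \ref{cor2.1}), the frame $W_{a}=Z_{a}+\tfrac{2}{3}J_{ab(1)}z^{b}\partial_{t}+s_{a(2)}^{b}Z_{b}+s_{a(3)}^{0}Z_{0}$ of \eqref{eq2.36} with coefficients \eqref{eq2.29}, and the second-order part $J_{\alpha\bar{\beta}(2)}=\tfrac{i}{2}Q_{\alpha\lambda}^{\bar{\gamma}}(q)Q_{\bar{\beta}\bar{\mu}}^{\gamma}(q)z^{\lambda}z^{\bar{\mu}}$ of the almost complex structure (Proposition \ref{prop4.3}).

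At leading order every quantity collapses to its Heisenberg value, so the $\varepsilon^{0}$ term of $\mathscr{Y}_{\theta,h}(f^{\varepsilon})$ equals $\mathscr{Y}(\Phi)=\lambda(\mathscr{H}^{n})$. The $\varepsilon^{1}$ contributions come from the degree-one homogeneous corrections, which are linear, hence odd, in $z$; integrated against the weights $\Phi^{p}\,dV$ and $|d\Phi|^{2}\,dV$---both even in $z$---over $\mathscr{H}^{n}$, they vanish. Thus $\mathscr{Y}_{\theta,h}(f^{\varepsilon})=\lambda(\mathscr{H}^{n})+c\,\varepsilon^{2}+O(\varepsilon^{3})$, and the whole task reduces to identifying $c$.

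The coefficient $c$ gathers the degree-two corrections to both $A_{\theta,h}$ and $B_{\theta,h}$ from three sources: (i) the scalar curvature $R$, whose value at $q$ is expressed through the normalized curvature components of Proposition \ref{prop5.1}, appearing in the term $R(f^{\varepsilon})^{2}$; (ii) the frame and coframe corrections $s_{a(2)}^{0},s_{a(2)}^{b}$, $\theta_{(3)}$ and $\omega_{a(2)}^{b}$, which enter $|df^{\varepsilon}|_{H}^{2}$ through $W_{\alpha}f^{\varepsilon}=Z_{\alpha}f^{\varepsilon}+\cdots$ and also enter the volume form; and (iii) the second-order part $J_{\alpha\bar{\beta}(2)}$, quadratic in $Q(q)$, which enters $dV_{\theta}$ through $d\theta$. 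Here the normalization of Proposition \ref{prop5.1} is decisive: with $A_{ab}(q)=0$ and $R_{\alpha\ \gamma\bar{\beta}}^{\ \gamma}(q)=0$, every surviving degree-two term is proportional either to $R_{\alpha\ \beta\bar{\beta}}^{\ \alpha}(q)=\tfrac{1}{4}\mathfrak{Q}$ or to quadratic contractions of $Q_{\alpha\beta}^{\bar{\gamma}}(q)$ that collapse to $\mathfrak{Q}$. Substituting these, $c$ becomes a finite linear combination of Heisenberg moment integrals of the form $\int_{\mathscr{H}^{n}}(\text{polynomial in }z,t)\,\Phi^{p}\,dV$ and $\int_{\mathscr{H}^{n}}(\cdots)\,|d\Phi|^{2}\,dV$, which can be evaluated in closed form.

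Carrying out this evaluation gives $c=-\lambda(\mathscr{H}^{n})\tfrac{3n-1}{12(n-1)n(n+1)}\mathfrak{Q}$, which is exactly \eqref{eq1.6}; since $\mathfrak{Q}>0$ by hypothesis and $n\ge2$, we have $c<0$, so $\mathscr{Y}_{\theta,h}(f^{\varepsilon})<\lambda(\mathscr{H}^{n})$ for all sufficiently small $\varepsilon>0$. The main obstacle is precisely the non-constancy of $J$: in the CR case $J$ is parallel and the $\varepsilon^{2}$ term vanishes, whereas here the degree-two expansion of $J$ together with the induced frame, coframe and volume-form corrections produces genuinely new $\mathfrak{Q}$-dependent terms. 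Keeping track of all of these simultaneously in $|df^{\varepsilon}|_{H}^{2}$ and in $dV_{\theta}$, and then evaluating the resulting moment integrals over $\mathscr{H}^{n}$, is the technically heaviest step, which I would relegate to the appendix.
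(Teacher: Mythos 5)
Your proposal follows essentially the same route as the paper: expand $A_{\theta,h}(f^{\varepsilon})$ and $B_{\theta,h}(f^{\varepsilon})$ in normal coordinates via the parabolic-homogeneity grading, observe that the $\varepsilon^0$ term reproduces $\lambda(\mathscr{H}^n)$ and the $\varepsilon^1$ terms vanish by the unbalanced-moment (parity) argument of Lemma \ref{lem5.3}, and reduce the $\varepsilon^2$ coefficient to $\mathfrak{Q}$-proportional moment integrals using the normalization $A_{ab}(q)=0$, $R_{\alpha\ \gamma\bar{\beta}}^{\ \gamma}(q)=0$ of Proposition \ref{prop5.1} together with $R_{\alpha\ \beta\bar{\beta}}^{\ \alpha}(q)=\tfrac{1}{4}\mathfrak{Q}$ and $J_{\alpha\bar{\beta}(2)}$. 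The evaluation you defer to an appendix is precisely the content of Proposition \ref{prop5.2}, Corollaries \ref{cor5.1} and \ref{corB.1}, and Lemmas \ref{lem5.5}--\ref{lem5.7} with Appendix B, so the argument matches the paper's in both structure and substance.
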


  We write the volume form of the contact manifold $dV_{\theta}=(-1)^n\theta\wedge{d\theta}^n$ as
  \begin{equation}\label{eq5.6}
  dV_{\theta}=(v_0+v_1+v_2+\mathscr{O}_3)dV,
  \end{equation}
  where $v_j$ is a homogeneous polynomial of degree $j=0,1,2$ and $dV=(-1)^n\Theta{\wedge}d\Theta$. By $\theta_{(2)}=\Theta$ in \eqref{eq2.22} we find $v_0=1$.
  \begin{proposition}\label{prop5.2}
  On the contact Riemannian manifold $(M,\theta,h,J)$, we have the following expansion.
  \begin{equation}\label{eq5.7}
  \begin{aligned}
  \int_M|f^{\varepsilon}|^{p}dV_{\theta}&=a_0(n)+a_1(n)\varepsilon+a_2(n)\varepsilon^2+O(\varepsilon^3),\\
  \int_M{|df^{\varepsilon}|}_{H}^2dV_{\theta}&=b_0(n)+b_1(n)\varepsilon+b_2(n)\varepsilon^2+O(\varepsilon^3),\\
  \int_MR|f^{\varepsilon}|^2dV_{\theta}&=c_2(n)\varepsilon^2+O(\varepsilon^3),
  \end{aligned}
  \end{equation}
  where
  \begin{equation}\label{eq5.10}  \begin{aligned} &a_m(n)=\int_{\mathscr{H}^n}{|\Phi|}^pv_mdV,\\ &b_m(n)=2\int_{\mathscr{H}^n}v_m^{jk}{Z_j\Phi}{Z_k\Phi}dV,
  \\& c_2(n)=\int_{\mathscr{H}^n}R(q)|\Phi|^2dV.
    \end{aligned}\end{equation}
  $m=0,1,2$. $v_m$ is defined as \eqref{eq5.6} and
  \begin{equation}\label{eq5.10a}
  v_m^{jk}=\sum\limits_{\substack{m_0+m_1+m_2=m \\ m_i\ge0,\ \beta}}v_{m_0}s_{\beta(m_1+o(j)-1)}^js_{\bar{\beta}(m_2+o(k)-1)}^k.
  \end{equation}
  \end{proposition}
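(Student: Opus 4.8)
The plan is to reduce all three integrals to integrals over $\mathscr{H}^n$ by the parabolic rescaling $\delta_\varepsilon(z,t)=(\varepsilon z,\varepsilon^2 t)$, exploiting that every datum entering the integrands is homogeneous under $\delta_\varepsilon$. The basic identities I would use are $\Phi^\varepsilon\circ\delta_\varepsilon=\varepsilon^{-n}\Phi$, the relation $(Z_j g)\circ\delta_\varepsilon=\varepsilon^{-o(j)}Z_j(g\circ\delta_\varepsilon)$ for the model vector fields, the scaling $\delta_\varepsilon^\ast(dV)=\varepsilon^{2n+2}dV$ of the model volume form, and the fact that a quantity homogeneous of degree $m$ (such as $v_m$, or the homogeneous part $s_{a(r)}^b$) is multiplied by $\varepsilon^m$ (resp. $\varepsilon^r$) under $\delta_\varepsilon$. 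First I would perform the substitution $x=\delta_\varepsilon(y)$ in each integral. Because $\psi\equiv1$ on a fixed neighborhood of $q$ and $\Phi$ together with its horizontal derivatives decays at the Heisenberg rate, the region $\{\kappa\le|w|\le2\kappa\}$ where $d\psi\ne0$ carries $\Phi^\varepsilon=O(\varepsilon^n)$ and hence contributes only $O(\varepsilon^{2n})$, so the cutoff may be discarded for the purpose of the expansion through order $\varepsilon^2$.

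For the first integral, $|\Phi^\varepsilon|^p$ contributes $\varepsilon^{-np}$ and $dV$ contributes $\varepsilon^{2n+2}$; since $p=(2n+2)/n$ these cancel, and inserting $dV_\theta=(1+v_1+v_2+\mathscr{O}_3)dV$ from \eqref{eq5.6} together with $v_m\circ\delta_\varepsilon=\varepsilon^m v_m$ shows that the degree-$m$ piece of the volume expansion yields exactly $\varepsilon^m a_m(n)$ with $a_m(n)=\int_{\mathscr{H}^n}|\Phi|^p v_m\,dV$. For the gradient term I would first write $|df^\varepsilon|_H^2=h^{ab}(W_af^\varepsilon)(W_bf^\varepsilon)=2\sum_\alpha(W_\alpha f^\varepsilon)(W_{\bar\alpha}f^\varepsilon)$, using $h_{\alpha\bar\beta}=\delta_{\alpha\bar\beta}$, $h_{\alpha\beta}=0$ from \eqref{eq2.18}, and then substitute $W_a=s_a^kZ_k$. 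Carrying out the rescaling, the typical term $v_{m_0}\,s_{\beta(m_1+o(j)-1)}^j\,s_{\bar\beta(m_2+o(k)-1)}^k\,(Z_j\Phi^\varepsilon)(Z_k\Phi^\varepsilon)\,dV$ acquires the exponent $m_0+(m_1+o(j)-1)+(m_2+o(k)-1)+(-o(j)-n)+(-o(k)-n)+(2n+2)=m_0+m_1+m_2$, so grouping by $m=m_0+m_1+m_2$ produces precisely the coefficients $v_m^{jk}$ of \eqref{eq5.10a} and hence the $b_m(n)$ of \eqref{eq5.10}. For the curvature integral the only homogeneous datum needed at leading order is the constant $R(q)$, and the exponent count $-2n+(2n+2)=2$ forces the expansion to begin at $\varepsilon^2$ with coefficient $c_2(n)=\int_{\mathscr{H}^n}R(q)|\Phi|^2\,dV$.

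The hard part will not be the formal matching of powers but the remainder control. After rescaling, the domain of integration is the dilated support $\{|\tilde w|<2\kappa/\varepsilon^2\}$, which exhausts $\mathscr{H}^n$ as $\varepsilon\to0$; I must therefore establish that the model integrals $a_m(n)$, $b_m(n)$, $c_2(n)$ converge and that the genuinely non-polynomial remainders — the $\mathscr{O}_3$ term of the volume form and the Taylor tails of the $s_a^j$ — contribute only $O(\varepsilon^3)$. Convergence follows from the decay $|\Phi|\sim|w|^{-n}\sim\rho^{-2n}$ in the Heisenberg norm $\rho$ weighed against $dV\sim\rho^{2n+1}\,d\rho$, which keeps each weighted integrand integrable at infinity for $n\ge2$. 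For the remainder estimate I would bound a term $\varphi\in\mathscr{O}_3$ by its parabolic majorant, $|\varphi(\delta_\varepsilon y)|\le C|\delta_\varepsilon y|^3=C\varepsilon^3|y|^3$, and then check that $\int_{\mathscr{H}^n}|y|^3|\Phi|^p\,dV<\infty$ (together with the analogous weighted integrals for the gradient and curvature terms), which yields the claimed $O(\varepsilon^3)$ uniformly in $\varepsilon$. The delicate point is exactly this interplay between the growing domain and the parabolic Taylor remainders; once the convergence and majorant estimates are in place, the explicit evaluation of $a_m(n)$, $b_m(n)$, $c_2(n)$ is deferred to the computations of Appendix B.
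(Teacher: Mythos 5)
Your proposal follows essentially the same route as the paper: parabolic rescaling $\delta_\varepsilon$, the homogeneity count $m_0+(m_1+o(j)-1)+(m_2+o(k)-1)-o(j)-o(k)-2n+(2n+2)=m_0+m_1+m_2$ identifying the coefficients $v_m^{jk}$, the decay bounds $|\Phi|\le C(1+\rho)^{-2n}$, $|Z_j\Phi|\le C(1+\rho)^{-2n-o(j)}$ to justify convergence of the model integrals and to absorb the Taylor tails and the cutoff annulus into $O(\varepsilon^3)$ for $n\ge2$. The paper carries out the same remainder estimates as explicit radial integrals $\int F(\rho)\rho^{2n+1}d\rho$ over the regions $\rho<\kappa/\varepsilon$ and $\kappa/\varepsilon<\rho<2\kappa/\varepsilon$, which is exactly your majorant argument made concrete, so the two proofs coincide in substance.
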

  \begin{proof}
  The estimates \eqref{eq5.7} is similar to the CR case (cf. Proposition 4.2 in \cite{JL1}), but the third identity of \eqref{eq5.7} is $O(\varepsilon^3)$ in the CR case with $R(q)=0$. We sketch the proof here. First note that if a function $|\varphi|\le{CF(\rho)}$, then
  $$\int_{a<\rho<b}{\varphi}dV=O\bigg(\int_a^bF(\rho)\rho^{2n+1}d\rho\bigg).$$
  If we replace $(z,t)$ by $\delta_{\varepsilon}(z,t)=(\varepsilon{z},\varepsilon^2t)$, we have $\delta_{\varepsilon}^{\ast}{\Phi}^{\varepsilon}={\varepsilon}^{-n}\Phi$, $\delta_{\varepsilon}^{\ast}(dV)={\varepsilon}^{2n+2}dV$. We also note that $\Phi{\le}C(1+\rho)^{-2n}$ (cf. p. 330 in \cite{JL1}). So
  \begin{align}
  \notag\int_M|f^{\varepsilon}|^pdV_{\theta}&=\int_{\mathscr{H}^n}|\psi|^p|\Phi^{\varepsilon}|^p(1+v_1+v_2+O(\rho^3))dV\\
  \notag &=\int_{\rho<\kappa/\varepsilon}|\Phi|^p\big(1+\varepsilon{v_1}+\varepsilon^2{v_2}+O(\varepsilon^3\rho^3)\big)dV
  +O\bigg(\int_{\kappa/\varepsilon<\rho<2\kappa/\varepsilon}|\Phi|^pdV\bigg)\\
  \notag &=\int_{\mathscr{H}^n}|\Phi|^p\big(1+\varepsilon{v_1}+\varepsilon^2{v_2}\big)dV
  +O\bigg(\int_{\kappa/\varepsilon}^{\infty}\sum\limits_{i=0}^2\varepsilon^i\rho^i(1+\rho)^{-4n-4}\rho^{2n+1}d\rho\bigg)\\
  \notag &\ \ +O\bigg(\int_{0}^{\kappa/\varepsilon}\varepsilon^3\rho^3(1+\rho)^{-4n-4}\rho^{2n+1}d\rho\bigg)
  +O\bigg(\int_{\kappa/\varepsilon}^{2\kappa/\varepsilon}(1+\rho)^{-4n-4}\rho^{2n+1}d\rho\bigg)\\
  \notag &=\int_{\mathscr{H}^n}|\Phi|^p\big(1+\varepsilon{v_1}+\varepsilon^2{v_2}\big)dV+O(\varepsilon^3),
  \end{align}
  for $n\ge2$. So we get the first identity in \eqref{eq5.7}. Noting that $|df^{\varepsilon}|_{H}^2={\langle}W_{\alpha}f^{\varepsilon}\theta^{\alpha}+W_{\bar{\alpha}}f^{\varepsilon}\theta^{\bar{\alpha}},
  W_{\beta}f^{\varepsilon}\theta^{\beta}+W_{\bar{\beta}}f^{\varepsilon}\theta^{\bar{\beta}}{\rangle}
  =h^{\alpha\bar{\beta}}W_{\alpha}f^{\varepsilon}W_{\bar{\beta}}f^{\varepsilon}
  +h^{\bar{\alpha}\beta}W_{\bar{\alpha}}f^{\varepsilon}W_{\beta}f^{\varepsilon}
  =2W_{\beta}f^{\varepsilon}W_{\bar{\beta}}f^{\varepsilon}$, we can write
  \begin{align}
  \notag \int_M|df^{\varepsilon}|_{H}^2dV_{\theta}&=2\int_MW_{\beta}f^{\varepsilon}W_{\bar{\beta}}f^{\varepsilon}dV_{\theta}\\
  \notag &=2\int_{\mathscr{H}^n}s_{\beta}^jZ_j(\psi\Phi^{\varepsilon})s_{\bar{\beta}}^kZ_k(\psi\Phi^{\varepsilon})(1+v_1+v_2+\cdots)dV\\
  \notag &=2\int_{\rho<\kappa}\big(v_0^{jk}+v_1^{jk}+v_2^{jk}+O(\rho^{1+o(jk)})\big)Z_j{\Phi}^{\varepsilon}Z_k{\Phi}^{\varepsilon}dV\\
  \notag &\ \ \ +O\bigg(\int_{\kappa<\rho<2\kappa}(|Z_j\Phi^{\varepsilon}||Z_k\Phi^{\varepsilon}|+|Z_j\Phi^{\varepsilon}||\Phi^{\varepsilon}|+|\Phi^{\varepsilon}|^2)dV\bigg),
  \end{align}
  by setting $v_m^{jk}$ as \eqref{eq5.10a}, which is a homogeneous polynomial of degree $m+o(jk)-2$.  And by noting that $\delta_{\varepsilon}^{\ast}(Z_j\Phi^{\varepsilon})=\varepsilon^{-n-o(j)}Z_j\Phi$ and $|Z_j\Phi|{\le}C(1+\rho)^{-2n-o(j)}$, then we have
  \begin{align}
  \notag &\int_M|df^{\varepsilon}|_{H}^2dV_{\theta}=2\int_{\rho<\kappa/\varepsilon}\sum\limits_{m=0}^2\varepsilon^mv_m^{jk}Z_j{\Phi}Z_k{\Phi}dV
  +O\bigg(\int_0^{\kappa/\varepsilon}\sum\limits_{i=2}^4\varepsilon^3\rho^{1+i}(1+\rho)^{-4n-i}\rho^{2n+1}d\rho\bigg)\\
  \notag &\ \ \ \ \ \ \ \ \ \ \ \ \ \ \ \ \ \ \ \ \ +O\bigg(\int_{\kappa/\varepsilon}^{2\kappa/\varepsilon}\sum\limits_{i=0}^4\varepsilon^{2-i}(1+\rho)^{-4n-i}\rho^{2n+1}d{\rho}\bigg)\\
  \notag &=2\int_{\mathscr{H}^n}\sum\limits_{m=0}^2\varepsilon^mv_m^{jk}Z_j{\Phi}Z_k{\Phi}dV
  +O\bigg(\int_{\kappa/\varepsilon}^{\infty}\sum\limits_{m=0}^2\sum\limits_{i=2}^4\varepsilon^m\rho^{m+i-2}(1+\rho)^{-4n-i}\rho^{2n+1}d\rho\bigg)
  +O(\varepsilon^3)\\
  \notag &=2\int_{\mathscr{H}^n}\sum\limits_{m=0}^2\varepsilon^mv_m^{jk}Z_j{\Phi}Z_k{\Phi}dV+O(\varepsilon^3).
  \end{align}
  So we get the second identity in \eqref{eq5.7}. The third identity in \eqref{eq5.7} follows from
  \begin{align}
  \notag \int_MR|f^{\varepsilon}|^2dV_{\theta}=\bigg(\int_{\mathscr{H}^n}R(q)|\Phi|^2dV\bigg)\varepsilon^2+O(\varepsilon^3),
  \end{align}
  (cf. p. 332 in \cite{JL1}).
  \end{proof}
  Note that the volume form of the Heisenberg group $dV=(-1)^n\Theta\wedge{d\Theta^n}$ can be written as
  \begin{align}\label{eq5.11}
  \notag dV&=(-1)^n\Theta\wedge{d\Theta^n}=(-1)^n\Theta\wedge{d\Theta^n}=(-1)^ndt\wedge(-2idz^{\alpha}{\wedge}dz^{\bar{\alpha}})^n\\
  \notag &=2^nn!dt{\wedge}(idz^1{\wedge}dz^{\bar{1}}){\wedge}\cdots{\wedge}(idz^n{\wedge}dz^{\bar{n}})=4^nn!dt\wedge{dx^1}\wedge{dy^1}\wedge\cdots\wedge{dx^n}\wedge{dy^n}\\
  &=4^nn!dtd\mu(z)=4^nn!r^{2n-1}d\nu(\zeta)drdt,
  \end{align}
  where $d\mu(z)$ is the Lebesgue measure on $\mathbb{C}^n$,
  and $d\nu$ is the uniform measure on $S^{2n-1}=\{z\in\mathbb{C}^n:|z|=1\}$, normalized so that if $z=r\zeta$, $\zeta\in{S^{2n-1}}$, $d\mu(z)=r^{2n-1}drd\nu(\zeta)$.

  To calculate $a_m(n),b_m(n),m=0,1,2$ explicitly, we need the following lemmas.
  \begin{lemma}\label{lem5.2}
  If a real two-form $\omega=m_{\alpha\beta}dz^{\alpha}\wedge{dz^{\beta}}+2im_{\alpha\bar{\beta}}dz^{\alpha}\wedge{dz^{\bar{\beta}}}
  +m_{\bar{\alpha}\bar{\beta}}dz^{\bar{\alpha}}\wedge{dz^{\bar{\beta}}}$, then
  \begin{equation}
  \begin{aligned}
  \notag &n\Theta\wedge\omega\wedge{d\Theta^{n-1}}=-\delta^{\alpha\bar{\beta}}m_{\alpha\bar{\beta}}\Theta\wedge{d\Theta^n},\\
  \notag &n(n-1)\Theta\wedge\omega^2{\wedge}d\Theta^{n-2}=\bigg((\delta^{\alpha\bar{\beta}}\delta^{\rho\bar{\sigma}}
  -\delta^{\alpha\bar{\sigma}}\delta^{\rho\bar{\beta}})m_{\alpha\bar{\beta}}m_{\rho\bar{\sigma}}
  +\frac{1}{2}(\delta^{\alpha\bar{\rho}}\delta^{\beta\bar{\sigma}}
  -\delta^{\alpha\bar{\sigma}}\delta^{\beta\bar{\rho}})m_{\alpha\beta}m_{\bar{\rho}\bar{\sigma}}\bigg)\Theta{\wedge}d\Theta^n.
  \end{aligned}
  \end{equation}
  \end{lemma}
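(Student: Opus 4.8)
The plan is to strip off the Reeb direction first and then reduce everything to a purely combinatorial computation with constant‑coefficient forms on $\mathbb{C}^n$, expressing the answer as a multiple of $d\Theta^n$. Write $\Theta=dt+\Theta'$ with $\Theta'=-iz^{\alpha}dz^{\bar{\alpha}}+iz^{\bar{\alpha}}dz^{\alpha}$, and recall $d\Theta=-2i\,dz^{\gamma}\wedge dz^{\bar{\gamma}}$. Since $\omega$ and $d\Theta$ involve only the $2n$ one‑forms $dz^{1},\dots,dz^{n},dz^{\bar1},\dots,dz^{\bar n}$, the products $\omega\wedge d\Theta^{n-1}$ and $\omega^2\wedge d\Theta^{n-2}$ are forms of degree $2n$ in these $2n$ generators; wedging such a form with the one‑form $\Theta'$ yields a $(2n+1)$‑form in only $2n$ generators, hence $0$. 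Therefore $\Theta\wedge\Xi=dt\wedge\Xi$ for each of $\Xi=\omega\wedge d\Theta^{n-1},\ \omega^2\wedge d\Theta^{n-2},\ d\Theta^n$, and $dt$ factors out of both sides. Thus it suffices to prove the two identities with $\Theta$ deleted, at the level of top‑degree forms on $\mathbb{C}^n$.

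Next I would set $\eta=dz^{\gamma}\wedge dz^{\bar{\gamma}}$, so $d\Theta=-2i\eta$ and $d\Theta^{k}=(-2i)^{k}\eta^{k}$, and split $\omega=\omega^{(2,0)}+\omega^{(1,1)}+\omega^{(0,2)}$ by bidegree, with $\omega^{(1,1)}=2im_{\alpha\bar{\beta}}dz^{\alpha}\wedge dz^{\bar{\beta}}$. A bidegree count then removes most terms. Wedging with $\eta^{n-1}$ (type $(n-1,n-1)$) annihilates $\omega^{(2,0)}$ and $\omega^{(0,2)}$, since a form of holomorphic or antiholomorphic degree $>n$ vanishes; only $\omega^{(1,1)}\wedge\eta^{n-1}$ survives, giving the first identity. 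For the square, $\omega^{2}\wedge\eta^{n-2}$ retains only $(\omega^{(1,1)})^{2}$ and the cross term $2\,\omega^{(2,0)}\wedge\omega^{(0,2)}$, both of type $(2,2)$; the pure $(4,0)$, $(0,4)$, $(3,1)$, $(1,3)$ contributions all vanish.

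The computational core is a short list of ``trace'' identities obtained by expanding $\eta^{n-1}=(n-1)!\sum_{|S|=n-1}\prod_{\gamma\in S}dz^{\gamma}\wedge dz^{\bar{\gamma}}$ (and similarly $\eta^{n-2}$) over index subsets $S$ and recording which $S$ complete the reference top form $\Omega=dz^{1}\wedge dz^{\bar1}\wedge\cdots\wedge dz^{n}\wedge dz^{\bar n}$. This yields
$$dz^{\alpha}\wedge dz^{\bar{\beta}}\wedge\eta^{n-1}=\tfrac{1}{n}\delta^{\alpha\bar{\beta}}\eta^{n},$$
$$dz^{\alpha}\wedge dz^{\bar{\beta}}\wedge dz^{\rho}\wedge dz^{\bar{\sigma}}\wedge\eta^{n-2}=\tfrac{1}{n(n-1)}(\delta^{\alpha\bar{\beta}}\delta^{\rho\bar{\sigma}}-\delta^{\alpha\bar{\sigma}}\delta^{\rho\bar{\beta}})\eta^{n},$$
together with the purely (anti)holomorphic analogue for $dz^{\alpha}\wedge dz^{\beta}\wedge dz^{\bar{\rho}}\wedge dz^{\bar{\sigma}}\wedge\eta^{n-2}$, which carries the combination $-(\delta^{\alpha\bar{\rho}}\delta^{\beta\bar{\sigma}}-\delta^{\alpha\bar{\sigma}}\delta^{\beta\bar{\rho}})$. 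Substituting these into the surviving terms and converting the powers of $\eta$ back to $d\Theta$ through the scalars $(-2i)^{-1}$ and $(-2i)^{-2}$ collects precisely the coefficients $-\delta^{\alpha\bar{\beta}}m_{\alpha\bar{\beta}}$ and $(\delta^{\alpha\bar{\beta}}\delta^{\rho\bar{\sigma}}-\delta^{\alpha\bar{\sigma}}\delta^{\rho\bar{\beta}})m_{\alpha\bar{\beta}}m_{\rho\bar{\sigma}}+\tfrac12(\delta^{\alpha\bar{\rho}}\delta^{\beta\bar{\sigma}}-\delta^{\alpha\bar{\sigma}}\delta^{\beta\bar{\rho}})m_{\alpha\beta}m_{\bar{\rho}\bar{\sigma}}$ stated in the lemma; re‑wedging with $dt$ restores $\Theta\wedge d\Theta^{n}$ on the right.

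The only genuine obstacle is the sign and normalization bookkeeping, which must be carried out consistently with the exterior‑derivative/wedge convention of \eqref{eq2.15a} and the volume normalization of \eqref{eq5.11}. The antisymmetric Kronecker combinations $\delta^{\alpha\bar{\beta}}\delta^{\rho\bar{\sigma}}-\delta^{\alpha\bar{\sigma}}\delta^{\rho\bar{\beta}}$ arise because, for the four‑form to complete $\Omega$, the holomorphic index pair $\{\alpha,\rho\}$ must equal the antiholomorphic pair $\{\beta,\sigma\}$ in one of exactly two ways, and reordering the factors into the fixed order of $\Omega$ makes the two matchings contribute with opposite signs; the vanishing that occurs when two holomorphic (or two antiholomorphic) indices coincide is then automatically encoded by these antisymmetrized deltas. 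I would track those reorderings explicitly, since it is there, rather than in the bidegree truncation, that all the nontrivial signs are produced.
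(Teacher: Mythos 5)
Your proposal is correct: the reduction $\Theta\wedge\Xi=dt\wedge\Xi$, the bidegree truncation, and the subset expansion of $\eta^{n-1}$ and $\eta^{n-2}$ with the antisymmetrized Kronecker deltas all check out, including the normalization factors $2i(-2i)^{-1}=-1$, $-4(-2i)^{-2}=1$ and $-2(-2i)^{-2}\cdot(-1)=\tfrac12$. This is essentially the same direct expansion the paper performs (following Lemma 5.1 of Jerison--Lee), only with the steps the paper leaves implicit — the replacement of $\Theta$ by $dt$, the vanishing of the $(2,0)$ and $(0,2)$ contributions, and the entire second identity, which the paper merely cites — written out in full.
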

  \begin{proof}
  This is essentially Lemma 5.1 in \cite{JL1}. To avoid confusion, we will not use the summation convention in the proof of this Lemma. We can calculate that
  \begin{align}
  \notag n\Theta{\wedge}\omega{\wedge}&d\Theta^{n-1}=ndt{\wedge}\bigg(2im_{\alpha\bar{\beta}}dz^{\alpha}\wedge{dz^{\bar{\beta}}}\bigg)
  \wedge\bigg(-2i\sum\limits_{\gamma}dz^{\gamma}{\wedge}dz^{\bar{\gamma}}\bigg)^{n-1}\\
  \notag &=(-2i)^nn!dt{\wedge}\bigg(-\sum\limits_{\alpha,\beta}m_{\alpha\bar{\beta}}dz^{\alpha}\wedge{dz^{\bar{\beta}}}\bigg)
  \wedge\bigg(\sum\limits_{\gamma}dz^1{\wedge}dz^{\bar{1}}\wedge\cdots\wedge\widehat{dz^{\gamma}}
  \wedge\widehat{dz^{\bar{\gamma}}}\wedge\cdots{\wedge}dz^n{\wedge}dz^{\bar{n}}\bigg)\\
  \notag &=(-2i)^nn!\sum\limits_{\alpha}(-m_{\alpha\bar{\alpha}})dt{\wedge}dz^1{\wedge}dz^{\bar{1}}\wedge\cdots{\wedge}dz^n{\wedge}dz^{\bar{n}}
  =-\delta^{\alpha\bar{\beta}}m_{\alpha\bar{\beta}}\Theta\wedge{d\Theta^n}.
  \end{align}
  Here $\widehat{dz^{\gamma}}$ means the exterior derivative has no $dz^{\gamma}$ terms. And for the second identity, we can prove in the same way as the second identity in Lemma 5.1 in \cite{JL1}.
  \end{proof}

  \begin{corollary}\label{cor5.1}
  As defined by \eqref{eq5.6},
  \begin{equation}\label{eqB.7a}
  v_1=0;{\quad}v_2=-\frac{1}{6}R_{\bar{\beta}\ \alpha\mu}^{\ \alpha}(q)z^{\bar{\beta}}z^{\mu}-\frac{1}{6}R_{\beta\ \bar{\alpha}\bar{\mu}}^{\ \bar{\alpha}}(q)z^{\beta}z^{\bar{\mu}},{\quad}mod{\quad}z^{\beta}z^{\mu},\ z^{\bar{\beta}}z^{\bar{\mu}}.
  \end{equation}
  \end{corollary}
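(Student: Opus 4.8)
The goal is to compute the homogeneous parts $v_1$ and $v_2$ in the expansion $dV_\theta = (v_0 + v_1 + v_2 + \mathscr{O}_3)dV$, where $dV_\theta = (-1)^n \theta \wedge (d\theta)^n$ and $dV = (-1)^n \Theta \wedge (d\Theta)^n$. Since $v_0 = 1$ is already established, the plan is to expand $\theta \wedge (d\theta)^n$ homogeneously and apply Lemma \ref{lem5.2} to convert the resulting wedge products into multiples of $\Theta \wedge (d\Theta)^n$.

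\begin{proof}
We work with the expansions of $\theta$ and $d\theta$ in terms of homogeneous parts. First I would recall from Corollary \ref{cor2.1} that $\theta_{(2)} = \Theta$, $\theta_{(3)} = \frac{2}{3}J_{ab(1)}z^a dz^b$, and that $\theta_{(4)}$ is given modulo terms in $\mathscr{A}$. Correspondingly, $d\theta = d\theta_{(2)} + d\theta_{(3)} + d\theta_{(4)} + \cdots = d\Theta + d\theta_{(3)} + \cdots$, where each $d\theta_{(m)}$ is homogeneous of degree $m$. Writing $dV_\theta = (-1)^n(\theta_{(2)} + \theta_{(3)} + \cdots)\wedge(d\Theta + d\theta_{(3)} + \cdots)^n$ and collecting homogeneous parts, the degree-$(2n+2)$ term (relative to $dV$, which itself has degree $2n+2$) gives $v_0 dV = \Theta \wedge (d\Theta)^n$, so $v_0 = 1$.

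For $v_1$, the degree-$(2n+3)$ contribution comes from replacing exactly one factor by its next homogeneous part: namely $\theta_{(3)}\wedge(d\Theta)^n$ together with $n\,\Theta\wedge d\theta_{(3)}\wedge(d\Theta)^{n-1}$. The key observation is that $\theta_{(3)}$ and $d\theta_{(3)}$ involve the Tanno tensor through $J_{ab(1)} = Q_{\cdot\cdot}^{\cdot}(q)z^\cdot$ (Proposition \ref{prop4.3}). Applying the first identity of Lemma \ref{lem5.2} to the two-form pieces converts these wedge products into scalar multiples of $\Theta\wedge(d\Theta)^n$; the scalars are traces $\delta^{\alpha\bar{\beta}}(\cdots)$ of the relevant components. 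By the symmetry and vanishing relations for the Tanno tensor at $q$ recorded in \eqref{Q} of Proposition \ref{prop4.2} (in particular $Q_{\alpha\beta}^\gamma(q)=0$ and the antisymmetry structure), these traces cancel, yielding $v_1 = 0$.

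For $v_2$, the degree-$(2n+4)$ contribution collects several terms: $\theta_{(4)}\wedge(d\Theta)^n$, $\theta_{(3)}\wedge d\theta_{(3)}\wedge$ (one $d\Theta$ dropped), $n\,\Theta\wedge d\theta_{(4)}\wedge(d\Theta)^{n-1}$, and the quadratic piece $\binom{n}{2}\Theta\wedge(d\theta_{(3)})^2\wedge(d\Theta)^{n-2}$. Here both identities in Lemma \ref{lem5.2} are needed—the second handles the quadratic term. The main obstacle will be bookkeeping the curvature and Tanno contributions correctly: $\theta_{(4)}$ carries the factor $\frac{1}{12}J_{ab}(q)R_{e\ cd}^{\ b}(q)$ plus $\frac{1}{2}J_{ab(2)}z^a dz^b$ terms, and one must take covariant derivatives to compute $d\theta_{(4)}$. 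Using Proposition \ref{prop5.1}, which arranges $A_{ab}(q)=0$ and $R_{\alpha\ \gamma\bar{\beta}}^{\ \gamma}(q)=0$, together with the curvature identities from Proposition \ref{prop4.4} and the Bianchi-type relation, most terms simplify. After tracing via Lemma \ref{lem5.2}, the surviving contribution reduces modulo the pure terms $z^\beta z^\mu$ and $z^{\bar{\beta}}z^{\bar{\mu}}$ (which do not affect the integral against the rotationally-structured $|\Phi|^p$) to $-\frac{1}{6}R_{\bar{\beta}\ \alpha\mu}^{\ \alpha}(q)z^{\bar{\beta}}z^\mu - \frac{1}{6}R_{\beta\ \bar{\alpha}\bar{\mu}}^{\ \bar{\alpha}}(q)z^\beta z^{\bar{\mu}}$, establishing \eqref{eqB.7a}.
\end{proof}
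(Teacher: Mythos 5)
Your overall strategy is the paper's own: expand $\theta\wedge(d\theta)^n$ into homogeneous parts, isolate the degree-$(2n+3)$ and degree-$(2n+4)$ pieces, and convert them into multiples of $\Theta\wedge(d\Theta)^n$ via Lemma \ref{lem5.2}. The four-term decomposition you list for $v_2$ is exactly the one used in the paper. Two small inaccuracies in your $v_1$ argument: the term $\theta_{(3)}\wedge(d\Theta)^n$ is not of the shape handled by Lemma \ref{lem5.2} at all — it vanishes because $\theta_{(3)}$ has no $dt$ component, so it wedges $2n+1$ horizontal differentials; and the trace of $n\Theta\wedge(d\theta)_{(3)}\wedge(d\Theta)^{n-1}$ does not ``cancel'' by a Tanno-tensor symmetry — it is zero outright because $(d\theta)_{(3)}$ has no mixed $dz^{\alpha}\wedge dz^{\bar{\beta}}$ component (that is, $J_{\alpha\bar{\beta}(1)}=0$, Proposition \ref{prop4.1}), and the first identity of Lemma \ref{lem5.2} only sees the mixed component. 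Both points are repairable, so $v_1=0$ stands.

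The genuine gap is in $v_2$. You write that ``after tracing via Lemma \ref{lem5.2}, the surviving contribution reduces \dots to'' the stated formula, but the step that actually produces that formula is never identified or carried out. Both $n\Theta\wedge(d\theta)_{(4)}\wedge(d\Theta)^{n-1}$ and $\tfrac{n(n-1)}{2}\Theta\wedge\big((d\theta)_{(3)}\big)^2\wedge(d\Theta)^{n-2}$ contribute terms quadratic in the Tanno tensor: the first contributes $iJ_{\alpha\bar{\alpha}(2)}=-\tfrac{1}{2}Q_{\alpha\beta}^{\bar{\sigma}}(q)Q_{\bar{\alpha}\bar{\mu}}^{\sigma}(q)z^{\beta}z^{\bar{\mu}}$ by \eqref{eq4.10}, and the second contributes $+\tfrac{1}{2}Q_{\beta\gamma}^{\bar{\alpha}}(q)Q_{\bar{\beta}\bar{\mu}}^{\alpha}(q)z^{\gamma}z^{\bar{\mu}}$ by \eqref{eq4.9} and the second identity of Lemma \ref{lem5.2}. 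These cancel exactly after relabelling indices, and only because of this cancellation is $v_2$ a pure curvature expression with coefficient $-\tfrac{1}{6}$; a priori a $\mathfrak{Q}$-term of order $\varepsilon^2$ would survive in $a_2(n)$ and alter the coefficient in \eqref{eq1.6}. Your proposal never computes either quadratic contribution nor observes their cancellation, and it also does not justify why $\theta_{(4)}\wedge(d\Theta)^n$ and $n\theta_{(3)}\wedge(d\theta)_{(3)}\wedge(d\Theta)^{n-1}$ drop out (both rest on the absence of $dt$ terms, the former requiring $A_{ab}(q)=0$ from Proposition \ref{prop5.1}). As written, the value of $v_2$ is asserted rather than derived.
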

  \begin{proof}
  By the definition of $v_1$,
  $$(dV_{\theta})_{(2n+3)}=(-1)^n(\theta{\wedge}d\theta^n)_{(2n+3)}
  =(-1)^n\bigg(\theta_{(3)}{\wedge}d\Theta^n+n\Theta{\wedge}(d\theta)_{(3)}{\wedge}d\Theta^{n-1}\bigg)
  =(-1)^nv_1\Theta{\wedge}d\Theta^n.$$
  By \eqref{eq2.22}, $\theta_{(3)}$ has no $dt$ term and so $\theta_{(3)}{\wedge}d\Theta^n$ vanishes. Note that \eqref{SE}, \eqref{eq2.22} and Proposition \ref{prop4.1} lead to
\begin{equation}\label{eq5.11b}
(d\theta)_{(3)}=J_{\alpha\beta(1)}dz^{\alpha}{\wedge}dz^{\beta}+J_{\bar{\alpha}\bar{\beta}(1)}dz^{\bar{\alpha}}{\wedge}dz^{\bar{\beta}},
\end{equation}
which has no $dz^{\alpha}{\wedge}dz^{\bar{\beta}}$ term. So by Lemma \ref{lem5.2}, $\Theta{\wedge}(d\theta)_{(3)}{\wedge}d\Theta^{n-1}=0$. Hence $v_1=0$.

By the definition of $v_2$ in \eqref{eq5.6}, we see that
\begin{align}
\notag (dV_{\theta})_{(2n+4)}&=(-1)^n(\theta\wedge{d\theta}^n)_{(2n+4)}\\
\notag &=(-1)^n\bigg(\theta_{(4)}\wedge{d\Theta^n}+n\Theta\wedge{(d\theta)}_{(4)}\wedge{d\Theta^{n-1}}\\
\notag &\ \ \ +n\theta_{(3)}{\wedge}(d\theta)_{(3)}{\wedge}d\Theta^{n-1}+\frac{n(n-1)}{2}\Theta\wedge\big((d\theta)_{(3)}\big)^2{\wedge}d\Theta^{n-2}\bigg)\\
\notag &=v_2dV=(-1)^nv_2\Theta\wedge{d\Theta}^n.
\end{align}
By \eqref{eq2.22} and $A_{ab}(q)=0$, $\theta_{(4)}$ has no $dt$ term, and so the first term of the right hand side vanishes. By \eqref{eq2.22} and \eqref{eq5.11b}, $\theta_{(3)}$ and $(d\theta)_{(3)}$ have no $dt$ term. Hence the third term of the right hand side vanishes. Now apply Lemma \ref{lem5.2} to $\omega=(d\theta)_{(3)}$ in \eqref{eq5.11b} to get
\begin{align}
\notag \frac{n(n-1)}{2}\Theta\wedge\big((d\theta)_{(3)}\big)^2{\wedge}d\Theta^{n-2}&
=\frac{1}{4}(J_{\alpha\beta(1)}J_{\bar{\alpha}\bar{\beta}(1)}-J_{\alpha\beta(1)}J_{\bar{\beta}\bar{\alpha}(1)})\Theta{\wedge}d\Theta^n\\
\notag &=\frac{1}{2}J_{\alpha\beta(1)}J_{\bar{\alpha}\bar{\beta}(1)}\Theta{\wedge}d\Theta^n
=\frac{1}{2}Q_{\beta\gamma}^{\bar{\alpha}}(q)Q_{\bar{\beta}\bar{\mu}}^{\alpha}(q)z^{\gamma}z^{\bar{\mu}}\Theta{\wedge}d\Theta^n,
\end{align}
by $J_{ab}=-J_{ba}$ in \eqref{eq2.0} and \eqref{eq4.9}. Noting that by \eqref{SE}, $J_{\alpha\beta}(q)=0$ by
\eqref{eq2.18} and $\theta_{(2)}^a=0$ by \eqref{eq2.22}, we have
  \begin{align}
  \notag (d\theta)_{(4)}&=(J_{\alpha\beta}\theta^{\alpha}\wedge\theta^{\beta}
  +2J_{\alpha\bar{\beta}}\theta^{\alpha}\wedge\theta^{\bar{\beta}}
  +J_{\bar{\alpha}\bar{\beta}}\theta^{\bar{\alpha}}\wedge\theta^{\bar{\beta}})_{(4)}\\
  \notag &=2J_{\alpha\bar{\beta}(2)}dz^{\alpha}\wedge{dz^{\bar{\beta}}}
  +2J_{\alpha\bar{\beta}}(q)\theta^{\alpha}_{(3)}\wedge{dz^{\bar{\beta}}}
  +2J_{\alpha\bar{\beta}}(q)dz^{\alpha}\wedge\theta^{\bar{\beta}}_{(3)}\\
  \notag
  &=2J_{\alpha\bar{\beta}(2)}dz^{\alpha}\wedge{dz^{\bar{\beta}}}-\frac{i}{3}R_{b\ c\lambda}^{\ \alpha}(q)z^bz^cdz^{\lambda}\wedge{dz^{\bar{\alpha}}}-\frac{i}{3}R_{b\ c\bar{\lambda}}^{\ \bar{\alpha}}(q)z^bz^cdz^{\alpha}\wedge{dz^{\bar{\lambda}}},\\
  \notag &=2i\bigg(-iJ_{\alpha\bar{\beta}(2)}dz^{\alpha}\wedge{dz^{\bar{\beta}}}+\frac{1}{6}R_{\beta\ \lambda\bar{\mu}}^{\ \alpha}(q)z^{\beta}z^{\bar{\mu}}dz^{\lambda}\wedge{dz^{\bar{\alpha}}}+\frac{1}{6}R_{\bar{\beta}\ \lambda\mu}^{\ \alpha}(q)z^{\bar{\beta}}z^{\mu}dz^{\lambda}\wedge{dz^{\bar{\alpha}}}\\
  \notag &\ \ \ +\frac{1}{6}R_{\beta\ \bar{\lambda}\bar{\mu}}^{\ \bar{\alpha}}(q)z^{\beta}z^{\bar{\mu}}dz^{\alpha}\wedge{dz^{\bar{\lambda}}}+\frac{1}{6}R_{\bar{\beta}\ \bar{\lambda}\mu}^{\ \bar{\alpha}}(q)z^{\bar{\beta}}z^{\mu}dz^{\alpha}\wedge{dz^{\bar{\lambda}}}\bigg),\\
  \notag &\ \ \ mod{\quad} dz^{\alpha}\wedge{dz}^{\beta},{\quad}dz^{\bar{\alpha}}\wedge{dz}^{\bar{\beta}},
  {\quad}z^{\alpha}z^{\beta},{\quad}z^{\bar{\alpha}}z^{\bar{\beta}},
  \end{align}
  by using Corollary \ref{cor2.1}. Now Apply Lemma \ref{lem5.2} to $\omega=(d\theta)_{(4)}$ to get
  \begin{align}
  \notag &\ \ \ n\Theta\wedge{(d\theta)}_{(4)}\wedge{d\Theta^{n-1}}\\
  \notag &=\bigg(iJ_{\alpha\bar{\alpha}(2)}-\frac{1}{6}R_{\bar{\beta}\ \alpha\mu}^{\ \alpha}(q)z^{\bar{\beta}}z^{\mu}-\frac{1}{6}R_{\beta\ \bar{\alpha}\bar{\mu}}^{\ \bar{\alpha}}(q)z^{\beta}z^{\bar{\mu}}\bigg)\Theta\wedge{d\Theta^n},\quad{mod}{\quad}z^{\alpha}z^{\beta},\ z^{\bar{\alpha}}z^{\bar{\beta}}\\
  \notag &=\bigg(-\frac{1}{2}Q_{\alpha\beta}^{\bar{\sigma}}(q)Q_{\bar{\alpha}\bar{\mu}}^{\sigma}(q)z^{\beta}z^{\bar{\mu}}-\frac{1}{6}R_{\bar{\beta}\ \alpha\mu}^{\ \alpha}(q)z^{\bar{\beta}}z^{\mu}-\frac{1}{6}R_{\beta\ \bar{\alpha}\bar{\mu}}^{\ \bar{\alpha}}(q)z^{\beta}z^{\bar{\mu}}\bigg)\Theta\wedge{d\Theta^n},\\
  \notag &\ \ \ {mod}{\quad}z^{\alpha}z^{\beta},\ z^{\bar{\alpha}}z^{\bar{\beta}}.
  \end{align}
  Here we have used \eqref{eq4.10} for $J_{\alpha\bar{\alpha}(2)}$ and Proposition \ref{prop5.1} for
  $R_{\beta\ \alpha\bar{\mu}}^{\ \alpha}(q)=R_{\bar{\beta}\ \bar{\alpha}\mu}^{\ \bar{\alpha}}(q)=0$.
  So we conclude that
  \begin{align}
  \notag v_2\Theta{\wedge}d\Theta^n&=n\Theta\wedge{(d\theta)}_{(4)}\wedge{d\Theta^{n-1}}+\frac{n(n-1)}{2}\Theta\wedge\big((d\theta)_{(3)}\big)^2{\wedge}d\Theta^{n-2}\\
  \notag &=\bigg(-\frac{1}{6}R_{\bar{\beta}\ \alpha\mu}^{\ \alpha}(q)z^{\bar{\beta}}z^{\mu}-\frac{1}{6}R_{\beta\ \bar{\alpha}\bar{\mu}}^{\ \bar{\alpha}}(q)z^{\beta}z^{\bar{\mu}}\bigg)\Theta{\wedge}d\Theta^n,{\quad}mod{\quad}z^{\beta}z^{\mu},\ z^{\bar{\beta}}z^{\bar{\mu}}.
  \end{align}
  We finish the proof of this corollary.
  \end{proof}
\subsection{Calculation of some integrals}
  \begin{lemma}\label{lem5.3}(cf. Proposition 5.3 in \cite{JL1})
  Let $A=(\alpha_1,\cdots,\alpha_m)$, $B=(\beta_1,\cdots,\beta_m)$ be the multi-indices with $1\le\alpha_i,\beta_i\le{n}$, let $\delta(A,B)=1$ if $A=B$ and 0 otherwise. Then
  \begin{align}
  \notag \int_{S^{2n-1}}z^{\alpha_1}\cdots{z^{\alpha_m}}z^{\bar{\beta}_1}\cdots{z^{\bar{\beta}_m}}d\nu
  =\frac{2\pi^n}{(n+m-1)!}\sum\limits_{\sigma\in{S_m}}\delta(A,\sigma{B}).
  \end{align}
  \end{lemma}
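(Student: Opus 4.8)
The plan is to evaluate the Gaussian integral
\[
I := \int_{\mathbb{C}^n} z^{\alpha_1}\cdots z^{\alpha_m}\, z^{\bar{\beta}_1}\cdots z^{\bar{\beta}_m}\, e^{-|z|^2}\, d\mu(z)
\]
in two independent ways and compare, where $d\mu$ is Lebesgue measure on $\mathbb{C}^n\cong\mathbb{R}^{2n}$ and the polar decomposition $d\mu(z)=r^{2n-1}\,dr\,d\nu(\zeta)$ is precisely the normalization of $d\nu$ fixed in \eqref{eq5.11}.

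First I would compute $I$ in polar coordinates. Since the integrand is homogeneous of degree $m$ in the $z^\alpha$ and degree $m$ in the $z^{\bar\beta}$, writing $z=r\zeta$ with $\zeta\in S^{2n-1}$ separates the radial and spherical parts, giving
\[
I=\bigg(\int_0^\infty r^{2m+2n-1}e^{-r^2}\,dr\bigg)\bigg(\int_{S^{2n-1}} z^{\alpha_1}\cdots z^{\bar{\beta}_m}\,d\nu\bigg)
=\frac{(n+m-1)!}{2}\int_{S^{2n-1}} z^{\alpha_1}\cdots z^{\bar{\beta}_m}\,d\nu,
\]
where the radial factor is $\tfrac{1}{2}\Gamma(n+m)=\tfrac{1}{2}(n+m-1)!$ after the substitution $u=r^2$.

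Second, I would compute $I$ directly by factoring over coordinates. Let $k_j$ (respectively $l_j$) denote the number of indices among $\alpha_1,\dots,\alpha_m$ (respectively $\beta_1,\dots,\beta_m$) equal to $j$; then the integrand equals $\prod_{j=1}^n z_j^{k_j}\bar z_j^{l_j}$ and the weight factors as $\prod_j e^{-|z_j|^2}$, so $I=\prod_{j=1}^n \int_{\mathbb{C}} z^{k_j}\bar z^{l_j}e^{-|z|^2}\,dx\,dy$. Passing to polar coordinates $z=\rho e^{i\phi}$ in each one-dimensional factor, the angular integral forces $k_j=l_j$ and yields the standard identity $\int_{\mathbb{C}} z^{k}\bar z^{l}e^{-|z|^2}=\pi\,k!\,\delta_{kl}$; hence $I=\pi^n\big(\prod_j\delta_{k_jl_j}\big)\big(\prod_j k_j!\big)$.

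The only genuinely combinatorial step is to identify $\big(\prod_j\delta_{k_jl_j}\big)\big(\prod_j k_j!\big)$ with $\sum_{\sigma\in S_m}\delta(A,\sigma B)$. A permutation $\sigma$ contributes $1$ exactly when $\alpha_i=\beta_{\sigma(i)}$ for all $i$, which is possible only if $A$ and $B$ share the same multiplicities $k_j=l_j$; in that case such a $\sigma$ must restrict to a bijection between the $k_j$ positions of value $j$ in $A$ and those in $B$, and counting these independently over $j$ gives exactly $\prod_j k_j!$ admissible permutations. Equating the two evaluations of $I$ and dividing by $(n+m-1)!/2$ produces the claimed formula. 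I expect no serious obstacle; the only care required is the bookkeeping of the normalization of $d\nu$ (already pinned down by \eqref{eq5.11}) and the verification of the one-dimensional identity $\int_{\mathbb{C}} z^{k}\bar z^{l}e^{-|z|^2}=\pi\,k!\,\delta_{kl}$.
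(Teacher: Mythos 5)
Your proof is correct and complete: both evaluations of the Gaussian integral are right (the radial factor $\tfrac12\Gamma(n+m)$, the one-variable identity $\int_{\mathbb C}z^k\bar z^l e^{-|z|^2}=\pi k!\,\delta_{kl}$, and the count $\prod_j k_j!$ of permutations $\sigma$ with $A=\sigma B$ all check out against the normalization of $d\nu$ fixed by \eqref{eq5.11}). The paper itself gives no proof, simply citing Proposition 5.3 of Jerison--Lee, and your Gaussian double-evaluation is exactly the standard argument behind that result, so there is nothing to correct.
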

  Lemma \ref{lem5.3} leads to following corollary.
  \begin{corollary}\label{corB.1}
  If $F$ is a function of $r$ and $t$, denote $\mathscr{F}_m=\int_{-\infty}^{\infty}\int_0^{\infty}F(r,t)r^mdrdt$. We have
  \begin{equation}\label{eqB.1}
  \begin{aligned}
  \int_{\mathscr{H}^n}Q_{\alpha\beta}^{\bar{\gamma}}(q)Q_{\bar{\alpha}\bar{\mu}}^{\gamma}(q)z^{\beta}z^{\bar{\mu}}FdV
  &=2(4\pi)^n\mathfrak{Q}\mathscr{F}_{2n+1},\\
  \int_{\mathscr{H}^n}R_{\bar{\beta}\ \alpha\mu}^{\ \alpha}(q)z^{\bar{\beta}}z^{\mu}FdV&=\int_{\mathscr{H}^n}R_{\beta\ \bar{\alpha}\bar{\mu}}^{\ \bar{\alpha}}(q)z^{\beta}z^{\bar{\mu}}FdV=-\frac{(4\pi)^n}{2}\mathfrak{Q}\mathscr{F}_{2n+1},\\
  \int_{\mathscr{H}^n}R_{\rho\ \gamma\bar{\lambda}}^{\ \alpha}(q)z^{\rho}z^{\bar{\alpha}}z^{\gamma}z^{\bar{\lambda}}FdV
  &=\int_{\mathscr{H}^n}R_{\bar{\rho}\ \bar{\gamma}\lambda}^{\ \bar{\alpha}}(q)z^{\bar{\rho}}z^{\alpha}z^{\bar{\gamma}}z^{\lambda}FdV=\frac{(4\pi)^n}{2(n+1)}\mathfrak{Q}\mathscr{F}_{2n+3},\\
  \int_{\mathscr{H}^n}R_{\bar{\rho}\ \lambda\gamma}^{\ \alpha}(q)z^{\bar{\rho}}z^{\bar{\alpha}}z^{\lambda}z^{\gamma}FdV&=\int_{\mathscr{H}^n}R_{\rho\ \bar{\lambda}\bar{\alpha}}^{\ \bar{\gamma}}(q)z^{\rho}z^{\gamma}z^{\bar{\lambda}}z^{\bar{\alpha}}FdV=0,\\
  \int_{\mathscr{H}^n}Q_{\alpha\lambda}^{\bar{\gamma}}(q)Q_{\bar{\beta}\bar{\mu}}^{\gamma}(q)z^{\alpha}z^{\lambda}z^{\bar{\beta}}z^{\bar{\mu}}FdV
  &=\frac{3(4\pi)^n}{n+1}\mathfrak{Q}\mathscr{F}_{2n+3},\\
  \int_{\mathscr{H}^n}J_{\alpha\bar{\beta}(2)}z^{\alpha}z^{\bar{\beta}}FdV&=-\int_{\mathscr{H}^n}J_{\bar{\beta}\alpha(2)}z^{\alpha}z^{\bar{\beta}}FdV
  =\frac{3i(4\pi)^n}{2(n+1)}\mathfrak{Q}\mathscr{F}_{2n+3},\\
  \int_{\mathscr{H}^n}J_{\alpha\beta(2)}z^{\alpha}z^{\beta}FdV&=\int_{\mathscr{H}^n}J_{\bar{\alpha}\bar{\beta}(2)}z^{\bar{\alpha}}z^{\bar{\beta}}FdV=0.
  \end{aligned}
  \end{equation}
  \end{corollary}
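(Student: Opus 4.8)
The plan is to reduce every integral to the radial integral $\mathscr{F}_m$ by separating the radial and angular variables and invoking the sphere integration formula of Lemma \ref{lem5.3}. Using the expression \eqref{eq5.11} for the volume form, $dV = 4^n n!\, r^{2n-1}\,d\nu(\zeta)\,dr\,dt$, and writing $z = r\zeta$ with $\zeta \in S^{2n-1}$, each integrand of the form $T\cdot z^{i_1}\cdots z^{i_k}\,F$ (where $T$ denotes constant tensor components at $q$ and $F = F(r,t)$) factors as $r^{k}$ times an angular monomial $\zeta^{i_1}\cdots\zeta^{i_k}$ times $F\,r^{2n-1}$. The radial and $t$ integrals combine into $\mathscr{F}_{2n-1+k}$, while the angular integral is computed by Lemma \ref{lem5.3}. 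Since the relevant monomials are balanced (equal numbers of holomorphic and anti-holomorphic factors), Lemma \ref{lem5.3} applies and produces Kronecker deltas; the prefactor $4^n n!$ combines with the $\frac{2\pi^n}{(n+m-1)!}$ of Lemma \ref{lem5.3} to give the factors $(4\pi)^n$ on the right.

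First I would dispose of the two vanishing identities, the fourth and the seventh, which require no integration at all. For the seventh, the antisymmetry $J_{ab}=-J_{ba}$ from Proposition \ref{prop2.1} forces $J_{\alpha\beta(2)}$ to be antisymmetric in $\alpha,\beta$, so its contraction against the symmetric monomial $z^\alpha z^\beta$ vanishes pointwise, and similarly for $J_{\bar\alpha\bar\beta(2)}z^{\bar\alpha}z^{\bar\beta}$. For the fourth, the curvature tensor is antisymmetric in its last two lower indices, $R_{a\ cd}^{\ b}=-R_{a\ dc}^{\ b}$, so $R_{\bar\rho\ \lambda\gamma}^{\ \alpha}(q)\,z^\lambda z^\gamma=0$ identically, whence the integral vanishes.

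For the two degree-two identities (the first and the second), the monomials $z^\beta z^{\bar\mu}$ and $z^{\bar\beta}z^\mu$ have $k=2$, so I would use the $m=1$ case of Lemma \ref{lem5.3}, namely $\int_{S^{2n-1}}\zeta^\mu\zeta^{\bar\nu}\,d\nu=\frac{2\pi^n}{n!}\delta^{\mu\bar\nu}$, obtaining $\mathscr{F}_{2n+1}$. The single Kronecker delta contracts the Tanno components into $\mathfrak{Q}$ through its definition \eqref{eq4.2} for the first identity, and into $R_{\bar\beta\ \alpha\beta}^{\ \alpha}(q)=-\tfrac14\mathfrak{Q}$ through \eqref{eq4.13} for the second; tracking the constant $4^n n!\cdot\frac{2\pi^n}{n!}=2(4\pi)^n$ then yields the stated coefficients.

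The remaining three identities (the third, fifth, and sixth) involve degree-four monomials, so here I would apply the $m=2$ case of Lemma \ref{lem5.3}, which produces the symmetric sum $\delta^{\alpha_1\bar\beta_1}\delta^{\alpha_2\bar\beta_2}+\delta^{\alpha_1\bar\beta_2}\delta^{\alpha_2\bar\beta_1}$ and the factor $\frac{2\pi^n}{(n+1)!}$, leading to $\mathscr{F}_{2n+3}$. The sixth identity follows from the fifth by substituting the expression \eqref{eq4.10} for $J_{\alpha\bar\beta(2)}$, which merely multiplies the fifth's value by $\frac{i}{2}$. The main obstacle is the careful bookkeeping in the fifth and third identities: one must contract each of the two permutation terms separately and recognize, using the relations \eqref{Q} of Proposition \ref{prop4.2} together with the normalizations $R_{\alpha\ \gamma\bar\beta}^{\ \gamma}(q)=0$ and $R_{\alpha\ \beta\bar\beta}^{\ \alpha}(q)=\tfrac14\mathfrak{Q}$ of Proposition \ref{prop5.1}, that for the fifth one term equals $\mathfrak{Q}$ and the other $\tfrac12\mathfrak{Q}$, while for the third one term equals $\tfrac14\mathfrak{Q}$ and the other vanishes. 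Summing the contributions $\tfrac32\mathfrak{Q}$ and $\tfrac14\mathfrak{Q}$ against the constant $4^n n!\cdot\frac{2\pi^n}{(n+1)!}=\frac{2(4\pi)^n}{n+1}$ then gives $\frac{3(4\pi)^n}{n+1}\mathfrak{Q}\,\mathscr{F}_{2n+3}$ and $\frac{(4\pi)^n}{2(n+1)}\mathfrak{Q}\,\mathscr{F}_{2n+3}$, respectively.
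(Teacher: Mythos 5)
Your proposal is correct and follows essentially the same route as the paper: separate radial and angular variables via the explicit volume form \eqref{eq5.11}, evaluate the sphere integrals with Lemma \ref{lem5.3}, and contract the resulting Kronecker deltas using the normalizations of Propositions \ref{prop4.2}, \ref{prop4.4} and \ref{prop5.1}; all of your constants check out. Your pointwise-antisymmetry disposal of the fourth and seventh identities is a small simplification over the paper, which instead integrates first (and, for the seventh, routes through Proposition \ref{prop4.3a}), but the underlying observation is the same.
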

  \begin{proof}
  First note that we have \eqref{eq5.11} for $dV$. By using Lemma \ref{lem5.3} for $m=1$, we get
  \begin{align}
  \notag \int_{\mathscr{H}^n}Q_{\alpha\beta}^{\bar{\gamma}}(q)Q_{\bar{\alpha}\bar{\mu}}^{\gamma}(q)z^{\beta}z^{\bar{\mu}}FdV
  &=\int_{-\infty}^{\infty}\int_{0}^{\infty}Q_{\alpha\beta}^{\bar{\gamma}}(q)Q_{\bar{\alpha}\bar{\mu}}^{\gamma}(q)\bigg(\int_{S^{2n-1}}z^{\beta}z^{\bar{\mu}}d{\nu}\bigg)4^nn!r^{2n+1}F(r,t)drdt\\
  \notag &=2(4\pi)^n\mathfrak{Q}\mathscr{F}_{2n+1}.
  \end{align}
  The identities of the second line of \eqref{eqB.1} follows similarly by noting that $R_{\bar{\beta}\ \alpha\beta}^{\ \alpha}(q)=-\frac{1}{4}\mathfrak{Q}$ in Proposition \ref{prop5.1}.

  Let $m=2$ in Lemma \ref{lem5.3}. First we have
  \begin{align}
  \notag \int_{\mathscr{H}^n}R_{\rho\ \gamma\bar{\lambda}}^{\ \alpha}(q)z^{\rho}z^{\bar{\alpha}}z^{\gamma}z^{\bar{\lambda}}FdV&=\int_{-\infty}^{\infty}\int_{0}^{\infty}R_{\rho\ \gamma\bar{\lambda}}^{\ \alpha}(q)\bigg(\int_{S^{2n-1}}z^{\rho}z^{\bar{\alpha}}z^{\gamma}z^{\bar{\lambda}}d{\nu}\bigg)4^nn!r^{2n+3}F(r,t)drdt\\
  \notag &=2\frac{(4\pi)^n}{n+1}\big(R_{\rho\ \alpha\bar{\rho}}^{\ \alpha}(q)+R_{\rho\ \gamma\bar{\gamma}}^{\ \rho}(q)\big)\int_{-\infty}^{\infty}\int_0^{\infty}F(r,t)r^{2n+3}drdt\\
  \notag &=\frac{(4\pi)^n}{2(n+1)}\mathfrak{Q}\mathscr{F}_{2n+3},
  \end{align}
  by $R_{\rho\ \alpha\bar{\rho}}^{\ \alpha}(q)=0$ and $R_{\rho\ \gamma\bar{\gamma}}^{\ \rho}(q)=\frac{\mathfrak{Q}}{4}$ in Proposition \ref{prop5.1}. And $\int_{\mathscr{H}^n}R_{\bar{\rho}\ \bar{\gamma}\lambda}^{\ \bar{\alpha}}(q)z^{\bar{\rho}}z^{\alpha}z^{\bar{\gamma}}z^{\lambda}FdV$ follows similarly or by taking conjugation. Similarly, we have
  \begin{align}
  \notag &\int_{\mathscr{H}^n}R_{\bar{\rho}\ \lambda\gamma}^{\ \alpha}(q)z^{\bar{\rho}}z^{\bar{\alpha}}z^{\lambda}z^{\gamma}FdV=\frac{2(4\pi)^n}{(n+1)}\bigg(R_{\bar{\rho}\ \alpha\rho}^{\ \alpha}(q)+R_{\bar{\rho}\ \rho\alpha}^{\ \alpha}(q)\bigg)\int_{-\infty}^{\infty}\int_0^{\infty}F(r,t)r^{2n+3}drdt=0,
  \end{align}
  by $R_{a\ cd}^{\ b}=-R_{a\ dc}^{\ b}$. The second identity of the fourth line in \eqref{eqB.1} follows from taking conjugation. And also we have
\begin{align}
\notag &\int_{\mathscr{H}^n}Q_{\alpha\lambda}^{\bar{\gamma}}(q)Q_{\bar{\beta}\bar{\mu}}^{\gamma}(q)z^{\alpha}z^{\lambda}z^{\bar{\beta}}z^{\bar{\mu}}FdV\\
\notag &=\frac{2(4\pi)^n}{n+1}\bigg(Q_{\alpha\lambda}^{\bar{\gamma}}(q)Q_{\bar{\alpha}\bar{\lambda}}^{\gamma}(q)
+Q_{\alpha\lambda}^{\bar{\gamma}}(q)Q_{\bar{\lambda}\bar{\alpha}}^{\gamma}(q)\bigg)
\int_{-\infty}^{\infty}\int_0^{\infty}F(r,t)r^{2n+3}drdt
=\frac{3(4\pi)^n}{n+1}\mathfrak{Q}\mathscr{F}_{2n+3}.
\end{align}
by \eqref{eq4.2} and the last identity in \eqref{Q}. By \eqref{eq4.10}, we get
\begin{align}
\notag &\int_{\mathscr{H}^n}J_{\alpha\bar{\beta}(2)}z^{\alpha}z^{\bar{\beta}}FdV=\frac{i}{2}\int_{\mathscr{H}^n}Q_{\alpha\lambda}^{\bar{\gamma}}(q)Q_{\bar{\beta}\bar{\mu}}^{\gamma}(q)z^{\alpha}z^{\lambda}z^{\bar{\beta}}z^{\bar{\mu}}FdV
=\frac{3(4\pi)^ni}{2(n+1)}\mathfrak{Q}\mathscr{F}_{2n+3}.
\end{align}
And by Proposition \ref{prop4.3a}, we get
  \begin{align}
  \notag &\int_{\mathscr{H}^n}J_{\alpha\beta(2)}z^{\alpha}z^{\beta}FdV=\frac{1}{2}\int_{\mathscr{H}^n}z^{\alpha}z^{\beta}z^cz^dZ_cZ_dJ_{\alpha\beta}(q)FdV=\frac{1}{2}\int_{\mathscr{H}^n}z^{\alpha}z^{\beta}z^{\bar{\rho}}z^{\bar{\mu}}Z_{\bar{\rho}}Z_{\bar{\mu}}J_{\alpha\beta}(q)FdV\\
  \notag &=\frac{(4\pi)^n}{n+1}\big(Z_{\bar{\beta}}Z_{\bar{\alpha}}J_{\alpha\beta}(q)+Z_{\bar{\alpha}}Z_{\bar{\beta}}J_{\alpha\beta}(q)\big)\int_{-\infty}^{\infty}\int_0^{\infty}F(r,t)r^{2n+3}drdt=0.
  \end{align}
  The last identity follows from the the anti-symmetry of $J_{\alpha\beta}$. Taking conjugation, we get $\int_{\mathscr{H}^n}J_{\bar{\alpha}\bar{\beta}(2)}z^{\bar{\alpha}}z^{\bar{\beta}}FdV=0$.
\end{proof}
\begin{lemma}\label{lem5.4}(cf. Lemma 5.5 in \cite{JL1})
  Suppose that $\alpha,\gamma+1,\beta+1$ and $\alpha-\gamma-1$ are positive real numbers. If $2\alpha-2\gamma-\beta>3$, then
  \[\int_{-\infty}^{\infty}\int_0^{\infty}|t+i(1+r^2)|^{-\alpha}r^{\beta}|t|^{\gamma}drdt=N_1(\alpha,\beta,\gamma),\]
  where
  \begin{equation}
  \notag N_1(\alpha,\beta,\gamma)=\frac{\Gamma\bigg(\frac{1}{2}(\beta+1)\bigg)\Gamma\bigg(\alpha-\gamma-\frac{1}{2}\beta-\frac{3}{2}\bigg)
  \Gamma\bigg(\frac{1}{2}(\gamma+1)\bigg)\Gamma\bigg(\frac{1}{2}(\alpha-\gamma-1)\bigg)}{2\Gamma(\alpha-\gamma-1)\Gamma(\frac{\alpha}{2})}.
  \end{equation}
  \end{lemma}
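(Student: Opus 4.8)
The plan is to evaluate the double integral by Fubini's theorem, reducing it to two successive applications of the classical Beta integral
\[
\int_0^\infty \frac{x^{a-1}}{(1+x)^{a+b}}\,dx=\frac{\Gamma(a)\Gamma(b)}{\Gamma(a+b)},\qquad a,b>0.
\]
First I would record that $|t+i(1+r^2)|^{-\alpha}=\bigl(t^2+(1+r^2)^2\bigr)^{-\alpha/2}$, so the integrand is even in $t$ and the two variables separate cleanly once we scale $t$ by $1+r^2$. The four hypotheses $\gamma+1>0$, $\alpha-\gamma-1>0$, $\beta+1>0$ and $2\alpha-2\gamma-\beta>3$ are precisely the positivity conditions that make the integral absolutely convergent, so Fubini applies and the order of integration is immaterial.

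The first step is the inner integral in $t$. Writing $u=1+r^2$ and substituting $t=ux$ (using evenness to fold onto $t>0$) gives
\[
\int_{-\infty}^\infty \bigl(t^2+u^2\bigr)^{-\alpha/2}|t|^\gamma\,dt
=2\,u^{\gamma+1-\alpha}\int_0^\infty x^\gamma(1+x^2)^{-\alpha/2}\,dx,
\]
and the substitution $y=x^2$ turns the remaining integral into a Beta integral with $a=\tfrac12(\gamma+1)$ and $a+b=\tfrac{\alpha}{2}$, so $b=\tfrac12(\alpha-\gamma-1)$. This produces the factor $\Gamma\!\bigl(\tfrac{\gamma+1}{2}\bigr)\Gamma\!\bigl(\tfrac{\alpha-\gamma-1}{2}\bigr)\big/\Gamma\!\bigl(\tfrac{\alpha}{2}\bigr)$, whose convergence needs exactly $\gamma+1>0$ (integrability at $t=0$) and $\alpha-\gamma-1>0$ (decay as $t\to\infty$).

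The second step is the outer integral in $r$. The $t$-integration has left the weight $(1+r^2)^{\gamma+1-\alpha}$, so I am reduced to $\int_0^\infty (1+r^2)^{\gamma+1-\alpha}r^\beta\,dr$. The substitution $v=r^2$ again gives a Beta integral, now with $a=\tfrac12(\beta+1)$ and $a+b=\alpha-\gamma-1$, hence $b=\alpha-\gamma-\tfrac{\beta}{2}-\tfrac32$; its convergence consumes the remaining hypotheses $\beta+1>0$ and $\alpha-\gamma-\tfrac{\beta}{2}-\tfrac32>0$, the latter being $2\alpha-2\gamma-\beta>3$. Multiplying the two Beta values and collecting the elementary constants — the factor $2$ from evenness in $t$ cancels one of the two $\tfrac12$'s coming from the quadratic substitutions $y=x^2$ and $v=r^2$, leaving the overall $\tfrac12$ — assembles exactly
\[
\frac{\Gamma\!\bigl(\tfrac{\beta+1}{2}\bigr)\Gamma\!\bigl(\alpha-\gamma-\tfrac{\beta}{2}-\tfrac32\bigr)\Gamma\!\bigl(\tfrac{\gamma+1}{2}\bigr)\Gamma\!\bigl(\tfrac{\alpha-\gamma-1}{2}\bigr)}{2\,\Gamma(\alpha-\gamma-1)\,\Gamma\!\bigl(\tfrac{\alpha}{2}\bigr)}=N_1(\alpha,\beta,\gamma).
\]

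The computation is entirely routine once the separation of variables is set up, and I expect no genuine obstacle beyond bookkeeping. The one point requiring care is to verify that each of the four hypotheses is used precisely as the convergence condition of one of the two Beta integrals — two for integrability near the origin ($\gamma+1>0$ and $\beta+1>0$) and two for decay at infinity ($\alpha-\gamma-1>0$ and $2\alpha-2\gamma-\beta>3$) — and that the residual power $u^{\gamma+1-\alpha}$ of $u=1+r^2$ left over from the $t$-integration is correctly carried into the $r$-integration. The positivity of $\alpha$ itself is needed only so that $\Gamma\!\bigl(\tfrac{\alpha}{2}\bigr)$ in the denominator is defined.
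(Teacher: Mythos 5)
Your proof is correct: the paper itself offers no proof of this lemma (it only cites Lemma 5.5 of Jerison--Lee), and your reduction to two Beta integrals via the substitutions $t=(1+r^2)x$, $y=x^2$, $v=r^2$ is exactly the standard computation, with the parameters $a=\tfrac12(\gamma+1)$, $b=\tfrac12(\alpha-\gamma-1)$ and $a=\tfrac12(\beta+1)$, $b=\alpha-\gamma-\tfrac{\beta}{2}-\tfrac32$ and the bookkeeping of the factor $\tfrac12$ all checking out. The accounting of the four hypotheses as the four convergence conditions is also accurate.
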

  By the expression of $N_1(\alpha,\beta,\gamma)$ above, we get
  $$N_1(2n,2n-1,0)=\frac{\Gamma(n)\Gamma(n-1)\Gamma(\frac{1}{2})\Gamma(\frac{2n-1}{2})}{2\Gamma(2n-1)\Gamma(n)}=\frac{\Gamma(n-1)\Gamma(\frac{1}{2})\Gamma(\frac{2n-1}{2})}{2\Gamma(2n-1)},$$
  $$N_1(2n+2,2n-1,0)=\frac{\Gamma(n)\Gamma(n+1)\Gamma(\frac{1}{2})\Gamma(\frac{2n+1}{2})}{2\Gamma(2n+1)\Gamma(n+1)}=\frac{\Gamma(n)\Gamma(\frac{1}{2})\Gamma(\frac{2n+1}{2})}{2\Gamma(2n+1)},$$
  $$N_1(2n+2,2n+1,0)=\frac{\Gamma(n+1)\Gamma(n)\Gamma(\frac{1}{2})\Gamma(\frac{2n+1}{2})}{2\Gamma(2n+1)\Gamma(n+1)}=\frac{\Gamma(n)\Gamma(\frac{1}{2})\Gamma(\frac{2n+1}{2})}{2\Gamma(2n+1)},$$
  $$N_1(2n+2,2n+3,0)=\frac{\Gamma(n+2)\Gamma(n-1)\Gamma(\frac{1}{2})\Gamma(\frac{2n+1}{2})}{2\Gamma(2n+1)\Gamma(n+1)},$$
  $$N_1(2n+4,2n+3,2)=\frac{\Gamma(n+2)\Gamma(n-1)\Gamma(\frac{3}{2})\Gamma(\frac{2n+1}{2})}{2\Gamma(2n+1)\Gamma(n+2)}=\frac{\Gamma(n-1)\Gamma(\frac{3}{2})\Gamma(\frac{2n+1}{2})}{2\Gamma(2n+1)}.$$
  So we can find that
  \begin{equation}\label{N1}
  \begin{aligned}
  &\frac{N_1(2n,2n-1,0)}{N_1(2n+2,2n+1,0)}=\frac{4n}{n-1},\\
  &\frac{N_1(2n+2,2n+1,0)}{N_1(2n+2,2n-1,0)}=1,\\
  &\frac{N_1(2n+2,2n+3,0)}{N_1(2n+2,2n+1,0)}=\frac{n+1}{n-1},\\
  &\frac{N_1(2n+4,2n+3,2)}{N_1(2n+2,2n+1,0)}=\frac{1}{2(n-1)}.
  \end{aligned}
  \end{equation}
  \subsection{Calculation of constants $a_m(n)$ and $b_m(n)$}
  \begin{lemma}
  For $Z_j$ given by \eqref{eq2.10}, we have
  \begin{equation}\label{eqB.17}
  \begin{aligned}
  &Z_{\alpha}\Phi=inz^{\bar{\alpha}}\frac{t+i(|z|^2+1)}{|w+i|^{n+2}},\\
  &Z_{\bar{\alpha}}\Phi=-inz^{\alpha}\frac{t-i(|z|^2+1)}{|w+i|^{n+2}},\\
  &Z_0\Phi=-nt\frac{1}{|w+i|^{n+2}}.
  \end{aligned}
  \end{equation}
  \end{lemma}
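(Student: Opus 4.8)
The plan is to reduce everything to a single application of the chain rule, writing $\Phi$ as a power of the real-analytic function $t^2+(|z|^2+1)^2$. Since $w+i=t+i(|z|^2+1)$, we have $|w+i|^2=t^2+(|z|^2+1)^2$, so that $\Phi=\bigl(t^2+(|z|^2+1)^2\bigr)^{-n/2}$. Set $\rho:=|w+i|^2=t^2+(|z|^2+1)^2$, so $\Phi=\rho^{-n/2}$ and $\rho^{-n/2-1}=|w+i|^{-(n+2)}$. First I would record the first-order derivatives of $\rho$. Using $|z|^2=\sum_\alpha z^\alpha z^{\bar\alpha}$, the Wirtinger derivatives give $\partial|z|^2/\partial z^\alpha=z^{\bar\alpha}$ and $\partial|z|^2/\partial z^{\bar\alpha}=z^\alpha$, hence
\[
\frac{\partial\rho}{\partial z^\alpha}=2(|z|^2+1)z^{\bar\alpha},\qquad \frac{\partial\rho}{\partial z^{\bar\alpha}}=2(|z|^2+1)z^\alpha,\qquad \frac{\partial\rho}{\partial t}=2t.
\]
Differentiating $\Phi=\rho^{-n/2}$ then yields $\partial\Phi/\partial z^\alpha=-n(|z|^2+1)z^{\bar\alpha}|w+i|^{-(n+2)}$, the analogous formula in $z^{\bar\alpha}$, and $\partial\Phi/\partial t=-nt\,|w+i|^{-(n+2)}$.

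The identity for $Z_0\Phi$ is then immediate from $Z_0=\partial/\partial t$. For $Z_\alpha\Phi$ I would substitute these into $Z_\alpha=\partial/\partial z^\alpha-iz^{\bar\alpha}\partial/\partial t$, collecting the common factor $nz^{\bar\alpha}|w+i|^{-(n+2)}$:
\[
Z_\alpha\Phi=nz^{\bar\alpha}|w+i|^{-(n+2)}\bigl(-(|z|^2+1)+it\bigr)=inz^{\bar\alpha}\frac{t+i(|z|^2+1)}{|w+i|^{n+2}},
\]
where the last step uses $i\bigl(t+i(|z|^2+1)\bigr)=it-(|z|^2+1)$. This is the first asserted formula.

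For $Z_{\bar\alpha}\Phi$ I would use $Z_{\bar\alpha}=\overline{Z_\alpha}=\partial/\partial z^{\bar\alpha}+iz^\alpha\,\partial/\partial t$ (recall $t$ is real). Substituting the derivatives above and factoring out $nz^\alpha|w+i|^{-(n+2)}$ gives
\[
Z_{\bar\alpha}\Phi=nz^\alpha|w+i|^{-(n+2)}\bigl(-(|z|^2+1)-it\bigr)=-inz^\alpha\frac{t-i(|z|^2+1)}{|w+i|^{n+2}},
\]
using $-i\bigl(t-i(|z|^2+1)\bigr)=-it-(|z|^2+1)$, which is the second asserted formula; alternatively it follows from the $Z_\alpha\Phi$ computation by complex conjugation, since $\Phi$ is real-valued. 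There is no genuine obstacle here: the computation is elementary, and the only point requiring care is the bookkeeping of the two $i$-terms, namely combining $\partial\Phi/\partial z^\alpha$ with $-iz^{\bar\alpha}\,\partial\Phi/\partial t$ (respectively $\partial\Phi/\partial z^{\bar\alpha}$ with $+iz^\alpha\,\partial\Phi/\partial t$) so as to recover the complex factor $t+i(|z|^2+1)$ and its conjugate.
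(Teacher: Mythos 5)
Your computation is correct and is essentially identical to the paper's proof, which likewise writes $\Phi=|w+i|^{-n}=(t^2+(|z|^2+1)^2)^{-n/2}$, records the partial derivatives $\partial_t(|w+i|^{-n})=-nt|w+i|^{-n-2}$ and $\partial_{z^\alpha}(|w+i|^{-n})=-nz^{\bar\alpha}(|z|^2+1)|w+i|^{-n-2}$, and then substitutes into the definitions of $Z_\alpha$, $Z_{\bar\alpha}$, $Z_0$. Your version merely spells out the final bookkeeping of the $i$-terms that the paper leaves to the reader.
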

  \begin{proof}
  By $|w+i|^{-n}=(t^2+(|z|^2+1)^2)^{-\frac{n}{2}}$, we have
  \begin{align}
  \notag \frac{\partial}{\partial{t}}(|w+i|^{-n})=-nt|w+i|^{-n-2},
  \end{align}
  and
  \begin{align}
  \notag \frac{\partial}{\partial{z}^{\alpha}}(|w+i|^{-n})&
  =-\frac{n}{2}|w+i|^{-n-2}\cdot2(|z|^2+1)z^{\bar{\alpha}}=-nz^{\bar{\alpha}}|w+i|^{-n-2}(|z|^2+1).
  \end{align}
  The result follows.
  \end{proof}
  Note that we have
  $$\int_{S^{2n-1}}d\nu=\frac{2\pi^n}{(n-1)!},$$
  by the case $m=0$ in Lemma \ref{lem5.3} and by \eqref{N1} we have
  \begin{equation}\label{eq5.12a}
  N_1(2n+2,2n+1,0)=N_1(2n+2,2n-1,0)=\frac{4^{-n}\pi}{2n},
  \end{equation}
  (cf. p. 341 in \cite{JL1} for the second identity). Hence by \eqref{eq5.10}, \eqref{eq5.11} and $v_0=1$ we get
  \begin{equation}\label{eq5.13a}
  \begin{aligned}
  a_0(n)&=\int_{\mathscr{H}^n}{|\Phi|}^pdV
  =4^nn!\int_{-\infty}^{\infty}\int_{0}^{\infty}\int_{S^{2n-1}}d{\nu}\frac{r^{2n-1}}{|t+i(1+r^2)|^{2n+2}}drdt\\
  &=(4\pi)^n(2n)N_1(2n+2,2n-1,0)=\pi^{n+1}.
  \end{aligned}
  \end{equation}
  And we see that
  \begin{align}
  \notag v_0^{jk}Z_j{\Phi}Z_k{\Phi}&=\sum\limits_{m_0=m_1=m_2=0}s_{\beta(m_1+o(j)-1)}^js_{\bar{\beta}(m_2+o(k)-1)}^kv_{m_0}{Z_j\Phi}{Z_k\Phi}\\
  &=s_{\beta(0)}^{\alpha}s_{\bar{\beta}(0)}^{\bar{\gamma}}{Z_{\alpha}\Phi}{Z_{\bar{\gamma}}\Phi}=Z_{\beta}{\Phi}Z_{\bar{\beta}}{\Phi}, \end{align}
  by \eqref{eq2.29} for $s_{\beta(0)}^{\alpha}=\delta_{\beta}^{\alpha}$ and $s_{b(1)}^0=0$.
  So we get
  \begin{equation}\label{eq5.16}
  \begin{aligned}
  b_0(n)&=2\int_{\mathscr{H}^n}v_0^{jk}Z_j{\Phi}Z_k{\Phi}dV=2\int_{\mathscr{H}^n}Z_{\beta}{\Phi}Z_{\bar{\beta}}{\Phi}dV
  =2\int_{\mathscr{H}^n}n^2|z|^2|\omega+i|^{-2n-2}dV\\
  &=2n^24^nn!\int_{-\infty}^{\infty}\int_{0}^{\infty}\int_{S^{2n-1}}d{\nu}\frac{r^{2n+1}}{|t+i(1+r^2)|^{2n+2}}drdt\\
  &=4n^3(4\pi)^nN_1(2n+2,2n+1,0)=2n^2\pi^{n+1},
  \end{aligned}
  \end{equation}
  by \eqref{eqB.17}. By \eqref{eq5.10} and Corollary \ref{cor5.1}, we have
  \begin{equation}\label{eq5.14a}
  a_1(n)=\int_{\mathscr{H}^n}{|\Phi|}^pv_1dV=0.
  \end{equation}
  By Proposition \ref{prop2.2} for $s_b^j$ and $v_1=0$ in \eqref{eqB.7a}, we can get
  \begin{align}
  \notag v_1^{jk}{Z_j\Phi}{Z_k\Phi}&=\sum\limits_{m_0+m_1+m_2=1}s_{\beta(m_1+o(j)-1)}^js_{\bar{\beta}(m_2+o(k)-1)}^kv_{m_0}{Z_j\Phi}{Z_k\Phi}\\
  \notag &=s_{\beta(0)}^{\alpha}s_{\bar{\beta}(2)}^0Z_{\alpha}{\Phi}Z_0{\Phi}+s_{\beta(2)}^{0}s_{\bar{\beta}(0)}^{\bar{\gamma}}Z_0{\Phi}Z_{\bar{\gamma}}{\Phi}
  =z^az^bz^cF(r,t),
  \end{align}
  for some functions $F(r,t)$ only depended with $r$ and $t$. So
  \begin{equation}\label{eq5.17}
  b_1(n)=2\int_{-\infty}^{\infty}\int_0^{\infty}\int_{S_{2n-1}}z^az^bz^cd{\nu}F(r,t)drdt=0,
  \end{equation}
  by Lemma \ref{lem5.3}. $a_2(n)$ is given by following lemma.
  \begin{lemma}\label{lem5.5}
  \begin{equation}\label{eq5.15}
  a_2(n)=\frac{\pi^{n+1}}{12n}\mathfrak{Q}.
  \end{equation}
  \end{lemma}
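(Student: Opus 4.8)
The plan is to compute $a_2(n)=\int_{\mathscr{H}^n}|\Phi|^p v_2\,dV$ directly from its definition in \eqref{eq5.10}, feeding in the explicit form of $v_2$ already obtained in Corollary \ref{cor5.1} together with the integral identities of Corollary \ref{corB.1}. First I would record that the weight is purely radial: since $\Phi=|w+i|^{-n}$ and $p=2+2/n$, we have $|\Phi|^p=|w+i|^{-(2n+2)}=|t+i(1+r^2)|^{-(2n+2)}$, a function of $r=|z|$ and $t$ alone. Consequently, when $v_2$ is integrated against $|\Phi|^p$, each sphere integral $\int_{S^{2n-1}}(\,\cdot\,)\,d\nu$ annihilates every monomial of pure holomorphic or antiholomorphic degree two, i.e. $z^{\beta}z^{\mu}$ and $z^{\bar{\beta}}z^{\bar{\mu}}$, by Lemma \ref{lem5.3}. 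Hence the ``$\mathrm{mod}\ z^{\beta}z^{\mu},\ z^{\bar{\beta}}z^{\bar{\mu}}$'' terms discarded in Corollary \ref{cor5.1} contribute nothing, and only the mixed part of $v_2$ survives.

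Next I would substitute the surviving part
\[
v_2\equiv-\tfrac{1}{6}R_{\bar{\beta}\ \alpha\mu}^{\ \alpha}(q)z^{\bar{\beta}}z^{\mu}-\tfrac{1}{6}R_{\beta\ \bar{\alpha}\bar{\mu}}^{\ \bar{\alpha}}(q)z^{\beta}z^{\bar{\mu}}
\]
and apply the second line of Corollary \ref{corB.1} with $F=|\Phi|^p$. Both of these integrals equal $-\tfrac{(4\pi)^n}{2}\mathfrak{Q}\,\mathscr{F}_{2n+1}$, so the two $-\tfrac{1}{6}$ coefficients combine to give
\[
a_2(n)=\frac{(4\pi)^n}{6}\,\mathfrak{Q}\,\mathscr{F}_{2n+1},
\]
where $\mathscr{F}_{2n+1}=\int_{-\infty}^{\infty}\!\int_0^{\infty}|t+i(1+r^2)|^{-(2n+2)}r^{2n+1}\,dr\,dt=N_1(2n+2,2n+1,0)$ in the notation of Lemma \ref{lem5.4}.

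Finally I would insert the value $N_1(2n+2,2n+1,0)=\tfrac{4^{-n}\pi}{2n}$ recorded in \eqref{eq5.12a}; since $(4\pi)^n\cdot4^{-n}=\pi^n$, this collapses to $a_2(n)=\frac{\pi^n}{6}\cdot\frac{\pi}{2n}\,\mathfrak{Q}=\frac{\pi^{n+1}}{12n}\,\mathfrak{Q}$, as claimed. I do not expect any serious obstacle here: the argument is essentially bookkeeping, and the one point requiring care is coefficient tracking — confirming that the two mixed curvature terms enter with the same sign, and that the minus sign in the second line of Corollary \ref{corB.1} cancels the $-\tfrac{1}{6}$ so that the answer is a positive multiple of $\mathfrak{Q}$. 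All the genuine work has already been front-loaded into Corollary \ref{cor5.1} (the order-two expansion of the volume form in terms of curvature and Tanno tensors) and Corollary \ref{corB.1} (the spherical integral identities), both of which I may invoke verbatim.
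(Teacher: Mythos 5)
Your proposal is correct and follows essentially the same route as the paper: substitute the expression for $v_2$ from Corollary \ref{cor5.1} into $a_2(n)=\int_{\mathscr{H}^n}|\Phi|^p v_2\,dV$, discard the pure $z^{\beta}z^{\mu}$ and $z^{\bar{\beta}}z^{\bar{\mu}}$ terms (which vanish under the spherical integration of Lemma \ref{lem5.3}), apply the second line of Corollary \ref{corB.1} with $F=|w+i|^{-2n-2}$, and evaluate via $N_1(2n+2,2n+1,0)=4^{-n}\pi/(2n)$. The sign and coefficient bookkeeping you describe matches the paper's computation exactly.
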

  \begin{proof}
  By \eqref{eq5.10}, \eqref{eqB.7a}, the second line of \eqref{eqB.1} and Lemma \ref{lem5.4}, we have
  \begin{align}
  \notag &a_2(n)=\int_{\mathscr{H}^n}{|\Phi|}^pv_2dV=\int_{\mathscr{H}^n}\bigg(-\frac{1}{6}R_{\bar{\beta}\ \alpha\mu}^{\ \alpha}(q)z^{\bar{\beta}}z^{\mu}
  -\frac{1}{6}R_{\beta\ \bar{\alpha}\bar{\mu}}^{\ \bar{\alpha}}(q)z^{\beta}z^{\bar{\mu}}\bigg)|w+i|^{-2n-2}dV\\
  \notag &=\frac{1}{6}(4\pi)^n\mathfrak{Q}\int_0^{\infty}\int_{-\infty}^{\infty}|t+i(1+r^2)|^{-2n-2}r^{2n+1}drdt\\
  \notag &=\frac{1}{6}(4\pi)^n\mathfrak{Q}N_1(2n+2,2n+1,0)=\frac{\pi^{n+1}}{12n}\mathfrak{Q}.
  \end{align}
  We finish the proof of Lemma \ref{lem5.5}.
  \end{proof}
  To calculate $b_2(n)$, we need the following results.
  \begin{lemma}\label{lem5.6}
  \begin{equation}\label{eq5.18}
  \begin{aligned}
  \int_{\mathscr{H}^n}v_2^{ab}Z_a{\Phi}Z_b{\Phi}dV=\frac{n^4+2n^3+2n^2}{6(n-1)(n+1)}(4\pi)^nN_1(2n+2,2n+1,0)\mathfrak{Q},\\
  \int_{\mathscr{H}^n}\bigg(v_2^{a0}Z_a{\Phi}Z_0{\Phi}+v_2^{0a}Z_0{\Phi}Z_a{\Phi}\bigg)dV=\frac{-5n^2}{6(n-1)(n+1)}(4\pi)^nN_1(2n+2,2n+1,0)\mathfrak{Q},\\
  \int_{\mathscr{H}^n}v_2^{00}Z_0{\Phi}Z_0{\Phi}dV=\frac{2n^2}{3(n-1)(n+1)}(4\pi)^nN_1(2n+2,2n+1,0)\mathfrak{Q}.
  \end{aligned}
  \end{equation}
  \end{lemma}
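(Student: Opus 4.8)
The plan is to evaluate each of the three integrals term by term after unwinding the definition (5.10a) of $v_2^{jk}$. Since $m=2$, the sum runs over $m_0+m_1+m_2=2$, but most of the index patterns drop out: by Proposition \ref{prop2.2} we have $s_{b(1)}^a=0$ and $s_{b(0)}^0=s_{b(1)}^0=0$, while $v_1=0$ by Corollary \ref{cor5.1}. A short degree count then shows that $v_2^{ab}$ retains only the patterns $(m_0,m_1,m_2)=(2,0,0),(0,2,0),(0,0,2)$; that $v_2^{a0}$ retains only $(0,0,2)$ and $v_2^{0a}$ only $(0,2,0)$, each forcing the cubic coefficient $s_{(3)}^0$; and that $v_2^{00}$ retains only $(0,1,1)$, i.e. the single product $s_{\beta(2)}^0 s_{\bar{\beta}(2)}^0$. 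Into the surviving terms I would substitute the explicit coefficients from Proposition \ref{prop2.2}, Corollary \ref{cor4.1} and Corollary \ref{cor5.1}.

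Next I would reduce each product $Z_j\Phi\,Z_k\Phi$ to a radial function times a monomial in $z$ using \eqref{eqB.17}. The crucial simplification is $Z_{\alpha}\Phi\,Z_{\bar{\beta}}\Phi=n^2 z^{\bar{\alpha}}z^{\beta}|w+i|^{-2n-2}$, which holds because $t^2+(|z|^2+1)^2=|w+i|^2$, whereas the products $Z_{\alpha}\Phi\,Z_{\beta}\Phi$, $Z_{\alpha}\Phi\,Z_0\Phi$ and $Z_0\Phi\,Z_0\Phi$ each carry a factor of the form $t^2+(\text{odd in }t)$; after dropping the $t$-odd part these contribute a genuine $|t|^2$ weight. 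This is precisely why both $N_1(2n+2,2n+1,0)$ and $N_1(2n+4,2n+3,2)$ appear in the radial integrals, together with the degree-raised weight $N_1(2n+2,2n+3,0)$ coming from the $v_2$ factor in the $(2,0,0)$ term. Performing the angular integration via Corollary \ref{corB.1} collapses every curvature and Tanno contraction to a multiple of $\mathfrak{Q}$, using the normalizations $R_{\alpha\ \gamma\bar{\beta}}^{\ \gamma}(q)=0$, $R_{\bar{\beta}\ \alpha\beta}^{\ \alpha}(q)=-\tfrac14\mathfrak{Q}$, $R_{\alpha\ \beta\bar{\beta}}^{\ \alpha}(q)=\tfrac14\mathfrak{Q}$ of Proposition \ref{prop5.1}; the radial integration is then Lemma \ref{lem5.4}. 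Finally I would invoke the four ratio identities \eqref{N1} to rewrite $N_1(2n+2,2n+3,0)$ and $N_1(2n+4,2n+3,2)$ as rational multiples of $N_1(2n+2,2n+1,0)$ and collect the three resulting rational functions of $n$. As a consistency check, the $v_2^{00}$ integral already yields $\tfrac{4n^2}{3(n+1)}(4\pi)^n\mathfrak{Q}\,N_1(2n+4,2n+3,2)$, which under \eqref{N1} becomes exactly $\tfrac{2n^2}{3(n-1)(n+1)}(4\pi)^n N_1(2n+2,2n+1,0)\mathfrak{Q}$.

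The main obstacle is the bookkeeping in the first integral $\int v_2^{ab}Z_a\Phi\,Z_b\Phi\,dV$. Its $(0,0,2)$ and $(0,2,0)$ patterns feed the curvature factors $s_{\bar{\beta}(2)}^b=-\tfrac16 R_{d\ c\bar{\beta}}^{\ b}(q)z^cz^d$ and $s_{\beta(2)}^a=-\tfrac16 R_{d\ c\beta}^{\ a}(q)z^cz^d$ into products $Z_a\Phi\,Z_b\Phi$ where $b$ ranges over both holomorphic and anti-holomorphic indices, producing degree-four contractions such as $R_{\rho\ \gamma\bar{\lambda}}^{\ \alpha}(q)z^{\rho}z^{\bar{\alpha}}z^{\gamma}z^{\bar{\lambda}}$ that must be matched against the precise identities of Corollary \ref{corB.1}; the $(2,0,0)$ pattern additionally pulls in the full $v_2$ of Corollary \ref{cor5.1}, so three different $N_1$-weights must be tracked at once. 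Keeping the $t$-parity, the $|w+i|$-powers, and the normalization constraints mutually consistent across all these terms — rather than any single estimate — is where the care is needed; the remaining two integrals follow the same template with fewer terms. These routine but lengthy computations are the content of Appendix B.
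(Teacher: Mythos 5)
Your plan reproduces the paper's own computation in Appendix B essentially verbatim: the same elimination of terms in $v_2^{jk}$ using $s_{b(1)}^a=s_{b(0)}^0=s_{b(1)}^0=0$ and $v_1=0$ (leaving exactly the patterns you list), the same reduction of $Z_j\Phi\,Z_k\Phi$ via \eqref{eqB.17}, the same angular integration through Corollary \ref{corB.1} with the normalizations of Proposition \ref{prop5.1}, and the same conversion of the weights $N_1(2n+2,2n+3,0)$ and $N_1(2n+4,2n+3,2)$ via \eqref{N1}; your check of the $v_2^{00}$ integral agrees with the paper exactly. The only loose point is that $Z_{\alpha}\Phi\,Z_{\beta}\Phi$ actually carries $t^2+(\text{odd in }t)-(|z|^2+1)^2$ rather than just $t^2+(\text{odd in }t)$, but the surviving $-(|z|^2+1)^2$ cancels against the $+(|z|^2+1)^2$ coming from the mixed term $v_2^{\alpha\bar{\gamma}}Z_{\alpha}\Phi\,Z_{\bar{\gamma}}\Phi$ in the final summation, just as in the paper's Appendix B.2, so the stated totals are unaffected.
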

  This lemma will be proved in Appendix B, from which we get
  \begin{align}\label{eq5.21}
  b_2(n)=2\int_{\mathscr{H}^n}v_2^{jk}Z_j\Phi{Z_k\Phi}dV=\frac{n^2(n+1)}{3(n-1)}(4\pi)^nN_1(2n+2,2n+1,0)\mathfrak{Q}
  =\frac{n(n+1)\pi^{n+1}}{6(n-1)}\mathfrak{Q}.
  \end{align}
  We also have
  \begin{lemma}\label{lem5.7}
  \begin{equation}
  c_2(n)=-\frac{2n\pi^{n+1}}{n-1}\mathfrak{Q}.
  \end{equation}
  \end{lemma}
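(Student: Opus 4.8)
The plan is to reduce the evaluation of $c_2(n)=\int_{\mathscr{H}^n}R(q)|\Phi|^2dV$ (see \eqref{eq5.10}) to the product of the scalar curvature $R(q)$ at the origin, which is a constant and factors out of the integral, and the scalar integral $\int_{\mathscr{H}^n}|\Phi|^2dV$. The first and decisive task is to express $R(q)$ through the normalized curvature components provided by Proposition \ref{prop5.1}.

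First I would compute the scalar curvature with respect to the special frame. By the definition $R=\mathrm{tr}(Ric)$ with $Ric_{jl}=R_{j\ kl}^{\ k}$, we have $R=h^{jl}R_{j\ kl}^{\ k}$. Since $\nabla T=0$ forces $R(\cdot,\cdot)T=0$, the vertical contribution $R_{0\ k0}^{\ k}$ vanishes; likewise $R_{a\ kl}^{\ 0}=0$ because the TWT connection preserves the horizontal bundle, so the inner summation index $k$ runs only over $\gamma,\bar\gamma$. Using $h_{\alpha\bar\beta}=\delta_{\alpha\bar\beta}$ and $h_{\alpha\beta}=0$, this leaves
\begin{equation}
\notag R(q)=\delta^{\alpha\bar\beta}\big(R_{\alpha\ \gamma\bar\beta}^{\ \gamma}(q)+R_{\alpha\ \bar\gamma\bar\beta}^{\ \bar\gamma}(q)\big)+\text{c.c.},
\end{equation}
where ``c.c.'' denotes the complex conjugate term arising from the $h^{\bar\alpha\beta}$ part, which is the conjugate by Corollary \ref{prop2.0f} since $R$ is real.

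Next, the normalization $R_{\alpha\ \gamma\bar\beta}^{\ \gamma}(q)=0$ from Proposition \ref{prop5.1} kills the first contraction, leaving $R(q)=R_{\alpha\ \bar\gamma\bar\alpha}^{\ \bar\gamma}(q)+\text{c.c.}$ Taking the complex conjugate via Corollary \ref{prop2.0f} and relabelling indices identifies $\overline{R_{\alpha\ \bar\gamma\bar\alpha}^{\ \bar\gamma}(q)}=R_{\bar\alpha\ \gamma\alpha}^{\ \gamma}(q)=R_{\bar\beta\ \alpha\beta}^{\ \alpha}(q)=-\tfrac14\mathfrak{Q}$, again by Proposition \ref{prop5.1}. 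Since this value is real, both summands equal $-\tfrac14\mathfrak{Q}$, and hence $R(q)=-\tfrac12\mathfrak{Q}$.

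Finally I would evaluate the scalar integral. With $|\Phi|^2=|w+i|^{-2n}$, formula \eqref{eq5.11} for $dV$, the $m=0$ case of Lemma \ref{lem5.3} for the spherical average, and Lemma \ref{lem5.4} for the radial integral $N_1(2n,2n-1,0)$, together with the ratios \eqref{N1} and the value \eqref{eq5.12a}, one obtains $\int_{\mathscr{H}^n}|\Phi|^2dV=\tfrac{4n\pi^{n+1}}{n-1}$. Multiplying gives $c_2(n)=R(q)\int_{\mathscr{H}^n}|\Phi|^2dV=-\tfrac12\mathfrak{Q}\cdot\tfrac{4n\pi^{n+1}}{n-1}=-\tfrac{2n\pi^{n+1}}{n-1}\mathfrak{Q}$, as claimed. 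The main obstacle is the curvature bookkeeping in the second and third paragraphs: one must carefully determine which horizontal contractions $R_{\alpha\ k\bar\beta}^{\ k}$ survive after imposing the normalization, and then correctly match the surviving $\bar\gamma$-contraction — using the antisymmetry in the last two indices and the conjugation rule of Corollary \ref{prop2.0f} — to the already-computed quantity $R_{\bar\beta\ \alpha\beta}^{\ \alpha}(q)=-\tfrac14\mathfrak{Q}$; the radial integral is then routine given Lemma \ref{lem5.4}.
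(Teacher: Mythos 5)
Your proposal is correct and follows essentially the same route as the paper: compute $R(q)=-\tfrac12\mathfrak{Q}$ from the normalized components $R_{\alpha\ \gamma\bar\beta}^{\ \gamma}(q)=0$ and $R_{\bar\beta\ \alpha\beta}^{\ \alpha}(q)=-\tfrac14\mathfrak{Q}$ of Proposition \ref{prop5.1}, then evaluate $\int_{\mathscr{H}^n}|\Phi|^2dV=\tfrac{4n\pi^{n+1}}{n-1}$ via Lemmas \ref{lem5.3} and \ref{lem5.4} and the ratios \eqref{N1}. The only cosmetic difference is that you justify the vanishing of the vertical contributions to the Ricci contraction via $\nabla T=0$ and horizontality of the connection, whereas the paper reads $R_{\beta\ 0\bar\mu}^{\ 0}=\theta(R(W_0,W_{\bar\mu})W_\beta)=0$ off the structure equations \eqref{SE}; both are valid.
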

  \begin{proof}
  Applying $X=W_0$, $Y=W_{\bar{\mu}}$ and taking index $a=\beta$ in the last identity of \eqref{SE}, we get
  $R_{\beta\ 0\bar{\mu}}^{\ 0}=\theta(R(W_0,W_{\bar{\mu}})W_{\beta})=0$. Hence by definition we have
  $R_{\beta\bar{\mu}}=R_{\beta\ \alpha\bar{\mu}}^{\ \alpha}+R_{\beta\ \bar{\alpha}\bar{\mu}}^{\ \bar{\alpha}}$, and so we get
  \begin{align}
  \notag R(q)&=h^{jk}R_{jk}(q)=h^{\beta\bar{\mu}}R_{\beta\bar{\mu}}(q)+h^{\bar{\beta}\mu}R_{\bar{\beta}\mu}(q)\\
  \notag &=\delta^{\beta\bar{\mu}}\big(R_{\beta\ \alpha\bar{\mu}}^{\ \alpha}(q)
  +R_{\beta\ \bar{\alpha}\bar{\mu}}^{\ \bar{\alpha}}(q)\big)
  +\delta^{\bar{\beta}\mu}\big(R_{\bar{\beta}\ \alpha\mu}^{\ \alpha}(q)
  +R_{\bar{\beta}\ \bar{\alpha}\mu}^{\ \bar{\alpha}}(q)\big)\\
  \notag &=R_{\beta\ \bar{\alpha}\bar{\beta}}^{\ \bar{\alpha}}(q)+R_{\bar{\beta}\ \alpha\beta}^{\ \alpha}(q)=-\frac{1}{2}\mathfrak{Q},
  \end{align}
 by Proposition \ref{prop5.1}. Noting that
 \begin{align}
\notag \int_{\mathscr{H}^n}|\Phi|^{2}dV&=\int_{\mathscr{H}^n}|w+i|^{-2n}dV
=\int_{S^{2n-1}}d\nu\int_{0}^{\infty}\int_{-\infty}^{\infty}\frac{4^nn!r^{2n-1}}{|t+i(1+r^2)|^{2n}}drdt\\
\notag &=2n(4\pi)^nN_1(2n,2n-1,0)=\frac{8n^2}{n-1}(4\pi)^nN_1(2n+2,2n+1,0),
\end{align}
  then by \eqref{eq5.10}, we get
  \begin{equation}
  \notag c_2(n)=-\frac{\mathfrak{Q}}{2}\int_{\mathscr{H}^n}|\Phi|^2dV
  =-\frac{4n^2}{n-1}(4\pi)^nN_1(2n+2,2n+1,0)\mathfrak{Q}=-\frac{2n\pi^{n+1}}{n-1}\mathfrak{Q}.
  \end{equation}
  \end{proof}
\begin{proposition}\label{YFH}
  The extremal of the Yamabe functional on the Heisenberg group is
  $$\lambda(\mathscr{H}^n)=2pn^2\pi.$$
  \end{proposition}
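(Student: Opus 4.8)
The plan is to exploit the fact, already recorded after \eqref{eq5.2}, that $\Phi$ is an extremal of the Yamabe functional on $\mathscr{H}^n$, so that $\lambda(\mathscr{H}^n)=\mathscr{Y}_{\theta,h}(\Phi)$, and then to evaluate this functional directly using the leading-order integrals computed earlier in this section. The essential simplification is that the Heisenberg group is flat, i.e. its scalar curvature $R$ vanishes identically. Consequently the curvature term in the numerator of \eqref{eq1.2} drops out, and we are left with
$$\lambda(\mathscr{H}^n)=\frac{p\int_{\mathscr{H}^n}|d\Phi|_{H}^2\,dV}{\big(\int_{\mathscr{H}^n}\Phi^p\,dV\big)^{2/p}}.$$

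First I would identify these two integrals with the constants $a_0(n)$ and $b_0(n)$ from Proposition \ref{prop5.2}. Since $v_0=1$, the denominator base is exactly $a_0(n)=\int_{\mathscr{H}^n}|\Phi|^p\,dV$, already evaluated in \eqref{eq5.13a} to be $\pi^{n+1}$. For the numerator, using $|d\Phi|_H^2=2Z_\beta\Phi\,Z_{\bar\beta}\Phi$ together with the identity $v_0^{jk}Z_j\Phi\,Z_k\Phi=Z_\beta\Phi\,Z_{\bar\beta}\Phi$ established just before \eqref{eq5.16}, the numerator integral equals $b_0(n)=2\int_{\mathscr{H}^n}Z_\beta\Phi\,Z_{\bar\beta}\Phi\,dV$, which was computed in \eqref{eq5.16} to be $2n^2\pi^{n+1}$.

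It then remains only to assemble these pieces and simplify the exponent. With $p=2+\tfrac{2}{n}=\tfrac{2(n+1)}{n}$ one has $2/p=\tfrac{n}{n+1}$, so that $(a_0(n))^{2/p}=(\pi^{n+1})^{n/(n+1)}=\pi^n$. Substituting,
$$\lambda(\mathscr{H}^n)=\frac{p\,b_0(n)}{(a_0(n))^{2/p}}=\frac{p\cdot 2n^2\pi^{n+1}}{\pi^n}=2pn^2\pi,$$
which is the claimed value. I do not expect a genuine obstacle here: the whole argument rests on the flatness of $\mathscr{H}^n$, on $\Phi$ being a known extremal, and on the two leading-order integrals $a_0(n)$ and $b_0(n)$ already evaluated in this section. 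The only point requiring care is bookkeeping of the normalization constants—those in the volume form $dV=(-1)^n\Theta\wedge d\Theta^n$ computed in \eqref{eq5.11}, and the factor $2$ in $|d\Phi|_H^2=2Z_\beta\Phi\,Z_{\bar\beta}\Phi$—since an inconsistency there would propagate through the arithmetic and alter the final constant.
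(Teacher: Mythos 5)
Your proposal is correct and follows essentially the same route as the paper: both use that $R\equiv 0$ on $\mathscr{H}^n$, that $\Phi$ is an extremal, that $|d\Phi|_H^2=2Z_\beta\Phi\,Z_{\bar\beta}\Phi$, and then substitute the already-computed values $a_0(n)=\pi^{n+1}$ and $b_0(n)=2n^2\pi^{n+1}$. The only detail the paper makes explicit that you flag but do not carry out is the verification from $d\Theta=-2i\,dz^{\alpha}\wedge dz^{\bar{\alpha}}$ that $h(Z_\alpha,Z_{\bar\beta})=\delta_{\alpha\bar\beta}$ on $\mathscr{H}^n$, which justifies the factor $2$ in $|d\Phi|_H^2$.
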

  \begin{proof}
  Recall that on $\mathscr{H}^n$, $\Theta=dt-iz^{\alpha}dz^{\bar{\alpha}}+iz^{\bar{\alpha}}dz^{\alpha}$. Then $\Theta=-2idz^{\alpha}{\wedge}dz^{\bar{\alpha}}$. So
  $-i\delta_{\alpha\bar{\beta}}=d\Theta(Z_{\alpha},Z_{\bar{\beta}})=h(Z_{\alpha},JZ_{\bar{\beta}})
  =-ih(Z_{\alpha},Z_{\bar{\beta}})$, namely $h(Z_{\alpha},Z_{\bar{\beta}})=\delta_{\alpha\bar{\beta}}$.
  Hence it induces the dual norm $|\cdot,\cdot|_{H}$ by ${\langle}\Theta^{\alpha},\Theta^{\bar{\beta}}{\rangle}_{H}=\delta^{\alpha\bar{\beta}}$.
  Then $|d\Phi|_{H}^2={\langle}Z_{\alpha}\Phi\Theta^{\alpha}+Z_{\bar{\alpha}}\Phi\Theta^{\bar{\alpha}},
  Z_{\beta}\Phi\Theta^{\beta}+Z_{\bar{\beta}}\Phi\Theta^{\bar{\beta}}{\rangle}_{H}=2Z_{\beta}{\Phi}Z_{\bar{\beta}}{\Phi}$.
  Since the curvature tensor $R\equiv0$ on the Heisenberg group, by definition we have
  \begin{align}
  \lambda(\mathscr{H}^n)=\frac{\int_{\mathscr{H}^n}p|d\Phi|_{H}^2dV}{(\int_{\mathscr{H}^n}{|\Phi|}^pdV)^{\frac{2}{p}}}
  =\frac{2p\int_{\mathscr{H}^n}Z_{\beta}{\Phi}Z_{\bar{\beta}}{\Phi}dV}{(\int_{\mathscr{H}^n}{|\Phi|}^pdV)^{\frac{2}{p}}}
  =2pn^2\pi,
  \end{align}
  by \eqref{eq5.13a} and \eqref{eq5.16}.
  \end{proof}
  \begin{remark}
  Recall that in \cite{JL1}, Jerison and Lee used the structure equation $d\theta=ih_{\alpha\bar{\beta}}\theta^{\alpha}\wedge\theta^{\bar{\beta}}$ (cf. p. 307 in \cite{JL1}). So in Heisenberg case in \cite{JL1}, $d\Theta=ih_{\alpha\bar{\beta}}\Theta^{\alpha}\wedge\Theta^{\bar{\beta}}=ih_{\alpha\bar{\beta}}dz^{\alpha}{\wedge}dz^{\bar{\beta}}$. On the other hand, $\Theta=dt+iz^{\alpha}dz^{\bar{\alpha}}-iz^{\bar{\alpha}}dz^{\alpha}$ leads to $d\Theta=2idz^{\alpha}{\wedge}dz^{\bar{\alpha}}$, which shows $h_{\alpha\bar{\beta}}=2\delta_{\alpha\bar{\beta}}$ on the Heisenberg group in \cite{JL1}. So   the Yamabe functional here in Proposition \ref{YFH} differs from \cite{JL1} by a factor $2$.
  \end{remark}
{\it Proof of Theorem \ref{thm5.1}.}
  Substituting \eqref{eq5.13a}, \eqref{eq5.14a} and \eqref{eq5.15} to the first identity in \eqref{eq5.7}, we get
  \begin{equation}
  \notag \int_M|f^{\varepsilon}|^pdV_{\theta}=\pi^{n+1}\bigg(1+\frac{1}{12n}\mathfrak{Q}\varepsilon^2\bigg)+O(\varepsilon^3),
  \end{equation}
  and so
  \begin{equation}\label{eq5.22}
  \bigg(\int_M|f^{\varepsilon}|^pdV_{\theta}\bigg)^{-\frac{2}{p}}
  =\pi^{-n}\bigg(1-\frac{1}{12(n+1)}\mathfrak{Q}\varepsilon^2\bigg)+O(\varepsilon^3).
  \end{equation}
Substituting \eqref{eq5.16}, \eqref{eq5.17} and \eqref{eq5.21} to the second identity in \eqref{eq5.7} leads to
\begin{align}\label{eq5.23} \int_Mp{|df^{\varepsilon}|}_{H}^2dV_{\theta}
=2pn^2\pi^{n+1}\bigg(1+\frac{n+1}{12(n-1)n}\mathfrak{Q}\varepsilon^2\bigg)+O(\varepsilon^3).
\end{align}
By the third identity in \eqref{eq5.7} and Lemma \ref{lem5.7}, we get
  \begin{align}\label{eq5.24}
  \int_MR|f^{\varepsilon}|^2dV_{\theta}&=2pn^2\pi^{n+1}\frac{-1}{2(n-1)(n+1)}\mathfrak{Q}\varepsilon^2+O(\varepsilon^3).
  \end{align}
  Finally, substituting \eqref{eq5.22}, \eqref{eq5.23} and \eqref{eq5.24} to the definition of $\mathscr{Y}_{\theta}(f^{\varepsilon})$ in \eqref{eq1.2}, we get
  \begin{align}
  \notag \mathscr{Y}_{\theta}(f^{\varepsilon})&=\bigg(\int_M|f^{\varepsilon}|^pdV_{\theta}\bigg)^{-\frac{2}{p}}
  \bigg(\int_Mp{|df^{\varepsilon}|}_{H}^2dV_{\theta}+\int_MR|f^{\varepsilon}|^2dV_{\theta}\bigg)\\
  \notag &=2pn^2\pi\bigg(1+\frac{-n(n-1)+(n+1)^2-6n}{12(n-1)n(n+1)}\mathfrak{Q}\varepsilon^2\bigg)+O(\varepsilon^3)\\
  \notag &=2pn^2\pi\bigg(1-\frac{3n-1}{12(n-1)n(n+1)}\mathfrak{Q}\varepsilon^2\bigg)+O(\varepsilon^3).
  \end{align}
  As $\mathfrak{Q}>0$ and $n>2$, Theorem 5.1 is proved.

  It remains to show the existence of the extremal. Note that on a contact Riemannian manifold, the Folland-Stein normal coordinates in \cite{FS1} also exist by the same argument (the integrability of $J$ is not used here), we can get the same estimates in Theorem 4.3 in \cite{JL2}. Then the existence of the extremal can be proved in same way as Section 5 and Section 6 in \cite{JL2} by using the Folland-Stein normal coordinates.
  \begin{appendix}
  \section{The transformation formulae}
  In this appendix, we will discuss the conformal transformations and prove Lemma \ref{lem3.3}. Recall that here we don't change $\{W_a\}\in{HM}$. Our tensors after conformal transformation is with respect to $\{W_a,\widehat{T}\}$ and $\{\widehat{\theta}^a,\widehat\theta\}$ satisfying \eqref{eq3.1}-\eqref{eqA.5}, e.g.
  $$\tau(\widehat{T},W_a)=\widehat{A}_a^bW_b,{\quad}\Gamma_{\widehat{0}b}^c=\omega_b^c(\widehat{T}).$$
  First we will discuss how the connection coefficients, the curvature tensor and the Webster torsion tensor change under a conformal transformation. The idea in this appendix follows from the proof of Lemma 10 in \cite{BD1} with a local $T^{(1,0)}M$-frame.
  \subsection{The transformation formulae of the connection coefficients under the conformal transformations}
  \begin{lemma}\label{lemA.0}
  We have
  \begin{align}\label{eqA.8}
  \notag 2h(\nabla_XY,Z)=&X(h(Y,Z))+Y(h(X,Z))-Z(h(X,Y))+\\
  \notag &-2h(X,JZ)\theta(Y)-2h(Y,JZ)\theta(X)+2h(X,JY)\theta(Z)+\\
  &-h([X,Z],Y)-h([Y,Z],X)+h([X,Y],Z),
  \end{align}
  for any $X,Y,Z\in{TM}$. And also we have
  \begin{equation}\label{eqA.8a}
  2h(\nabla_TY,Z)=T(h(Y,Z))-h([T,Z],Y)+h([T,Y],Z).
  \end{equation}
  for any $Y,Z{\in}HM$.
  \end{lemma}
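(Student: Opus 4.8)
The plan is to derive both identities from a single Koszul-type computation adapted to the TWT connection, using only $\nabla h=0$ together with the explicit torsion in \eqref{TWT}; this parallels the proof of Lemma~10 in \cite{BD1}. First I would write metric compatibility three times, for the triples $(X;Y,Z)$, $(Y;Z,X)$ and $(Z;X,Y)$, and form the cyclic combination
\[
X(h(Y,Z))+Y(h(Z,X))-Z(h(X,Y)),
\]
expanding each term by $\nabla h=0$. Since $h$ is symmetric, this already reproduces the metric terms on the right of \eqref{eqA.8}.

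Next I would eliminate the symmetric part of $\nabla$ via the torsion relation $\nabla_AB-\nabla_BA=[A,B]+\tau(A,B)$. Grouping $h(\nabla_XY,Z)+h(\nabla_YX,Z)$ produces $2h(\nabla_XY,Z)$, the leftover Lie-bracket terms collapse to $h([X,Y],Z)-h([X,Z],Y)-h([Y,Z],X)$ (the last line of \eqref{eqA.8}), and one is left with the torsion combination
\[
h(\tau(X,Y),Z)-h(\tau(X,Z),Y)-h(\tau(Y,Z),X).
\]
The whole content of the lemma is to identify this with the second line of \eqref{eqA.8}. I would decompose each vector as $V=V_H+\theta(V)T$ and insert the TWT torsion: on horizontal arguments $\tau(V_H,W_H)=2\,d\theta(V_H,W_H)T$ with $d\theta(\cdot,\cdot)=h(\cdot,J\cdot)$ from \eqref{eq2.1}, while the mixed Reeb--horizontal pieces are carried by the Webster torsion $\tau_\ast=\tau(T,\cdot)$. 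Pairing $h(T,\cdot)=\theta(\cdot)$ converts the horizontal--horizontal torsion into the three $h(\cdot,J\cdot)\theta(\cdot)$ terms, and the contributions involving $\tau_\ast$ are organized using its self-adjointness (Lemma \ref{lem2.1}(2)).

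For the second identity \eqref{eqA.8a} I would specialize to $X=T$ and $Y,Z\in HM$, where everything trivializes: the metric terms $Y(h(T,Z))$ and $Z(h(T,Y))$ vanish because $h(T,\cdot)=\theta(\cdot)$ annihilates $HM$, leaving $T(h(Y,Z))$. In the torsion combination the two Webster pieces $h(\tau_\ast Y,Z)-h(\tau_\ast Z,Y)$ cancel exactly by self-adjointness, and the surviving horizontal--horizontal torsion $-2\,d\theta(Y,Z)$ cancels the term $2\,d\theta(Y,Z)=-\theta([Y,Z])$ coming from the bracket $-h([Y,Z],T)$, using $2\,d\theta(Y,Z)=-\theta([Y,Z])$ for horizontal $Y,Z$ in the convention \eqref{eq2.15a}. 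The main obstacle is precisely this torsion step: keeping the three torsion terms, their horizontal/Reeb splittings, and the factor-$\tfrac12$ conventions of \eqref{eq2.15a} consistent. It is the anti-symmetry of $\tau$ together with the self-adjointness and $J$-anticommutation of $\tau_\ast$ (Lemma \ref{lem2.1}) that forces the relevant cancellations.
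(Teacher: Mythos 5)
Your route differs from the paper's: the paper simply cites p.~334 of \cite{BD1} for \eqref{eqA.8} and proves \eqref{eqA.8a} by a direct two-line computation (expanding $T(h(Y,Z))$ via $\nabla_TY=[T,Y]+\tau_{\ast}Y$ and invoking self-adjointness of $\tau_{\ast}$), whereas you reconstruct \eqref{eqA.8} by a Koszul-type argument and then obtain \eqref{eqA.8a} by specializing $X=T$. The specialization step is correct: with $X=T$ and $Y,Z\in HM$ the metric terms $Y(h(T,Z))$, $Z(h(T,Y))$ vanish, the Webster pieces $h(\tau_{\ast}Y,Z)-h(\tau_{\ast}Z,Y)$ cancel by self-adjointness, and $-2h(Y,JZ)\theta(T)$ cancels against $-h([Y,Z],T)=2d\theta(Y,Z)$, exactly as you describe.

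The gap is in the torsion step of the Koszul argument, which you correctly single out as the crux but do not carry out. Writing $\tau(V,W)=2h(V,JW)T+\theta(V)\tau_{\ast}W-\theta(W)\tau_{\ast}V$ and substituting into $h(\tau(X,Y),Z)-h(\tau(X,Z),Y)-h(\tau(Y,Z),X)$, one obtains the second line of \eqref{eqA.8} plus
\begin{equation*}
\theta(X)\big(h(\tau_{\ast}Y,Z)-h(\tau_{\ast}Z,Y)\big)-\theta(Y)\big(h(\tau_{\ast}X,Z)+h(\tau_{\ast}Z,X)\big)+\theta(Z)\big(h(\tau_{\ast}X,Y)+h(\tau_{\ast}Y,X)\big).
\end{equation*}
Self-adjointness (Lemma \ref{lem2.1}) annihilates only the $\theta(X)$ group; the other two groups \emph{double} rather than cancel, leaving $-2\theta(Y)h(\tau_{\ast}X,Z)+2\theta(Z)h(\tau_{\ast}X,Y)$. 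So the assertion that self-adjointness ``forces the relevant cancellations'' fails, and indeed \eqref{eqA.8} cannot hold verbatim for all $X,Y,Z\in TM$ when $\tau_{\ast}\neq0$: taking $Y=T$ and $X,Z\in HM$, the left side is $2h(\nabla_XT,Z)=0$ while the stated right side evaluates to $2h(\tau_{\ast}X,Z)$. The surviving terms vanish precisely when $\theta(Y)=\theta(Z)=0$ or when $X=T$ (since $\tau_{\ast}T=0$), which covers every application in the paper (Corollary \ref{lemA.1}) and your derivation of \eqref{eqA.8a}; but a complete proof along your lines must either carry the extra Webster-torsion terms in \eqref{eqA.8} or restrict the range of $X,Y,Z$. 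As written, your argument would silently discard a nonzero contribution at exactly the step you flag as the main obstacle.
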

  \begin{proof}
  We refer to p. 334 in \cite{BD1} for \eqref{eqA.8}.
  For \eqref{eqA.8a}, we have
  \begin{align}
  \notag T(h(Y,Z))&=h(\nabla_TY,Z)+h(Y,\nabla_TZ)=h(\nabla_TY,Z)+h(Y,[T,Z])+h(Y,\tau_{\ast}Z)\\
  \notag &=h(\nabla_TY,Z)+h(Y,[T,Z])+h(\tau_{\ast}Y,Z)=2h(\nabla_TY,Z)+h([T,Z],Y)-h([T,Y],Z),
  \end{align}
  by $\nabla{T}=0$, the definition of the Webster torsion $\tau_{\ast}$ and its self-adjointness (see Lemma \ref{lem2.1}).
  \end{proof}
\begin{corollary}\label{lemA.1}
  With respect to any frame $\{W_a,T\}$ with $\{W_a\}$ horizontal, we have
  \begin{align}\label{eqA.6}
  \notag \Gamma_{ab}^c=&\frac{1}{2}h^{cd}\Big(W_a(h_{bd})+W_b(h_{ad})-W_d(h_{ab})\\
  &\ \ \ \ \ \ -h([W_a,W_d],W_b)-h([W_b,W_d],W_a)+h([W_a,W_b],W_d)\Big),
  \end{align}
  and
  \begin{align}\label{eqA.7}
  \Gamma_{0b}^c=\frac{1}{2}h^{cd}\Big(T(h_{bd})-h([T,W_d],W_b)+h([T,W_b],W_d)\Big).
  \end{align}
  \end{corollary}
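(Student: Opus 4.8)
The plan is to read off both identities directly from Lemma \ref{lemA.0} by evaluating its Koszul-type formulae on the horizontal frame $\{W_a\}$ and then inverting the resulting linear relations with the metric $h_{ab}$ to isolate the Christoffel symbols. Recall that, by the convention $\nabla{W_j}=\omega_j^k\otimes W_k$ with $\omega_j^k=\Gamma_{ij}^k\theta^i$, we have $\nabla_{W_a}W_b=\Gamma_{ab}^eW_e$ and, since $W_0=T$, $\nabla_TW_b=\Gamma_{0b}^eW_e$. Thus the task reduces to computing $h(\nabla_{W_a}W_b,W_d)$ and $h(\nabla_TW_b,W_d)$ and contracting.

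First I would prove \eqref{eqA.6}. Substitute $X=W_a$, $Y=W_b$, $Z=W_d$ into \eqref{eqA.8}. Since $\{W_a\}$ is horizontal, we have $\theta(W_a)=\theta(W_b)=\theta(W_d)=0$, so the three terms carrying $\theta$, namely $-2h(X,JZ)\theta(Y)$, $-2h(Y,JZ)\theta(X)$ and $2h(X,JY)\theta(Z)$, all vanish. What remains is exactly the symmetrized combination of directional derivatives of $h_{ab}$ and the three bracket terms. On the left side, $2h(\nabla_{W_a}W_b,W_d)=2\Gamma_{ab}^e h(W_e,W_d)=2\Gamma_{ab}^e h_{ed}$. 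Contracting both sides with $\tfrac12 h^{cd}$ and using $h^{cd}h_{ed}=\delta_e^c$ then yields \eqref{eqA.6}.

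The identity \eqref{eqA.7} is obtained the same way, but starting from \eqref{eqA.8a} with $Y=W_b$, $Z=W_d\in HM$; here no $\theta$-terms occur at all, so nothing needs to be discarded. Writing the left side as $2h(\nabla_TW_b,W_d)=2\Gamma_{0b}^e h_{ed}$ and contracting with $\tfrac12 h^{cd}$ gives \eqref{eqA.7} directly.

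I do not expect a genuine obstacle: the content is entirely in Lemma \ref{lemA.0}, which already encodes the defining properties of the TWT connection (metric compatibility $\nabla h=0$, the prescribed torsion, and, for \eqref{eqA.8a}, the self-adjointness of the Webster torsion from Lemma \ref{lem2.1}). The only points requiring care are the clean vanishing of the $\theta$-terms, which is purely the horizontality statement $\theta|_{HM}=0$, and keeping the index placement consistent when raising with $h^{cd}$; both are routine bookkeeping rather than a substantive difficulty.
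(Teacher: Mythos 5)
Your proposal is correct and follows the paper's own proof exactly: the paper likewise obtains \eqref{eqA.6} by substituting $X=W_a$, $Y=W_b$, $Z=W_d$ into \eqref{eqA.8} and \eqref{eqA.7} by substituting $Y=W_b$, $Z=W_d$ into \eqref{eqA.8a}, then contracting with $h^{cd}$. The additional details you supply (vanishing of the $\theta$-terms by horizontality, the identification $2h(\nabla_{W_a}W_b,W_d)=2\Gamma_{ab}^e h_{ed}$) are accurate and merely make explicit what the paper leaves implicit.
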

  \begin{proof}
  \eqref{eqA.6} follows by substituting $X=W_a$, $Y=W_b$, $Z=W_d$ into \eqref{eqA.8}. \eqref{eqA.7} follows by substituting $Y=W_b$, $Z=W_d$ into \eqref{eqA.8a}.
\end{proof}
\begin{lemma}\label{lemA.2}
Under the conformal transformation \eqref{eq3.1}, if $u\in{\mathscr{O}_m}$, we have
\begin{equation}\label{eqA.9}
[\widehat{T},W_{\beta}]=[T,W_{\beta}]-iZ_{\beta}Z_{\bar{\alpha}}uW_{\alpha}+iZ_{\beta}Z_{\alpha}uW_{\bar{\alpha}}+\mathscr{E}_{m-1}(W),
\end{equation}
where $\mathscr{O}_{m-1}\mathscr{E}(W)$ denote the linear combination of $W_j$'s with coefficients $\mathscr{O}_{m-1}$.
\end{lemma}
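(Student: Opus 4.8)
The plan is to start from the explicit expression $\widehat{T}=e^{-2u}\big(T+J_{\ a}^{b}u^{a}W_{b}\big)$ of \eqref{eq3.2} and expand the bracket by the Leibniz rule $[fX,Y]=f[X,Y]-(Yf)X$. Writing $V:=J_{\ a}^{b}u^{a}W_{b}$, this produces
\[
[\widehat{T},W_{\beta}]=e^{-2u}[T,W_{\beta}]+e^{-2u}[V,W_{\beta}]+2e^{-2u}u_{\beta}\big(T+V\big),
\]
where $u_{\beta}=W_{\beta}u$. Since $u\in\mathscr{O}_{m}$ and $W_{\beta}=Z_{\beta}+\mathscr{O}_{0}$ by \eqref{eq2.36}, we have $u_{\beta}\in\mathscr{O}_{m-1}$ and $e^{-2u}=1+\mathscr{O}_{m}$. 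Because $[T,W_{\beta}]=(\Gamma_{0\beta}^{d}-A_{\beta}^{d})W_{d}$ has coefficients of nonnegative order, the first group equals $[T,W_{\beta}]$ modulo $\mathscr{O}_{m}\cdot[T,W_{\beta}]\subset\mathscr{E}_{m-1}(W)$; and the last group lies in $\mathscr{E}_{m-1}(W)$ since $u_{\beta}\in\mathscr{O}_{m-1}$ while both $T$ and $V$ have coefficients of nonnegative order. So only $e^{-2u}[V,W_{\beta}]$ can contribute the two displayed main terms.

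Next I would pin down the leading part of $V$. Since $h_{\alpha\bar\beta}\equiv\delta_{\alpha\bar\beta}$ and $h_{\alpha\beta}\equiv0$ near $q$ (Proposition \ref{prop2.0d}), one has $u^{\alpha}=u_{\bar\alpha}$ and $u^{\bar\alpha}=u_{\alpha}$, each in $\mathscr{O}_{m-1}$. Using $J_{\ \alpha}^{\gamma}(q)=i\delta_{\alpha}^{\gamma}$ and $J_{\ \alpha}^{\bar\gamma}(q)=0$ from \eqref{eq2.18} (the latter giving $J_{\ \alpha}^{\bar\gamma}\in\mathscr{O}_{1}$ and, by conjugation, $J_{\ \bar\alpha}^{\gamma}\in\mathscr{O}_{1}$), the coefficient bookkeeping yields
\[
V=i\,u^{\gamma}W_{\gamma}-i\,u^{\bar\gamma}W_{\bar\gamma}+\mathscr{E}_{m}(W).
\]
Applying the Leibniz rule once more, $[V,W_{\beta}]$ generates the undifferentiated terms $i\,u^{\gamma}[W_{\gamma},W_{\beta}]-i\,u^{\bar\gamma}[W_{\bar\gamma},W_{\beta}]$, which lie in $\mathscr{E}_{m-1}(W)$ because $u^{\gamma},u^{\bar\gamma}\in\mathscr{O}_{m-1}$ and each $[W_{a},W_{\beta}]=(\Gamma_{a\beta}^{d}-\Gamma_{\beta a}^{d})W_{d}-2J_{a\beta}T$ has coefficients of nonnegative order (the $\Gamma$'s lie in $\mathscr{O}_{1}$ by \eqref{eq3.1b} and $J_{a\beta}$ is bounded). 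The surviving contributions are therefore $-i(W_{\beta}u^{\gamma})W_{\gamma}+i(W_{\beta}u^{\bar\gamma})W_{\bar\gamma}$.

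It remains to replace these derivatives by their Heisenberg counterparts. Here $W_{\beta}u^{\gamma}=W_{\beta}W_{\bar\gamma}u$ and $W_{\beta}u^{\bar\gamma}=W_{\beta}W_{\gamma}u$, and I would use that \eqref{eq2.36} exhibits $W_{a}-Z_{a}$ as a differential operator that does not lower the homogeneous degree. Expanding $W_{\beta}W_{\bar\gamma}=\big(Z_{\beta}+(W_{\beta}-Z_{\beta})\big)\big(Z_{\bar\gamma}+(W_{\bar\gamma}-Z_{\bar\gamma})\big)$ and applying it to $u\in\mathscr{O}_{m}$, all three cross terms land in $\mathscr{O}_{m-1}$, leaving $W_{\beta}W_{\bar\gamma}u=Z_{\beta}Z_{\bar\gamma}u+\mathscr{O}_{m-1}$, and likewise $W_{\beta}W_{\gamma}u=Z_{\beta}Z_{\gamma}u+\mathscr{O}_{m-1}$. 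Substituting back and relabelling $\gamma\mapsto\alpha$ gives exactly \eqref{eqA.9}. The genuinely delicate point is precisely this order bookkeeping: unlike the CR case, $J_{\ a}^{b}$ is not constant, so its expansion must be carried along when identifying the leading part of $V$, and one must check that every correction stemming from the non-constancy of $J$ and from $W_{a}\neq Z_{a}$ is of order $\ge m-1$. The degree-preserving nature of $W_{a}-Z_{a}$ supplied by \eqref{eq2.36} is exactly what lets the two second-order derivatives collapse to $Z_{\beta}Z_{\bar\gamma}u$ and $Z_{\beta}Z_{\gamma}u$.
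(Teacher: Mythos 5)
Your argument is correct and follows essentially the same route as the paper's proof: expand the bracket by the Leibniz rule, absorb every term carrying an undifferentiated factor $u_{\beta}$ or $u^{a}$ into $\mathscr{E}_{m-1}(W)$, freeze $h$ and $J$ at $q$, and replace $W_{\beta}W_{a}u$ by $Z_{\beta}Z_{a}u$ modulo $\mathscr{O}_{m-1}$ using \eqref{eq2.36}. The only cosmetic difference is that you substitute $J_{\ a}^{b}(q)$ inside $V$ before taking the bracket rather than after differentiating $u^{a}$, which costs only the routine observation that $[\mathscr{E}_{m}(W),W_{\beta}]\subset\mathscr{E}_{m-1}(W)$.
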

\begin{proof}
We have
\begin{align}
\notag [\widehat{T},W_{\beta}]&=[e^{-2u}(T+J_{\ a}^cu^aW_c),W_{\beta}]\\
\notag &=e^{-2u}[T,W_{\beta}]+e^{-2u}[J_{\ a}^cu^aW_c,W_{\beta}]+2e^{-2u}u_{\beta}(T+J_{\ a}^cu^aW_c)\\
\notag &=[T,W_{\beta}]-(W_{\beta}u^a)J_{\ a}^cW_c+\mathscr{O}_{m-1}\mathscr{E}(W)\\
\notag &=[T,W_{\beta}]-(W_{\beta}u_{\bar{\mu}})h^{\alpha\bar{\mu}}J_{\ \alpha}^{\rho}(q)W_{\rho}-(W_{\beta}u_{\mu})h^{\mu\bar{\alpha}}J_{\ \bar{\alpha}}^{\bar{\rho}}(q)W_{\bar{\rho}}+\mathscr{E}_{m-1}(W)\\
\notag &=[T,W_{\beta}]-iZ_{\beta}Z_{\bar{\alpha}}uW_{\alpha}+iZ_{\beta}Z_{\alpha}uW_{\bar{\alpha}}+\mathscr{E}_{m-1}(W),
\end{align}
by $h_{\alpha\bar{\beta}}=\delta_{\alpha\bar{\beta}}$, $J_{\ \alpha}^{\rho}(q)=i\delta_{\ \alpha}^{\rho}$ in \eqref{eq2.18} and $u_a{\in}\mathscr{O}_{m-1}$ for $u\in\mathscr{O}_m$. \eqref{eqA.9} follows.
\end{proof}
\begin{proposition}\label{propA.1}
  Under the conformal transformation \eqref{eq3.1}, the connection coefficients of the TWT connection change as
  \begin{equation}\label{eqA.10}
  \begin{aligned}
  \widehat{\Gamma}_{ab}^c&=\Gamma_{ab}^c+u_a\delta_b^c+u_b\delta_a^c-u^ch_{ab},\\
  \widehat{\Gamma}_{\widehat{0}\beta}^{\rho}&=\Gamma_{0\beta}^{\rho}+u_0\delta_{\beta}^{\rho}
  -\frac{i}{2}(Z_{\bar{\rho}}Z_{\beta}u+Z_{\beta}Z_{\bar{\rho}}u)+\mathscr{O}_{m-1},
  \end{aligned}
  \end{equation}
  where $u^c=h^{cd}u_d$.
  \end{proposition}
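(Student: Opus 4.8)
The plan is to read off both $\Gamma_{ab}^c$ and $\widehat\Gamma_{ab}^c$ from the Koszul-type identity of Corollary~\ref{lemA.1} (equivalently Lemma~\ref{lemA.0}), applied to the two structures $(\theta,h,J)$ and $(\widehat\theta,\widehat h,J)$ in the \emph{same} horizontal frame $\{W_a\}$. Since $\{W_a\}$ is not changed, the Lie brackets $[W_a,W_b]$ are literally the same in both computations, so the whole difference is driven by the three changes $\widehat h_{ab}=e^{2u}h_{ab}$, $\widehat\theta=e^{2u}\theta$ and $\widehat T=e^{-2u}(T+\zeta)$ recorded in \eqref{eq3.2} and \eqref{eqA.1}. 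This reduces the problem to tracking how the metric and the Reeb field enter the Koszul formula.

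For the first identity I would substitute the hatted data into \eqref{eqA.6}. Writing $W_a(\widehat h_{bd})=e^{2u}\bigl(2u_a h_{bd}+W_a h_{bd}\bigr)$ and $\widehat h^{cd}=e^{-2u}h^{cd}$, the $W(h)$-derivative terms together with the horizontal parts of the bracket terms reassemble, after the cancelling factors $e^{2u}$ and $e^{-2u}$, into $\Gamma_{ab}^c$, while the three terms carrying a derivative of $u$ collect into $u_a\delta_b^c+u_b\delta_a^c-u^c h_{ab}$, which is exactly the asserted correction. The one point that needs care is the \emph{vertical} part of each bracket: because $\widehat T\neq T$, the old Reeb field is no longer $\widehat h$-orthogonal to $HM$, and from \eqref{eqA.1} one computes $\widehat h(T,W_b)=-e^{2u}u^e J_{be}$. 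I would therefore have to verify that the $J$-weighted contributions coming from $\theta([W_a,W_b])\,\widehat h(T,\cdot)$ reorganize so as not to disturb the clean conformal-weight formula; this is where the values at $q$ ($h_{\alpha\bar\beta}=\delta_{\alpha\bar\beta}$, $J_{\ \alpha}^{\rho}(q)=i\delta_\alpha^\rho$, $J_{\ \alpha}^{\bar\rho}(q)=0$) from \eqref{eq2.18} and the antisymmetry of $J_{ab}$ are used, exactly as in the $T^{(1,0)}$-frame derivation of Lemma~10 in \cite{BD1}.

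For the second identity I would run the same argument on \eqref{eqA.7}, but now with $\widehat T$ in the Reeb slot, so that $\widehat\Gamma_{\widehat 0\beta}^\rho=\tfrac12\widehat h^{\rho d}\bigl(\widehat T(\widehat h_{\beta d})-\widehat h([\widehat T,W_d],W_\beta)+\widehat h([\widehat T,W_\beta],W_d)\bigr)$. The term $\widehat T(\widehat h_{\beta d})$ produces $u_0\delta_\beta^\rho$ (alongside $\Gamma_{0\beta}^\rho$), and the real input is Lemma~\ref{lemA.2}: substituting \eqref{eqA.9} for $[\widehat T,W_\beta]$ and $[\widehat T,W_d]$ replaces the unhatted brackets by $[T,\cdot]$ and injects the second-order terms $-iZ_\beta Z_{\bar\alpha}u\,W_\alpha+iZ_\beta Z_\alpha u\,W_{\bar\alpha}$. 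Pairing these against $W_d$ with $\widehat h$, using $J_{\ \alpha}^\rho(q)=i\delta_\alpha^\rho$ and symmetrizing the two bracket terms, should produce exactly $-\tfrac{i}{2}\bigl(Z_{\bar\rho}Z_\beta u+Z_\beta Z_{\bar\rho}u\bigr)$, with every leftover (products $u\cdot\Gamma$, the $\|\zeta\|^2$ correction in $\widehat h$, and the $\mathscr{E}_{m-1}(W)$ remainder of \eqref{eqA.9}) being of weight $\ge m-1$ and hence swept into $\mathscr{O}_{m-1}$.

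The hard part, and the reason the second formula only holds modulo $\mathscr{O}_{m-1}$ while the first is exact, is precisely the bookkeeping of these vertical and bracket contributions: one must simultaneously keep track of the $T$--$\widehat T$ discrepancy (which makes $\widehat h(T,W_b)$ nonzero), of the fact that $J$ is \emph{not} constant away from $q$ (so that $W_cJ_{\ b}^a$ enters), and of the remainder $\mathscr{E}_{m-1}(W)$ in Lemma~\ref{lemA.2}. The cleanest route I would follow is to carry out the whole computation to leading order at $q$ using \eqref{eq2.18} and \eqref{eq3.2}, absorb all weight-$(m-1)$ debris into the stated remainder, and check separately that in the purely horizontal-index case the $J$-weighted vertical terms cancel, so that $\widehat\Gamma_{ab}^c$ emerges with no error term at all.
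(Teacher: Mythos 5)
Your plan follows the same route as the paper's own proof: read $\widehat{\Gamma}_{ab}^c$ and $\widehat{\Gamma}_{\widehat{0}\beta}^{\rho}$ off the Koszul identities \eqref{eqA.6}--\eqref{eqA.7} applied to the hatted data in the unchanged horizontal frame $\{W_a\}$, and feed Lemma \ref{lemA.2} into the Reeb--direction brackets. For the second identity this is exactly what the paper does, and your accounting of which leftovers ($u\cdot\Gamma$ products, the $\|\zeta\|^2$ correction, the $\mathscr{E}_{m-1}(W)$ remainder) land in $\mathscr{O}_{m-1}$ is sound.

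The gap is the step you explicitly defer for the first identity: ``check separately that in the purely horizontal-index case the $J$-weighted vertical terms cancel, so that $\widehat{\Gamma}_{ab}^c$ emerges with no error term at all.'' That check fails. You are right that $\widehat{h}(T,W_b)=-e^{2u}u^eJ_{be}\neq0$, so the vertical component $\theta([W_a,W_d])T=-2J_{ad}\,T$ of each bracket contributes terms of the type $2e^{2u}J_{ad}u^eJ_{be}$ to $\widehat{h}([W_a,W_d],W_b)$; summing the three bracket terms of \eqref{eqA.6} these do \emph{not} cancel. A frame-free way to see the obstruction: applying $\widehat{\theta}^c$ to the torsion condition $\widehat{\tau}(W_a,W_b)=2d\widehat{\theta}(W_a,W_b)\widehat{T}$ gives $\widehat{\Gamma}_{ab}^c-\widehat{\Gamma}_{ba}^c=\widehat{\theta}^c([W_a,W_b])=\theta^c([W_a,W_b])+2J_{ab}J_{\ e}^cu^e$ (using \eqref{eqA.5} and $\theta([W_a,W_b])=-2J_{ab}$), so the correction $\widehat{\Gamma}_{ab}^c-\Gamma_{ab}^c$ must have antisymmetric part $J_{ab}J_{\ e}^cu^e$, which is nonzero wherever $du|_{HM}\neq0$; but the displayed correction $u_a\delta_b^c+u_b\delta_a^c-u^ch_{ab}$ is symmetric in $(a,b)$. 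Hence the first line of \eqref{eqA.10} can only be reached by carrying additional terms linear in $u^e$ and quadratic in $J$ — and these are of the same homogeneity $\mathscr{O}_{m-1}$ as the displayed corrections, so they cannot be swept into a smaller remainder. Note that the paper's own proof simply writes $\widehat{h}([W_a,W_d],W_b)=e^{2u}h([W_a,W_d],W_b)$ and never confronts this point; your proposal is more honest about where the difficulty sits, but as written it asserts a cancellation that does not occur, so it does not establish the first identity in the exact form stated.
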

\begin{proof}
  By Lemma \ref{lemA.1}, we get
  \begin{align}
  \notag \widehat{\Gamma}_{ab}^c&=\frac{1}{2}e^{-2u}h^{cd}\bigg(W_a\bigg(e^{2u}h_{bd}\bigg)+W_b\bigg(e^{2u}h_{ad}\bigg)-W_d\bigg(e^{2u}h_{ab}\bigg)\\
  \notag &\ \ \ -e^{2u}h([W_a,W_d],W_b)-e^{2u}h([W_b,W_d],W_a)+e^{2u}h([W_a,W_b],W_d)\bigg)\\
  \notag &=\Gamma_{ab}^c+u_a\delta_b^c+u_b\delta_a^c-u^ch_{ab}.
  \end{align}

Note that
  \begin{equation}\label{eqA.12}
  \widehat{\Gamma}_{\widehat{0}\beta}^{\rho}=\frac{1}{2}\widehat{h}^{\rho\bar{\mu}}\Big(\widehat{T}(\widehat{h}_{\beta\bar{\mu}})
  -\widehat{h}([\widehat{T},W_{\bar{\mu}}],W_{\beta})+\widehat{h}([\widehat{T},W_{\beta}],W_{\bar{\mu}})\Big),
  \end{equation}
For the first term in the right hand side of \eqref{eqA.12}, according to \eqref{eq3.2}, we have
  \begin{align}
  \notag \frac{1}{2}\widehat{h}^{\rho\bar{\mu}}\widehat{T}(\widehat{h}_{\beta\bar{\mu}})&=\frac{1}{2}e^{-4u}h^{\rho\bar{\mu}}(T+J_{\ a}^eu^aW_e)(e^{2u}h_{\beta\bar{\mu}})=\frac{1}{2}h^{\rho\bar{\mu}}T(h_{\beta\bar{\mu}})+u_0\delta_{\beta}^{\rho}+\mathscr{O}_{m-1}.
  \end{align}
Take conjugation on both sides of \eqref{eqA.9} to get
$[\widehat{T},W_{\bar{\mu}}]=[T,W_{\bar{\mu}}]+iZ_{\bar{\mu}}Z_{\alpha}uW_{\bar{\alpha}}-iZ_{\bar{\mu}}Z_{\bar{\alpha}}uW_{\alpha}+\mathscr{E}_{m-1}(W).$
So for the second and third terms of \eqref{eqA.12}, we have
\begin{align}
\notag -\frac{1}{2}\widehat{h}^{\rho\bar{\mu}}\widehat{h}([\widehat{T},W_{\bar{\mu}}],W_{\beta})
&=-\frac{1}{2}h^{\rho\bar{\mu}}h\bigg([T,W_{\bar{\mu}}]+iZ_{\bar{\mu}}Z_{\alpha}uW_{\bar{\alpha}},W_{\beta}\bigg)+\mathscr{O}_{m-1}\\
\notag &=-\frac{1}{2}h^{\rho\bar{\mu}}h([T,W_{\bar{\mu}}],W_{\beta})-\frac{i}{2}Z_{\bar{\rho}}Z_{\beta}u+\mathscr{O}_{m-1},
\end{align}
and
\begin{align}
\notag \frac{1}{2}\widehat{h}^{\rho\bar{\mu}}\widehat{h}([\widehat{T},W_{\beta}],W_{\bar{\mu}})
&=\frac{1}{2}h^{\rho\bar{\mu}}h\bigg([T,W_{\beta}]-iZ_{\beta}Z_{\bar{\alpha}}uW_{\alpha},W_{\bar{\mu}}\bigg)+\mathscr{O}_{m-1}\\
\notag &=\frac{1}{2}h^{\rho\bar{\mu}}h([T,W_{\beta}],W_{\bar{\mu}})-\frac{i}{2}Z_{\beta}Z_{\bar{\rho}}u+\mathscr{O}_{m-1}.
\end{align}
So \eqref{eqA.12} becomes $\widehat{\Gamma}_{\widehat{0}\beta}^{\rho}=\Gamma_{0\beta}^{\rho}+u_0\delta_{\beta}^{\rho}
-\frac{i}{2}(Z_{\bar{\rho}}Z_{\beta}u+Z_{\beta}Z_{\bar{\rho}}u)+\mathscr{O}_{m-1}$.
\end{proof}
\subsection{The transformation formulae of the curvature and Webster torsion tensors under the conformal transformations}
{\it Proof of Lemma \ref{lem3.3}.} By $\nabla{T}=0$ and $\tau_{\ast}W_a=\tau(T,W_a)=\nabla_TW_a-[T,W_a]$, we get
\begin{align}
\notag A_{ab}&=h(A_a^cW_c,W_b)=h(\tau_{\ast}W_a,W_b)=h(\nabla_TW_a-[T,W_a],W_b)\\
\notag &=T(h_{ab})-h(W_a,\nabla_TW_b)-h([T,W_a],W_b)\\
\notag &=T(h_{ab})-h(W_a,\tau_{\ast}W_b+[T,W_b])-h([T,W_a],W_b)\\
\notag &=T(h_{ab})-A_{ba}-h([W_a,[T,W_b])-h([T,W_a],W_b).
\end{align}
Since the tensor $A$ is self-adjoint by Lemma \ref{lem2.1}, we get
$$A_{ab}=\frac{1}{2}\bigg(T(h_{ab})-h([W_a,[T,W_b])-h([T,W_a],W_b)\bigg).$$
In particular,
$\notag A_{\alpha\beta}=-\frac{1}{2}\big(h(W_{\alpha},[T,W_{\beta}]))+h([T,W_{\alpha}],W_{\beta})\big)$.
Applying Lemma \ref{lemA.2} with the frame $\{W_a,\widehat{T}\}$, we get
\begin{align}
\notag \widehat{A}_{\alpha\beta}&=-\frac{1}{2}\bigg(\widehat{h}(W_{\alpha},[\widehat{T},W_{\beta}]))+\widehat{h}([\widehat{T},W_{\alpha}],W_{\beta})\bigg)
=A_{\alpha\beta}-\frac{i}{2}Z_{\alpha}Z_{\beta}u-\frac{i}{2}Z_{\beta}Z_{\alpha}u+\mathscr{O}_{m-1}\\
\notag &=A_{\alpha\beta}-iZ_{\alpha}Z_{\beta}u+\mathscr{O}_{m-1},
\end{align}
by \eqref{eqA.9} and $[Z_{\alpha},Z_{\beta}]=0$. And \eqref{eq2.14} with frame $\{W_a,\widehat{T}\}$, we get
\begin{align}\label{eqA.14}
\widehat{R}_{\alpha\ \gamma\bar{\beta}}^{\ \gamma}=W_{\gamma}\widehat{\Gamma}_{\bar{\beta}\alpha}^{\ \gamma}-W_{\bar{\beta}}\widehat{\Gamma}_{\gamma\alpha}^{\gamma}
-\widehat{\Gamma}_{\gamma\bar{\beta}}^e\widehat{\Gamma}_{e\alpha}^{\gamma}
+\widehat{\Gamma}_{\bar{\beta}\gamma}^e\widehat{\Gamma}_{e\alpha}^{\gamma}
-\widehat{\Gamma}_{\gamma\alpha}^e\widehat{\Gamma}_{\bar{\beta}e}^{\gamma}
+\widehat{\Gamma}_{\bar{\beta}\alpha}^e\widehat{\Gamma}_{{\gamma}e}^{\gamma}
+2\widehat{\Gamma}_{\widehat{0}\alpha}^{\gamma}\widehat{J}_{\gamma\bar{\beta}}.
\end{align}
By the first identity in \eqref{eqA.10} and \eqref{eq2.18}, for $u\in\mathscr{O}_m$, we have
\begin{align}
\notag W_{\gamma}\widehat{\Gamma}_{\bar{\beta}\alpha}^{\ \gamma}&=W_\gamma\bigg(\Gamma_{\bar{\beta}\alpha}^{\ \gamma}+u_{\bar{\beta}}\delta_{\alpha}^{\gamma}-u^{\gamma}h_{\alpha\bar{\beta}}\bigg)=W_\gamma\Gamma_{\bar{\beta}\alpha}^{\ \gamma}+W_{\gamma}(u_{\bar{\beta}})\delta_{\alpha}^{\gamma}-h^{\gamma\bar{\mu}}W_\gamma(u_{\bar{\mu}})h_{\alpha\bar{\beta}}\\
&=W_\gamma\Gamma_{\bar{\beta}\alpha}^{\ \gamma}
+Z_{\alpha}Z_{\bar{\beta}}u-\delta_{\alpha\bar{\beta}}Z_{\gamma}Z_{\bar{\gamma}}u+\mathscr{O}_{m-1},
\end{align}
and
\begin{align}
\notag W_{\bar{\beta}}\widehat{\Gamma}_{\gamma\alpha}^{\gamma}&=
W_{\bar{\beta}}\bigg({\Gamma}_{\gamma\alpha}^{\gamma}
+u_{\gamma}\delta_{\alpha}^{\gamma}+u_{\alpha}\delta_{\gamma}^{\gamma}\bigg)
=W_{\bar{\beta}}{\Gamma}_{\gamma\alpha}^{\gamma}+(n+1)Z_{\bar{\beta}}Z_{\alpha}u+\mathscr{O}_{m-1}.
\end{align}
Again by the first identity of \eqref{eqA.10}, we have $\widehat{\Gamma}_{ab}^c={\Gamma}_{ab}^c+\mathscr{O}_{m-1}$. And by \eqref{eq3.1b}, we have ${\Gamma}_{ab}^c=\mathscr{O}_{1}$. So we get
\begin{equation}
\widehat{\Gamma}_{ab}^c\widehat{\Gamma}_{de}^f={\Gamma}_{ab}^c{\Gamma}_{de}^f+\mathscr{O}_{m},
\end{equation}
for any indices $a,b,c,d,e,f$. By the second identity of \eqref{eqA.10}, $J_{\gamma\bar{\beta}}=-i\delta_{\gamma\bar{\beta}}$ in \eqref{eq2.18} and $u_0=Tu=\frac{\partial{u}}{\partial{t}}+\mathscr{O}_m$ by \eqref{eq2.36}, we have
\begin{align}
\notag
2\widehat{\Gamma}_{\widehat{0}\alpha}^{\gamma}\widehat{J}_{\gamma\bar{\beta}}&=2\bigg(\Gamma_{0\alpha}^{\gamma}+u_0\delta_{\alpha}^{\gamma}
-\frac{i}{2}Z_{\bar{\gamma}}Z_{\alpha}u-\frac{i}{2}Z_{\alpha}Z_{\bar{\gamma}}u\bigg)J_{\gamma\bar{\beta}}+\mathscr{O}_{m-1}\\
&=2\Gamma_{0\alpha}^{\gamma}J_{\gamma\bar{\beta}}-2i\frac{\partial{u}}{\partial{t}}\delta_{\alpha\bar{\beta}}
-Z_{\bar{\beta}}Z_{\alpha}u-Z_{\alpha}Z_{\bar{\beta}}u+\mathscr{O}_{m-1}.
\end{align}
Noting that $[Z_{\alpha},Z_{\bar{\beta}}]=[\frac{\partial}{\partial{z}^{\alpha}}
-iz^{\bar{\alpha}}\frac{\partial}{\partial{t}},\frac{\partial}{\partial{z}^{\bar{\beta}}}
+iz^{\beta}\frac{\partial}{\partial{t}}]
=2i\delta_{\alpha\bar{\beta}}\frac{\partial}{\partial{t}}$, \eqref{eqA.14} leads to
\begin{align}
  \notag \widehat{R}_{\alpha\ \gamma\bar{\beta}}^{\ \gamma}&=R_{\alpha\ \gamma\bar{\beta}}^{\ \gamma}-2i\delta_{\alpha\bar{\beta}}\frac{\partial{u}}{\partial{t}}-(n+2)Z_{\bar{\beta}}Z_{\alpha}u
  -\delta_{\alpha\bar{\beta}}Z_{\gamma}Z_{\bar{\gamma}}u+\mathscr{O}_{m-1}\\
  \notag &=R_{\alpha\ \gamma\bar{\beta}}^{\ \gamma}-2i\delta_{\alpha\bar{\beta}}\frac{\partial{u}}{\partial{t}}
  -\frac{n+2}{2}\bigg(Z_{\bar{\beta}}Z_{\alpha}u+Z_{\alpha}Z_{\bar{\beta}}u
  -2i\delta_{\alpha\bar{\beta}}\frac{\partial{u}}{\partial{t}}\bigg)\\
  \notag &\ \ \ -\frac{1}{2}\delta_{\alpha\bar{\beta}}\bigg(Z_{\gamma}Z_{\bar{\gamma}}u+Z_{\bar{\gamma}}Z_{\gamma}u
  +2ni\frac{\partial{u}}{\partial{t}}\bigg)+\mathscr{O}_{m-1}\\
  \notag &=R_{\alpha\ \gamma\bar{\beta}}^{\ \gamma}-\frac{n+2}{2}\bigg(Z_{\bar{\beta}}Z_{\alpha}u+Z_{\alpha}Z_{\bar{\beta}}u\bigg)
  +\frac{1}{2}{\delta}_{\alpha\bar{\beta}}\mathscr{L}_0u+\mathscr{O}_{m-1},
\end{align}
with $\mathscr{L}_0=-(Z_{\alpha}Z_{\bar{\alpha}}+Z_{\bar{\alpha}}Z_{\alpha})$.
\subsection{The Covariance of $R_{abcd}$ and $A_{ab}$}
  Fix a contact Riemannian structure $(M,\theta,h,J)$ and a connection $\nabla$. For a frame $\{W_a\}$ of $HM$, $\{W_j\}=\{W_a,T\}$ is a frame of $TM$. Take an invertible transformation for $\{W_a\}$ by writing
  \begin{equation}\label{eqA.16}
  \widetilde{W_a}=v_a^cW_c,
  \end{equation}
  for an invertible matrix $(v_a^c)$. $\widetilde{T}=T$ by $\theta$ is fixed. Let $\{\widetilde\theta^b,\widetilde\theta\}$ be the coframe dual to $\{\widetilde{W_a},\widetilde{T}\}$. Then we can write
  \begin{equation}\label{eqA.17}
  \widetilde\theta^b=u_c^b\theta^c,{\quad}\theta^c=v_b^c\widetilde\theta^b,
  \end{equation}
  where $(u_b^c)$ is the inverse matrix of $(v_a^c)$, i.e. $u_c^bv_a^c=\delta_a^b$. We write $\nabla{W_a}=\omega_a^c\otimes{W_c}$ and $\nabla\widetilde{W_a}=\widetilde\omega_a^c\otimes{W_c}$.
  \begin{proposition}
  With the transformation mentioned above, Let $\widetilde{A}_{ab}$ and $\widetilde{R}_{abcd}$ be the Webster torsion tensor and the curvature tensor with respect to $\{\widetilde{W_a},T\}$. Then we have
  $$\widetilde{A}_{ab}=v_a^{a_1}v_b^{b_1}A_{a_1b_1},{\quad}\widetilde{R}_{abcd}=v_a^{a_1}v_b^{b_1}v_c^{c_1}v_d^{d_1}R_{a_1b_1c_1d_1}.$$
  \end{proposition}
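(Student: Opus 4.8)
The plan is to observe that both $A_{ab}$ and $R_{abcd}$ are merely the components, relative to the frame $\{W_a,T\}$, of fixed global objects on $M$, and then to invoke the $C^\infty(M)$-multilinearity (tensoriality) of those objects. Throughout this subsection the contact structure $(\theta,h,J)$ and the TWT connection $\nabla$ are held fixed, and only the horizontal frame is replaced via $\widetilde{W_a}=v_a^cW_c$ with $\widetilde{T}=T$ and $\widetilde h=h$. Hence the two identities to be proved are exactly the classical transformation laws of tensor components under a change of frame, and the whole argument reduces to recognizing the relevant tensors and pulling the coefficients $v$ out of each slot.

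First I would treat the torsion. From $\tau_{\ast}(W_a)=A_a^bW_b$ and the convention $A_{ab}=h_{bc}A_a^c$ that indices are lowered by $h$, one has $A_{ab}=h(\tau_{\ast}(W_a),W_b)=h(\tau(T,W_a),W_b)$. Since $\tau$ is the torsion $(1,2)$-tensor, the map $\tau_{\ast}=\tau(T,\cdot)$ is $C^\infty(M)$-linear; combined with the $C^\infty(M)$-bilinearity of $h$ (and $\widetilde h=h$) this yields
\begin{equation*}
\widetilde A_{ab}=h\big(\tau_{\ast}(\widetilde{W_a}),\widetilde{W_b}\big)=v_a^{a_1}v_b^{b_1}\,h\big(\tau_{\ast}(W_{a_1}),W_{b_1}\big)=v_a^{a_1}v_b^{b_1}A_{a_1b_1}.
\end{equation*}
Next, for the curvature I would use the index convention preceding \eqref{eq2.14}, namely $R(W_c,W_d)W_a=R_{a\ cd}^{\ b}W_b$ together with the lowering $R_{abcd}=h_{bs}R_{a\ cd}^{\ s}$, so that $R_{abcd}=h\big(R(W_c,W_d)W_a,W_b\big)$. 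The curvature operator $R(X,Y)Z=\nabla_X\nabla_YZ-\nabla_Y\nabla_XZ-\nabla_{[X,Y]}Z$ of any linear connection is $C^\infty(M)$-linear in each of $X$, $Y$, $Z$, exactly as in the Riemannian case. Substituting the frame relations and extracting the functions $v$ from every slot therefore gives
\begin{equation*}
\widetilde R_{abcd}=h\big(R(\widetilde{W_c},\widetilde{W_d})\widetilde{W_a},\widetilde{W_b}\big)=v_a^{a_1}v_b^{b_1}v_c^{c_1}v_d^{d_1}R_{a_1b_1c_1d_1}.
\end{equation*}

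The only subtle point, and the step I expect to be the main obstacle, is the tensoriality itself: because the transition coefficients $v_a^c$ are genuine functions on $M$ rather than constants, one must be certain that no stray terms of the form $W_c(v_a^{a_1})$ survive when the $v$'s are moved outside $h$, $\nabla$, and the brackets. This is guaranteed precisely because $\tau_{\ast}$ and $R(\cdot,\cdot)\cdot$ are tensors, so that the Leibniz contributions coming from $\nabla$ and from the Lie brackets $[\,\cdot\,,\,\cdot\,]$ cancel in pairs. If a fully self-contained verification is preferred, these cancellations can be checked directly from the structure equations \eqref{SE} and the expression \eqref{eq2.14}; but invoking the standard tensoriality of the curvature and torsion of a linear connection is the cleanest route and suffices to establish both formulae.
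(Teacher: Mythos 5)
Your proof is correct, but it takes a genuinely different route from the paper. You identify $A_{ab}=h(\tau(T,W_a),W_b)$ and $R_{abcd}=h(R(W_c,W_d)W_a,W_b)$ as components of globally defined tensors and invoke the $C^\infty(M)$-multilinearity of the torsion and curvature of a linear connection (together with the fact that the metric and $T$ are unchanged, so that $\widetilde{h}_{ab}=v_a^{a_1}v_b^{b_1}h_{a_1b_1}$ lowers indices compatibly); the transformation law is then immediate, and your remark about why no $W_c(v_a^{a_1})$ terms survive is exactly the right point to flag. The paper instead runs a moving-frames computation: it first derives the transformation of the connection $1$-forms, $\widetilde\omega_a^b=u_c^b\,dv_a^c+u_c^bv_a^d\omega_d^c$, equivalently $dv_a^c=\widetilde\omega_a^bv_b^c-v_a^b\omega_b^c$ (this is \eqref{eqA.18}), then differentiates the coframe relation to show that the torsion $2$-form $d\theta^a-\theta^b\wedge\omega_b^a$ transforms tensorially, evaluates it on $(\widetilde{T},\widetilde{W_a})$ to get the law for $A$, and for $R$ either cites Wells or repeats the argument by differentiating \eqref{eqA.18}. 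Your argument is shorter and more conceptual, and avoids any appeal to the structure equations; the paper's argument has the side benefit of producing the explicit formula for $\widetilde\omega_a^b$, which is an intermediate object it wants on record. Both establish the proposition.
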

  \begin{proof}
  By \eqref{eqA.16}, we have
  \begin{align}
  \notag \nabla\widetilde{W_a}&=\nabla(v_a^cW_c)=dv_a^c\otimes{W_c}+v_a^c\nabla{W_c}=\big(u_c^bdv_a^c+u_c^bv_a^d\omega_d^c\big)\otimes\widetilde{W_b}.
  \end{align}
  So $\widetilde\omega_a^b=u_c^bdv_a^c+u_c^bv_a^d\omega_d^c$, which is equivalent to
  \begin{equation}\label{eqA.18}
  dv_a^c=\widetilde\omega_a^bv_b^c-v_a^b\omega_b^c.
  \end{equation}

  Differentiating the second identity of \eqref{eqA.17}, we get:
  \begin{align}
  \notag &d\theta^c=dv_a^c\wedge\widetilde\theta^a+v_a^cd\widetilde\theta^a=\big(\widetilde\omega_a^bv_b^c-v_a^b\omega_b^c\big)\wedge\widetilde\theta^a+v_a^cd\widetilde\theta^a,
  \end{align}
  by \eqref{eqA.18}. Therefore,
  \begin{align}\label{eqA.19}
  d\widetilde\theta^a-\widetilde\theta^b\wedge\widetilde\omega_b^a=u_b^a\big(d\theta^b-\theta^c\wedge\omega_c^b\big).
  \end{align}
    So $\widetilde{A}_a^b=(d\widetilde\theta^b-\widetilde\theta^c\wedge\widetilde\omega_c^b)(\widetilde{T},\widetilde{W_a})
  =u_{b_1}^b\big(d\theta^{b_1}-\theta^c\wedge\omega_c^{b_1}\big)(T,v_a^{a_1}W_{a_1})=v_a^{a_1}u_{b_1}^b{A}_{a_1}^{b_1}.$
  For the covariance of $R_{abcd}$, we can refer to Section 3 in \cite{We1}, or it can be achieved in the same way as tensor $A$ by differentiating \eqref{eqA.18}. So we finish proving that $R_{abcd}$ and $A_{ab}$ are covariant.
  \end{proof}
\section{The calculation of $a_2(n)$ and $b_2(n)$}
  In Appendix B we will show the detailed calculation of Lemma \ref{lem5.6}. Recall that our contact manifold $(M,\theta,h,J)$ satisfies Proposition \ref{prop5.1}.
  \subsection{Calculation of $v_2^{jk}$}
  \begin{lemma}\label{lemB.1}
  For $v_2^{jk}$ defined in \eqref{eq5.10a}, we have
  \begin{equation}\label{eqB.8}
  \begin{aligned}
  &v_2^{\alpha\gamma}=-\frac{1}{6}R_{d\ c\bar{\alpha}}^{\ \gamma}(q)z^dz^c,\\
  &v_2^{\alpha\bar{\gamma}}=-\frac{1}{6}\bigg(R_{d\ c\gamma}^{\ \alpha}(q)+R_{d\ c\bar{\alpha}}^{\ \bar{\gamma}}(q)\bigg)z^cz^d+\delta_{\beta}^{\alpha}\delta_{\beta}^{\gamma}v_2,\\
  &v_2^{\bar{\alpha}\gamma}=0,\\
  &v_2^{\bar{\alpha}\bar{\gamma}}=-\frac{1}{6}R_{d\ c\gamma}^{\ \bar{\alpha}}(q)z^dz^c,\\
  &v_2^{\alpha0}=-\frac{1}{2}J_{\beta\bar{\alpha}(2)}z^{\beta}-\frac{1}{2}J_{\bar{\beta}\bar{\alpha}(2)}z^{\bar{\beta}}
  +\frac{i}{12}R_{d\ c\bar{\alpha}}^{\ \bar{\rho}}(q)z^dz^cz^{\rho}-\frac{i}{12}R_{d\ c\bar{\alpha}}^{\ \rho}(q)z^dz^cz^{\bar{\rho}},\\
  &v_2^{\bar{\alpha}0}=0,\\
  &v_2^{0\alpha}=0,\\
  &v_2^{0\bar{\alpha}}=-\frac{1}{2}J_{\beta\alpha(2)}z^{\beta}-\frac{1}{2}J_{\bar{\beta}\alpha(2)}z^{\bar{\beta}}
  +\frac{i}{12}R_{d\ c\alpha}^{\ \bar{\rho}}(q)z^dz^cz^{\rho}-\frac{i}{12}R_{d\ c\alpha}^{\ \rho}(q)z^dz^cz^{\bar{\rho}},\\
  &v_2^{00}=\frac{4}{9}Q_{\gamma\lambda}^{\bar{\beta}}(q)Q_{\bar{\sigma}\bar{\mu}}^{\beta}(q)z^{\gamma}z^{\lambda}z^{\bar{\sigma}}z^{\bar{\mu}}.
  \end{aligned}
  \end{equation}
  \end{lemma}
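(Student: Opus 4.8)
The plan is to expand the defining sum \eqref{eq5.10a} for $m=2$ and evaluate it term by term, substituting the explicit homogeneous parts of the coefficient functions $s_j^k$ recorded in Proposition \ref{prop2.2} and Corollary \ref{cor4.1}, together with $v_0=1$, $v_1=0$ and the formula for $v_2$ from Corollary \ref{cor5.1}. The first observation is that the constraint $m_0+m_1+m_2=2$ combined with $v_1=0$ forces $m_0\in\{0,2\}$, so the $m_0=1$ contribution drops out entirely and
\[
v_2^{jk}=\sum_{m_1+m_2=2}s_{\beta(m_1+o(j)-1)}^{j}\,s_{\bar\beta(m_2+o(k)-1)}^{k}
+v_2\,s_{\beta(o(j)-1)}^{j}\,s_{\bar\beta(o(k)-1)}^{k},
\]
with the degree shifts read off from $o(\alpha)=o(\bar\alpha)=1$ and $o(0)=2$.

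Next I would organize the work by the nine index types $j,k\in\{\alpha,\bar\alpha,0\}$ and use the vanishing components to prune the sum. The decisive simplifications are $s_{b(1)}^a=0$ and $s_{b(1)}^0=0$, which annihilate every pairing involving a degree-one factor, and $s_{\beta(0)}^{\bar\alpha}=s_{\bar\beta(0)}^{\alpha}=0$ together with $s_{\beta(0)}^{\alpha}=s_{\bar\beta(0)}^{\bar\alpha}=\delta_\beta^\alpha$, which control the degree-zero factors and produce the $\delta_\beta^\alpha$-contractions. For example, for $v_2^{\alpha\gamma}$ only $(m_0,m_1,m_2)=(0,0,2)$ survives and, after contraction, equals $s_{\bar\alpha(2)}^{\gamma}=-\tfrac16 R_{d\ c\bar\alpha}^{\ \gamma}(q)z^cz^d$; by the same mechanism $v_2^{\bar\alpha\gamma}$, $v_2^{\bar\alpha0}$ and $v_2^{0\alpha}$ vanish because every admissible pairing carries one of the null factors above, while $v_2^{\alpha\bar\gamma}$ picks up the two surviving degree-two factors plus the $m_0=2$ term $\delta_\beta^\alpha\delta_\beta^\gamma v_2$.

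For the mixed entries containing the index $0$, the shift $o(0)-1=1$ brings in the degree-three component $s_{b(3)}^0$ of \eqref{eq2.29}, which I would substitute directly to generate the stated curvature and $J_{(2)}$ contributions in $v_2^{\alpha0}$ and $v_2^{0\bar\alpha}$, the $\delta_\beta^\alpha$-contraction again collapsing the sum to a single pairing. The one entry coupling two nontrivial factors is $v_2^{00}$: there the block $(0,1,1)$ yields $s_{\beta(2)}^0 s_{\bar\beta(2)}^0$, and I would invoke the Tanno-tensor form $s_{\beta(2)}^0=\tfrac23 Q_{\alpha\gamma}^{\bar\beta}(q)z^\alpha z^\gamma$, $s_{\bar\beta(2)}^0=\tfrac23 Q_{\bar\alpha\bar\gamma}^{\beta}(q)z^{\bar\alpha}z^{\bar\gamma}$ from Corollary \ref{cor4.1} to obtain $v_2^{00}=\tfrac49 Q_{\gamma\lambda}^{\bar\beta}(q)Q_{\bar\sigma\bar\mu}^{\beta}(q)z^\gamma z^\lambda z^{\bar\sigma}z^{\bar\mu}$. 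The main difficulty is purely organizational: consistently tracking the degree shifts $m_i+o(\cdot)-1$, the contractions with $\delta_\beta^\alpha$, and the suppression of the $\mathscr{A}$-terms across all nine cases. Once the table of vanishing components is in place the computation is mechanical, with the only genuinely quadratic assembly occurring in the $v_2^{00}$ entry.
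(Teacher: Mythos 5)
Your proposal is correct and follows essentially the same route as the paper: expand the defining sum \eqref{eq5.10a} for $m=2$, kill the $m_0=1$ block via $v_1=0$, prune with the vanishing components $s_{b(1)}^a=s_{b(1)}^0=s_{\beta(0)}^{\bar\alpha}=s_{\bar\beta(0)}^{\alpha}=0$ and the Kronecker deltas $s_{\beta(0)}^{\alpha}=\delta_\beta^\alpha$, then substitute the explicit $s_{b(2)}^a$, $s_{b(3)}^0$, $s_{b(2)}^0$ from Proposition \ref{prop2.2} and Corollary \ref{cor4.1}. Your identification of the single surviving block in each of the nine cases (in particular $(0,1,1)$ for $v_2^{00}$ and the $m_0=2$ contribution $\delta_\gamma^\alpha v_2$ in $v_2^{\alpha\bar\gamma}$) matches the paper's computation exactly.
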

  \begin{proof}
  In the following we will use Proposition \ref{prop2.2} repeatedly, especially
  $$s_{\beta(0)}^{\alpha}=s_{\bar{\beta}(0)}^{\bar{\alpha}}=\delta_{\beta}^{\alpha},{\quad}s_{\beta(0)}^{\bar{\alpha}}=s_{\bar{\beta}(0)}^{\alpha}=0,{\quad}s_{b(1)}^a=0,{\quad}s_{b(0)}^0=s_{b(1)}^0=0,$$
  and we also have $v_0=1$, $v_1=0$, by Corollary \ref{cor5.1}. We find that
  \begin{align}
  \notag v_2^{\alpha\gamma}&=\sum\limits_{m_0+m_1+m_2=2}s_{\beta(m_1)}^{\alpha}s_{\bar{\beta}(m_2)}^{\gamma}v_{m_0}\\
  \notag &=s_{\beta(2)}^{\alpha}s_{\bar{\beta}(0)}^{\gamma}v_{0}+s_{\beta(0)}^{\alpha}s_{\bar{\beta}(2)}^{\gamma}v_{0}
  +s_{\beta(1)}^{\alpha}s_{\bar{\beta}(1)}^{\gamma}v_{0}+s_{\beta(1)}^{\alpha}s_{\bar{\beta}(0)}^{\gamma}v_{1}
  +s_{\beta(0)}^{\alpha}s_{\bar{\beta}(1)}^{\gamma}v_{1}+s_{\beta(0)}^{\alpha}s_{\bar{\beta}(0)}^{\gamma}v_{2}\\
  \notag &=\delta_{\beta}^{\alpha}s_{\bar{\beta}(2)}^{\gamma}v_0=s_{\bar{\alpha}(2)}^{\gamma}
  =-\frac{1}{6}R_{d\ c\bar{\alpha}}^{\ \gamma}(q)z^dz^c,
  \end{align}
  by \eqref{eq2.29} for $s_{\bar{\beta}(2)}^{\alpha}$. Similarly we get
  \begin{align}
  \notag v_2^{\alpha\bar{\gamma}}&=\sum\limits_{m_0+m_1+m_2=2}s_{\beta(m_1)}^{\alpha}s_{\bar{\beta}(m_2)}^{\bar{\gamma}}v_{m_0}
  =s_{\beta(2)}^{\alpha}\delta_{\bar{\beta}}^{\bar{\gamma}}+\delta_{\beta}^{\alpha}s_{\bar{\beta}(2)}^{\bar{\gamma}}
  +\delta_{\beta}^{\alpha}\delta_{\bar{\beta}}^{\bar{\gamma}}v_2
  =s_{\gamma(2)}^{\alpha}+s_{\bar{\alpha}(2)}^{\bar{\gamma}}+\delta_{\gamma}^{\alpha}v_2\\
  \notag &=-\frac{1}{6}\bigg(R_{d\ c\gamma}^{\ \alpha}(q)+R_{d\ c\bar{\alpha}}^{\ \bar{\gamma}}(q)\bigg)z^dz^c+\delta_{\gamma}^{\alpha}v_2,\\
  \notag v_2^{\bar{\alpha}\gamma}&=\sum\limits_{m_0+m_1+m_2=2}s_{\beta(m_1)}^{\bar{\alpha}}s_{\bar{\beta}(m_2)}^{\gamma}v_{m_0}=0,\\
  \notag v_2^{\bar{\alpha}\bar{\gamma}}&=\sum\limits_{m_0+m_1+m_2=2}s_{\beta(m_1)}^{\bar{\alpha}}s_{\bar{\beta}(m_2)}^{\bar{\gamma}}v_{m_0}
  =s_{\beta(2)}^{\bar{\alpha}}\delta_{\bar{\beta}}^{\bar{\gamma}}=s_{\gamma(2)}^{\bar{\alpha}}
  =-\frac{1}{6}R_{d\ c\gamma}^{\ \bar{\alpha}}(q)z^dz^c,\\
  \notag v_2^{\alpha0}&=\sum\limits_{m_0+m_1+m_2=2}s_{\beta(m_1)}^{\alpha}s_{\bar{\beta}(m_2+1)}^0v_{m_0}
  =\delta_{\beta}^{\alpha}s_{\bar{\beta}(3)}^0=s_{\bar{\alpha}(3)}^0\\
  \notag &=-\frac{1}{2}J_{\beta\bar{\alpha}(2)}z^{\beta}-\frac{1}{2}J_{\bar{\beta}\bar{\alpha}(2)}z^{\bar{\beta}}+\frac{i}{12}R_{d\ c\bar{\alpha}}^{\ \bar{\rho}}(q)z^dz^cz^{\rho}-\frac{i}{12}R_{d\ c\bar{\alpha}}^{\ \rho}(q)z^dz^cz^{\bar{\rho}},\\
  \notag v_2^{\bar{\alpha}0}&=\sum\limits_{m_0+m_1+m_2=2}s_{\beta(m_1)}^{\bar{\alpha}}s_{\bar{\beta}(m_2+1)}^0v_{m_0}=0,\\
  \notag v_2^{0\alpha}&=\sum\limits_{m_0+m_1+m_2=2}s_{\beta(m_1+1)}^0s_{\bar{\beta}(m_2)}^{\alpha}v_{m_0}=0,\\
  \notag v_2^{0\bar{\alpha}}&=\sum\limits_{m_0+m_1+m_2=2}s_{\beta(m_1+1)}^0s_{\bar{\beta}(m_2)}^{\bar{\alpha}}v_{m_0}
  =s_{\beta(3)}^0\delta_{\bar{\beta}}^{\bar{\alpha}}=s_{\alpha(3)}^0\\
  \notag &=-\frac{1}{2}J_{\beta\alpha(2)}z^{\beta}-\frac{1}{2}J_{\bar{\beta}\alpha(2)}z^{\bar{\beta}}+\frac{i}{12}R_{d\ c\alpha}^{\ \bar{\rho}}(q)z^dz^cz^{\rho}-\frac{i}{12}R_{d\ c\alpha}^{\ \rho}(q)z^dz^cz^{\bar{\rho}}.
  \end{align}
  By noting that we have Corollary \ref{cor4.1} for $s_{b(2)}^0$, we get
  \begin{align}
  \notag v_2^{00}=\sum\limits_{m_0+m_1+m_2=2}s_{\beta(m_1+1)}^0s_{\bar{\beta}(m_2+1)}^0v_{m_0}=s_{\beta(2)}^0s_{\bar{\beta}(2)}^0
  =\frac{4}{9}Q_{\gamma\lambda}^{\bar{\beta}}(q)Q_{\bar{\sigma}\bar{\mu}}^{\beta}(q)z^{\gamma}z^{\lambda}z^{\bar{\sigma}}z^{\bar{\mu}}.
  \end{align}
So we finish the proof of Lemma B.1.
\end{proof}
\subsection{Proof of Lemma \ref{lem5.6}}
By the first identity  in \eqref{eqB.8} for $v_2^{\alpha\gamma}$ and \eqref{eqB.17}, we get
  \begin{align}\label{eqB.20}
  \notag &\ \ \ \int_{\mathscr{H}^n}v_2^{\alpha\gamma}Z_{\alpha}\Phi{Z_{\gamma}\Phi}dV\\
  \notag &=\int_{\mathscr{H}^n}\frac{n^2}{6}R_{d\ c\bar{\alpha}}^{\ \gamma}(q)
  z^dz^{\bar{\gamma}}z^cz^{\bar{\alpha}}\frac{t^2+2i(|z|^2+1)t-(|z|^2+1)^2}{|w+i|^{2n+4}}dV\\
  \notag &=\int_{\mathscr{H}^n}\frac{n^2}{6}R_{\rho\ \lambda\bar{\alpha}}^{\ \gamma}(q)
  z^{\rho}z^{\bar{\gamma}}z^{\lambda}z^{\bar{\alpha}}\frac{t^2+2i(|z|^2+1)t-(|z|^2+1)^2}{|w+i|^{2n+4}}dV\\
  &=\frac{n^2(4\pi)^n}{12(n+1)}\mathfrak{Q}
  \int_{-\infty}^{\infty}\int_0^{\infty}\frac{t^2+2i(r^2+1)t-(r^2+1)^2}{|1+i(1+r^2)|^{2n+4}}r^{2n+3}drdt,
  \end{align}
  where the last identity is by the third identity in \eqref{eqB.1}. Similarly by \eqref{eqB.8}, \eqref{eqB.1} and \eqref{eqB.17}, we get
  \begin{align}\label{eqB.21}
  \int_{\mathscr{H}^n}v_2^{\bar{\alpha}\bar{\gamma}}Z_{\bar{\alpha}}\Phi{Z_{\bar{\gamma}}\Phi}dV
  =\frac{n^2(4\pi)^n}{12(n+1)}\mathfrak{Q}\int_{-\infty}^{\infty}\int_0^{\infty}\frac{t^2-2i(r^2+1)t
  -(r^2+1)^2}{|1+i(1+r^2)|^{2n+4}}r^{2n+3}drdt.
  \end{align}
Recall that we have \eqref{eqB.7a} for $v_2$. So we have
  \begin{align}
  \notag &\ \ \ \int_{\mathscr{H}^n}v_2n^2|z|^2\frac{t^2+(|z|^2+1)^2}{|w+i|^{2n+4}}dV\\
  \notag &=-\frac{1}{6}\int_{\mathscr{H}^n}\big(
  R_{\bar{\beta}\ \alpha\mu}^{\ \alpha}(q)z^{\bar{\beta}}z^{\mu}+R_{\beta\ \bar{\alpha}\bar{\mu}}^{\ \bar{\alpha}}(q)z^{\beta}z^{\bar{\mu}}\big)n^2|z|^2\frac{t^2+(|z|^2+1)^2}{|w+i|^{2n+4}}dV\\
  \notag &=\frac{n^2}{6}(4\pi)^n\mathfrak{Q}\int_{-\infty}^{\infty}\int_0^{\infty}\frac{r^{2n+3}}{|t+i(1+r^2)|^{2n+2}}drdt=\frac{n^2}{6}(4\pi)^n\mathfrak{Q}N_1(2n+2,2n+3,0),
  \end{align}
  by \eqref{eqB.1} and Lemma \ref{lem5.4}. Then by \eqref{eqB.8} and \eqref{eqB.17}, we get
  \begin{equation}\label{eqB.22}
  \begin{aligned}
  &\ \ \ \int_{\mathscr{H}^n}v_2^{\alpha\bar{\gamma}}Z_{\alpha}\Phi{Z_{\bar{\gamma}}\Phi}dV\\
  &=\int_{\mathscr{H}^n}\bigg(-\frac{n^2}{6}(R_{d\ c\gamma}^{\ \alpha}(q)z^dz^c+R_{d\ c\bar{\alpha}}^{\ \bar{\gamma}}(q)z^dz^c)+\delta_{\gamma}^{\alpha}v_2\bigg)z^{\bar{\alpha}}z^{\gamma}\frac{t^2+(|z|^2+1)^2}{|w+i|^{2n+4}}dV\\
  &=\int_{\mathscr{H}^n}\bigg(\frac{n^2}{6}\big(R_{\rho\ \gamma\bar{\lambda}}^{\ \alpha}(q)z^{\rho}z^{\bar{\alpha}}z^{\gamma}z^{\bar{\lambda}}-R_{\bar{\rho}\ \lambda\gamma}^{\ \alpha}(q)z^{\bar{\rho}}z^{\bar{\alpha}}z^{\lambda}z^{\gamma}\\
  &\ \ -R_{\rho\ \bar{\lambda}\bar{\alpha}}^{\ \bar{\gamma}}(q)z^{\rho}z^{\gamma}z^{\bar{\lambda}}z^{\bar{\alpha}}+R_{\bar{\rho}\ \bar{\alpha}\lambda}^{\ \bar{\gamma}}(q)z^{\bar{\rho}}z^{\gamma}z^{\bar{\alpha}}z^{\lambda}\big)
  \frac{t^2+(|z|^2+1)^2}{|w+i|^{2n+4}}+v_2n^2|z|^2\frac{t^2+(|z|^2+1)^2}{|w+i|^{2n+4}}\bigg)dV\\
  &=\frac{n^2}{6(n+1)}(4\pi)^n\mathfrak{Q}\int_{-\infty}^{\infty}\int_0^{\infty}
  \frac{\big(t^2+(r^2+1)^2\big)r^{2n+3}}{|t^2+i(1+r^2)|^{2n+4}}drdt+\frac{n^2}{6}(4\pi)^n\mathfrak{Q}N_1(2n+2,2n+3,0).
  \end{aligned}
  \end{equation}
  The last identity is by the third and fourth identities in \eqref{eqB.1}. And by \eqref{eqB.8}, we have
  \begin{equation}\label{eqB.23}
  \int_{\mathscr{H}^n}v_2^{\bar{\alpha}\gamma}Z_{\bar{\alpha}}{\Phi}Z_{\gamma}{\Phi}dV=0.
  \end{equation}
  Taking summation of \eqref{eqB.20}, \eqref{eqB.21}, \eqref{eqB.22} and \eqref{eqB.23}, we get
  \begin{align}
  \notag &\ \ \ \int_{\mathscr{H}^n}v_2^{ab}Z_a{\Phi}Z_b{\Phi}dV\\
  \notag &=\frac{n^2}{3(n+1)}(4\pi)^n\mathfrak{Q}\int_{-\infty}^{\infty}\int_0^{\infty}\frac{t^2r^{2n+3}}{|t^2+i(1+r^2)|^{2n+4}}drdt+\frac{n^2}{6}(4\pi)^n\mathfrak{Q}N_1(2n+2,2n+3,0)\\
  \notag &=\frac{n^2}{3(n+1)}(4\pi)^nN_1(2n+4,2n+3,2)\mathfrak{Q}+\frac{n^2}{6}(4\pi)^nN_1(2n+2,2n+3,0)\mathfrak{Q}\\
  \notag &=\frac{n^4+2n^3+2n^2}{6(n-1)(n+1)}(4\pi)^nN_1(2n+2,2n+1,0)\mathfrak{Q},
  \end{align}
  by using the last identity in \eqref{N1}. So the first identity in \eqref{eq5.18} follows.

  By \eqref{eqB.8}, \eqref{eqB.17} and substituting identities in \eqref{eqB.1} for certain terms, we get
  \begin{align}\label{eqB.24}
  \notag &\ \ \ \int_{\mathscr{H}^n}v_2^{\alpha0}Z_{\alpha}{\Phi}Z_0{\Phi}dV\\
  \notag &=\int_{\mathscr{H}^n}\bigg(-\frac{1}{2}J_{\beta\bar{\alpha}(2)}z^{\beta}-\frac{1}{2}J_{\bar{\beta}\bar{\alpha}(2)}z^{\bar{\beta}}
  +\frac{i}{12}R_{d\ c\bar{\alpha}}^{\ \bar{\rho}}(q)z^cz^dz^{\rho}-\frac{i}{12}R_{d\ c\bar{\alpha}}^{\ \rho}(q)z^cz^dz^{\bar{\rho}}\bigg)\\
  \notag &\ \ \ (-in^2z^{\bar{\alpha}})\frac{t^2+it(|z|^2+1)}{|w+i|^{2n+4}}dV\\
  \notag &=n^2\int_{\mathscr{H}^n}\bigg(\frac{i}{2}J_{\beta\bar{\alpha}(2)}z^{\beta}z^{\bar{\alpha}}+\frac{i}{2}J_{\bar{\beta}\bar{\alpha}(2)}z^{\bar{\beta}}z^{\bar{\alpha}}
  +\frac{1}{12}\big(R_{\beta\ \bar{\mu}\bar{\alpha}}^{\ \bar{\rho}}(q)z^{\beta}z^{\rho}z^{\bar{\mu}}z^{\bar{\alpha}}-R_{\bar{\mu}\ \bar{\alpha}\beta}^{\ \bar{\rho}}(q)z^{\bar{\mu}}z^{\rho}z^{\bar{\alpha}}z^{\beta}\big)\\
  \notag &\ \ -\frac{1}{12}R_{\beta\ \mu\bar{\alpha}}^{\ \rho}(q)z^{\beta}z^{\bar{\rho}}z^{\mu}z^{\bar{\alpha}}\bigg)\frac{t^2+it(|z|^2+1)}{|w+i|^{2n+4}}dV\\
  \notag &=\frac{n^2}{n+1}(4\pi)^n\bigg(-\frac{3}{4}+0+0-\frac{1}{24}-\frac{1}{24}\bigg)\mathfrak{Q}\int_{-\infty}^{\infty}\int_0^{\infty}\frac{t^2+it(r^2+1)}{|t^2+i(1+r^2)|^{2n+4}}r^{2n+3}drdt\\
  &=-\frac{5n^2}{6(n+1)}(4\pi)^n\mathfrak{Q}\int_{-\infty}^{\infty}\int_0^{\infty}\frac{t^2+it(r^2+1)}{|t^2+i(1+r^2)|^{2n+4}}r^{2n+3}drdt.
  \end{align}
By taking conjugation of \eqref{eqB.24}, we get
  \begin{align}\label{eqB.25}
  \int_{\mathscr{H}^n}v_2^{0\bar{\alpha}}Z_0{\Phi}Z_{\bar{\alpha}}{\Phi}dV
  =-\frac{5n^2}{6(n+1)}(4\pi)^n\mathfrak{Q}\int_{-\infty}^{\infty}\int_0^{\infty}\frac{t^2-it(r^2+1)}{|t^2+i(1+r^2)|^{2n+4}}r^{2n+3}drdt.
  \end{align}
And by \eqref{eqB.8}, we get
  \begin{equation}\label{eqB.26}
  \int_{\mathscr{H}^n}v_2^{0\alpha}Z_0{\Phi}Z_{\alpha}{\Phi}dV=\int_{\mathscr{H}^n}v_2^{\bar{\alpha}0}Z_{\bar{\alpha}}{\Phi}Z_0{\Phi}dV=0.
  \end{equation}
So taking summation of \eqref{eqB.24}, \eqref{eqB.25} and \eqref{eqB.26}, we get
  \begin{align}
  \notag &\ \ \ \int_{\mathscr{H}^n}v_2^{a0}Z_a{\Phi}Z_0{\Phi}+v_2^{0a}Z_0{\Phi}Z_a{\Phi}dV\\
  \notag &=\int_{\mathscr{H}^n}v_2^{\alpha0}Z_{\alpha}{\Phi}Z_0{\Phi}+v_2^{\bar{\alpha}0}Z_{\bar{\alpha}}{\Phi}Z_0{\Phi}
  +v_2^{0\alpha}Z_0{\Phi}Z_{\alpha}{\Phi}+v_2^{0\bar{\alpha}}Z_0{\Phi}Z_{\bar{\alpha}}{\Phi}dV\\
  \notag &=-\frac{5n^2}{3(n+1)}(4\pi)^n\mathfrak{Q}\int_{-\infty}^{\infty}\int_0^{\infty}\frac{r^{2n+3}t^2}{|t^2+i(1+r^2)|^{2n+4}}drdt\\
  \notag &=-\frac{5n^2}{3(n+1)}(4\pi)^n\mathfrak{Q}N_1(2n+4,2n+3,2)=-\frac{5n^2}{6(n+1)(n-1)}\mathfrak{Q}N_1(2n+2,2n+1,0),
  \end{align}
  by \eqref{N1} and Lemma \ref{lem5.4}. So the second identity in \eqref{eq5.18} follows.

  By \eqref{eqB.8}, \eqref{N1}, \eqref{eqB.17}, the fifth identity in \eqref{eqB.1} and Lemma \ref{lem5.4}, we get
  \begin{align}
  \notag &\ \ \ \int_{\mathscr{H}^n}v_2^{00}Z_0{\Phi}Z_0{\Phi}dV=\int_{\mathscr{H}^n}\frac{4}{9}Q_{\gamma\lambda}^{\bar{\beta}}(q)Q_{\bar{\sigma}\bar{\mu}}^{\beta}(q)z^{\gamma}z^{\lambda}z^{\bar{\sigma}}z^{\bar{\mu}}\frac{n^2t^2}{|w+i|^{2n+4}}dV\\
  \notag &=\int_{-\infty}^{\infty}\int_0^{\infty}\frac{4n^2}{3(n+1)}(4\pi)^n\mathfrak{Q}\frac{r^{2n+3}t^2}{|t^2+i(1+r^2)|^{2n+4}}drdt\\
  \notag &=\frac{4n^2}{3(n+1)}(4\pi)^nN_1(2n+4,2n+3,2)\mathfrak{Q}=\frac{2n^2}{3(n+1)(n-1)}(4\pi)^nN_1(2n+2,2n+1,0)\mathfrak{Q}.
  \end{align}
  So the third identity in \eqref{eq5.18} follows.
\end{appendix}

  \bibliographystyle{plain}

 \end{document}